\newtheorem{thm}{Theorem}[section]
\newtheorem{cor}[thm]{Corollary}
\newtheorem{lem}[thm]{Lemma}
\newtheorem{prop}[thm]{Proposition}
\newtheorem{defn}[thm]{Definition}
\newtheorem{cond}[thm]{Condition}
\newtheorem{rem}[thm]{Remark}
\newtheorem{thmintro}{Theorem}
\newtheorem{exintro}[thmintro]{Example}
\newtheorem{work}[thm]{Working hypothesis}
\renewcommand{\thethmintro}{\Alph{thmintro}}
\renewcommand{\theequation}{\arabic{section}.\arabic{equation}}
\newcommand{\Z}{\mathbb Z}
\newcommand{\Q}{\mathbb Q}
\newcommand{\R}{\mathbb R}
\newcommand{\C}{\mathbb C}
\newcommand{\mf}{\mathfrak}
\newcommand{\mc}{\mathcal}
\newcommand{\mh}{\mathbb}
\def\Irr{{\rm Irr}}
\newcommand{\mr}{\mathrm}
\newcommand{\ind}{\mathrm{ind}}
\newcommand{\enuma}[1]{\begin{enumerate}[\textup{(}a\textup{)}] {#1} \end{enumerate}}
\newcommand{\cusp}{\mathrm{cusp}}
\newcommand{\nr}{\mathrm{nr}}
\newcommand{\unr}{\mathrm{unr}}
\newcommand{\Rep}{\mathrm{Rep}}
\newcommand{\Res}{\mathrm{Res}}
\newcommand{\der}{\mathrm{der}}
\newcommand{\red}{\mathrm{red}}
\newcommand{\Mod}{\mathrm{Mod}}
\newcommand{\fMod}{\mathrm{Mod}_{\mr f}}
\newcommand{\Hom}{\mathrm{Hom}}
\newcommand{\End}{\mathrm{End}}
\newcommand{\an}{\mathrm{an}}
\newcommand{\me}{\mathrm{me}}
\newcommand{\Modf}[1]{\mathrm{\Mod}_{\mr{f},#1}}
\begin{document}

\title[Endomorphism algebras for $p$-adic groups]{Endomorphism algebras and Hecke algebras\\ 
for reductive $p$-adic groups}
\date{\today}
\thanks{The author is supported by a NWO Vidi grant ``A Hecke algebra approach to the 
local Langlands correspondence" (nr. 639.032.528).}
\subjclass[2010]{Primary 22E50, Secondary 20G25, 20C08}
\maketitle

\begin{center}
{\Large Maarten Solleveld} \\[1mm]
IMAPP, Radboud Universiteit Nijmegen\\
Heyendaalseweg 135, 6525AJ Nijmegen, the Netherlands \\
email: m.solleveld@science.ru.nl 
\end{center}
\vspace{6mm}

\begin{abstract}
Let $G$ be a reductive $p$-adic group and let Rep$(G)^{\mf s}$ be a Bernstein block in the 
category of smooth complex $G$-representations. We investigate the structure of Rep$(G)^{\mf s}$,
by analysing the algebra of $G$-endomorphisms of a progenerator $\Pi$ of that category.

We show that Rep$(G)^{\mf s}$ is ``almost" Morita equivalent with a (twisted) affine Hecke algebra.
This statement is made precise in several ways, most importantly with a family of (twisted)
graded algebras. It entails that, as far as finite length representations are concerned,
Rep$(G)^{\mf s}$ and $\End_G (\Pi)$-Mod can be treated as the module category of a twisted 
affine Hecke algebra.

We draw two major consequences. Firstly, we show that the equivalence of categories between 
Rep$(G)^{\mf s}$ and $\End_G (\Pi)$-Mod preserves temperedness of finite length representations.
Secondly, we provide a classification of the irreducible representations in Rep$(G)^{\mf s}$,
in terms of the complex torus and the finite group canonically associated to Rep$(G)^{\mf s}$.
This proves a version of the ABPS conjecture and enables us to express the set of irreducible 
$G$-representations in terms of the supercuspidal representations of the Levi subgroups of $G$.

Our methods are independent of the existence of types, and apply in complete generality.

In 2023 an appendix was added, to solve a problem with preservation of temperedness in 
Paragraph 9.1.
\end{abstract}
\vspace{5mm}

\tableofcontents

\section*{Introduction}

This paper investigates the structure of Bernstein blocks in the representation theory
of reductive $p$-adic groups. Let $G$ be such a group and let $M$ be a Levi subgroup.
Let $(\sigma,E)$ be a supercuspidal $M$-representation (over $\C$), and let $\mf s$ be its
inertial equivalence class (for $G$). To these data Bernstein associated a block Rep$(G)^{\mf s}$
in the category of smooth $G$-representations Rep$(G)$, see \cite{BeDe,Ren}.

Several questions about Rep$(G)^{\mf s}$ have been avidly studied, for instance:
\begin{itemize}\label{questions}
\item Can one describe Rep$(G)^{\mf s}$ as the module category of an algebra $H$ with an 
explicit presentation?
\item Is there an easy description of temperedness and unitarity of $G$-representations in
terms of $H$?
\item How to classify the set of irreducible representations $\Irr (G)^{\mf s}$?
\item How to classify the discrete series representations in Rep$(G)^{\mf s}$?
\end{itemize}
We note that all these issues have been solved already for $M = G$. In that case the real 
task is to obtain a supercuspidal representation, whereas in this paper we use a given 
$(\sigma,E)$ as starting point.

Most of the time, the above questions have been approached with types, following \cite{BuKu2}.
Given an $\mf s$-type $(K,\lambda)$, there is always a Hecke algebra $\mc H (G,K,\lambda)$
whose module category is equivalent with Rep$(G)^{\mf s}$.
This has been exploited very successfully in many cases, e.g. for $GL_n (F)$ \cite{BuKu1},
for depth zero representations \cite{Mor1,Mor2}, for the principal series of split groups \cite{Roc},
the results on unitarity from \cite{Ciu} and on temperedness from \cite{SolComp}.

However, it is often quite difficult to find a type $(K,\lambda)$, and even if one has it, 
it can be hard to find generators and relations for $\mc H (G,K,\lambda)$. For instance, types
have been constructed for all Bernstein components of classical groups \cite{Ste,MiSt}, but so far
the Hecke algebras of most of these types have not been worked out. Already for the principal
series of unitary $p$-adic groups, this is a difficult task \cite{Bad}. At the moment, it 
seems unfeasible to carry out the full Bushnell--Kutzko program for arbitary Bernstein
components.\\

We follow another approach, which builds more directly on the seminal work of Bernstein. 
We consider a progenerator $\Pi$ of Rep$(G)^{\mf s}$, and the algebra $\End_G (\Pi)$. There is
a natural equivalence from Rep$(G)^{\mf s}$ to the category $\End_G (\Pi)$-Mod of right
$\End_G (\Pi)$-modules, namely $V \mapsto \Hom_G (\Pi,V)$. 

Thus all the above questions can in principle be answered by studying the algebra $\End_G (\Pi)$.
To avoid superfluous complications, we should use a progenerator with an easy shape. 
Fortunately, such an object was already constructed in \cite{BeRu}. Namely, let $M^1$ be
subgroup of $M$ generated by all compact subgroups, write $B = \C [M / M^1]$ and
$E_B = E \otimes_\C B$. The latter is an algebraic version of the integral of the representations
$\sigma \otimes \chi$, where $\chi$ runs through the group $X_\nr (M)$ of unramified characters
of $M$. Then the (normalized) parabolic induction $I_P^G (E_B)$ is a progenerator of 
Rep$(G)^{\mf s}$. In particular we have the equivalence of categories
\[
\begin{array}{cccc}
\mc E : & \Rep (G)^{\mf s} & \longrightarrow & \End_G (I_P^G (E_B))\text{-Mod} \\
& V & \mapsto & \Hom_G (I_P^G (E_B),V)
\end{array}.
\]
For classical groups and inner forms of $GL_n$, the algebras $\End_G (I_P^G (E_B))$
were already analysed by Heiermann \cite{Hei1,Hei2,Hei4}. It turns out that they are isomorphic
to affine Hecke algebras (sometimes extended with a finite group). These results make use
of some special properties of representations of classical groups, which need not hold for
other groups.

We want to study $\End_G (I_P^G (E_B))$ in complete generality, for any Bernstein block of
any connected reductive group over any non-archimedean local field $F$. This entails that we
can only use the abstract properties of the supercuspidal representation $(\sigma,E)$, which
also go into the Bernstein decomposition. A couple of observations about $\End_G (I_P^G (E_B))$
can be made quickly, based on earlier work.
\begin{itemize}
\item The algebra $B$ acts on $E_B$ by $M$-intertwiners, and $I_P^G$ embeds $B$ as a commutative
subalgebra in $\End_G (I_P^G (E_B))$. As a $B$-module, $\End_G (I_P^G (E_B))$ has finite rank
\cite{BeRu,Ren}.
\item Write $\mc O = \{ \sigma \otimes \chi : \chi \in X_\nr (M) \} \subset \Irr (M)$. The group 
$N_G (M)/M$ acts naturally on $\Irr (M)$, and we denote the stabilizer of $\mc O$ in $N_G (M)/M$ 
by $W(M,\mc O)$. By \cite{BeDe}, the centre of $\End_G (I_P^G (E_B))$ is isomorphic to\\
$\C [\mc O / W(M,\mc O)] = \C [\mc O]^{W(M,\mc O)}$.
\item Consider the finite group 
\[
X_\nr (M,\sigma) = \{ \chi_c \in X_\nr (M) : \sigma \otimes \chi_c \cong \sigma \} .
\]
For every $\chi_c \in X_\nr (M,\sigma)$ there exists an $M$-intertwiner $\sigma \otimes \chi
\to \sigma \otimes \chi_c \chi$, which gives rise to an intertwiner $\phi_{\chi_c}$ in
$\End_M (E_B)$ and in $\End_G (I_P^G (E_B))$ \cite{Roc1}.
\item For every $w \in W(M,\mc O)$ there exists an intertwining operator 
\[
I_w (\chi) : I_P^G (\sigma \otimes \chi) \to I_P^G (w(\sigma \otimes \chi)) ,
\]
see \cite{Wal}. However, it is rational as a function of $\chi \in X_\nr (M)$ and in general has 
non-removable singularities, so it does not automatically yield an element of $\End_G (I_P^G (E_B))$. 
\end{itemize}
Based on this knowledge and on \cite{Hei2}, one can expect that $\End_G (I_P^G (E_B))$ has a 
$B$-basis indexed by $X_\nr (M,\sigma) \times W(M,\mc O)$, and that the elements of this 
basis behave somewhat like a group. However, in general things are more subtle than that.\\

\textbf{Main results.}\\
The action of any $w \in W(M,\mc O)$ on $\mc O \cong X_\nr (M) / X_\nr (M,\sigma)$ can be lifted 
to a transformation $\mf w$ of $X_\nr (M)$. Let $W(M,\sigma,X_\nr (M))$ be the group of 
permutations of $X_\nr (M)$ generated by $X_\nr (M,\sigma)$ and the $\mf w$. It satisfies
\[
W(M,\sigma,X_\nr(M)) / X_\nr (M,\sigma) = W(M,\mc O) .
\]
Let $K(B) = \C (X_\nr (M))$ be the quotient field of $B = \C[X_\nr (M)]$. In view of the rationality
of the intertwining operators $I_w$, it is easier to investigate the algebra
\[
\End_G (I_P^G (E_B)) \otimes_B K(B) = \Hom_G \big( I_P^G (E_B), I_P^G (E_B \otimes_B K(B)) \big) .
\]

\begin{thmintro}\label{thm:A} 
\textup{(see Corollary \ref{cor:5.6})} \\
There exist a 2-cocycle $\natural : W(M,\sigma,X_\nr (M))^2 \to \C^\times$ and an algebra isomorphism
\[
\End_G (I_P^G (E_B)) \otimes_B K(B) \cong K(B) \rtimes \C [W(M,\sigma,X_\nr (M)),\natural] .
\]
\end{thmintro}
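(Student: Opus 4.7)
The plan is to pass to the generic point of $X_\nr(M)$, where the intertwining operators of \cite{Wal} lose their singularities and become bona fide endomorphisms, and then build an explicit algebra map out of the twisted group algebra that turns out to be bijective by a rank count. Concretely, I would define operators $T_\gamma \in \End_G(I_P^G(E_B))\otimes_B K(B)$ for each $\gamma \in W(M,\sigma,X_\nr(M))$, check that they multiply up to a $2$-cocycle on $W(M,\sigma,X_\nr(M))$ with values in $\C^\times$, and conclude by comparing dimensions.

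\textbf{Construction of the operators.} The construction splits along the subgroup/quotient presentation
\[
1 \to X_\nr(M,\sigma) \to W(M,\sigma,X_\nr(M)) \to W(M,\mc O) \to 1 .
\]
For $\chi_c \in X_\nr(M,\sigma)$, apply $I_P^G$ to the endomorphism $\phi_{\chi_c} \in \End_M(E_B)$ to get $T_{\chi_c}$; these realize the known twisted-group-algebra structure $B\rtimes\C[X_\nr(M,\sigma),\natural_0] \hookrightarrow \End_G(I_P^G(E_B))$, after Roche \cite{Roc1}. For $w \in W(M,\mc O)$, choose a set-theoretic lift $\mf w$ and take $T_{\mf w}$ to be the rationally-defined intertwining operator $I_{\mf w}$ viewed inside $\Hom_G(I_P^G(E_B),I_P^G(E_B\otimes_B K(B))) = \End_G(I_P^G(E_B))\otimes_B K(B)$; the rationalization is harmless because $K(B)$ inverts the denominators obstructing regularity over $B$. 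A general $\gamma = \chi_c \cdot \mf w$ is then assigned $T_\gamma := T_{\chi_c}T_{\mf w}$.

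\textbf{Cocycle and algebra map.} For $\gamma_1,\gamma_2 \in W(M,\sigma,X_\nr(M))$, the product $T_{\gamma_1}T_{\gamma_2}$ and $T_{\gamma_1\gamma_2}$ are both nonzero intertwining operators with the same source and target. Over the function field $K(B)$, generic induced representations $I_P^G(\sigma\otimes\chi)$ are irreducible, so a Schur-type argument forces $T_{\gamma_1}T_{\gamma_2} = \natural(\gamma_1,\gamma_2)\,T_{\gamma_1\gamma_2}$ for some $\natural(\gamma_1,\gamma_2) \in K(B)^\times$. I would then argue that a careful choice of normalization of the $T_{\mf w}$ (for example, using Harish-Chandra's $j$-factors to absorb rational scalars, as in \cite{Hei2}) can be made so that $\natural(\gamma_1,\gamma_2) \in \C^\times$; associativity of composition automatically upgrades $\natural$ to a $2$-cocycle. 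Extending $K(B)$-linearly yields
\[
\Phi : K(B)\rtimes\C[W(M,\sigma,X_\nr(M)),\natural] \longrightarrow \End_G(I_P^G(E_B))\otimes_B K(B) ,
\]
where the semidirect structure on the left encodes the action of $W(M,\sigma,X_\nr(M))$ on $X_\nr(M)$ (through $X_\nr(M,\sigma)$ by multiplication and through $W(M,\mc O)$ by its standard lift), and this is exactly how the operators $T_\gamma$ conjugate the image of $B$.

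\textbf{Bijectivity.} The target has finite $B$-rank equal to $|W(M,\sigma,X_\nr(M))|$ by Bernstein--Rumelhart, so both sides are $K(B)$-algebras of the same dimension and it suffices to prove $\Phi$ injective, i.e.\ that the $T_\gamma$ are linearly independent over $K(B)$. The main obstacle is precisely this linear independence: I expect to establish it by evaluating at a generic point $\chi \in X_\nr(M)$ with trivial stabilizer, where the geometric lemma applied to $\Res^G_M\circ I_P^G$ produces $|W(M,\mc O)|$ distinct direct summands of $\Res^G_M I_P^G(\sigma\otimes\chi)$ indexed by $W(M,\mc O)$, which together with the $X_\nr(M,\sigma)$-action on each summand separates the $T_\gamma$. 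This Mackey/intertwining-number computation is the technical heart of the argument; everything else is either formal or already in the literature.
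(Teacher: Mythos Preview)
Your overall architecture matches the paper's: build operators indexed by $W(M,\sigma,X_\nr(M))$ inside $\End_G(I_P^G(E_B))\otimes_B K(B)$, show they obey a twisted multiplication, and match dimensions over $K(B)$ using the geometric lemma. The rank count and linear independence are handled in the paper exactly as you sketch (Lemma~\ref{lem:5.1} and Theorem~\ref{thm:4.3}, ultimately via Bernstein's filtration and \cite[Proposition~3.7]{Hei2}).

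The gap is in the sentence ``a careful choice of normalization \dots\ can be made so that $\natural(\gamma_1,\gamma_2)\in\C^\times$.'' This is not a normalization trick; it is the main content of the theorem, and it does not follow from Schur plus absorbing $j$-factors. The naive intertwiners $A_w$ (the paper's notation for your $T_{\mf w}$) satisfy, for $w_1,w_2\in W(\Sigma_{\mc O,\mu})$,
\[
A_{w_1}A_{w_2}=\Big(\prod_\alpha c_\alpha\,\mu^{M_\alpha}(\sigma\otimes\cdot)^{-1}\Big)A_{w_1w_2}
\]
(Proposition~\ref{prop:4.1}), so the raw cocycle is genuinely $K(B)^\times$-valued. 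To force it into $\C^\times$ the paper must (i) know the explicit shape of $\mu^{M_\alpha}$ on $\mc O$ as in \eqref{eq:3.22}, (ii) introduce specific rational functions $g_\alpha$ and set $\mc T_{s_\alpha}=g_\alpha A_{s_\alpha}$, (iii) verify by hand that $\mc T_{s_\alpha}^2=1$ and that the braid relations hold (Proposition~\ref{prop:4.6}, using the $W(M,\mc O)$-equivariance $w\cdot g_\alpha=g_{w(\alpha)}$), and (iv) separately analyze the interaction of $R(\mc O)$ with $W(\Sigma_{\mc O,\mu})$ and with $X_\nr(M,\sigma)$ (Proposition~\ref{prop:4.2}, Lemma~\ref{lem:4.7}, Lemma~\ref{lem:4.4}), which needs the preparatory Lemmas~\ref{lem:3.7} and~\ref{lem:3.8} about how $\chi_r$ behaves under $W(\Sigma_{\mc O,\mu})$ and how $q_\alpha,q_{\alpha*}$ transform. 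None of this is visible from a generic Schur argument: at a generic point two intertwiners differ by a scalar, but that scalar is a function of the point, and arranging for it to be constant across all of $X_\nr(M)$ requires the explicit $\mu$-function analysis. Your proposal should promote this step from a parenthetical to the core of the proof.
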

Here $\C [W(M,\sigma,X_\nr (M)),\natural]$ is a twisted group algebra, it has basis elements 
$\mc T_w$ that multiply as $\mc T_w \mc T_{w'} = \natural (w,w') \mc T_{w w'}$.
The symbol $\rtimes$ denotes a crossed product: as vector space it just means the tensor product,
and the multiplication rules on that are determined by the action of $W(M,\sigma,X_\nr (M))$
on $K(B)$.

Theorem \ref{thm:A} suggests a lot about $\End_G (I_P^G (E_B))$, but the poles of some involved
operators make it impossible to already draw many conclusions about representations. In fact
the operators $\mc T_w$ with $w \in W(M,\mc O)$ involve certain parameters, powers
of the cardinality $q_F$ of the residue field of $F$. If we would manually replace $q_F$ by 1,
then $\End_G (I_P^G (E_B))$ would become isomorphic to $B \rtimes \C [W(M,\sigma,X_\nr (M)),\natural]$.
Of course that is an outrageous thing to do, we just mention it to indicate the relation 
between these two algebras.\\

To formulate our results about $\End_G (I_P^G (E_B))$, we introduce more objects.
The set of roots of $G$ with respect to $M$ contains a root system $\Sigma_{\mc O,\mu}$, namely
the set of roots for which the associated Harish-Chandra $\mu$-function has a zero on $\mc O$
\cite{Hei2}. This induces a semi-direct factorization
\[
W(M,\mc O) = W(\Sigma_{\mc O,\mu}) \rtimes R(\mc O) ,
\]
where $R(\mc O)$ is the stabilizer of the set of positive roots. We may and will assume 
throughout that $\sigma \in \Irr (M)$ is unitary and stabilized by $W(\Sigma_{\mc O,\mu})$.
The Harish-Chandra $\mu$-functions also determine parameter functions 
$\lambda,\lambda^* : \Sigma_{\mc O,\mu} \to \R_{\geq 0}$. The values $\lambda (\alpha)$ and 
$\lambda^* (\alpha)$ encode in a simple way for which $\chi \in X_\nr (M)$ the normalized parabolic 
induction $I_{M (P \cap M_\alpha}^{M_\alpha} (\sigma \otimes \chi)$ becomes reducible, see 
\eqref{eq:3.22} and \eqref{eq:9.9}. (Here $M_\alpha$ denotes the Levi subgroup of $G$ generated by 
$M$ and the root subgroups $U_\alpha, U_{-\alpha}$.)

To the data $\mc O, \Sigma_{\mc O,\mu}, \lambda, \lambda^*,q_F^{1/2}$ one can associate 
an affine Hecke algebra, which we denote in this introduction by
$\mc H \big( \mc O,\Sigma_{\mc O,\mu},\lambda,\lambda^*,q_F^{1/2} \big)$. There is a large subalgebra
\[
\End_G^\circ (I_P^G (E_B)) \; \subset \; \End_G (I_P^G (E_B))
\] 
such that the categories of finite length right modules of $\mc H \big(\mc O,\Sigma_{\mc O,\mu},
\lambda,\lambda^*,q_F^{1/2} \big)$ and of $\End_G^\circ (I_P^G (E_B))$ are equivalent. Suppose that 
$\natural$ descends to a 2-cocycle $\tilde \natural$ of $R(\mc O)$. Then the crossed product
\[
\tilde{\mc H}(\mc O) := \mc H \big( \mc O,\Sigma_{\mc O,\mu},\lambda,\lambda^* ,q_F^{1/2} \big) 
\rtimes \C [R(\mc O),\tilde{\natural}] 
\]
is a twisted affine Hecke algebra \cite[\S 2.1]{AMS3}. It is reasonable to expect that 
$\End_G (I_P^G (E_B))$ is Morita equivalent with $\tilde{\mc H}(\mc O)$. Indeed this 
is ``almost" true, and in important cases known already.
\begin{itemize} \label{importantCases}
\item The group $R(\mc O)$ is always trivial for $GL_n (F)$ \cite{BuKu1}, for inner forms of general
linear groups \cite{SeSt,Hei2} and for unipotent representations \cite{Lus-Uni,SolRamif}.
\item The 2-cocycle $\natural$ of $W(M,\sigma,X_\nr (M))$ is trivial for symplectic groups and special 
orthogonal groups \cite{Hei2} and for principal series representatons of split groups \cite{Roc}.
\end{itemize}
In these cases all the involved 2-cocycles are trivial, and there are equivalences of categories
\[
\Rep (G)^{\mf s} \;\cong\; \End_G (I_P^G (E_B))-\Mod \;\cong\; 
\mc H \big( \mc O,\Sigma_{\mc O,\mu},\lambda,\lambda^* ,q_F^{1/2} \big) \rtimes R(\mc O) -\Mod .
\]
However, examples with inner forms of $SL_n$ \cite{ABPS1} suggest that such a
Morita equivalence for $\End_G (I_P^G (E_B))$ might not hold for arbitrary groups.
It is conceivable that the 2-cocycles are always trivial for (quasi-)split reductive $F$-groups, 
but we would not know how to prove that.

In our completely general setting, we shall need to decompose $\End_G (I_P^G (E_B))$-modules according 
to their $B$-weights (which live in $X_\nr (M)$). The existence of such a decomposition cannot be 
guaranteed for representations of infinite length, and therefore we stick to finite length in most 
of the paper. All the algebras we consider have a large centre, so that every finite length module
actually has finite dimension. For Rep$(G)^{\mf s}$ ``finite length" is equivalent to ``admissible",
and we denote the corresponding subcategory by $\Rep_{\mr f}(G)^{\mf s}$.

It is known from \cite{Lus-Gr,AMS3} that the category of finite dimensional right modules
$\tilde{\mc H}(\mc O)-\fMod$ can be described with a family of (twisted) graded Hecke 
algebras. Write $X_\nr^+ (M) = \Hom (M/M^1,\R_{>0})$ and note that its 
Lie algebra is $a_M^* = \Hom (M / M^1 ,\R)$. For a unitary $u \in X_\nr (M)$, there is a 
graded Hecke algebra $\mh H_u$, built from the following data: the tangent space
$a_M^* \otimes_\R \C$ of $X_\nr (M)$ at $u$, a root subsystem $\Sigma_{\sigma \otimes u}
\subset \Sigma_{\mc O,\mu}$ and a parameter function $k^u_\alpha$ induced by 
$\lambda$ and $\lambda^*$. Further $W(M,\mc O)_{\sigma \otimes u}$ decomposes as 
$W(\Sigma_{\sigma \otimes u}) \rtimes R(\sigma \otimes u)$, and $\natural$ induces a
2-cocycle of the local R-group $R(\sigma \otimes u)$. This yields a twisted graded Hecke
algebra $\mh H_u \rtimes \C [R(\sigma \otimes u),\natural_u]$ \cite[\S 1]{AMS3}. 

We remark that these algebras depend mainly on the variety $\mc O$ and the group $W(M,\mc O)$.
Only the subsidiary data $k^u$ and $\natural_u$ take the internal structure of the
representations $\sigma \otimes \chi \in \mc O$ into account. The parameters $k^u_\alpha$,
depend only on the poles of the Harish-Chandra $\mu$-function (associated to $\alpha$)
on $\{ \sigma \otimes u \chi : \chi \in X_\nr^+ (M) \}$. It is not clear to us whether,
for a given $\sigma \otimes u$, they can be effectively computed in that way, further
investigations are required there. 

We do not know whether a 2-cocycle $\tilde \natural$ as used in $\tilde{\mc H}(\mc O)$ always 
exists. Fortunately, the description of $\End_G (I_P^G (E_B))-\fMod$ found via affine Hecke 
algebras turns out to be valid anyway.

\begin{thmintro}\label{thm:B} 
\textup{(see Corollaries \ref{cor:8.4} and \ref{cor:9.9})} \\
For any unitary $u \in X_\nr (M)$ there are equivalences between the following categories:
\begin{enumerate}[(i)]
\item representations in $\Rep_{\mr f} (G)^{\mf s}$ with cuspidal support in\\
$W(M,\mc O) \{ \sigma \otimes u \chi : \chi \in X_\nr^+ (M) \}$;
\item modules in $\End_G (I_P^G (E_B))-\fMod$ with all their $B$-weights in \\
$W(M,\sigma,X_\nr (M)) u X_\nr^+ (M)$;
\item modules in $\mh H (\tilde{\mc R}_u, W(M,\mc O)_{\sigma \otimes u}, k^u, 
\natural_u ) -\fMod$ with all their $\C[a_M^* \otimes_\R \C]$-weights in $a_M^*$.
\end{enumerate}
These equivalences commute with parabolic induction and Jacquet restriction (which for (ii)
and (iii) are just induction and restriction between the appropriate algebras).

Futhermore, suppose that there exists a 2-cocycle $\tilde \natural$ on
$R(\mc O) \cong W(M,\mc O ) / W(\Sigma_{\mc O,\mu})$ which on each subgroup 
$W(M,\mc O)_{\sigma \otimes u}$ is cohomologous to $\natural_u$. Then the above equivalences,
for all unitary $u \in X_\nr (M)$, combine to an equivalence of categories
\[
\End_G (I_P^G (E_B))-\fMod \; \longrightarrow \; \tilde{\mc H}(\mc O)-\fMod .
\]
Via $\mc E$, the left hand side is always equivalent with $\Rep_{\mr f}(G)^{\mf s}$.
\end{thmintro}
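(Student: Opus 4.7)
The plan is to combine three ingredients: the Bernstein equivalence $\mc E$, which handles (i) $\leftrightarrow$ (ii); the structural description of $\End_G(I_P^G(E_B))$ from Theorem~\ref{thm:A} together with a Lusztig-style reduction from affine to graded Hecke algebras, which handles (ii) $\leftrightarrow$ (iii); and a cocycle-matching argument to patch the local equivalences into a global Morita equivalence with $\tilde{\mc H}(\mc O)-\fMod$.

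First I would establish (i) $\leftrightarrow$ (ii) by transporting the cuspidal-support condition across $\mc E$. The $B$-action on $\Hom_G(I_P^G(E_B),V)$ is the restriction of the action of the commutative subalgebra $B \subset \End_G(I_P^G(E_B))$, while the centre $\C[\mc O]^{W(M,\mc O)}$ governs the cuspidal support. The intertwiners $\phi_{\chi_c}$ exchange $B$-weights that differ by $X_\nr(M,\sigma)$, so the set of $B$-weights of a finite-length module is stable under this subgroup and projects, via $\mc O \cong X_\nr(M)/X_\nr(M,\sigma)$, onto the exact set of $\sigma \otimes \chi$ appearing in its cuspidal support. This turns the condition ``cuspidal support in $W(M,\mc O)\{\sigma \otimes u\chi : \chi \in X_\nr^+(M)\}$'' into ``$B$-weights in $W(M,\sigma,X_\nr(M))\, u\, X_\nr^+(M)$''.

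For (ii) $\leftrightarrow$ (iii), I would pass to a formal (or analytic) neighborhood of the $W(M,\sigma,X_\nr(M))$-orbit of $u$ inside $X_\nr(M)$. Theorem~\ref{thm:A} identifies $\End_G(I_P^G(E_B)) \otimes_B K(B)$ with the twisted crossed product $K(B) \rtimes \C[W(M,\sigma,X_\nr(M)),\natural]$, but the real task is to describe the integral algebra $\End_G(I_P^G(E_B))$ itself near $u$. The key input is the local behaviour of the intertwining operators $I_w(\chi)$ and the Harish-Chandra $\mu$-function: their zeros and poles determine the parameter function $(\lambda,\lambda^*)$ and carve out the subsystem $\Sigma_{\sigma \otimes u} \subset \Sigma_{\mc O,\mu}$. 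Using this I would identify the completion of $\End_G(I_P^G(E_B))$ along the orbit of $u$ with the analogous completion of an affine Hecke algebra for $(\Sigma_{\mc O,\mu},\lambda,\lambda^*)$ extended by $\C[R(\mc O),\natural]$. The Lusztig reduction theorem, in the twisted form developed in \cite{AMS3}, then converts finite-dimensional modules of that completion into finite-dimensional modules of $\mh H_u \rtimes \C[R(\sigma \otimes u),\natural_u]$ whose $\C[a_M^* \otimes_\R \C]$-weights lie in $a_M^*$.

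Finally, for the global statement, I would decompose both $\End_G(I_P^G(E_B))-\fMod$ and $\tilde{\mc H}(\mc O)-\fMod$ according to $W(M,\mc O)$-orbits of unitary $\sigma \otimes u$ (using the action of the centre $\C[\mc O]^{W(M,\mc O)}$) and apply the previous step to each summand. The hypothesis that $\natural$ descends to a 2-cocycle $\tilde\natural$ on $R(\mc O)$ whose restriction to each $W(M,\mc O)_{\sigma \otimes u}$ is cohomologous to $\natural_u$ is exactly what is needed to glue the local twisted-group-algebra factors into a single crossed product with $\C[R(\mc O),\tilde\natural]$. The main obstacle is the local step (ii) $\leftrightarrow$ (iii): without a type we have no presentation of $\End_G(I_P^G(E_B))$ by generators and relations, so the identification with an affine Hecke algebra near $u$ must be extracted intrinsically from the analytic behaviour of $\mu$-functions and intertwining operators, including careful control of the non-removable singularities of $I_w(\chi)$. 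Once that local structure theorem is in hand, the rest is bookkeeping of orbits, weights, and cocycles.
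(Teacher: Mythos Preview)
Your overall architecture---localize near the $W(M,\sigma,X_\nr(M))$-orbit of $u$, compare with a Hecke algebra, then patch over orbits using the cocycle hypothesis---matches the paper, and your treatment of (i) $\leftrightarrow$ (ii) and of the global gluing is essentially what the paper does (Lemma~\ref{lem:7.1} for the former, Corollary~\ref{cor:9.9} together with \cite[Theorems 2.5 and 2.9]{AMS3} for the latter).

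The gap is in your local step (ii) $\leftrightarrow$ (iii). You propose to identify the completion of $\End_G(I_P^G(E_B))$ near the orbit of $u$ with the completion of an \emph{affine} Hecke algebra for $(\Sigma_{\mc O,\mu},\lambda,\lambda^*)$ extended by $\C[R(\mc O),\natural]$, and then apply Lusztig's reduction from \cite{AMS3} to reach the graded side. But such an extended affine Hecke algebra is exactly the object $\tilde{\mc H}(\mc O)$ whose existence requires $\natural$ to descend to a $\tilde\natural$ on $R(\mc O)$---the hypothesis that is \emph{only} assumed in the second, global part of the theorem. For the three local equivalences no such object is given, and the Remark after Lemma~\ref{lem:6.1} explains why $\End_G(I_P^G(E_B))$ itself does not admit a clean affine-Hecke presentation: the poles of $A_{s_\alpha}$ occur at points not fixed by $s_\alpha$ and cannot be removed by elements of $K(B)$, and the possible nontriviality of $\natural|_{X_\nr(M,\sigma)^2}$ obstructs finding good $B$-module generators.

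The paper circumvents this by never passing through an affine Hecke algebra for the local step. After analytic localization on $U = W(M,\sigma,X_\nr(M)) U_u$ (Lemma~\ref{lem:7.1}), the key move is a Morita equivalence via the idempotent $1_u$ (Lemma~\ref{lem:7.2}): the corner $1_u \End_G(I_P^G(E_B))_U^\an 1_u$ sees only the stabilizer $W(M,\mc O)_{\sigma\otimes u}$---so $X_\nr(M,\sigma)$ drops out---and only the sub-root-system $\Sigma_{\sigma\otimes u}$. New intertwiners $A_w^u,\, T_w^u$ built with base point $\sigma\otimes u$ give this corner an explicit presentation over $C^\an(U_u)$ (Theorem~\ref{thm:7.4}), and a direct calculation via $\exp_u$ (Proposition~\ref{prop:8.3}) identifies it with the analytic localization of the graded Hecke algebra $\mh H(\tilde{\mc R}_u, W(M,\mc O)_{\sigma\otimes u}, k^u, \natural_u)$. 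Lusztig's reduction from \cite{AMS3} enters only afterwards, and in the opposite direction: it is applied to $\tilde{\mc H}(\mc O)$ in Corollary~\ref{cor:9.9} to decompose its module category into the same graded pieces, so that the two sides can be matched orbit by orbit under the $\tilde\natural$ hypothesis.
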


We stress that Theorem \ref{thm:B} holds for all Bernstein blocks of all reductive $p$-adic
groups. It provides a good substitute for types, when those are not available
or too complicated. The use of graded (instead of affine) Hecke algebras is only a small concession, 
since the standard approaches to the representation theory of affine Hecke algebras with unequal 
parameters run via graded Hecke algebras anyway. 

Let us point out that on the Galois side of the local Langlands correspondence,
analogous structures exist. Indeed, in \cite{AMS1,AMS2,AMS3} twisted graded Hecke algebras and
a twisted affine Hecke algebra were associated to every Bernstein component in the space of
enhanced L-parameters. By comparing twisted graded Hecke algebras on both sides of the local
Langlands correspondence, it might be possible to establish new cases of that correspondence.
 
For representations of $\End_G (I_P^G (E_B))$ and $\mh H_u \rtimes \C[R(\sigma \otimes u),
\natural_u]$ there are natural notions of temperedness and essentially discrete series,
which mimic those for affine Hecke algebras \cite{Opd-Sp}. The next result generalizes \cite{Hei3}.

\begin{thmintro}\label{thm:C} 
\textup{(see Theorem \ref{thm:9.2} and Proposition \ref{prop:9.3})} \\
Choose the parabolic subgroup $P$ with Levi factor $M$ as indicated by Lemma \ref{lem:9.1}.
Then all the equivalences of categories in Theorem \ref{thm:B} preserve temperedness.

Suppose that $\Sigma_{\mc O,\mu}$ has full rank in the set of roots of $(G,M)$. Then
these equivalences send essentially square-integrable representations in (i) to
essentially discrete series representations in (ii), and the other way round.

Suppose $\Sigma_{\sigma \otimes u}$ has full rank in the set of roots of $(G,M)$, for a
fixed unitary $u \in X_\nr (M)$. Then the equivalences in Theorem \ref{thm:B}, for that $u$,
send essentially square-integrable representations in (i) to essentially discrete series
representations in (iii), and conversely.
\end{thmintro}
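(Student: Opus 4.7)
The strategy is to characterize temperedness on each of the three sides of Theorem \ref{thm:B} by an intrinsic weight/exponent criterion, and then to use that the equivalences of categories preserve this weight information on the nose. On the $G$-side, Casselman's criterion says that a finite length $V \in \Rep (G)^{\mf s}$ is tempered iff every exponent of $V$ along $P$---a character of $A_M$ appearing in the Jacquet module $J_P V$---has real part in the closed cone spanned by the negative coroots associated with $P$, and is essentially square-integrable iff those real parts all lie in the strict interior modulo $a_G^*$. Via Bernstein's second adjointness the relevant exponents match up with the polar parts of the $B$-weights of $\mc E (V) = \Hom_G (I_P^G (E_B),V)$, so that the Casselman condition translates literally into the Opdam--Solleveld temperedness criterion for $\End_G (I_P^G (E_B))$-modules, namely that each weight $\chi \in X_\nr (M)$ decomposes as $\chi = u |\chi|$ with $|\chi| \in X_\nr^+ (M)$ lying in the closed negative cone of $\Sigma_{\mc O,\mu}$. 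That the two positive chambers coincide is exactly what the choice of $P$ in Lemma \ref{lem:9.1} is designed to achieve, which establishes the equivalence (i) $\leftrightarrow$ (ii).

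For the passage from (ii) to (iii), I would invoke the Lusztig-style reduction underlying the description of $\End_G (I_P^G (E_B))\text{-}\fMod$ by the family of twisted graded Hecke algebras $\mh H_u \rtimes \C [R(\sigma \otimes u),\natural_u]$. In this reduction, an affine Hecke algebra module whose weights all lie in $W(M,\sigma,X_\nr (M)) u X_\nr^+ (M)$ is identified with a graded module whose weights are the logarithms in $a_M^* \otimes_\R \C$; temperedness on the graded side is the condition that these logarithmic weights have real parts in the closed negative cone of $\Sigma_{\sigma \otimes u}$. Since $\Sigma_{\sigma \otimes u} \subset \Sigma_{\mc O,\mu}$ is cut out by those $\alpha$ that become singular at $u$, and since the corresponding parameters $k^u_\alpha$ are induced from $\lambda,\lambda^*$ in a manner consistent with the $\mu$-function, the two cone conditions are identified by the exponential map; this yields the equivalence (ii) $\leftrightarrow$ (iii) and, by composition, (i) $\leftrightarrow$ (iii) for a fixed $u$.

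The essentially square-integrable statement is handled by the same criteria with the inequalities made strict: a weight lies in the relative interior of the negative cone, which is nonempty only when the root system at hand has full rank in $a_M^*/a_G^*$. This explains both rank hypotheses. The main obstacle I anticipate is bookkeeping in the unequal-parameter case: one must verify that under the isomorphism of Theorem \ref{thm:A} the normalized Harish-Chandra intertwining operators correspond to the Bernstein/Lusztig basis elements $\mc T_w$ with scalars that transform the Waldspurger $\mu$-function into the Macdonald $c$-function of the affine Hecke algebra, and hence translate Casselman's Jacquet-module criterion into the weight criterion with the same parameter function $\lambda,\lambda^*$. It is precisely at this step that the choice of $P$ from Lemma \ref{lem:9.1}, rather than an arbitrary parabolic with Levi $M$, is indispensable, since only for that $P$ do the signs in the polar decomposition and in Casselman's criterion match those appearing in the Bernstein presentation.
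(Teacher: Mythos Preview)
Your overall strategy---reduce temperedness on each side to a weight/exponent criterion and then check that the equivalences preserve the relevant cone conditions---is the right one and is what the paper does. But there are two places where your sketch is too optimistic.

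\textbf{The passage (ii) $\leftrightarrow$ (iii).} You write that ``the two cone conditions are identified by the exponential map.'' They are not. On the $\End_G(I_P^G(E_B))$ side, temperedness is defined via the cone $a_M^{*-}$ spanned by $-\Delta_{\mc O,\mu}$, whereas on the graded Hecke algebra side at $u$ it uses the cone $(a_M^*)_u^-$ spanned by $-\Delta_{\sigma\otimes u}$. Since $\Sigma_{\sigma\otimes u}\subsetneq\Sigma_{\mc O,\mu}$ in general, these cones differ: $(a_M^*)_u^-$ is strictly larger. The exponential map alone does nothing to reconcile them. What actually bridges the two is the Morita equivalence of Lemma \ref{lem:7.2}: under it, a module $V_u$ on the $1_u$-side with weight $\chi$ produces a module on the global side whose weights are the full orbit $\{w(\chi):w\in W^u\}$ for shortest-length representatives $w$ of $W(M,\sigma,X_\nr(M))/W(M,\sigma,X_\nr(M))_u$ (Lemma \ref{lem:6.6}). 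One must then prove that $|\chi|\in\exp((a_M^*)_u^-)$ implies $|w(\chi)|\in\exp(a_M^{*-})$ for all such $w$, and conversely. The paper does not redo this combinatorics but invokes \cite[Proposition 2.7]{AMS3}; this is the missing ingredient in your argument, and it is also where the ``full rank'' hypothesis enters for the essentially discrete series claim (Proposition \ref{prop:9.6}.c).

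\textbf{The passage (i) $\leftrightarrow$ (ii).} Your Casselman-plus-second-adjointness idea is sound in spirit, but the paper does not carry it out directly. Instead it verifies the list of structural conditions in \cite[\S 4.1]{SolComp} (compatibility of parabolic subalgebras, the containment $\Sigma_{\mc O,\mu}^+\subset\Q_{\geq 0}\Sigma(\mc G,\mc M)^+$ guaranteed by Lemma \ref{lem:9.1}, behaviour of $R(\mc O,L)$ on $X_\nr(M)$, etc.) and then applies \cite[Theorem 4.9, Proposition 4.10]{SolComp}. If you want to argue via Casselman directly, you need to identify the $B$-weights of $\mc E(V)$ with the exponents of the Jacquet module along $\overline P$ (Proposition \ref{prop:4.5}.b does this) and then match the Casselman cone---which is phrased in terms of $\Sigma(\mc G,\mc M)$---with $a_M^{*-}$, phrased in terms of $\Sigma_{\mc O,\mu}$. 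Lemma \ref{lem:9.1} makes these compatible, but you should say why: the point is that $\Delta_{\mc O,\mu}$ consists of images of simple roots in $\Delta$, so the two closed negative chambers agree on the relevant subspace.

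Finally, your anticipated obstacle about matching $\mu$-functions with $c$-functions and getting the intertwining operators to line up with the $\mc T_w$ is a red herring for this theorem. Temperedness is a pure weight condition; the parameters $q_\alpha,q_{\alpha*}$ (or $k^u_\alpha$) play no role in Definition \ref{def:temp}, and the equivalences preserve weights regardless of the parameter values. That bookkeeping matters elsewhere in the paper (e.g.\ Proposition \ref{prop:8.3}), but not here.
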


Now that we have a good understanding of $\End_G (I_P^G (E_B))$, its finite dimensional
representations and their properties, we turn to the remaining pressing issue from page
\pageref{questions}: can one classify the involved irreducible representations?
This is indeed possible, because graded Hecke algebras have been studied extensively, see e.g.
\cite{BaMo1,BaMo2,CiOpTr,Eve,SolHomGHA,SolGHA,SolHomAHA}. The answer depends in a 
well-understood but involved and subtle way on the parameter functions $\lambda,\lambda^*,k^u$. 

With the methods in this paper, it is difficult to really compute the parameter functions
$\lambda$ and $\lambda^*$. Whenever a type $(K,\tau)$ and an associated Hecke algebra 
$\mc H (G,K,\tau)$ for $\Rep (G)^{\mf s}$ are known, $\mc H (G,K,\tau)$ is Morita equivalent
with $\End_G (I_P^G (E_B))$. In that case the values $q_F^{\lambda (\alpha)}$ and 
$q_F^{\lambda^* (\alpha)}$ can be read off from $\mc H (G,K,\tau)$, because they only depend
on the reducibility of certain parabolically induced representations and those properties
are preserved by a Morita equivalence. But, that does not cover all cases. 

We expect that the functorial properties of the progenerators $I_P^G (E_B)$ enable us to reduce 
the computation of $\lambda (\alpha), \lambda^* (\alpha)$ to cases where $G$ is simple and 
adjoint or simply connected. Thus it may be possible to prove that the parameter functions
$\lambda,\lambda^*$ are integers and of ``geometric type", as Lusztig conjectured in \cite{Lus2}.
We work that out in the sequel \cite{SolParam} to this paper.

The classification of $\Irr (G)^{\mf s}$ becomes more tractable if we just want to understand
$\Irr \big( \End_G (I_P^G (E_B)) \big)$ and $\Irr (\mh H_u \rtimes \C[R(\sigma \otimes u,\natural_u])$
as sets, and allow ourselves to slightly adjust the weights (with respect to respectively
$B$ and $\C[a_M^* \otimes_\R \C]$) in the bookkeeping. 
Then we can investigate $\Irr (\mh H_u \rtimes \C[R(\sigma \otimes u,\natural_u])$ via
the change of parameters $k^u \to 0$, like in \cite{SolAHA,SolHecke}. That replaces
\[
\mh H_u \rtimes \C[R(\sigma \otimes u),\natural_u] \qquad \text{by} \qquad
\C[a_M^* \otimes_\R \C] \rtimes \C[W(M,\mc O)_{\sigma \otimes u},\natural_u], 
\]
for which Clifford theory classifies the irreducible representations.

\begin{thmintro}\label{thm:D} 
\textup{(see Theorem \ref{thm:9.4})} \\
There exists a bijection
\[
\zeta \circ \mc E :\Irr (G)^{\mf s} \longrightarrow 
\Irr \big( \C [X_\nr (M)] \rtimes \C [W(M,\sigma,X_\nr(M)),\natural] \big)
\]
such that, for $\pi \in \Irr (G)^{\mf s}$ and a unitary $u \in X_\unr (M)$:
\begin{itemize}
\item the cuspidal support of $\zeta \circ \mc E (\pi)$ lies in $W(M,\mc O) u X_\nr^+ (M)$ if
and only if all the $\C[X_\nr (M)]$-weights of $(\zeta \circ \mc E (\pi))$ lie in 
$W(M,\sigma,X_\nr(M)) u X_\nr^+ (M)$,
\item $\pi$ is tempered if and only if all the $\C[X_\nr (M)]$-weights of 
$(\zeta \circ \mc E) (\pi)$ are unitary.
\end{itemize}\vspace{-2mm}
\end{thmintro}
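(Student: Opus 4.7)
The plan is to combine the categorical equivalences of Theorem \ref{thm:B} with the ``change-of-parameters $k^u \to 0$'' bijection for (twisted) graded Hecke algebras from \cite{SolAHA,SolHecke}, and then assemble the resulting local bijections along the polar decomposition of $X_\nr(M)$ into a single global map $\zeta$.

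First I would pass from $\Irr(G)^{\mf s}$ to $\Irr\bigl(\End_G(I_P^G(E_B))\bigr)$ via $\mc E$. Because $\End_G(I_P^G(E_B))$ is finite over its centre $\C[\mc O]^{W(M,\mc O)}$, every irreducible module is finite-dimensional and carries a single $W(M,\sigma,X_\nr(M))$-orbit of $B$-weights in $X_\nr(M)$. The polar decomposition $X_\nr(M) = X_\unr(M) \cdot X_\nr^+(M)$ shows that every such orbit meets $u\, X_\nr^+(M)$ for some unitary $u$, unique modulo $W(M,\sigma,X_\nr(M))$. So it suffices to produce, for each $W(M,\sigma,X_\nr(M))$-orbit of unitary $u$, a bijection between the irreducibles of the two algebras whose $B$-weights lie in $W(M,\sigma,X_\nr(M))\, u\, X_\nr^+(M)$.

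For a fixed unitary $u$, Theorem \ref{thm:B}(ii)$\Leftrightarrow$(iii) rewrites the endomorphism side as irreducible modules of $\mh H_u \rtimes \C[R(\sigma \otimes u),\natural_u]$ with $\C[a_M^* \otimes_\R \C]$-weights in $a_M^*$. The key step is then to apply the scaling $k^u \rightsquigarrow 0$ from \cite{SolAHA,SolHecke}, which yields a bijection onto the irreducibles of the degenerate algebra $\C[a_M^* \otimes_\R \C] \rtimes \C[W(M,\mc O)_{\sigma \otimes u},\natural_u]$ satisfying the same weight restriction. By Clifford theory these latter irreducibles are classified by pairs $(\lambda, \rho)$ with $\lambda \in a_M^*$ and $\rho$ an irreducible $\natural_u$-projective representation of the stabilizer of $\lambda$ in $W(M,\mc O)_{\sigma \otimes u}$. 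Via the exponential $a_M^* \to X_\nr^+(M)$, these data match, again by Clifford theory, the irreducible modules of $\C[X_\nr(M)] \rtimes \C[W(M,\sigma,X_\nr(M)),\natural]$ with $B$-weights in $W(M,\sigma,X_\nr(M))\, u\, X_\nr^+(M)$; the 2-cocycles line up because $\natural_u$ is by construction the restriction of $\natural$ to the stabilizer of $u$.

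Gluing these bijections across $W(M,\sigma,X_\nr(M))$-orbits of unitary $u$ defines $\zeta$. The first bullet then follows by construction, since the weight condition is propagated through every step of the chain. For the second bullet, Theorem \ref{thm:C} identifies tempered $\pi \in \Irr(G)^{\mf s}$ with finite-length modules all of whose $\mh H_u$-weights lie in $i a_M^*$, i.e.\ in the polar decomposition $\chi = 1$; under the exponential identification on the crossed-product side this is exactly the condition that every $\C[X_\nr(M)]$-weight of $\zeta(\mc E(\pi))$ is unitary. The main obstacle will be verifying that the $k^u \to 0$ deformation respects the polar decomposition on weights (and hence preserves temperedness through the bijection on $\Irr$), which is not automatic but follows from the polar-coordinate analysis in \cite{SolAHA,SolHecke}; the secondary difficulty is checking that the per-$u$ bijections fit together consistently, which reduces to the uniqueness of the polar decomposition and the fact that different orbit representatives of $u$ yield canonically identified Clifford data.
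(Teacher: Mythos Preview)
Your overall strategy matches the paper's: pass via $\mc E$ and Corollary~\ref{cor:8.4} to the per-$u$ twisted graded Hecke algebras, apply the $k^u \to 0$ bijection (Theorem~\ref{thm:8.8} in the paper), use Clifford theory on the $k=0$ side (Lemma~\ref{lem:8.7} and \eqref{eq:8.17}--\eqref{eq:8.18}), and glue the pieces into $\zeta$ (Corollary~\ref{cor:8.10}). The construction of $\zeta$ and the first bullet are handled correctly; one minor imprecision is that $\natural_u$ is only \emph{cohomologous} to the restriction of $\natural$ via $\Omega_u$ (Lemma~\ref{lem:8.5}), not literally equal, but this does not affect the Clifford-theoretic matching.

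There is, however, a genuine error in your argument for the second bullet. You assert that Theorem~\ref{thm:C} ``identifies tempered $\pi \in \Irr(G)^{\mf s}$ with finite-length modules all of whose $\mh H_u$-weights lie in $i a_M^*$.'' This is false. For the graded Hecke algebra $\mh H_u \rtimes \C[R(\sigma\otimes u),\natural_u]$ with nonzero parameters $k^u$, temperedness means $\mr{Wt}(V) \subset i a_M^* + (a_M^*)_u^-$ (Definition~\ref{def:temp}); restricted to the real-weight modules occurring here this becomes $\mr{Wt}(V) \subset (a_M^*)_u^-$, \emph{not} $\mr{Wt}(V)=\{0\}$. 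Any discrete series module already has weights strictly inside the open negative cone.

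The correct chain, carried out in the proof of Theorem~\ref{thm:9.4}, needs one more link. Theorem~\ref{thm:C} (Proposition~\ref{prop:9.3}) shows $\pi$ is tempered iff the corresponding module for $\mh H_u \rtimes \C[R(\sigma\otimes u),\natural_u]$ is tempered; the bijection $\zeta_u$ of Theorem~\ref{thm:8.8} preserves temperedness; and only \emph{then}, on the $k=0$ side, does ``tempered with real weights'' force the unique weight $0$, because the weight set of any module of $\C[a_M^*\otimes_\R\C]\rtimes\C[W(M,\mc O)_{\sigma\otimes u},\natural_u]$ is a union of $W(\Sigma_{\sigma\otimes u})$-orbits and every nonzero real orbit leaves the cone $(a_M^*)_u^-$. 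Your exponential identification with ``unitary $\C[X_\nr(M)]$-weights'' is correct, but it must be applied to the $k=0$ module $\zeta_u(V)$, not to the $k^u$-module $V$ itself.
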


Notice that on the right hand side the parameter functions $\lambda,\lambda^*$ and $k^u$
are no longer involved. Recall that in the important cases mentioned on page \pageref{importantCases},
$\natural$ is trivial. Then Theorem \ref{thm:D} and standard Morita equivalences provide bijections
\[
\Irr (G)^{\mf s} \longrightarrow \Irr \big( \C [X_\nr (M)] \rtimes W(M,\sigma,X_\nr(M)) \big)
\longrightarrow \Irr \big( \C [\mc O] \rtimes W(M,\mc O) \big) .
\]
Clifford theory identifies $\Irr ( \C [\mc O] \rtimes W(M,\mc O) )$ with the extended quotient 
\[
\mc O /\!/ W(M,\mc O) = \{ (\chi,\rho) : \chi \in \mc O, \rho \in \Irr (W(M,\mc O)_\chi) \} / W(M,\mc O) .
\] 
For $GL_n (F)$ such a bijection between $\Irr (G)^{\mf s}$ and $\mc O /\!/ W(M,\mc O)$ was already 
known from \cite{BrPl}, and for principal series representations of split groups from \cite{ABPS3,ABPS4}.
In general, in the language of \cite{ABPS}, 
\[
\Irr \big( \C [X_\nr (M)] \rtimes \C [W(M,\sigma,X_\nr(M)),\natural] \big)
\] 
is a twisted extended quotient $(\mc O /\!/ W(M,\mc O) )_\natural$. With that interpretation 
Theorem \ref{thm:D} proves a version of the ABPS conjecture \cite[\S 2.3]{ABPS} and:

\begin{thmintro}\label{thm:E}
\textup{(see Theorem \ref{thm:9.8})} \\
Theorem \ref{thm:D} (for all possible $\mf s = [M,\sigma]_G$ together) yields a bijection
\[
\Irr (G) \longrightarrow 
\bigsqcup\nolimits_M  \big( \Irr_\cusp (M) /\!/ (N_G(M)/M) \big)_\natural ,
\]
where $M$ runs over a set of representatives for the conjugacy classes of Levi subgroups of $G$
and $\Irr (M)_{\mr{cusp}}$ denotes the set of irreducible supercuspidal $M$-representations.
\end{thmintro}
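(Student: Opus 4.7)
The approach is to apply Theorem~\ref{thm:D} inertial-class-by-inertial-class via the Bernstein decomposition, group the resulting twisted extended quotients by the $G$-conjugacy class of the Levi, and recognize each such group as a single twisted extended quotient of $\Irr_\cusp(M)$ by $N_G(M)/M$.

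Concretely, the Bernstein decomposition gives $\Irr(G) = \bigsqcup_{\mf s} \Irr(G)^{\mf s}$ with $\mf s = [M,\sigma]_G$ ranging over inertial equivalence classes, and Theorem~\ref{thm:D} supplies a bijection of $\Irr(G)^{\mf s}$ onto the twisted extended quotient $(\mc O_\sigma /\!/ W(M,\mc O_\sigma))_\natural$, where $\mc O_\sigma = \{\sigma \otimes \chi : \chi \in X_\nr(M)\}$ and the identification with $\Irr\bigl( \C[X_\nr(M)] \rtimes \C[W(M,\sigma,X_\nr(M)),\natural] \bigr)$ is the one recalled just before the statement. After fixing one representative $M$ in each $G$-conjugacy class of Levi subgroups, the theorem reduces to the identity
\[
\bigsqcup_{\mf s = [M,\sigma]_G} (\mc O_\sigma /\!/ W(M,\mc O_\sigma))_\natural \;\cong\; (\Irr_\cusp(M) /\!/ (N_G(M)/M))_\natural,
\]
for each such $M$ separately, with the left-hand disjoint union running over the inertial classes with Levi $M$.

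For this identity, observe that $\Irr_\cusp(M)$ decomposes as a disjoint union of its connected components $\mc O_{\sigma'}$, which are permuted by the finite group $N_G(M)/M$, and that the setwise stabilizer of $\mc O_\sigma$ in $N_G(M)/M$ is by definition $W(M,\mc O_\sigma)$. The $N_G(M)/M$-orbits on the set of components correspond bijectively to the inertial classes with Levi $M$. Applying the standard Frobenius-type identity for extended quotients,
\[
(\Gamma \cdot X_0) /\!/ \Gamma \;\cong\; X_0 /\!/ \mathrm{Stab}_\Gamma(X_0),
\]
with $\Gamma = N_G(M)/M$ and $X_0 = \mc O_\sigma$, together with additivity of extended quotients over $\Gamma$-saturations, then yields the desired identity at the level of ordinary extended quotients.

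The main obstacle will be verifying that the twists also agree. The cocycle for $(\Irr_\cusp(M) /\!/ (N_G(M)/M))_\natural$ must be specified componentwise by 2-cocycles on pointwise stabilizers in $N_G(M)/M$, while the twists in Theorem~\ref{thm:D} are produced class-by-class on $W(M,\sigma,X_\nr(M))$ (and descend to $W(M,\mc O_\sigma)$ in the ABPS interpretation). Because the pointwise stabilizer of any $\sigma' \in \Irr_\cusp(M)$ in $N_G(M)/M$ is contained in, and therefore equal to, its stabilizer in the setwise stabilizer $W(M,\mc O_{\sigma'})$ of its component, these cocycle data are genuinely local on components and no global cocycle on $N_G(M)/M$ need be constructed; the twist on the right-hand side is defined componentwise from the twists supplied by Theorem~\ref{thm:D}. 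Once this bookkeeping is in place, Theorem~\ref{thm:E} follows formally from Theorem~\ref{thm:D}.
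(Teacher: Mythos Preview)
Your proposal is correct and follows essentially the same approach as the paper's proof of Theorem~\ref{thm:9.8}(b): decompose $\Irr(G)$ via Bernstein, apply part~(a) (your Theorem~\ref{thm:D}) to each $\mf s$, and then assemble the per-class twisted extended quotients $(\mc O_\sigma /\!/ W(M,\mc O_\sigma))_\natural$ into $(\Irr_\cusp(M) /\!/ (N_G(M)/M))_\natural$ by a Frobenius-type identity, noting that the cocycle data are local to pointwise stabilizers. The paper is slightly more explicit on one point you gloss over: since $\widetilde{\zeta \circ \mc E}$ is not canonical, one must \emph{choose} a representative $\sigma$ per inertial class and then \emph{define} $\natural_{w\cdot\sigma'}$ on the other components of the $N_G(M)/M$-orbit as the push-forward of $\natural_{\sigma'}$ along $\mathrm{Ad}(N_w)$, to ensure the required compatibility of twists across the orbit.
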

It is quite surprising that such a simple relation between the space of irreducible representations
of an arbitrary reductive $p$-adic group and the supercuspidal representations of its Levi 
subgroups holds.\\

We note that Theorem \ref{thm:D} is about right modules of the involved algebra. If we insist on
left modules we must use the opposite algebra, which is isomorphic to
$\C [X_\nr (M)] \rtimes \C [W(M,\sigma,X_\nr(M)),\natural^{-1}]$. Then we would get the twisted
extended quotient $(\mc O /\!/ W(M,\mc O) )_{\natural^{-1}}$. 

The only noncanonical ingredient in Theorem \ref{thm:D} is the 
2-cocycle $\natural$. It is trivial on $W(\Sigma_{\mc O,\mu})$, but apart from that it depends
on some choices of $M$-isomorphisms $w(\sigma \otimes \chi) \to \sigma \otimes \chi'$
for $w \in R(\mc O)$ and $\chi, \chi' \in X_\nr (M)$. From Theorem \ref{thm:B} we see that the 
restrictions $\natural_u$ of $\natural$ have a definite effect on the involved module categories. 

Moreover, by \eqref{eq:8.11} $\natural_u^{-1}$ must be cohomologous to
a 2-cocycle obtained from the Hecke algebra of an $\mf s$-type (if such a type exists). 
This entails that in many cases $\natural_u$ must be trivial. At the same time, this argument
shows that in some instances, like \cite[Example 5.5]{ABPS1} and Example \ref{ex:2.E}, the 
2-cocycles $\natural_u$ and $\natural$ are cohomologically nontrivial. It would be interesting 
if $\natural$ could be related to the way $G$ is realized as an inner twist of a quasi-split 
$F$-group, like in \cite{HiSa}.\\

Besides $I_P^G (E_B)$, a smaller progenerator of $\Rep (G)^{\mf s}$ is available. Namely,
let $E_1$ be an irreducible subrepresentation of $\Res^M_{M^1} (E)$ and build
$I_P^G (\ind_{M^1}^M (E_1))$. We investigate the Morita equivalent subalgebra 
\[
\End_G \big( I_P^G (\ind_{M^1}^M (E_1)) \big) \; \subset \; \End_G \big( I_P^G (E_B) \big)
\] 
as well, because it should be even closer to an affine Hecke algebra. 

Unfortunately this turns out to be difficult, and we unable to make progress without further 
assumptions. In the large majority of cases, the restriction of $(\sigma,E)$ to $M^1$ decomposes 
without multiplicities bigger than one. (That does not always hold though, see Example \ref{ex:2.E}.)
With such multiplicity one as working hypothesis, we can slightly improve on Theorem \ref{thm:B}.

\begin{thmintro}\label{thm:F}
Suppose that the multiplicity of $E_1$ in $\Res^M_{M^1} (E)$ is one. There exists a 2-cocycle
$\natural_J : W(M,\mc O)^2 \to \C [\mc O]^\times$ and an algebra isomorphism
\[
\End_G \big( I_P^G (\ind_{M^1}^M (E_1)) \big) \cong \mc H \big( \mc O,\Sigma_{\mc O,\mu},
\lambda,\lambda^*,q_F^{1/2} \big) \rtimes \C [R (\mc O), \natural_J] .
\]
On the right hand side the first factor is a subalgebra but the second factor need not be.
The basis elements $J_r$ with $r \in R (\mc O)$ have products
\[
J_r J_{r'} = \natural_J (r,r') J_{r r'} \in \C [\mc O]^\times J_{r r'} .
\]
\end{thmintro}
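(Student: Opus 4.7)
The plan is to realize $\End_G(I_P^G(\ind_{M^1}^M(E_1)))$ as a corner $e_1 \End_G(I_P^G(E_B)) e_1$ for an idempotent $e_1$ exploiting multiplicity one, and to identify that corner with the affine Hecke algebra of Theorem \ref{thm:B} extended by the promised $R(\mc O)$-twist.

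First, I would construct $e_1 \in \End_M(E_B)$ with $e_1 E_B \cong \ind_{M^1}^M(E_1)$. A Frobenius/Mackey computation gives $E_B = \ind_{M^1}^M(\Res^M_{M^1}(E)) = \bigoplus_{j \in J} \ind_{M^1}^M(E_j)$; by Clifford theory each $E_j$ is $M/M^1$-conjugate to $E_1$, so $\ind_{M^1}^M(E_j) \cong \ind_{M^1}^M(E_1)$ and $E_B \cong |J|\cdot \ind_{M^1}^M(E_1)$ as $M$-modules. Equivalently, the intertwiners $\phi_{\chi_c}$ for $\chi_c \in X_\nr(M,\sigma)$ generate inside $\End_M(E_B)$ a twisted group algebra whose primitive idempotents, which exist thanks to the multiplicity-one hypothesis, produce $e_1$. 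Since $I_P^G$ is exact, $\End_G(I_P^G(\ind_{M^1}^M(E_1))) \cong e_1 \End_G(I_P^G(E_B)) e_1$.

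Next, I would compute this corner via Theorems \ref{thm:A} and \ref{thm:B}. Theorem \ref{thm:A} identifies $\End_G(I_P^G(E_B)) \otimes_B K(B)$ with $K(B) \rtimes \C[W(M,\sigma,X_\nr(M)),\natural]$, with $e_1$ sitting inside the subalgebra generated by $K(B)$ and $\C[X_\nr(M,\sigma),\natural|_{X_\nr(M,\sigma)}]$. Standard Clifford theory for the normal subgroup $X_\nr(M,\sigma) \triangleleft W(M,\sigma,X_\nr(M))$ with quotient $W(M,\mc O)$ then identifies the corner with $K(\mc O) \rtimes \C[W(M,\mc O),\natural']$, where $K(\mc O) = K(B)^{X_\nr(M,\sigma)}$ and $\natural'$ is a $K(\mc O)^\times$-valued cocycle induced from $\natural$ via a set-theoretic section $W(M,\mc O) \to W(M,\sigma,X_\nr(M))$. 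The constructions behind Theorem \ref{thm:B} extract the affine Hecke algebra $\mc H(\mc O,\Sigma_{\mc O,\mu},\lambda,\lambda^*,q_F^{1/2})$ from the $W(\Sigma_{\mc O,\mu})$-part of $W(M,\mc O)$ together with the $\C[\mc O]$-action, leaving the $R(\mc O)$-factor with a cocycle $\natural_J$.

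The main obstacle is to show that $\natural_J$ takes values in $\C[\mc O]^\times$ rather than merely in $K(\mc O)^\times$, equivalently that the basis elements $J_r$ for $r \in R(\mc O)$ are everywhere regular and invertible on $\mc O$. Since $R(\mc O)$ stabilizes the positive system $\Sigma_{\mc O,\mu}^+$, the $\mu$-function zeros producing singularities of the $W(\Sigma_{\mc O,\mu})$-intertwiners are absorbed into the affine Hecke algebra factor and do not touch the $J_r$; the residual $\C[\mc O]^\times$-valued factors coming from the choice of section then constitute exactly $\natural_J$. Confirming this requires a careful revisit of the normalizations used in the proof of Theorem \ref{thm:B} to ensure compatibility with the corner reduction by $e_1$, and in particular an explicit check that conjugation by the $J_r$ preserves the Bernstein presentation of $\mc H(\mc O,\Sigma_{\mc O,\mu},\lambda,\lambda^*,q_F^{1/2})$ up to the $\C[\mc O]^\times$-scaling governed by $\natural_J$.
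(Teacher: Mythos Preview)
Your overall framework --- realize $\End_G(I_P^G(\ind_{M^1}^M(E_1)))$ as the $X_\nr(M,\sigma)$-invariant corner of $\End_G(I_P^G(E_B))$ via the averaging idempotent $p_{X_\nr(M,\sigma)} = |X_\nr(M,\sigma)|^{-1}\sum_{\chi_c}\phi_{\chi_c}$, then descend from $K(B)$ to $K(\mc O)=\C(\mc O)$ --- is exactly what the paper does (see \eqref{eq:10.1}, \eqref{eq:10.2}, \eqref{eq:10.8} and Proposition~\ref{prop:10.4}). Your identification of the $K(\mc O)$-structure via Theorem~\ref{thm:A} is also correct; this is Theorem~\ref{thm:10.5}.

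The genuine gap is the sentence ``the constructions behind Theorem~\ref{thm:B} extract the affine Hecke algebra $\mc H(\mc O,\Sigma_{\mc O,\mu},\lambda,\lambda^*,q_F^{1/2})$ from the $W(\Sigma_{\mc O,\mu})$-part''. Theorem~\ref{thm:B} and the machinery of \S\ref{sec:localization}--\ref{sec:GHA} only produce \emph{local analytic} structures and graded Hecke algebras at each $u\in X_\unr(M)$; the paper explicitly notes (Remark after Lemma~\ref{lem:6.1}, and the discussion after Corollary~\ref{cor:9.9}) that these do \emph{not} assemble into a global affine Hecke algebra presentation for $\End_G(I_P^G(E_B))$. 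The obstruction is precisely the residue computation in Lemma~\ref{lem:6.1}: on $X_\nr(M)$ the locus $\{X_\alpha=1\}$ can be disconnected, and the sign of the residue of $A_{s_\alpha}$ can jump between components, so one cannot cancel the pole by a single element of $\C(X_\nr(M))$.

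What makes the passage to $\mc O$ work --- and this is the real content of Theorem~\ref{thm:F} --- is that $h_\alpha^\vee$ is indivisible in $M_\sigma/M^1$, so $\{X_\alpha=\pm 1\}\subset\mc O$ is \emph{connected}. The paper exploits this via Lemma~\ref{lem:10.2} (choose $m_{s_\alpha}\in M\cap M_\alpha^1$ so that $J_{s_\alpha}=b_{m_{s_\alpha}}A_{s_\alpha}$ stabilizes the invariants) and Lemma~\ref{lem:10.6} (on the connected locus the sign is a single constant $(-1)^{\epsilon_\alpha}$). Only then does $T'_{s_\alpha}=(1+f_{-\alpha})X_\alpha^{\epsilon_\alpha}\mc T'_{s_\alpha}+f_\alpha$ lie in $\End_G(I_P^G(E_B)^{X_\nr(M,\sigma)})$ (Lemma~\ref{lem:10.7}), yielding the Hecke subalgebra (Theorem~\ref{thm:10.8}). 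You have also inverted the difficulty: regularity of the $J_r$ for $r\in R(\mc O)$ is the easy part (it already follows from \eqref{eq:4.6} since $A_r$ is regular), and $\natural_J\in\C[\mc O]^\times$ for those drops out of Lemma~\ref{lem:10.3}. The hard part is the $W(\Sigma_{\mc O,\mu})$-generators, which you have assumed away.
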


Thus, the price we pay for the smaller progenerator $I_P^G (\ind_{M^1}^M (E_1))$ consists of
more complicated intertwining operators from the R-group $R(\mc O)$. In concrete cases this may be 
resolved by an explicit analysis of $R(\mc O)$. In general Theorem \ref{thm:10.8} could be useful
to say something about the relation between unitarity in $\Rep (G)^{\mf s}$ and unitarity in
$\End_G \big( I_P^G (\ind_{M^1}^M (E_1)) \big) -\Mod$.\\

\textbf{Structure of the paper.}\\
Most results about endomorphism algebras of progenerators in the cuspidal case ($M = G$)
are contained in Section \ref{sec:cuspidal}. A substantial part of this was already shown in
\cite{Roc1}, we push it further to describe $\End_M (E_B)$ better. 
Section \ref{sec:root} is elementary, its main purpose is to introduce some useful objects.

Harish-Chandra's intertwining operators $J_{P'|P}$ play the main role in Section \ref{sec:intertwining}.
We study their poles and devise several auxiliary operators to fit $J_{w(P)|P}$ into
$\Hom_G \big( I_P^G (E_B), I_P^G (E_B \otimes_B K(B)) \big)$. 
The actual analysis of that algebra is carried out in Section \ref{sec:rational}. First we express
it in terms of operators $A_w$ for $w \in W(M,\mc O)$, which are made by composing the
$J_{P'|P}$ with suitable auxiliary operators. Next we adjust the $A_w$ to $\mc T_w$ and we
prove Theorem \ref{thm:A}. Sections \ref{sec:cuspidal}--\ref{sec:rational}
are strongly influenced by \cite{Hei2}, where similar results were established in the
(simpler) case of classical groups.

At this point Lemma \ref{lem:6.1} forces us to admit that in general $\End_G (I_P^G (E_B))$
probably does not have a nice presentation. To pursue the analysis of this algebra, we localize it
on relatively small subsets $U$ of $X_\nr (M)$. In this way we get rid of $X_\nr (M,\sigma)$
from the intertwining group $W(M,\sigma,X_\nr (M))$, and several issues simplify.
For maximal effect, we localize with analytic rather than polynomial functions on $U$ -- after
checking (in Section \ref{sec:localization}) that it does not make a difference as far as
finite dimensional modules are concerned. We show that the localization of $\End_G (I_P^G (E_B))$
at $U$, extended with the algebra $C^\me (U)$ of meromorphic functions on $U$, is isomorphic
to a crossed product
$C^\me (U) \rtimes \C[W(M,\mc O)_{\sigma \otimes u}, \natural_u]$.

A presentation of the analytic localization of $\End_G (I_P^G (E_B))$ at $U$ is obtained in
Theorem \ref{thm:7.4}: it is almost Morita equivalent to an affine Hecke algebra. The only 
difference is that the standard large commutative subalgebra of that affine Hecke algebra must 
be replaced by the algebra of analytic functions on $U$. 

This presentation makes it possible to relate the localized version of $\End_G (I_P^G (E_B))$ to
the localized version of a suitable graded Hecke algebra. We do that in Section \ref{sec:GHA},
thus proving the first half of Theorem \ref{thm:B}. In Section \ref{sec:class} we translate that
to a classification of $\Irr (G)^{\mf s}$ in terms of graded Hecke algebras. Next we study the
change of parameters $k^u \to 0$ in graded Hecke algebras, and derive the larger part of Theorem 
\ref{thm:D}. All considerations about temperedness can be found in Section \ref{sec:temp}. 
There we finish the proofs of Theorems \ref{thm:B}, \ref{thm:C}, \ref{thm:D} and \ref{thm:E}.

Finally, in Section \ref{sec:smaller} we study the smaller progenerator $I_P^G (\ind_{M^1}^M (E_1))$.
Varying on earlier results, we establish Theorem \ref{thm:F}.\\

\textbf{Acknowledgement.}\\
We thank for George Lusztig for some helpful remarks on the first version of this paper.
We are also grateful to the referees for their work and their useful comments, especially for 
pointing out Example \ref{ex:2.E}.

\renewcommand{\thethmintro}{\arabic{section}.\Alph{thmintro}}

\section{Notations}
\label{sec:notations}

We introduce some of the notations that will be used throughout the paper.\\
$F$: a non-archimedean local field\\
$\mc G$: a connected reductive $F$-group\\
$\mc P$: a parabolic $F$-subgroup of $\mc G$\\
$\mc M$: a $F$-Levi factor of $\mc P$\\
$\mc U$: the unipotent radical of $\mc P$\\
$\overline{\mc P}$: the parabolic subgroup of $\mc G$ that is opposite to $\mc P$ with respect to $\mc M$\\
$G = \mc G (F)$ (and $M = \mc M (F)$ etc.): the group of $F$-rational points of $\mc G$\\
We often abbreviate the above situation to: $P = MU$ is a parabolic subgroup of $G$\\

\noindent
$\Rep (G)$: the category of smooth $G$-representations (always on $\C$-vector spaces)\\
$\Rep_{\mr f}(G)$: the subcategory of finite length representations\\
$\Irr (G)$: the set of irreducible smooth $G$-representations up to isomorphism\\
$I_P^G : \Rep (M) \to \Rep (G)$: the normalized parabolic induction functor\\
$X_\nr (M)$: the group of unramified characters of $M$, with its structure as a complex algebraic torus\\
$M^1 = \bigcap_{\chi \in X_\nr (M)} \ker \chi$\\

$\Irr (M)_\cusp$ subset of supercuspidal representations in $\Irr (M)$\\
$(\sigma,E)$: an element of $\Irr_\cusp (M)$\\
$\mc O = [M,\sigma]_M$: the inertial equivalence class of $\sigma$ for $M$, that is, the subset of
$\Irr (M)$ consisting of the $\sigma \otimes \chi$ with $\chi \in X_\nr (M)$\\
$\Rep (M)^{\mc O}$: the Bernstein block of $\Rep (M)$ associated to $\mc O$\\
$\mf s = [M,\sigma]_G$: the inertial equivalence class of $(M,\sigma)$ for $G$\\
$\Rep (G)^{\mf s}$: the Bernstein block of $\Rep (G)$ associated to $\mf s$\\
$\Irr (G)^{\mf s} = \Irr (G) \cap \Rep (G)^{\mf s}$\\

$W(G,M) = N_G (M) / M$\\
$N_G (M)$ acts on $\Rep (M)$ by $(g \cdot \pi)(m) = \pi (g^{-1} m g)$. 
This induces an action of $W(G,M)$ on $\Irr (M)$\\
$N_G (M,\mc O) = \{ g \in N_G (M) : g \cdot \sigma \cong \sigma \otimes \chi$ for some 
$\chi \in X_\nr (M) \}$\\
$W(M,\mc O) = N_G (M,\mc O) / M = \{ w \in W(G,M) : w \cdot \sigma \in \mc O \}$\\

$X_\nr (M,\sigma) = \{ \chi \in X_\nr (M) : \sigma \otimes \chi \cong \sigma \}$\\
$B = \C [X_\nr (M)]$: the ring of regular functions on the complex
algebraic torus $X_\nr (M)$\\
$K(B) = \C (X_\nr (M))$: the quotient field of $B$, the field of
rational functions on $X_\nr (M)$\\
The covering map 
\[
X_\nr (M) \to \mc O : \chi \mapsto \sigma \otimes \chi
\]
induces a bijection $X_\nr (M) / X_\nr (M,\sigma) \to \mc O$. In this way we regard $\mc O$ as a complex
algebraic variety.
We define $\C [X_\nr (M) / X_\nr (M,\sigma)], \C [\mc O]$ and $\C (X_\nr (M) / X_\nr (M,\sigma))$,
$\C (\mc O)$ like $B$ and $K(B)$.

\section{Endomorphism algebras for cuspidal representations}
\label{sec:cuspidal}

This section relies largely on \cite{Roc1}. Let 
\[
\ind_{M^1}^M : \Rep (M^1) \to \Rep (M) 
\]
be the functor of smooth, compactly supported induction. We realize it as
\begin{multline*}
\ind_{M^1}^M (\pi,V) = \{ f : M \to V \mid \pi (m_1) f(m) = f(m_1 m) \; \forall m \in M, m_1 \in M^1, \\
\mr{supp}(f) / M^1 \text{ is compact} \} ,
\end{multline*}
with the $M$-action by right translation. (Smoothness of $f$ is automatic because $M^1$ is open in $M$.)

Regard $(\sigma,E)$ as a representation of $M^1$, by restriction. Bernstein \cite[\S II.3.3]{BeRu} 
showed that $\ind_{M^1}^M (\sigma,E)$ is a progenerator of $\Rep (M)^{\mc O}$. This entails that
\[
V \mapsto \Hom_M \big( \ind_{M^1}^M (E),V \big)
\]
is an equivalence between $\Rep (M)^{\mc O}$ and the category 
$\End_M \big(\ind_{M^1}^M (\sigma,E)\big)-\Mod$ of right modules over the $M$-endomorphism algebra of 
$\ind_{M^1}^M (\sigma,E)$, see \cite[Theorem 1.5.3.1]{Roc1}.
We want to analyse the structure of $\End_M \big(\ind_{M^1}^M (\sigma,E)\big)$.

For $m \in M$, let $b_m \in \C [X_\nr (M)]$ be the element given by evaluating unramified characters at $m$.
We let $m$ act on $\C [X_\nr (M)]$ by
\[
m \cdot b = b_m b \qquad b \in \C [X_\nr (M)] .
\]
Then specialization/evaluation at $\chi \in X_\nr (M)$ is an $M$-homomorphism
\[
\mr{sp}_\chi : \C [X_\nr (M)] \to (\chi,\C) .
\]
Let $\delta_m \in \ind_{M^1}^M (\C)$ be the function which is 1 on $m M^1$ and zero on the rest of $M$.
Let $\C [M/M^1]$ be the group algebra of $M/M^1$, considered as the left regular representation of $M / M^1$. 
There are canonical isomorphisms of $M$-representations
\begin{equation}\label{eq:2.1}
\begin{array}{ccccc}
\C [X_\nr (M)] & \to & \C [M/ M^1] & \to & \ind_{M^1}^M (\C) \\
b_m & \mapsto & m M^1 & \mapsto & \delta_{m^{-1}} 
\end{array}.
\end{equation}
We endow $E \otimes_\C \ind_{M^1}^M (\C)$ with the tensor product of the $M$-representations
$\sigma$ and $\ind_{M^1}^M (\mr{triv})$. There is an isomorphism of $M$-representations
\begin{equation}\label{eq:2.2}
\begin{array}{ccc}
E \otimes_\C \ind_{M^1}^M (\C) & \cong & \ind_{M^1}^M (E) \\
e \otimes f & \mapsto & [v_{e \otimes f} : m \mapsto f(m) \sigma (m) e] \\
\sum_{m \in M / M^1} \sigma (m^{-1}) v(m) \otimes \delta_m & \text{\reflectbox{$\mapsto$}} & v 
\end{array}.
\end{equation}
Composing \eqref{eq:2.1} and \eqref{eq:2.2}, we obtain an isomorphism 
\begin{equation}\label{eq:2.3}
\begin{array}{ccc}
\ind_{M^1}^M (E) & \to & E \otimes_\C \C[X_\nr (M)] \\
v & \mapsto & \sum_{m \in M / M^1} \sigma (m) v(m^{-1}) \otimes b_m \\ 
v_{e \otimes \delta_m^{-1}} & \text{\reflectbox{$\mapsto$}} & e \otimes b_m
\end{array} .
\end{equation}
With \eqref{eq:2.3}, specialization at $\chi \in X_\nr (M)$ becomes a $M$-homomorphism
\begin{equation}\label{eq:2.spec}
\mr{sp}_\chi : \ind_{M^1}^M (\sigma,E) \to (\sigma \otimes \chi,E) .
\end{equation}
As $M / M^1$ is commutative, the $M$-action on $E \otimes_\C \C[X_\nr (M)]$ is $\C[X_\nr (M)]$-linear.
Via \eqref{eq:2.3} we obtain an embedding 
\begin{equation}\label{eq:2.4}
\C [X_\nr (M)] \to \End_M \big(\ind_{M^1}^M (\sigma,E)\big) .
\end{equation}
For a basis element $b_m \in \C [X_\nr (M)]$ and any $v \in \ind_{M^1}^M (E)$, it works out as
\begin{equation}\label{eq:2.5}
(b_m \cdot v)(m') = \sigma (m^{-1}) v (m m') . 
\end{equation}
For any $\chi_c \in X_\nr (M)$ we can define a linear bijection 
\begin{equation}\label{eq:2.25}
\begin{array}{cccc}
\rho_{\chi_c} : & \C [X_\nr (M)] & \to & \C [X_\nr (M)] \\
& b & \mapsto & [b_{\chi_c} : \chi \mapsto b (\chi \chi_c)] 
\end{array} .
\end{equation}
This provides an $M$-isomorphism 
\[
\mr{id}_E \otimes \rho_{\chi_c} : \ind_{M^1}^M (\sigma) \to \ind_{M^1}^M (\sigma \otimes \chi_c) .
\]
Let $(\sigma_1,E_1)$ be an irreducible subrepresentation of $\Res_{M^1}^M (\sigma,E)$, such that
the stabilizer of the subspace $E_1 \subset E$ in $M$ is maximal. We denote the multiplicity of
$\sigma_1$ in $\sigma$ by $\mu_{\sigma,1}$. Every other irreducible
$M^1$-subrepresentation of $\sigma$ is isomorphic to $m \cdot \sigma_1$ for some $m \in M$, and
$\sigma (m^{-1}) E_1$ is a space that affords $m \cdot \sigma_1$. Hence $\mu_{\sigma,1}$ depends
only on $\sigma$ and not on the choice of $(\sigma_1,E_1)$. (But note that, if $\mu_{\sigma,1} > 1$,
not every $M^1$-subrepresentation of $E$ isomorphic to $\sigma_1$ equals $\sigma (m^{-1}) E_1$
for an $m \in M$.)

Following \cite[\S 1.6]{Roc1} we consider the groups
\[
\begin{array}{lll}
M_\sigma^2 & = & \bigcap_{\chi \in X_\nr (M,\sigma)} \ker \chi , \\ 
M_\sigma^3 & = & \{ m \in M : \sigma (m) E_1 = E_1 \} , \\
M_\sigma^4 & = & \{ m \in M :m \cdot \sigma_1 \cong \sigma_1 \} .
\end{array}
\]
Notice that $X_\nr (M,\sigma) = \Irr (M / M_\sigma^2)$. There is a sequence of inclusions
\begin{equation}\label{eq:2.6}
M^1 \subset M_\sigma^2 \subset M_\sigma^3 \subset M_\sigma^4 \subset M . 
\end{equation}
Since $M^1$ is a normal subgroup of $M$ and $M / M^1$ is abelian, all these groups are normal in $M$.
By this normality, for any $m' \in M$:
\begin{equation}\label{eq:2.34}
\begin{array}{ccl}
M_\sigma^3 & = & \{ m \in M : \sigma (m) \sigma (m') E_1 = \sigma (m') E_1 \} , \\
M_\sigma^4 & = & \{ m \in M : m \cdot (m' \cdot \sigma_1) \cong m' \cdot \sigma_1 \} .
\end{array}
\end{equation}
In other words, $M_\sigma^4$ consists of the $m \in M$ that stabilize the isomorphism class of 
one (or equivalently any) irreducible $M^1$-subrepresentation of $\sigma$. In particular
$M_\sigma^2$ and $M_\sigma^4$ only depend on $\sigma$. On the other hand, it seems possible that 
$M_\sigma^3$ does depend on the choice of $E_1$.

Furthermore $[M : M_\sigma^4]$ equals the number of inequivalent irreducible constituents of
$\Res_{M^1}^M (\sigma)$ and, like \eqref{eq:2.2},
\[
\ind_{M^1}^{M_\sigma^2} (\C) \cong \C [X_\nr (M) / X_\nr (M,\sigma)] . 
\]
By \cite[Lemma 1.6.3.1]{Roc1}
\begin{equation}\label{eq:2.9}
[M_\sigma^4 : M_\sigma^3] = [M_\sigma^3 : M_\sigma^2] = \mu_{\sigma,1} . 
\end{equation}
\begin{exintro}\label{ex:2.E}
Consider the group called $G$ in \cite[\S 4.4]{HeVi}. This is a connected reductive $p$-adic
group, an extension of a four-dimensional torus (of split rank two) by the norm one elements
in the multiplicative group of a division algebra. Moreover $G$ is compact modulo centre,
it has a unique maximal compact subgroup $K = G^1$ and 
\[
K \setminus \! G / K = G / G^1 \cong \Z^2 .
\]
In \cite[Proposition 4.4]{HeVi} a particular character $\chi$ of $K$ is exhibited, and it is 
proven that the Hecke algebra $\mc H (G,K,\chi)$ is not commutative. 

Let $\sigma$ be an irreducible quotient of $\mr{ind}_K^G (\chi)$. Then $\sigma$ is supercuspidal
(by the compactness of $G / Z(G)$), and $\mu_{\sigma,1} = 2$. The last claim can be checked 
as follows. 

The arguments in \cite{HeVi} entail that $\mc H (G,K,\chi)$ has a vector space basis
$\{ T_g : g \in G / G^1 \}$ and that the centre of this Hecke algebra is the span of the basis
vectors indexed by a certain subgroup $G'_\chi / G^1$. By the calculations in 
\cite[Proposition 4.4]{HeVi} $G'_\chi$ 
is a proper subgroup of $G$, which guarantees the noncommutativity of $\mc H (G,K,\chi)$. 
Computations in the same spirit show that $G'_\chi / G^1$ corresponds to 
\[
(2 \Z)^2 \subset \Z^2 \cong G / G^1 .
\]
Moreover the basis of $\mc H (G,K,\chi)$ can be chosen so that 
\[
T_g \cdot T_{g'} = \natural (g,g') T_{g g'} \qquad g,g' \in G ,
\]
where $\natural : (G / G'_\chi)^2 \to \{\pm 1\}$ is a nontrivial 2-cocycle. Since 
$(G / G'_\chi) \cong (\Z / 2 \Z)^2$, there is only one such 2-cocycle (up to coboundaries). 
One checks that, for any one-dimensional representation $\tau$ of $\mc H (G'_\chi,K,\chi)$,
$\mc H (G,K,\chi) / \ker \tau$ is a group algebra of $(\Z / 2 \Z)^2$ twisted by a 
nontrivial 2-cocycle, so isomorphic with $M_2 (\C)$. It follows that every irreducible 
representation $\sigma'$ of $\mc H (G,K,\chi)$ has dimension two, and restricts with
multiplicity two to $\mc H (G^1,K,\chi) \cong \C$. Hence the irreducible $G$-representation
$\sigma$ corresponding to $\sigma'$ restricts with multiplicity $\mu_{\sigma,1} = 2$ to $G^1$.
In this example
\[
G = G_\sigma^4 ,\quad G'_\chi = G_\sigma^2 \quad \text{and} \quad 
[G_\sigma^4 : G_\sigma^3] = [G_\sigma^3 : G_\sigma^2] = 2 .
\]
\end{exintro}

When $\mu_{\sigma,1} = 1$, the groups $M_\sigma^2, M_\sigma^3$ and $M_\sigma^4$ coincide with
the group called $M^\sigma$ in \cite[\S 1.16]{Hei2}. Otherwise all the different 
$m \in M_\sigma^4 / M_\sigma^3$ give rise to different subspaces $\sigma (m) E_1$ of $E$. 
We denote the representation of $M_\sigma^3$ (resp. $M_\sigma^2$) on $E_1$ by $\sigma_3$ 
(resp. $\sigma_2$). The $\sigma_1$-isotypical component of $E$ is an irreducible representation 
$(\sigma_4,E_4)$ of $M_\sigma^4$. More explicitly
\begin{equation}\label{eq:2.7}
E_4 = \bigoplus\nolimits_{m \in M_\sigma^4 / M_\sigma^3} \sigma (m) E_1 \cong 
\ind_{M_\sigma^3}^{M_\sigma^4}(\sigma_3,E_1) .
\end{equation}
From \eqref{eq:2.7} we see that
\begin{equation}\label{eq:2.11}
(\sigma,E) \cong \ind_{M_\sigma^4}^M (\sigma_4,E_4) \cong \ind_{M_\sigma^3}^M (\sigma_3,E_1) . 
\end{equation}
The structure of $(\sigma_4,E_4)$ can be analysed as in \cite[\S 2]{GeKn}:

\begin{lem}\label{lem:2.1}
\enuma{ 
\item In the above setting
\[
\Res_{M_\sigma^3}^{M_\sigma^4} (\sigma_4) = 
\bigoplus\nolimits_{\chi \in \Irr (M_\sigma^3 / M_\sigma^2)} \sigma_3 \otimes \chi .
\]
\item All the $\sigma_3 \otimes \chi$ are inequivalent irreducible $M_\sigma^3$-representations.
\item There is a group isomorphism 
\[
\begin{array}{ccc}
M_\sigma^4 / M_\sigma^3 & \longrightarrow & \Irr (M_\sigma^3 / M_\sigma^2) \\
n M_\sigma^3 & \mapsto & \chi_{3,n}
\end{array}
\]
defined by $n \cdot \sigma_3 \cong \sigma_3 \otimes \chi_{3,n}$.
}
\end{lem}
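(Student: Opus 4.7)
The plan is to derive parts (a) and (b) from the stronger statement (c), after first producing (c) via a short Clifford--Mackey calculation. By \eqref{eq:2.7}, $\sigma_4 \cong \ind_{M_\sigma^3}^{M_\sigma^4}(\sigma_3)$. Since $M/M^1$ is abelian, $M_\sigma^3$ is normal in $M_\sigma^4$, so Mackey's formula gives
\[
\sigma_4|_{M_\sigma^3} \;=\; \bigoplus_{n \in M_\sigma^4 / M_\sigma^3} n \cdot \sigma_3 .
\]
Moreover $\sigma_4$ is irreducible: by Clifford applied to $M^1 \lhd M$ and the irreducible $M$-representation $\sigma$, the $\sigma_1$-isotypic subspace $E_4$ is irreducible under its stabilizer $M_\sigma^4$. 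Mackey's irreducibility criterion then forces $n \cdot \sigma_3 \not\cong \sigma_3$ for $n \notin M_\sigma^3$, so the summands above are pairwise non-isomorphic irreducible $M_\sigma^3$-representations.

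The crux is to identify each $n \cdot \sigma_3$ with a twist $\sigma_3 \otimes \chi$ for some $\chi \in \Irr(M_\sigma^3 / M_\sigma^2)$. Equivalently, one must show $n \cdot \sigma_2 \cong \sigma_2$ for every $n \in M_\sigma^4$: granted this, both $n \cdot \sigma_3$ and $\sigma_3$ become irreducible extensions of $\sigma_2$ from $M_\sigma^2$ to $M_\sigma^3$, and any two such differ by a unique character in $\Irr(M_\sigma^3 / M_\sigma^2)$ (Schur together with irreducibility of $\sigma_2$ forbids $\sigma_3 \otimes \chi \cong \sigma_3$ for nontrivial $\chi$). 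Both $n \cdot \sigma_2$ and $\sigma_2$ restrict to $\sigma_1$ on $M^1$ (using $n \in M_\sigma^4$), so they differ by some $c_n \in \Irr(M_\sigma^2 / M^1)$, and a direct Schur calculation shows $n \mapsto c_n$ is a group homomorphism vanishing on $M_\sigma^3$ (there $\sigma_3(n)$ supplies the intertwiner). The main obstacle is proving $c_n = 1$ identically. This is the specific input from the analysis of supercuspidal representations in \cite[\S 1.6]{Roc1} following \cite[\S 2]{GeKn}: $\sigma_2$ is not an arbitrary extension of $\sigma_1$ but the one cut out by the subspace $E_1$ inside the irreducible supercuspidal $\sigma$, and this rigidity constrains the homomorphism $c_\bullet$ to be trivial.

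Once $n \cdot \sigma_3 \cong \sigma_3 \otimes \chi_{3,n}$ with $\chi_{3,n} \in \Irr(M_\sigma^3 / M_\sigma^2)$, the map $n M_\sigma^3 \mapsto \chi_{3,n}$ is well-defined and a group homomorphism: expanding
\[
(nn') \cdot \sigma_3 \;\cong\; (n \cdot \sigma_3) \otimes (n \cdot \chi_{3,n'}) \;\cong\; (\sigma_3 \otimes \chi_{3,n}) \otimes \chi_{3,n'} ,
\]
and $n \cdot \chi_{3,n'} = \chi_{3,n'}$ follows from $[M_\sigma^4, M_\sigma^3] \subseteq [M,M] \subseteq M^1 \subseteq M_\sigma^2$ together with the triviality of $\chi_{3,n'}$ on $M_\sigma^2$. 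Injectivity comes from the pairwise non-isomorphism in the first paragraph, and by \eqref{eq:2.9} both $M_\sigma^4 / M_\sigma^3$ and $\Irr(M_\sigma^3 / M_\sigma^2)$ have order $\mu_{\sigma,1}$, so the homomorphism is a bijection, proving (c). Substituting the identification back into the Mackey decomposition now yields (a) and completes (b):
\[
\sigma_4|_{M_\sigma^3} \;=\; \bigoplus_{n \in M_\sigma^4 / M_\sigma^3} n \cdot \sigma_3 \;=\; \bigoplus_{\chi \in \Irr(M_\sigma^3 / M_\sigma^2)} \sigma_3 \otimes \chi .
\]
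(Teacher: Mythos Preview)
Your reduction is sound up to the point where you need $n\cdot\sigma_2\cong\sigma_2$ for every $n\in M_\sigma^4$ (your $c_n=1$), but then you do not actually prove it: the appeal to \cite[\S 1.6]{Roc1} and \cite[\S 2]{GeKn} as providing some unspecified ``rigidity'' is not an argument. Nothing in those references says that an arbitrary extension $\sigma_2$ of $\sigma_1$ to $M_\sigma^2$ is $M_\sigma^4$-stable; and your own remark that $c_\bullet$ is a homomorphism $M_\sigma^4/M_\sigma^3\to\Irr(M_\sigma^2/M^1)$ does not force it to be trivial. So as written the proof has a genuine gap at its crux.

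The paper closes this gap by reversing the direction of the argument. Rather than showing each $n\cdot\sigma_3$ is a twist of $\sigma_3$, it shows directly that every $\sigma_3\otimes\chi$ with $\chi\in\Irr(M_\sigma^3/M_\sigma^2)$ occurs in $\sigma_4|_{M_\sigma^3}$. The key input you are missing is the very definition of $M_\sigma^2$: one has $X_\nr(M,\sigma)=\Irr(M/M_\sigma^2)$, so every $\chi\in\Irr(M_\sigma^3/M_\sigma^2)$ lifts to some $\tilde\chi\in X_\nr(M,\sigma)$, and the global $M$-isomorphism $\sigma\cong\sigma\otimes\tilde\chi$ restricts to exhibit $\sigma_3\otimes\chi$ inside $E_4$. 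Then Frobenius reciprocity and the equality $\dim\sigma_4=[M_\sigma^4:M_\sigma^3]\dim\sigma_3$ force each $\sigma_3\otimes\chi$ to occur with multiplicity one; since there are $|\Irr(M_\sigma^3/M_\sigma^2)|=\mu_{\sigma,1}$ of them and $\sigma_4|_{M_\sigma^3}$ has length $[M_\sigma^4:M_\sigma^3]=\mu_{\sigma,1}$, they exhaust the decomposition and are pairwise inequivalent. Part (c) then follows by comparing with your Mackey decomposition. In particular $c_n=1$ is a \emph{consequence} of the lemma, not an input to it.

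If you want to repair your proof along its own lines, replace the hand-wave by this concrete step: pick $\tilde\chi\in X_\nr(M,\sigma)$ and an $M$-isomorphism $\phi:\sigma\to\sigma\otimes\tilde\chi$; then $\phi(E_1)\subset E_4$ (it lies in the $\sigma_1$-isotypic part), and $\phi|_{E_1}$ realises $\sigma_3\otimes(\tilde\chi|_{M_\sigma^3})^{-1}$ as an $M_\sigma^3$-constituent of $\sigma_4$. Combined with your Mackey decomposition and the elementary fact that $\sigma_3\otimes\chi\cong\sigma_3$ forces $\chi=1$ (Schur on $M^1$), a cardinality count gives the bijection $n\mapsto\chi_{3,n}$ you want.
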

\begin{proof}
(a) For any $\chi \in X_\nr (M,\sigma)$ we have $\sigma \otimes \chi \cong \sigma$, so 
$\sigma_3 \otimes \Res_{M_\sigma^3}^M \chi$ is isomorphic to an $M_\sigma^3$-subrepresentation
of $E$. As $M^1$-representation it is just $\sigma_1$, so $\sigma_3 \otimes \Res_{M_\sigma^3}^M \chi$
is even isomorphic to a subrepresentation of $E_4$. As every character of $M_\sigma^3 / M_\sigma^2$ can
be extended to a character of $M / M_\sigma^2$ (that is, to an element of $X_\nr (M,\sigma)$), all
the $\sigma_3 \otimes \chi$ with $\chi \in \Irr (M_\sigma^3 / M_\sigma^2)$ appear in $E_4$.

Further, all the $M_\sigma^3$-subrepresentations $(n^{-1} \cdot \sigma_3, \sigma (n) E_1)$ of 
$(\sigma_4,E_4)$ are extensions of the irreducible $M_\sigma^2$-representation $(\sigma_2,E_1)$. 
Hence they differ from each other only by characters of $M_\sigma^3 / M_\sigma^2$ 
\cite[Lemma 2.14]{GoHe}. This shows that $\Res_{M_\sigma^3}^{M_\sigma^4} (\sigma_4,E_4)$ is a direct 
sum of $M_\sigma^3$-representations of the form $\sigma_3 \otimes \chi$ with 
$\chi \in \Irr (M_\sigma^3 / M_\sigma^2)$. 

By Frobenius reciprocity, for any such $\chi$:
\begin{equation}\label{eq:2.8}
\Hom_{M_\sigma^4} \big( \ind_{M_\sigma^3}^{M_\sigma^4} (\sigma_3 \otimes \chi), \sigma_4 \big) \cong
\Hom_{M_\sigma^3} (\sigma_3 \otimes \chi , \sigma_4) \neq 0 .
\end{equation}
Thus there exists a nonzero $M_\sigma^4$-homomorphism 
$\ind_{M_\sigma^3}^{M_\sigma^4} (\sigma_3 \otimes \chi)
\to \sigma_4$. As these two representations have the same dimension and $\sigma_4$ is irreducible,
they are isomorphic. Knowing that, \eqref{eq:2.8} also shows that 
$\dim \Hom_{M_\sigma^3} (\sigma_3 \otimes \chi , \sigma_4) = 1$.\\
(b) The previous line is equivalent to: every $\sigma_3 \otimes \chi$ appears exactly once as a 
$M_\sigma^3$-subrepresentation of $\sigma_4$. As $\Res_{M_\sigma^3}^{M_\sigma^4} (\sigma_4)$ has length 
$[M_\sigma^4 : M_\sigma^3] = [M_\sigma^3 : M_\sigma^2]$, that means that they are mutually inequivalent.\\
(c) This is a consequence of parts (a), (b) and the Mackey decomposition of \\
$\Res_{M_\sigma^3}^{M_\sigma^4} (\sigma_4, E_4)$.
\end{proof}

For $\chi \in \Irr (M / M_\sigma^3)$ we define an $M$-isomorphism
\begin{equation}\label{eq:2.28}
\begin{array}{ccccr}
\phi_{\sigma,\chi} : & (\sigma,E) & \to & (\sigma \otimes \chi,E) & \\
& \sigma (m) e_1 & \mapsto & \chi (m) \sigma (m) e_1 &
\qquad e_1 \in E_1, m \in M .
\end{array}
\end{equation}
This says that $\phi_{\sigma,\chi}$ acts as $\chi (m) \mr{id}$ on the $M_\sigma^3$-subrepresentation
$\sigma (m) E_1$ of $E$. By Lemma \ref{lem:2.1} these $\phi_{\sigma,\chi}$ form a basis of
$\End_{M_\sigma^3}(E)$. We can extend $\phi_{\sigma,\chi}$ to an $M$-isomorphism
\begin{equation}\label{eq:2.10}
\begin{array}{cccc}
\phi_\chi = \phi_{\sigma,\chi} \otimes \rho_\chi^{-1} : & \ind_{M^1}^M (\sigma,E) & \to & 
\ind_{M^1}^M (\sigma,E) \\
& e \otimes \delta_m & \mapsto & \phi_{\sigma,\chi}(e) \otimes \chi (m) \delta_m 
\end{array} , 
\end{equation}
where $e \in E, m \in M$ and the elements are presented in $E \otimes_\C \ind_{M^1}^M (\C)$
using \eqref{eq:2.2}. Via \eqref{eq:2.3}, this becomes
\begin{equation}\label{eq:2.12}
\phi_\chi \in \mr{Aut}_M (E \otimes_\C \C [X_\nr (M)] ) : \qquad 
e \otimes b \mapsto \phi_{\sigma,\chi}(e) \otimes \rho_{\chi}^{-1} (b),
\end{equation}
where $e \in E, b \in \C [X_\nr (M)]$. Given $E_1$, $\phi_\chi$ is canonical.

For an arbitrary $\chi \in \Irr (M / M_\sigma^2) = X_\nr (M,\sigma)$ we can also construct such
$M$-homomorphisms, albeit not canonically. Pick $n \in M_\sigma^4$ (unique up to $M_\sigma^3$) as in 
Lemma \ref{lem:2.1}.c, such that $\chi_{3,n} = \chi |_{M_\sigma^3}$. Choose an $M_\sigma^3$-isomorphism
\[
\phi_{\sigma_3,\chi} : (\sigma_3,E_1) \to ((n^{-1} \cdot \sigma_3) \otimes \chi, \sigma (n) E_1).  
\]
We note that, when $\chi \notin \Irr ( M / M_\sigma^3 )$, $\psi_{\sigma_3,\chi}$ cannot commute
with all the $\phi_{\sigma,\chi'}$ for $\chi' \in \Irr (M / M_\sigma^3)$ because it does not
stabilize $E_1$.

For compatibility with \eqref{eq:2.28} we may assume that 
\begin{equation}\label{eq:2.30}
\phi_{\sigma_3,\chi \chi'} = \phi_{\sigma_3,\chi} \quad \text{for all } \chi' \in \Irr (M / M_\sigma^3).
\end{equation}
By Schur's lemma $\phi_{\sigma_3,\chi}$ is unique up to scalars, 
but we do not know a canonical choice when $M_\sigma^3 \not\subset \ker \chi$. By \eqref{eq:2.11}
\[
\Hom_M (\sigma, \sigma \otimes \chi) = \Hom_M (\ind_{M_\sigma^3}^M (\sigma_3), \sigma \otimes \chi) 
\cong \Hom_{M_\sigma^3} (\sigma_3, \sigma \otimes \chi) ,
\]
while $((n^{-1} \cdot \sigma_3) \otimes \chi, \sigma (n) E_1)$ is contained in $(\sigma \otimes \chi,E)$ 
as $M_\sigma^3$-representation. Thus $\phi_{\sigma_3,\chi}$ determines a 
$\phi_{\sigma,\chi} \in \Hom_M (\sigma, \sigma \otimes \chi)$, which is nonzero and hence bijective.
Then $\rho_\chi$ from \eqref{eq:2.25} and the formulas \eqref{eq:2.10} and \eqref{eq:2.12} provide 
\begin{equation}\label{eq:2.24}
\phi_\chi = \phi_{\sigma,\chi} \otimes \rho_\chi^{-1}  \in \mr{Aut}_M (\ind_{M^1}^M (E)) 
\cong \mr{Aut}_M (E \otimes_\C \C [X_\nr (M)]) . 
\end{equation}
For $\chi_c \in X_\nr (M)$ and $\chi \in X_\nr (M,\sigma)$, we see from \eqref{eq:2.24} that
\begin{equation}\label{eq:3.31}
\mr{sp}_{\chi_c} \circ \phi_{\chi} = \phi_{\chi} \circ \mr{sp}_{\chi_c \chi^{-1}} . 
\end{equation}
We also note that, regarding $b \in \C [X_\nr (M)]$ as multiplication operator:
\begin{equation}\label{eq:2.26}
b \circ \phi_\chi = \phi_\chi \circ b_\chi \in \End_M \big( E \otimes_\C \C [X_\nr (M)] \big) .
\end{equation}
For all $\chi, \chi' \in \Irr (M / M_\sigma^2)$, the uniqueness of $\phi_{\sigma_3,\chi}$ 
up to scalars implies that there exists a $\natural (\chi,\chi') \in \C^\times$ such that
\begin{equation}\label{eq:2.29}
\phi_\chi \phi_{\chi'} = \natural (\chi,\chi') \phi_{\chi \chi'} . 
\end{equation}
In other words, the $\phi_\chi$ span a twisted group algebra $\C[X_\nr (M,\sigma),\natural]$. 
By \eqref{eq:2.30} we have 
\begin{equation}\label{eq:2.35}
\natural (\chi,\chi') = 1 \text{ if } \chi \in \Irr (M / M_\sigma^3) \text{ or }
\chi' \in \Irr (M / M_\sigma^3). 
\end{equation}
If desired, we can scale the $\phi_{\sigma,\chi}$ so that $\phi_{\sigma_3,\chi}^{-1} = 
\phi_{\sigma_3,\chi^{-1}}$. In that case $\phi_{\chi}^{-1} = \phi_{\chi^{-1}}$ and 
$\natural (\chi,\chi^{-1}) = 1$ for all $\chi \in \Irr (M / M_\sigma^2)$. When 
$\mu_{\sigma,1} > 1$, not all $\phi_\chi$ commute and $\natural$ is nontrivial. Then it is unlikely
that all the $\phi_\chi$ with $\chi \in X_\nr (M,\sigma)$ can be normalized simultaneously 
in a canonical way, because they can always be rescaled by a character of $X_\nr (M,\sigma)$.

The next result is a variation on \cite[Proposition 3.6]{Hei2}.

\begin{prop}\label{prop:2.2}
\enuma{
\item The set $\{ \phi_{\sigma,\chi} : \chi \in X_\nr (M,\sigma) \}$ is a $\C$-basis
of $\End_{M^1}(E)$.
\item With respect to the embedding \eqref{eq:2.4}:
\[
\End_M (\ind_{M^1}^M (\sigma,E)) = 
\bigoplus_{\chi \in X_\nr (M,\sigma)} \C [X_\nr (M)] \phi_\chi =
\bigoplus_{\chi \in X_\nr (M,\sigma)} \phi_\chi \C [X_\nr (M)] .
\]
}
\end{prop}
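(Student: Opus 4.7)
For part (a), the key observation is that each $\phi_{\sigma,\chi}$ is an eigenvector for the conjugation action of $M/M^1$ on $\End_{M^1}(E)$: the intertwining property of $\phi_{\sigma,\chi} : \sigma \to \sigma \otimes \chi$ gives $\sigma(m) \phi_{\sigma,\chi} \sigma(m)^{-1} = \chi(m)^{-1} \phi_{\sigma,\chi}$, and since $X_\nr(M,\sigma) \subseteq \Irr(M/M^1)$ distinct $\chi$ give distinct eigenvalues. Hence the $\phi_{\sigma,\chi}$ are linearly independent. To match dimensions I will combine \eqref{eq:2.11} with the maximality of the stabilizer of $E_1$: the decomposition $E = \bigoplus_{m_0 \in M/M_\sigma^4} \sigma(m_0) E_4$ is the $M^1$-isotypic decomposition (different cosets giving non-isomorphic $M^1$-summands by the definition of $M_\sigma^4$), and each summand is $\mu_{\sigma,1}$ copies of an irreducible $M^1$-representation. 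Thus $\End_{M^1}(E) \cong \prod_{M/M_\sigma^4} M_{\mu_{\sigma,1}}(\C)$ has dimension $[M:M_\sigma^4]\mu_{\sigma,1}^2$, which by \eqref{eq:2.9} equals $[M:M_\sigma^2] = |X_\nr(M,\sigma)|$, closing the argument.

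For part (b), I will use Frobenius reciprocity along the inclusion $\iota : e \mapsto e \otimes 1$ of $E$ into $\ind_{M^1}^M(E) \cong E \otimes_\C \C[X_\nr(M)]$. Since $M^1$ acts only on the $E$-factor of the latter (by the construction behind \eqref{eq:2.2} and \eqref{eq:2.3}),
\[
\End_M\big(\ind_{M^1}^M E\big) \;\cong\; \Hom_{M^1}\big(E,\, E \otimes_\C \C[X_\nr(M)]\big) \;=\; \End_{M^1}(E) \otimes_\C \C[X_\nr(M)] ,
\]
and part (a) turns the right-hand side into $\bigoplus_{\chi} \phi_{\sigma,\chi} \otimes_\C \C[X_\nr(M)]$. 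A direct computation with \eqref{eq:2.12} identifies the preimage under Frobenius of $\phi_{\sigma,\chi} \otimes b$ with the product $b \cdot \phi_\chi$, giving $\End_M(\ind_{M^1}^M E) = \bigoplus_\chi \C[X_\nr(M)] \phi_\chi$ as a left $\C[X_\nr(M)]$-module. The right-module statement is then immediate: \eqref{eq:2.26} reads $b \phi_\chi = \phi_\chi \rho_\chi(b)$, and since $\rho_\chi$ is an automorphism of $\C[X_\nr(M)]$ this forces the equality $\C[X_\nr(M)] \phi_\chi = \phi_\chi \C[X_\nr(M)]$ of subspaces.

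I do not foresee a serious obstacle. The most delicate ingredient is the $M^1$-isotypic decomposition of $E$ used in the dimension count, which relies on the maximality assumption on the stabilizer of $E_1$; once this is in place the rest of (a) is a two-line eigenvalue argument, and tracing the Frobenius identification in (b) is routine given the explicit formulas \eqref{eq:2.1}--\eqref{eq:2.3}.
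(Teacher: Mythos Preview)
Your proposal is correct. Part (b) is essentially identical to the paper's argument: both apply Frobenius reciprocity to identify $\End_M(\ind_{M^1}^M E)$ with $\End_{M^1}(E) \otimes_\C \C[X_\nr(M)]$, observe that $\phi_\chi$ corresponds to $\phi_{\sigma,\chi}\otimes 1$, and invoke part (a). Your use of \eqref{eq:2.26} to pass from the left-module to the right-module decomposition is exactly what the paper leaves implicit.

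For part (a), however, your argument is genuinely different and noticeably cleaner. The paper establishes linear independence by building up the basis in two stages: it first shows that the $\phi_{\sigma,\chi}$ with $\chi \in \Irr(M/M_\sigma^3)$ form a basis of $\End_{M_\sigma^3}(E)$ via the explicit decomposition $E=\bigoplus_{m\in M/M_\sigma^3}\sigma(m)E_1$ from Lemma \ref{lem:2.1}, and then for each $\chi_3\in\Irr(M_\sigma^3/M_\sigma^2)$ it checks that multiplying by $\phi_{\sigma,\tilde\chi_3}$ (a fixed extension of $\chi_3$) permutes the blocks $\sigma(m)E_1$ inside each isotypic component $E_4$ according to a distinct element of $M_\sigma^4/M_\sigma^3$. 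Your eigenvector observation --- that $\sigma(m)\phi_{\sigma,\chi}\sigma(m)^{-1}=\chi(m)^{-1}\phi_{\sigma,\chi}$ places the $\phi_{\sigma,\chi}$ in distinct weight spaces for the $M/M^1$-conjugation action on $\End_{M^1}(E)$ --- bypasses this case analysis entirely. The dimension count you give ($[M:M_\sigma^4]\,\mu_{\sigma,1}^2=[M:M_\sigma^2]=|X_\nr(M,\sigma)|$ via \eqref{eq:2.9}) is the same as the paper's, just stated more directly. What the paper's longer route buys is an explicit description of how each $\phi_{\sigma,\chi}$ acts on the blocks $\sigma(m)E_1$, which is not needed for the statement of Proposition \ref{prop:2.2} itself but is used implicitly elsewhere in the paper.
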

\begin{proof}
(a) By \eqref{eq:2.7} and Lemma \ref{lem:2.1}
\[
\Res_{M_\sigma^3}^M (\sigma,E) = 
\bigoplus\nolimits_{m \in M / M_\sigma^3} (m^{-1} \cdot \sigma,\sigma (m) E_1 ) , 
\]
and all these summands are mutually inequivalent. Hence
\begin{equation}\label{eq:2.13}
\End_{M_\sigma^3} (E) = \bigoplus_{m \in M / M_\sigma^3} \End_{M_\sigma^3} (\sigma (m) E_1) =
\bigoplus_{m \in M / M_\sigma^3} \C \, \mr{id}_{\sigma (m) E_1} .
\end{equation}
The operators $\phi_{\sigma,\chi}$ with $\chi \in \Irr (M / M_\sigma^3)$ provide a basis of 
\eqref{eq:2.13}, because they are linearly independent.

For every $\chi_3 \in \Irr (M_\sigma^3 / M_\sigma^2)$ we choose an extension $\tilde \chi_3 \in
\Irr (M/M_\sigma^2)$. Then
\[
\{ \phi_{\sigma,\chi} : \chi \in \Irr (M / M_\sigma^2) \} = \{ \phi_{\sigma,\tilde \chi_3} 
\phi_{\sigma,\chi} : \chi \in \Irr (M / M_\sigma^3), \chi_3 \in \Irr (M_\sigma^3 / M_\sigma^2) \} .
\]
It follows from \eqref{eq:2.7} that
\begin{align*}
& \Res_{M^1}^M (\sigma,E) = \bigoplus\nolimits_{m \in M / M_\sigma^4} 
\big( (m \cdot \sigma_1)^{\mu_{\sigma,1}}, \sigma (m) E_4 \big) , \\
& \End_{M^1} (E) \cong \bigoplus_{m \in M / M_\sigma^4} \End_{M^1}(\sigma (m) E_4) \cong
\bigoplus_{m \in M / M_\sigma^4} \sigma (m) \End_{M^1}(E_4) \sigma (m^{-1}) .
\end{align*}
In view of the already exhibited basis of \eqref{eq:2.13}, it only remains to show that
\begin{equation}\label{eq:2.14}
\big\{ \mr{id}_{\sigma (m) E_1} \phi_{\tilde \chi_3} \big|_{E_4} \big\} 
\end{equation}
is a $\C$-basis of $\End_{M^1} (E_4)$. Every $\phi_{\tilde \chi_3}$ permutes the irreducible
$M_\sigma^3$-subrepresentations $\sigma (m) E_1$ of $E_4$ according to a unique 
$n \in M_\sigma^4 / M_\sigma^3$, so the set \eqref{eq:2.14} is linearly independent. As
\[
\dim \End_{M^1}(E_4) = \dim \End_{M^1} \big( \sigma_1^{\mu_{\sigma,1}} \big) = \mu_{\sigma,1}^2 =
[M_\sigma^4 : M_\sigma^3] [M_\sigma^3 : M_\sigma^2] ,
\]
equals the cardinality of \eqref{eq:2.14}, that set also spans $\End_{M^1} (E_4)$.\\
(b) As $M^1 \subset M$ is open, Frobenius reciprocity for 
compact smooth induction holds. It gives a natural bijection
\[
\End_M (\ind_{M^1}^M (E)) \to \Hom_{M^1}(E, \ind_{M^1}^M (E)) . 
\]
By \eqref{eq:2.3} the right hand side is isomorphic to 
\[
\Hom_{M^1} (E, E \otimes_\C \C [X_\nr (M)]) = \End_{M^1} (E) \otimes_\C \C [X_\nr (M)] ,
\]
where the action of $X_\nr (M)$ becomes multiplication on the second tensor factor
on the right hand side. Under these bijections $\phi_\chi \in \mr{Aut}_M (\ind_{M^1}^M (E))$ 
corresponds to 
\[
\phi_{\sigma,\chi} \otimes 1 \in \End_{M^1} (E) \otimes_\C \C [X_\nr (M)] .
\]
We conclude by applying part (a).
\end{proof}

We remark that \eqref{eq:2.29}, \eqref{eq:2.26} and Proposition \ref{prop:2.2}.b mean that
\begin{equation}\label{eq:2.36}
\End_M (E \otimes_\C \C[X_\nr (M)]) = \C[X_\nr (M)] \rtimes \C [X_\nr (M,\sigma),\natural] ,
\end{equation}
the crossed product with respect to the multiplication action of $X_\nr (M,\sigma)$ on $X_\nr (M)$.
This description confirms that
\begin{equation}
Z \big( \End_M (E \otimes_\C \C[X_\nr (M)]) \big) = \C[X_\nr (M) / X_\nr (M,\sigma)] \cong \C [\mc O] .
\end{equation}
Let us record what happens when we replace regular functions on the involved complex
algebraic tori by rational functions. More generally, consider a group $\Gamma$ and an integral 
domain $R$ with quotient field $Q$. Suppose that $V$ is a $\C \Gamma \times R$-module, which is 
free over $R$. Then $R \subset \End_\Gamma (V)$ and there is a natural isomorphism of $R$-modules
\begin{equation}\label{eq:2.21}
\Hom_\Gamma (V, V \otimes_R Q) \cong \End_\Gamma (V) \otimes_R Q .
\end{equation}
Applying this to \eqref{eq:2.3} and Proposition \ref{prop:2.2} we find
\begin{multline}\label{eq:2.22}
\Hom_M \big( \ind_{M^1}^M (E), \ind_{M^1}^M (E) \otimes_{\C [X_\nr (M)]} \C (X_\nr (M) \big) \cong \\
\bigoplus\nolimits_{\chi \in X_\nr (M,\sigma)} \phi_\chi \C (X_\nr (M)) =
\C (X_\nr (M)) \rtimes \C [X_\nr (M,\sigma),\natural] ,
\end{multline}
which generalizes \cite[Proposition 3.6]{Hei2}.

\section{Some root systems and associated groups}
\label{sec:root}

Let $\mc A_M$ be the maximal $F$-split torus in $Z(\mc M)$, put $A_M = \mc A_M (F)$ and let $
X_* (\mc A_M) = X_* (A_M)$ be the cocharacter lattice. We write 
\[
\mf a_M = X_* (A_M) \otimes_\Z \R \quad \text{and} \quad \mf a_M^* = X^* (A_M) \otimes_\Z \R .
\]
Let $\Sigma (\mc G,\mc M) \subset X^* (A_M)$ be the set of nonzero weights occurring in the adjoint 
representation of $A_M$ on the Lie algebra of $G$, and let $\Sigma_\red (A_M)$ be the set
of indivisible elements therein. 

For every $\alpha \in \Sigma_\red (A_M)$ there is a Levi subgroup $M_\alpha$ of $G$ which contains 
$M$ and the root subgroup $U_\alpha$, and whose semisimple rank is one higher than that of $M$. 
Let $\alpha^\vee \in \mf a_M$ be the unique element which is orthogonal to $X^* (A_{M_\alpha})$
and satisfies $\langle \alpha^\vee, \alpha \rangle = 2$.

Recall the Harish-Chandra $\mu$-functions from \cite[\S 1]{Sil2} and \cite[\S V.2]{Wal}. 
The restriction of $\mu^G$ to $\mc O$ is a rational, $W(M,\mc O)$-invariant function on $\mc O$ 
\cite[Lemma V.2.1]{Wal}. It determines a reduced root system \cite[Proposition 1.3]{Hei2}
\[
\Sigma_{\mc O,\mu} = \{ \alpha \in \Sigma_\red (A_M) : \mu^{M_\alpha}(\sigma \otimes \chi)
\text{ has a zero on } \mc O \} .
\]
For $\alpha \in \Sigma_\red (A_M)$ the function $\mu^{M_\alpha}$ factors through the quotient
map $A_M \to A_M / A_{M_\alpha}$. The associated system of coroots is
\[
\Sigma_{\mc O,\mu}^\vee = \{ \alpha^\vee \in \mf a_M :  \mu^{M_\alpha}(\sigma \otimes \chi)
\text{ has a zero on } \mc O \} .
\]
By the aforementioned $W(M,\mc O)$-invariance of $\mu^G$, $W(M,\mc O)$ acts naturally on
$\Sigma_{\mc O,\mu}$ and $\Sigma_{\mc O,\mu}^\vee$.
Let $s_\alpha$ be the unique nontrivial element of $W(M_\alpha,M)$. By \cite[Proposition 1.3]{Hei2}
the Weyl group $W(\Sigma_{\mc O,\mu})$ can be identified with the subgroup of $W(G,M)$ generated
by the reflections $s_\alpha$ with $\alpha \in \Sigma_{\mc O,\mu}$, and as such it is a normal
subgroup of $W(M,\mc O)$.

The parabolic subgroup $P = MU$ of $G$ determines a set of positive roots $\Sigma_{\mc O,\mu}(P)$ 
and a basis $\Delta_{\mc O,\mu}$ of $\Sigma_{\mc O,\mu}$. Let $\ell_{\mc O}$ be the length function on
$W(\Sigma_{\mc O,\mu})$ specified by $\Delta_{\mc O,\mu}$. Since $W(M,\mc O)$ acts on 
$\Sigma_{\mc O,\mu}$, $\ell_{\mc O}$ extends naturally to $W(M,\mc O)$, by
\[
\ell_{\mc O}(w) = | w (\Sigma_{\mc O,\mu}(P)) \cap \Sigma_{\mc O,\mu}(\bar P) |. 
\]
The set of positive roots also determines a subgroup of $W(M,\mc O)$:
\begin{equation}\label{eq:3.19}
\begin{aligned}
R(\mc O) & = \{ w \in W(M,\mc O) : w (\Sigma_{\mc O,\mu}(P)) = \Sigma_{\mc O,\mu}(P) \} \\
& = \{ w \in W(M,\mc O) : \ell_{\mc O}(w) = 0 \} .
\end{aligned}
\end{equation}
As $W(\Sigma_{\mc O,\mu}) \subset W(M,\mc O)$, 
a well-known result from the theory of root systems says:
\begin{equation}\label{eq:3.8}
W(M,\mc O) = R (\mc O) \ltimes W(\Sigma_{\mc O,\mu})  . 
\end{equation}
Recall that $X_\nr (M) / X_\nr (M,\sigma)$ is isomorphic to the character group of the lattice 
$M_\sigma^2 / M^1$. Since $M_\sigma^2$ depends only on $\mc O$, it is normalized by $N_G (M,\mc O)$. 
In particular the conjugation action of $N_G (M,\mc O)$ on $M_\sigma^2 / M^1$ induces an action of 
$W(M,\mc O)$ on $M_\sigma^2 / M^1$.

Let $\nu_F : F \to \Z \cup \{\infty\}$ be the valuation of $F$. Let $h^\vee_\alpha$ be the unique 
generator of $(M_\sigma^2 \cap M_\alpha^1) / M^1 \cong \Z$ such that 
$\nu_F (\alpha (h^\vee_\alpha)) > 0$. 
Recall the injective homomorphism $H_M : M/M^1 \to \mf a_M$ defined by 
\[
\langle H_M (m), \gamma \rangle = \nu_F (\gamma (m)) \qquad \text{for } m \in M, \gamma \in X^* (M).
\]
In these terms $H_M (h^\vee_\alpha) \in \R_{>0} \alpha^\vee$. Since $M_\sigma^2$ has finite index in
$M$, $H_M (M_\sigma^2 / M^1)$ is a lattice of full rank in $\mf a_M$. We write 
\[
(M_\sigma^2 / M^1)^\vee = \Hom_\Z (M_\sigma^2 / M^1, \Z) .
\]
Composition with $H_M$ and $\R$-linear extension of maps $H_M (M_\sigma^2 / M^1) \to \Z$ determines 
an embedding 
\[
H_M^\vee : (M_\sigma^2 / M^1)^\vee \to \mf a_M^*.
\]
Then $H_M^\vee (M_\sigma^2 / M^1)^\vee$ is a lattice of full rank in $\mf a_M^*$.

\begin{prop}\label{prop:3.5}
Let $\alpha \in \Sigma_{\mc O,\mu}$.
\enuma{
\item For $w \in W(M,\mc O)$: $w(h^\vee_\alpha) = h^\vee_{w(\alpha)}$.
\item There exists a unique $\alpha^\sharp \in (M_\sigma^2 / M^1)^\vee$ such that 
$H_M^\vee (\alpha^\sharp) \in \R \alpha$ and $\langle h^\vee_\alpha, \alpha^\sharp \rangle = 2$.
\item Write 
\[
\begin{array}{lll}
\Sigma_{\mc O} & = & \{ \alpha^\sharp : \alpha \in \Sigma_{\mc O,\mu} \} , \\
\Sigma^\vee_{\mc O} & = & \{ h^\vee_\alpha : \alpha \in \Sigma_{\mc O,\mu} \} .
\end{array}
\]
Then $(\Sigma_{\mc O}^\vee, M_\sigma^2 / M^1, \Sigma_{\mc O}, (M_\sigma^2 / M^1)^\vee)$
is a root datum with Weyl group $W(\Sigma_{\mc O,\mu})$.
\item The group $W(M,\mc O)$ acts naturally on this root datum, and $R(\mc O)$ is the
stabilizer of the basis determined by $P$.
}
\end{prop}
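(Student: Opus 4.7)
The plan is to verify the four claims in order, each building on the previous. For part (a), I would collect three normalization facts: $M_\sigma^2$ depends only on $\mc O$ and hence is $N_G(M,\mc O)$-stable, conjugation by $w \in N_G(M,\mc O)$ carries $M_\alpha^1$ to $M_{w(\alpha)}^1$ (since $M_\alpha$ is determined by $M$ together with $U_{\pm\alpha}$), and $M^1$ is normalized by $N_G(M)$. Together these imply that $w$ induces an isomorphism $(M_\sigma^2 \cap M_\alpha^1)/M^1 \to (M_\sigma^2 \cap M_{w(\alpha)}^1)/M^1$, so $w(h^\vee_\alpha) = \pm h^\vee_{w(\alpha)}$; the sign is pinned down by $\nu_F\bigl((w\alpha)(w \cdot h^\vee_\alpha)\bigr) = \nu_F\bigl(\alpha(h^\vee_\alpha)\bigr) > 0$.

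For part (b), the core step is a primitivity argument. First I would verify the identity $H_M\bigl((M_\sigma^2 \cap M_\alpha^1)/M^1\bigr) = H_M(M_\sigma^2/M^1) \cap \R \alpha^\vee$, using that $\R \alpha^\vee$ is precisely the orthogonal complement of $X^*(A_{M_\alpha})$ and that $m \in M_\alpha^1$ iff $\nu_F \circ \gamma$ vanishes on $m$ for every $\gamma \in X^*(A_{M_\alpha})$. A short argument then rules out any $m \in M_\sigma^2$ with $\alpha(H_M(m)) = k^{-1}\alpha(H_M(h^\vee_\alpha))$ for $k \geq 2$, by considering $m^k \cdot (h^\vee_\alpha)^{-1}$ and checking it lies in $M_\sigma^2 \cap M_\alpha^1$. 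This shows $H_M(h^\vee_\alpha)$ is primitive in the rank-one quotient $H_M(M_\sigma^2/M^1)/(H_M(M_\sigma^2/M^1) \cap \ker \alpha)$, and the unique $\Z$-linear functional on that quotient taking value $2$ there pulls back to the required $\alpha^\sharp$, which automatically satisfies $H_M^\vee(\alpha^\sharp) \in \R\alpha$.

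For parts (c) and (d), writing $H_M^\vee(\alpha^\sharp) = c\alpha$ and $H_M(h^\vee_\alpha) = d\alpha^\vee$, the defining equation $\langle h^\vee_\alpha,\alpha^\sharp\rangle = 2$ forces $cd = 1$. A direct calculation then shows that the abstract reflection $s_{\alpha^\sharp}(x) = x - \langle x,\alpha^\sharp\rangle h^\vee_\alpha$ on $M_\sigma^2/M^1$, pushed forward to $\mf a_M$ via $H_M$, coincides with the usual reflection $s_\alpha \in W(\Sigma_{\mc O,\mu})$. Since $s_\alpha \in W(M,\mc O)$ preserves $M_\sigma^2/M^1$ and (by (a)) the set $\Sigma^\vee_{\mc O}$, dualising shows it permutes $\Sigma_{\mc O}$; integrality of $\langle h^\vee_\beta,\alpha^\sharp\rangle$ descends from the corresponding integrality in $\Sigma_{\mc O,\mu}$ via the constants $c,d$. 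Hence the quadruple is a root datum with Weyl group $W(\Sigma_{\mc O,\mu})$. For (d), the $W(M,\mc O)$-action on the datum follows from (a) combined with the uniqueness in (b) (which gives $w \cdot \alpha^\sharp = (w\alpha)^\sharp$), and the identification of $R(\mc O)$ as the stabilizer of the basis determined by $P$ is then the observation that $\alpha \mapsto \alpha^\sharp$ is an order-preserving bijection from $\Sigma_{\mc O,\mu}(P)$ to a positive system in $\Sigma_{\mc O}$, so \eqref{eq:3.19} applies verbatim.

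The main obstacle I anticipate is the primitivity step in (b): one must show $H_M(h^\vee_\alpha)$ is primitive in the quotient modulo $\ker\alpha$ rather than merely nonzero, and this genuinely uses the defining characterization of $h^\vee_\alpha$ as a generator inside $M_\alpha^1$ — the weaker property that $H_M(h^\vee_\alpha)$ is the shortest positive vector of $H_M(M_\sigma^2/M^1)$ in the direction $\alpha^\vee$ would not suffice. Once that lemma is in place, everything else reduces to standard root-datum bookkeeping.
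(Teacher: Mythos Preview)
Your arguments for (a), (c), (d) are essentially the paper's. The gap is in (b).

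Your primitivity claim --- that the image of $H_M(h^\vee_\alpha)$ generates the rank-one quotient $L/(L \cap \ker\alpha)$ where $L = H_M(M_\sigma^2/M^1)$ --- is false in general. Take $G = GL_2(F)$, $M$ the diagonal torus, $\sigma$ trivial; then $M_\sigma^2 = M$, $L \cong \Z^2$, $h^\vee_\alpha$ corresponds to $(1,-1)$, and $\alpha(m,n) = m-n$, so the image of $h^\vee_\alpha$ in $L/\Z(1,1) \cong \Z$ is $2$, not a generator. Your proposed proof breaks because it confuses the hyperplane $\ker\alpha$ with the line $\R\alpha^\vee$: the element $m^k (h^\vee_\alpha)^{-1}$ has $H_M$-image in $\ker\alpha$, but that does not put it in $\R\alpha^\vee$, hence not in $M_\alpha^1$. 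So no contradiction arises.

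The paper proceeds differently. Define $\alpha^* \in \R\alpha$ by $\langle H_M(h^\vee_\alpha),\alpha^*\rangle = 2$, and rewrite the reflection as $s_\alpha(x) = x - \langle x,\alpha^*\rangle H_M(h^\vee_\alpha)$. The crucial input is that $s_\alpha \in W(\Sigma_{\mc O,\mu}) \subset W(M,\mc O)$ stabilizes the lattice $L$ (since $M_\sigma^2$ depends only on $\mc O$). Hence $\langle x,\alpha^*\rangle H_M(h^\vee_\alpha) \in L$ for every $x \in L$. Now one uses indivisibility of $H_M(h^\vee_\alpha)$ \emph{in the full lattice} $L$ --- which your identity $H_M((M_\sigma^2 \cap M_\alpha^1)/M^1) = L \cap \R\alpha^\vee$ does establish --- to conclude $\langle x,\alpha^*\rangle \in \Z$ for all $x \in L$, i.e.\ $\alpha^* = H_M^\vee(\alpha^\sharp)$ for a unique $\alpha^\sharp \in (M_\sigma^2/M^1)^\vee$. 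In the $GL_2$ example this gives $\alpha^\sharp(m,n) = m-n$, which is integral on $\Z^2$ even though $h^\vee_\alpha$ is not primitive in the quotient. So the $s_\alpha$-stability of $L$ is the missing idea; a direct primitivity argument cannot work.
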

\begin{proof}
(a) Since $M_\sigma^2 / M^1$ and $\Sigma_{\mc O,\mu}$ are $W(M,\mc O)$-stable, we have
\[
\tilde w (M_\sigma^2 \cap M^{\alpha,1}) \tilde w^{-1} = M_\sigma^2 \cap M^{w(\alpha),1}.
\]
Hence $w (h^\vee_\alpha)$ is a generator of $(M_\sigma^2 \cap M^{w(\alpha),1}) / M^1$.
As $w(\alpha)(w (h^\vee_\alpha)) = \alpha (h^\vee_\alpha)$, it equals $h^\vee_{w (\alpha)}$.\\
(b) Let $\alpha^* \in \R \alpha \subset \mf a_M^*$ be the unique element 
which satisfies
\[
\langle H_M (h^\vee_\alpha), \alpha^* \rangle = 2.
\]
The group $W(\Sigma_{\mc O,\mu})$ acts naturally on $\mf a_M$ by
\begin{equation}\label{eq:3.18}
s_\alpha (x) = x - \langle x, \alpha \rangle \alpha^\vee =  
x - \langle x, \alpha^* \rangle H_M (h^\vee_\alpha) .
\end{equation}
This action stabilizes the lattice $H_M (M_\sigma / M^1)$. By construction  
$h^\vee_\alpha$ is indivisible in $M_\sigma^2 / M^1$. It follows that for all
$x \in H_M (M_\sigma^2 / M^1)$ we must have $\langle x, \alpha^* \rangle \in \Z$.
This means that $\alpha^*$ lies in $H_M^\vee (M_\sigma^2 / M^1)^\vee$, say
$\alpha^* = H_M^\vee (\alpha^\sharp)$. \\
(c) By construction the lattices $M_\sigma^2 / M^1$ and $(M_\sigma^2 / M^1)^\vee$ are
dual and $W(M,\mc O)$ acts naturally on them. In view of \eqref{eq:3.18}, the map
\[
M_\sigma^2 / M^1 \to M_\sigma^2 / M^1 : 
\bar m \mapsto \bar m - \langle \bar m, \alpha^\sharp \rangle h^\vee_\alpha
\]
coincides with the action of $s_\alpha$. Hence it stabilizes $\Sigma^\vee_{\mc O}$. 
Similarly, for $y \in \mf a_M^*$:
\[
y - \langle H_M (h^\vee_\alpha), y \rangle H_M (\alpha^\sharp) = 
y - \langle \alpha^\vee, y \rangle \alpha  = s_\alpha (y) . 
\]
This implies that the map 
\[
(M_\sigma^2 / M^1)^\vee \to (M_\sigma^2 / M^1)^\vee :
y \mapsto y - \langle h^\vee_\alpha, y \rangle \alpha^\sharp
\]
coincides with the action of $s_\alpha$ and stabilizes $\Sigma_{\mc O}$. Thus
$(\Sigma_{\mc O}^\vee, M_\sigma^2 / M^1, \Sigma_{\mc O}, (M_\sigma^2 / M^1)^\vee)$ is
a root datum and the Weyl groups of $\Sigma_{\mc O}$ and $\Sigma_{\mc O}^\vee$ can be
identified with $W(\Sigma_{\mc O,\mu})$.\\
(d) By part (a) $W(M,\mc O)$ acts naturally on the root datum, extending the action of
$W(\Sigma_{\mc O,\mu})$. The characterization of $R(\mc O)$ is obvious from \eqref{eq:3.19}
and the definition of $\Sigma_{\mc O}$ and $\Sigma_{\mc O}^\vee$.
\end{proof}

We note that $\Sigma_{\mc O}$ and $\Sigma_{\mc O}^\vee$ have almost the same type as
$\Sigma_{\mc O,\mu}$. Indeed, the roots $H_M^\vee (\alpha^\sharp)$ are scalar multiples
of the $\alpha \in \Sigma_{\mc O,\mu}$, the angles between the elements of $\Sigma_{\mc O}$
are the same as the angles between the corresponding elements of $\Sigma_{\mc O,\mu}$. 
It follows that every irreducible component of $\Sigma_{\mc O,\mu}$ has the same type
as the corresponding components of $\Sigma_{\mc O}$ and $\Sigma_{\mc O}^\vee$, except
that type $B_n / C_n$ might be replaced by type $C_n / B_n$.

For $\alpha \in \Sigma_\red (M) \setminus \Sigma_{\mc O,\mu}$, the function $
\mu^{M_\alpha}$ is constant on $\mc O$. In contrast, for $\alpha \in \Sigma_{\mc O,\mu}$ 
it has both zeros and poles on $\mc O$. By \cite[\S 5.4.2]{Sil2} 
\begin{equation}\label{eq:3.4}
\tilde{s_\alpha} \cdot \sigma' \cong \sigma' \quad \text{whenever } \mu^{M_\alpha}(\sigma') = 0 .
\end{equation}
As $\Delta_{\mc O,\mu}$ is linearly independent in $X^* (A_M)$ and $\mu^{M_\alpha}$ factors
through $A_M / A_{M_\alpha}$, there exists a $\tilde \sigma \in \mc O$ such that 
$\mu^{M_\alpha}(\tilde \sigma) = 0$ for all $\alpha \in \Delta_{\mc O,\mu}$. In view of 
\cite[Theorem 1.6]{Sil3} this can even be achieved with a unitary $\tilde \sigma$. 
We replace $\sigma$ by $\tilde \sigma$, which means that from now on we adhere to:

\begin{cond}\label{cond:3.1}
$(\sigma,E) \in \Irr (M)$ is unitary supercuspidal and $\mu^{M_\alpha}(\sigma) = 0$ 
for all $\alpha \in \Delta_{\mc O,\mu}$.
\end{cond}

By \eqref{eq:3.4} the entire Weyl group $W(\Sigma_{\mc O,\mu})$ stabilizes the isomorphism 
class of this $\sigma$. However, in general $R(\mc O)$ need not stabilize $\sigma$.
We identify $X_\nr (M) / X_\nr (M,\sigma)$ with 
$\mc O$ via $\chi \mapsto \sigma \otimes \chi$ and we define
\begin{equation}\label{eq:3.20}
X_\alpha = b_{h^\vee_\alpha} \in \C [X_\nr (M) / X_\nr (M,\sigma)] .
\end{equation}
For any $w \in W(M,\mc O)$ which stabilizes $\sigma$ in $\Irr (M)$, Proposition 
\ref{prop:3.5}.a implies
\begin{equation}\label{eq:3.21}
w (X_\alpha) = X_{w (\alpha)} \quad \text{for all } \alpha \in \Sigma_{\mc O,\mu} . 
\end{equation}
Let $q_F$ be the cardinality of the residue field of $F$.
According to \cite[\S 1]{Sil3} there exist $q_\alpha, q_{\alpha*} \in \R_{\geq 1}$,
$c'_{s_\alpha} \in \R_{>0}$ for $\alpha \in \Sigma_{\mc O,\mu}$, such that
\begin{equation}\label{eq:3.22}
\mu^{M_\alpha}(\sigma \otimes \cdot) = 
\frac{c'_{s_\alpha}  (1 - X_\alpha) (1 - X_\alpha^{-1})}{(1 - q_\alpha^{-1} X_\alpha)
(1 - q_\alpha^{-1} X_\alpha^{-1})} \frac{(1 + X_\alpha) (1 + X_\alpha^{-1})
}{(1 + q_{\alpha*}^{-1} X_\alpha)(1 + q_{\alpha*}^{-1} X_\alpha^{-1})} 
\end{equation}
as rational functions on $X_\nr (M) / X_\nr (M,\sigma) \cong \mc O$.

We have only little explicit information about the $q_\alpha$ and the
$q_{\alpha*}$ in general ($c'_{s_\alpha}$ is not important). Obviously, knowing them is
equivalent to knowing the poles of $\mu^{M_\alpha}$. These are precisely the reducibility
points of the normalized parabolic induction $I_{P \cap M_\alpha}^{M_\alpha}(\sigma \otimes \chi)$
\cite[\S 5.4]{Sil2}. When these reducibility points are known somehow, one can recover 
$q_\alpha$ and $q_{\alpha*}$ from them. In all cases that we are aware of, this method
shows that $q_{\alpha}$ and $q_{\alpha*}$ are integers. We refer to \cite{SolParam} for
further investigations in this direction.

\hspace{-2mm} We may modify the choice of $\sigma$ in Condition \ref{cond:3.1}, so that, as in 
\cite[Remark 1.7]{Hei2}:
\begin{equation}\label{eq:3.25}
q_\alpha \geq q_{\alpha*} \text{ for all } \alpha \in \Delta_{\mc O,\mu} .
\end{equation}
Comparing \eqref{eq:3.22}, Condition \ref{cond:3.1} and \eqref{eq:3.25}, we see that 
$q_{\alpha} > 1$ for all $\alpha \in \Sigma_{\mc O,\mu}$.
In particular the zeros of $\mu^{M_\alpha}$ occur at
\[
\{ X_\alpha = 1 \} = \{ \sigma' \in \mc O : X_\alpha (\sigma') = 1 \} 
\]
and sometimes at
\[
\{ X_\alpha = -1 \} = \{ \sigma' \in \mc O : X_\alpha (\sigma') = -1 \} . 
\]

\begin{lem}\label{lem:3.6}
Let $\alpha \in \Sigma_{\mc O,\mu}$ and suppose that $\mu^{M_\alpha}$ has a zero at both 
$\{ X_\alpha = 1 \}$ and $\{ X_\alpha = -1 \}$. Then the irreducible component of 
$\Sigma_{\mc O}^\vee$ containing $h^\vee_\alpha$ has type
$B_n \; (n \geq 1)$ and $h^\vee_\alpha$ is a short root.
\end{lem}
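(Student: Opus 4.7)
The plan is to first unpack the hypothesis using the explicit formula \eqref{eq:3.22} and then translate it into a root-theoretic statement via the structure of Proposition \ref{prop:3.5}. I would start by observing that in \eqref{eq:3.22}, the numerator factor $(1+X_\alpha)(1+X_\alpha^{-1})$ is exactly cancelled by the denominator factor $(1+q_{\alpha*}^{-1}X_\alpha)(1+q_{\alpha*}^{-1}X_\alpha^{-1})$ precisely when $q_{\alpha*} = 1$, and otherwise produces a genuine zero of $\mu^{M_\alpha}$ on the set $\{X_\alpha = -1\}$. Combined with $q_\alpha > 1$ (which follows from Condition \ref{cond:3.1} and the discussion after \eqref{eq:3.25}), the hypothesis of the lemma is thus equivalent to the parameter condition $q_\alpha > 1$ and $q_{\alpha*} > 1$.

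Next I would analyse what this ``doubled parameter'' condition tells us about $h^\vee_\alpha$ inside the root datum from Proposition \ref{prop:3.5}. The key point is that $\mu^{M_\alpha}$ depends only on the rank-one data $M_\alpha$, $\sigma$, and $h^\vee_\alpha$; the simultaneous nontriviality of $q_\alpha$ and $q_{\alpha*}$ is the hallmark of a rank-one reducibility pattern with \emph{two} distinct reducibility points on the one-parameter family $\{\sigma \otimes \chi\}_{\chi \in X_\nr(M/M_\alpha)}$. By Silberger's classification of rank-one Plancherel measures in \cite[\S 5.4]{Sil2} (also recalled in \cite{Sil3}), such a double-reducibility pattern forces $\sigma$ to be, up to a quadratic twist, self-conjugate under $s_\alpha$. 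At the root-datum level this singles out $h^\vee_\alpha$ as a coroot admitting an ``affine sibling'' $2h^\vee_\alpha + \text{const}$ of an independent parameter type, which is the characteristic feature of a short root in a $B_n$-type (reduced) component.

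The main obstacle will be turning this essentially local information into the stated global conclusion about the irreducible component of $\Sigma_{\mc O}^\vee$ containing $h^\vee_\alpha$. For this I would use that $q_\alpha$ and $q_{\alpha*}$ are constant on $W(\Sigma_{\mc O,\mu})$-orbits (by the $W(M,\mc O)$-invariance of $\mu^G$ recorded just before Condition \ref{cond:3.1}, together with the fact that $W(\Sigma_{\mc O,\mu})$ stabilizes $\sigma$ via \eqref{eq:3.4}), combined with Proposition \ref{prop:3.5}(c)--(d) and the remark immediately after Proposition \ref{prop:3.5} (which identifies the types of $\Sigma_{\mc O,\mu}$ and $\Sigma_{\mc O}^\vee$ up to a $B_n \leftrightarrow C_n$ swap). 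A case-by-case inspection of the irreducible reduced root system types then shows that the two-parameter structure $q_{\alpha*} > 1$ is only compatible with $h^\vee_\alpha$ lying in the short-root orbit of a $B_n$ component of $\Sigma_{\mc O}^\vee$; the remaining cases (simply laced components, or the ``wrong-length'' root orbits in non-simply-laced components such as $B_n$-long, $C_n$-short, $F_4$, $G_2$) all correspond to $q_{\beta*} = 1$ on the relevant orbit, and hence contradict the hypothesis.
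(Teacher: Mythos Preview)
Your first paragraph is correct: the hypothesis is equivalent to $q_{\alpha*} > 1$ (together with $q_\alpha > 1$, which already holds for every $\alpha \in \Sigma_{\mc O,\mu}$).  From that point on, however, the argument does not close.

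The second and third paragraphs never actually prove anything.  The appeal to ``Silberger's classification'' and the phrase ``self-conjugate up to a quadratic twist'' are too vague to carry weight; note that \eqref{eq:3.4} already gives $s_\alpha \cdot \sigma' \cong \sigma'$ at \emph{every} zero of $\mu^{M_\alpha}$, so nothing special is being extracted there.  More seriously, your final ``case-by-case inspection'' is circular: you assert that in all non-$B_n$-short situations one has $q_{\beta*} = 1$, but that is exactly the content of the lemma.  The values $q_\alpha, q_{\alpha*}$ are representation-theoretic data determined by $\mu^{M_\alpha}$; nothing in the root-system combinatorics alone forbids $q_{\alpha*} > 1$ for an arbitrary root.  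A constraint has to come from somewhere, and your outline never identifies where.

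The paper's argument supplies the missing mechanism and is purely lattice-theoretic.  Argue by contraposition: if $h^\vee_\alpha$ is \emph{not} a short root in a $B_n$-component of $\Sigma_{\mc O}^\vee$, then $\alpha^\sharp$ is not a long root in a $C_n$-component of $\Sigma_{\mc O}$, and one can find $h^\vee_\beta \in \Sigma_{\mc O}^\vee$ with $\langle h^\vee_\beta, \alpha^\sharp \rangle = -1$ (this is the elementary root-system fact you need).  Then $s_\alpha(h^\vee_\beta) = h^\vee_\beta + h^\vee_\alpha$, so by \eqref{eq:3.21} $s_\alpha(X_\beta) = X_\beta X_\alpha$.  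If now $\sigma' \in \mc O$ has $\mu^{M_\alpha}(\sigma') = 0$ and $X_\alpha(\sigma') = -1$, one computes $X_\beta(s_\alpha \cdot \sigma') = - X_\beta(\sigma')$.  Since $X_\beta \in \C[X_\nr(M)/X_\nr(M,\sigma)] = \C[\mc O]$, this shows $s_\alpha \cdot \sigma' \not\cong \sigma'$, contradicting \eqref{eq:3.4}.  The crucial input you were missing is precisely this use of \eqref{eq:3.4} together with the existence of a coroot pairing of value $-1$; no rank-one Plancherel classification is needed.
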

\begin{proof}
Consider any $h^\vee_\alpha \in \Sigma_{\mc O}^\vee$ which is not a short root in a type $B_n$ 
irreducible component. Then $\alpha^\sharp$ is not a long root in a type $C_n$ irreducible
component of $\Sigma_{\mc O}$, so there exists a $x \in \Sigma_{\mc O}^\vee \subset  M_\sigma^2 / M^1$ 
with $\langle x , \alpha^\sharp \rangle = -1$. Then
\[
s_\alpha (x) = x - \langle x , \alpha^\sharp \rangle h^\vee_\alpha
= x + h^\vee_\alpha \in M_\sigma^2 / M^1 .
\]
In $X^* (X_\nr (M) / X_\nr (M,\sigma))$ this becomes $s_\alpha (x) = x X_\alpha$. Assume that there 
exists a $\sigma' \in \mc O$ with $\mu^{M_\alpha}(\sigma') = 0$ and $X_\alpha (\sigma') = -1$. We compute
\[
x (\tilde{s_\alpha} \cdot \sigma') = (s_\alpha x)(\sigma') = (x X_\alpha)(\sigma')
= x (\sigma') X_\alpha (\sigma') = - x (\sigma') .
\]
As $x \in X_* (X_\nr (M) / X_\nr (M,\sigma))$, this implies that $\tilde{s_\alpha} \cdot \sigma'$
is not isomorphic to $\sigma'$. But that contradicts \eqref{eq:3.4}, so the assumption cannot hold.
\end{proof}

Consider $r \in R(\mc O)$. By the definition of $W(M,\mc O)$ there exists a $\chi_r \in X_\nr (M)$
such that 
\begin{equation}\label{eq:3.27}
\tilde r \cdot \sigma \cong \sigma \otimes \chi_r .
\end{equation}

\begin{lem}\label{lem:3.8}
\enuma{
\item The maps $\alpha \mapsto q_{\alpha}, \alpha \mapsto q_{\alpha*}$ and 
$\alpha \mapsto c'_{s_\alpha}$ are constant on $W(M,O)$-orbits in $\Sigma_{\mc O,\mu}$.
\item For $\alpha \in \Delta_{\mc O,\mu}$ and $r \in R(\mc O)$, either $X_\alpha (\chi_r) = 1$
or $X_\alpha (\chi_r) = -1$ and $q_\alpha = q_{\alpha*}$.
}
\end{lem}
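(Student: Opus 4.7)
The plan is to exploit the transformation identity
$\mu^{M_{w(\alpha)}}(\tau) = \mu^{M_\alpha}(\tilde w^{-1} \cdot \tau)$ for $w \in W(M,\mc O)$ with lift $\tilde w \in N_G(M)$ and $\tau \in \mc O$ (which holds because $\tilde w^{-1} M_{w(\alpha)} \tilde w = M_\alpha$). Writing the $\mu$-function compactly as $\mu^{M_\alpha}(\sigma \otimes \chi) = f_\alpha(X_\alpha(\chi))$ with $f_\alpha$ the one-variable rational function of \eqref{eq:3.22}, substitution of $\tau = \sigma \otimes \chi$ yields rational-function identities whose zero and pole structure force the claimed constraints on $q_\alpha, q_{\alpha*}, c'_{s_\alpha}$.

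For $w \in W(\Sigma_{\mc O,\mu})$, Condition \ref{cond:3.1} and \eqref{eq:3.4} give $w \cdot \sigma \cong \sigma$; combined with $X_\alpha(w^{-1}\chi) = X_{w(\alpha)}(\chi)$ from Proposition \ref{prop:3.5}(a), the identity collapses to $f_{w(\alpha)} = f_\alpha$, and inspection of \eqref{eq:3.22} produces $q_\alpha = q_{w(\alpha)}$, $q_{\alpha*} = q_{w(\alpha)*}$, $c'_{s_\alpha} = c'_{s_{w(\alpha)}}$. For $r \in R(\mc O)$, the relation $\tilde r^{-1} \cdot \sigma \cong \sigma \otimes \chi_{r^{-1}}$ rewrites the identity as $f_{r(\alpha)}(Y) = f_\alpha(cY)$ with $c := X_\alpha(\chi_{r^{-1}})$ and $Y := X_{r(\alpha)}(\chi)$; matching the zero set $\{\pm 1\}$ of $f_{r(\alpha)}$ against $\{\pm c^{-1}\}$ of $f_\alpha(cY)$ forces $c \in \{\pm 1\}$. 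Using $X_\alpha(\chi_{r^{-1}}) = X_{r(\alpha)}(\chi_r)^{-1}$ (which follows from $\chi_{r^{-1}} \equiv (r^{-1}\cdot \chi_r)^{-1}$ modulo $X_\nr(M,\sigma)$ together with Proposition \ref{prop:3.5}(a)) and the fact that $r$ permutes $\Delta_{\mc O,\mu}$, this translates to $X_\beta(\chi_r) \in \{\pm 1\}$ for every $\beta \in \Delta_{\mc O,\mu}$, giving the first alternative in (b).

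In the subcase $c = 1$, the equality $f_\alpha = f_{r(\alpha)}$ directly yields $q_{r(\alpha)} = q_\alpha$, $q_{r(\alpha)*} = q_{\alpha*}$, $c'_{s_{r(\alpha)}} = c'_{s_\alpha}$, completing part (a) in this subcase. In the subcase $c = -1$, the identity $f_\alpha(-Y) = f_{r(\alpha)}(Y)$ forces the zero at $Y = -1$ on the left to persist, so $q_{\alpha*} > 1$ and symmetrically $q_{r(\alpha)*} > 1$; Lemma \ref{lem:3.6} then places both $h^\vee_\alpha$ and $h^\vee_{r(\alpha)}$ as short roots in $B_n$-type irreducible components of $\Sigma_{\mc O}^\vee$. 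Since $r$ acts on $\Sigma_{\mc O,\mu}$ as a root-system automorphism preserving $\Delta_{\mc O,\mu}$ and the outer automorphism group of an irreducible $B_n$ root system is trivial, whenever $r$ stabilises the $B_n$-component containing $\alpha$ it must fix $\alpha$, so $f_\alpha(-Y) = f_\alpha(Y)$ and hence $q_\alpha = q_{\alpha*}$, giving the second alternative of (b) and closing up part (a). The main obstacle is the edge case in which $r$ permutes two isomorphic $B_n$-components of $\Sigma_{\mc O,\mu}$; the plan there is to use $r^n \cdot \sigma \cong \sigma$ (so $\chi_{r^n} \in X_\nr(M,\sigma)$), which yields the orbit product identity $\prod_{j=0}^{k-1} X_{r^j(\alpha)}(\chi_r) = 1$, and to combine it with the iterated $(q,q^*)$-swap coming from the $c = -1$ relation to force $q_\alpha = q_{\alpha*}$ in this scenario as well.
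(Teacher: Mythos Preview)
Your overall strategy---exploiting the transformation rule for $\mu$-functions and reading off constraints from zeros and poles of the one-variable function $f_\alpha$---matches the paper's, and your treatment of the $W(\Sigma_{\mc O,\mu})$-case and the $c=1$ subcase is fine. The genuine problem is your handling of the $c=-1$ subcase.

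Once you have $f_{r(\alpha)}(Y)=f_\alpha(-Y)$, comparison of pole loci gives $q_{r(\alpha)}=q_{\alpha*}$ and $q_{r(\alpha)*}=q_\alpha$, a swap. To finish both (a) and (b) you then need $q_\alpha=q_{\alpha*}$. Your argument via Lemma~\ref{lem:3.6} and triviality of $\mathrm{Out}(B_n)$ works when $r$ stabilises the component of $\alpha$, but your sketched plan for the cross-component case does not close. From $r^n=1$ one obtains $\prod_{j=0}^{n-1} X_{r^j(\alpha)}(\chi_r)=1$; if the $\langle r\rangle$-orbit of $\alpha$ has size $k\mid n$, the $k$-term product is only a $(n/k)$-th root of unity in $\{\pm1\}$, not necessarily $1$. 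More seriously, even if the $k$-term product \emph{were} $1$, that means the number of $c_j=-1$ along the orbit is even, and then the iterated swap returns $(q_\alpha,q_{\alpha*})$ to itself after $k$ steps, yielding no constraint. For example, with $k=2$ and $c_0=c_1=-1$ one gets $(q_{r(\alpha)},q_{r(\alpha)*})=(q_{\alpha*},q_\alpha)$ and $(q_{r^2(\alpha)},q_{r^2(\alpha)*})=(q_\alpha,q_{\alpha*})$, which is consistent with $q_\alpha\neq q_{\alpha*}$. So nothing in your orbit argument forces equality.

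The paper closes this gap by a device you did not use: the normalisation \eqref{eq:3.25}, which was imposed on $\sigma$ and says $q_\alpha\ge q_{\alpha*}$ for all $\alpha\in\Delta_{\mc O,\mu}$. Since the Weyl-group case is already established, this inequality propagates to all $\alpha\in\Sigma_{\mc O,\mu}$ (this is \eqref{eq:3.24}). Then the swap $q_{r(\alpha)}=q_{\alpha*}$, $q_{r(\alpha)*}=q_\alpha$ combined with $q_{r(\alpha)}\ge q_{r(\alpha)*}$ gives $q_{\alpha*}\ge q_\alpha$, and with $q_\alpha\ge q_{\alpha*}$ one concludes $q_\alpha=q_{\alpha*}=q_{r(\alpha)}=q_{r(\alpha)*}$ in one line---no root-system casework needed. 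You should replace your $B_n$/orbit-product discussion by this inequality argument.
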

\begin{proof}
It follows directly from the definitions in \cite[\S V.2]{Wal} that
\begin{equation}\label{eq:3.23}
\mu^{M_{w (\alpha)}}(\tilde w \cdot \sigma') = \mu^{M_\alpha}(\sigma') \quad
\text{for all } w \in W(M,\mc O).
\end{equation}
Since every $W(\Sigma_{\mc O,\mu})$-orbit in $\Sigma_{\mc O,\mu}$ meets $\Delta_{\mc O,\mu}$,
\eqref{eq:3.25} generalizes to
\begin{equation}\label{eq:3.24}
q_{\alpha} \geq q_{\alpha*} \qquad \forall \alpha \in \Sigma_{\mc O,\mu} .
\end{equation}
As $W(\Sigma_{\mc O,\mu})$ stabilizes $\sigma$, \eqref{eq:3.23}, \eqref{eq:3.24} and \eqref{eq:3.20} 
imply that part (a) holds at least on $W(\Sigma_{\mc O,\mu}$-orbits in $\Sigma_{\mc O,\mu}$. 

For $r \in R(\mc O)$ we work out \eqref{eq:3.23}:
\begin{multline*}
\mu^{M_\alpha} (\sigma \otimes \chi) = \mu^{M_{r(\alpha)}}( \tilde r \cdot (\sigma \otimes \chi)) \;
= \; \mu^{M_{r(\alpha)}} (\sigma \otimes \chi_r r(\chi)) = \\
\mr{sp}_{\chi_r r (\chi)} \Big( \frac{c'_{s_{r \alpha}}  (1 - X_{r \alpha}) (1 - X_{r \alpha}^{-1})
}{(1 - q_{r \alpha}^{-1} X_{r \alpha}) (1 - q_{r \alpha}^{-1} X_{r \alpha}^{-1})} 
\frac{(1 + X_{r \alpha}) (1 + X_{r \alpha}^{-1})}{(1 + q_{r \alpha*}^{-1} X_{r \alpha})
(1 + q_{r \alpha*}^{-1} X_{r \alpha}^{-1})} \Big) = \\
\mr{sp}_\chi \Big( \frac{c'_{s_\alpha} (1 - X_{r \alpha} (\chi_r) X_\alpha) (1 - X_{r \alpha}^{-1} 
(\chi_r) X_\alpha^{-1})}{(1 - q_\alpha^{-1} X_{r \alpha} (\chi_r) X_\alpha)
(1 - q_{\alpha}^{-1} X_{r \alpha}^{-1} (\chi_r) X_\alpha^{-1})} \; \times \\
\hspace{14mm} \frac{(1 + X_{r \alpha} (\chi_r) 
X_\alpha) (1 + X_{r \alpha}^{-1} (\chi_r) X_\alpha^{-1})}{(1 + q_{\alpha*}^{-1} X_{r \alpha} 
(\chi_r) X_\alpha)(1 + q_{\alpha*}^{-1} X_{r \alpha}^{-1} (\chi_r) X_\alpha^{-1})} \Big) 
\end{multline*}
Comparing the zero orders along the subvarieties $\{X_\alpha = $ constant\}, we see that
$X_{r \alpha} (\chi_r) \in \{1,-1\}$. Then we look at the pole orders.

When $X_{r \alpha} (\chi_r) = 1$, we obtain $q_{r \alpha} = q_{\alpha}$ and
$q_{r \alpha*} = q_{\alpha*}$.

When $X_{r \alpha} (\chi_r) = -1$, we find $q_{r \alpha} = q_{\alpha*}$ and
$q_{r \alpha*} = q_{\alpha}$. Together with \eqref{eq:3.24} that implies 
$q_{r \alpha} = q_{\alpha*} = q_{r \alpha*} = q_{\alpha}$.

Knowing all this, another glance at \eqref{eq:3.23} reveals that $c'_{s_{r \alpha}} = c'_{s_\alpha}$.
\end{proof}

Of course, $\chi_r$ is in general not unique, only up to $X_\nr (M,\sigma)$. If 
$\tilde r \cdot \sigma \cong \sigma$, then we take $\chi_r = 1$, otherwise we just pick
one of eligible $\chi_r$. We note that then
\[
\tilde{r}^{-1} \cdot \sigma \cong \sigma \otimes r^{-1} (\chi_r^{-1}) ,
\]
which implies 
\begin{equation}\label{eq:3.28}
r^{-1} (\chi_r) \chi_{r^{-1}} \in X_\nr (M,\sigma) .
\end{equation}
For $r \in R (\mc O)$ of order larger than two,  we may take $\chi_{r^{-1}} = r^{-1} (\chi_r^{-1})$.

\begin{lem}\label{lem:3.7}
For all $w \in W(\Sigma_{\mc O,\mu}), r \in R(\mc O)$: 
$w(\chi_r) \chi_r^{-1} \in X_\nr (M,\sigma)$.
\end{lem}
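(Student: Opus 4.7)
\textbf{Proof plan for Lemma \ref{lem:3.7}.}

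The goal is to show $\sigma \otimes w(\chi_r) \cong \sigma \otimes \chi_r$, since this is precisely the statement that $w(\chi_r)\chi_r^{-1} \in X_\nr(M,\sigma)$. My strategy is to compute $\widetilde{wrw^{-1}} \cdot \sigma$ in two different ways and compare.

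First I will exploit the semidirect product decomposition \eqref{eq:3.8}, which says that $W(\Sigma_{\mc O,\mu})$ is a normal subgroup of $W(M,\mc O)$. Hence $w r w^{-1} = r \cdot (r^{-1} w r) w^{-1}$, and since $w, r^{-1} w r \in W(\Sigma_{\mc O,\mu})$ (normality), the element $w'' := (r^{-1} w r) w^{-1}$ lies in $W(\Sigma_{\mc O,\mu})$. Using the remark after Condition \ref{cond:3.1} that $W(\Sigma_{\mc O,\mu})$ stabilizes the isomorphism class of $\sigma$, I then compute
\[
\widetilde{w r w^{-1}} \cdot \sigma \; \cong \; \tilde r \, \tilde{w''} \cdot \sigma \; \cong \; \tilde r \cdot \sigma \; \cong \; \sigma \otimes \chi_r .
\]

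Second, I will expand $\widetilde{w r w^{-1}}$ directly. Since the $W(G,M)$-action on $\mr{Irr}(M)$ commutes with tensoring up to isomorphism, and since $\tilde w^{-1} \cdot \sigma \cong \sigma$,
\[
\widetilde{w r w^{-1}} \cdot \sigma \; \cong \; \tilde w \cdot (\tilde r \cdot (\tilde w^{-1} \cdot \sigma))
\; \cong \; \tilde w \cdot (\sigma \otimes \chi_r) \; \cong \; (\tilde w \cdot \sigma) \otimes w(\chi_r) \; \cong \; \sigma \otimes w(\chi_r) ,
\]
where in the penultimate step I use the compatibility $(\tilde w \cdot \chi)(m) = \chi(\tilde w^{-1} m \tilde w) = w(\chi)(m)$ of the natural $W(G,M)$-actions on $\mr{Irr}(M)$ and on $X_\nr(M)$.

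Comparing the two computations yields $\sigma \otimes \chi_r \cong \sigma \otimes w(\chi_r)$, from which the conclusion is immediate. There is no serious obstacle here: the argument rests entirely on the normality of $W(\Sigma_{\mc O,\mu})$ in $W(M,\mc O)$, on the fact that $W(\Sigma_{\mc O,\mu})$ fixes $\sigma$, and on the compatibility between the actions of $W(G,M)$ on representations of $M$ and on unramified characters; all of these are already available at this point in the paper.
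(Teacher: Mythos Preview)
Your proof is correct and is essentially the same argument as the paper's: both exploit that $W(\Sigma_{\mc O,\mu})$ is normal in $W(M,\mc O)$ and fixes $\sigma$, together with the compatibility of the $W(G,M)$-actions on $\Irr(M)$ and on $X_\nr(M)$. The paper instead expands $\tilde w \cdot \tilde r \cdot \tilde{w'} \cdot \tilde r^{-1} \cdot \sigma$ (with $w' = r^{-1} w^{-1} r$) step by step using \eqref{eq:3.28}, whereas your two-way computation of $(wrw^{-1})\cdot\sigma$ avoids the detour through $\chi_{r^{-1}}$ and is slightly more direct; the content is the same.
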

\begin{proof}
We abbreviate $w' = r^{-1} w^{-1} r$. Since $w r w' r^{-1} = 1 \in W(M,\mc O)$, 
\[
\tilde w \cdot \tilde r \cdot \tilde{w'} \cdot \tilde{r}^{-1} \cdot \sigma \cong \sigma \in \Irr (M). 
\]
We can also work out the left hand side stepwise. Recall from Condition \ref{cond:3.1} that 
$W(\Sigma_{\mc O,\mu})$ stabilizes $\sigma \in \Irr (M)$. With \eqref{eq:3.28} we compute
\begin{align*}
\tilde w \cdot \tilde r \cdot \tilde{w'} \cdot \tilde{r}^{-1} \cdot \sigma & \cong
\tilde w \cdot \tilde r \cdot \tilde{w'} \cdot (\sigma \otimes \chi_{r^{-1}}) \\
& \cong \tilde w \cdot \tilde r \cdot \tilde{w'} \cdot (\sigma \otimes r^{-1}(\chi_r^{-1})) \\ 
& \cong \tilde w \cdot \tilde r \cdot (\sigma \otimes w' r^{-1} (\chi_r^{-1})) \\
& \cong \tilde w \cdot (\sigma \otimes \chi_r \otimes r w' r^{-1} (\chi_r^{-1})) \\
& \cong \sigma \otimes w(\chi_r) \otimes w r w' r^{-1} (\chi_r^{-1}) 
= \sigma \otimes w(\chi_r) \chi_r^{-1} . \qedhere
\end{align*}
\end{proof}

Now we have three collections of transformations of $\mc O$:
\[
\begin{array}{ccc@{\qquad}l}
\sigma \otimes \chi & \mapsto & w (\sigma \otimes \chi) \cong \sigma \otimes w (\chi) & 
w \in W(\Sigma_{\mc O,\mu}) , \\
\sigma \otimes \chi & \mapsto & 
r (\sigma \otimes \chi) \cong \sigma \otimes r (\chi) \chi_r & r \in R(\mc O) ,\\
\sigma \otimes \chi & \mapsto & \sigma \otimes \chi \chi_c & \chi_c \in X_\nr (M,\sigma) . 
\end{array} 
\]
These give rise to the following transformations of $X_\nr (M)$:
\begin{equation}\label{eq:3.29}
\begin{array}{rccc@{\qquad}l}
\mf{w} : & \chi & \mapsto & w (\chi) & w \in W(\Sigma_{\mc O,\mu}) , \\
\mf{r} : & \chi & \mapsto & r (\chi) \chi_r & r \in R(\mc O) ,\\
\chi_c : & \chi & \mapsto & \chi \chi_c & \chi_c \in X_\nr (M,\sigma) . 
\end{array}  
\end{equation}
Let $W(M,\sigma,X_\nr (M))$ be the group of transformations of $X_\nr (M)$ generated by the
$\mf{w}, \mf{r}$ and $\phi_{\chi_c}$ from \eqref{eq:3.29}. Since $X_\nr (M,\sigma)$ is $W(M,\mc O)$-stable,
it constitutes a normal subgroup of $W(M,\sigma,X_\nr (M))$. Further $W(\Sigma_{\mc O,\mu})$ embeds 
as a subgroup in $W(M,\sigma,X_\nr (M))$, and $R(\mc O)$ as the subset $\{ \mf{r} : r \in R(\mc O) \}$. 

By \eqref{eq:3.8} and Lemma \ref{lem:3.7}, the multiplication map
\begin{equation}\label{eq:3.30}
X_\nr (M,\sigma) \times R(\mc O) \times W(\Sigma_{\mc O,\mu}) \to W(M,\sigma,X_\nr (M)) 
\end{equation}
is a bijection (but usually not a group homomorphism). We note that $R(\mc O)$ does not necessarily 
normalize $W(\Sigma_{\mc O,\mu})$ in $W(M,\sigma,X_\nr (M))$:
\begin{multline*}
\mf{r} \mf{w} \mf{r}^{-1} (\chi) = \mf{r} \mf{w} (r^{-1}(\chi) r^{-1}(\chi_r^{-1})) \\
= \mf{r} (wr^{-1} (\chi) w r^{-1} (\chi_r^{-1})) = (rwr^{-1}) (\chi) (r w r^{-1})(\chi_r^{-1}) \chi_r .
\end{multline*}
Rather, $W(M,\sigma,X_\nr (M))$ is a nontrivial extension of $W(M,\mc O)$ by $X_\nr (M,\sigma)$.

Via the quotient maps 
\[
W(M,\sigma,X_\nr (M)) \to W(M,\mc O) \to W(\Sigma_{\mc O,\mu})
\]
we lift $\ell_{\mc O}$ to $W(M,\sigma,X_\nr (M))$.

\section{Intertwining operators}
\label{sec:intertwining}

We abbreviate 
\[
E_B = E \otimes_\C B = E \otimes_\C \C [X_\nr (M)] \cong \ind_{M^1}^M (E) . 
\]
By \cite[\S III.4.1]{BeRu} or \cite{Ren}, the parabolically induced representation
$I_P^G (E_B)$ is a progenerator of $\Rep (G)^{\mf s}$, Hence, as in \cite[Theorem 1.8.2.1]{Roc1},
\begin{equation}\label{eq:4.equiv}
\begin{array}{cccc}
\mc E : &\Rep (G)^{\mf s} & \to & \End_G ( I_P^G (E_B) )\text{-Mod} \\
& V & \mapsto & \Hom_G ( I_P^G (E_B), V)
\end{array}
\end{equation}
is an equivalence of categories. This equivalence commute with parabolic induction, in the
following sense. Let $L$ be a Levi subgroup of $G$ containing $L$. Then $PL$ and $\overline{P}L$
are opposite parabolic subgroups of $G$ with common Levi factor $L$. The normalized parabolic
induction functor $I_{PL}^G$ provides a natural injection
\begin{equation}\label{eq:4.33}
\End_L ( I_{P \cap L}^L (E_B) ) \longrightarrow \End_G ( I_P^G (E_B) ) ,
\end{equation}
which allows us to consider $\End_L ( I_{P \cap L}^L (E_B) )$ as a subalgebra of 
$\End_G ( I_P^G (E_B) )$. We write $\mf s_L = [M,\sigma ]_L$ and we let $\mc E_L$ be the
analogue of $\mc E$ of $L$.

\begin{prop}\label{prop:4.5}
\textup{\cite[Proposition 1.8.5.1]{Roc1}} 
\enuma{
\item The following diagram commutes:
\[
\begin{array}{ccc}
\Rep (G)^{\mf s} & \xrightarrow{\quad \mc E \quad} & \End_G ( I_P^G (E_B) ) -\Mod \\
\uparrow I_{PL}^G & & \uparrow \ind_{\End_L ( I_{P \cap L}^L (E_B) )}^{\End_G ( I_P^G (E_B) )} \\
\Rep (L)^{\mf s_L} & \xrightarrow{\quad \mc E_L \quad} &  \End_L ( I_{P \cap L}^L (E_B) ) -\Mod 
\end{array}
\]
\item Let $J^G_{\overline{P}L} : \Rep (G) \to \Rep (L)$ be the normalized Jacquet restriction 
functor and let $\mr{pr}_{\mf s_L} : \Rep (L) \to \Rep (L)^{\mf s_L}$ be the projection coming
from the Bernstein decomposition. Then the following diagram commutes:
\[
\begin{array}{ccc}
\Rep (G)^{\mf s} & \xrightarrow{\quad \mc E \quad} & \End_G ( I_P^G (E_B) ) -\Mod \\
\downarrow \mr{pr}_{\mf s_L} \circ J_{\overline{P}L}^G & & 
\downarrow \mr{Res}_{\End_L ( I_{P \cap L}^L (E_B) )}^{\End_G ( I_P^G (E_B) )} \\
\Rep (L)^{\mf s_L} & \xrightarrow{\quad \mc E_L \quad} &  \End_L ( I_{P \cap L}^L (E_B) ) -\Mod 
\end{array}
\]
}
\end{prop}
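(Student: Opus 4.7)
The plan is to prove part (b) first, as a direct application of Bernstein's second adjointness theorem, and then establish part (a) by constructing a natural transformation and checking it is an isomorphism on the single generator $I_{P\cap L}^L(E_B)$.

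\textbf{Part (b).} By transitivity of normalized parabolic induction, $I_P^G = I_{PL}^G \circ I_{P\cap L}^L$, so
\[
\mc E(V) = \Hom_G\bigl(I_{PL}^G I_{P\cap L}^L(E_B), V\bigr).
\]
Bernstein's second adjointness theorem states that $I_{PL}^G$ is left adjoint to $J_{\overline{P}L}^G$, yielding a natural isomorphism
\[
\Hom_G\bigl(I_{PL}^G I_{P\cap L}^L(E_B), V\bigr) \;\cong\; \Hom_L\bigl(I_{P\cap L}^L(E_B), J_{\overline{P}L}^G V\bigr).
\]
Since $I_{P\cap L}^L(E_B) \in \Rep(L)^{\mf s_L}$, any $L$-homomorphism out of it vanishes on the components of the Bernstein decomposition of $J_{\overline{P}L}^G V$ other than $\mf s_L$, so the right-hand side equals $\mc E_L(\mr{pr}_{\mf s_L} J_{\overline{P}L}^G V)$. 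It remains to verify that the right module structures match. By functoriality of the adjunction, the action of $\psi \in \End_G(I_P^G(E_B))$ on the left by precomposition corresponds to the action of its preimage under \eqref{eq:4.33}, when it exists; more generally, restriction along \eqref{eq:4.33} gives precisely the $\End_L(I_{P\cap L}^L(E_B))$-structure on the right, since the embedding sends $\chi$ to $I_{PL}^G(\chi)$ and Bernstein's adjunction is natural in the source argument.

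\textbf{Part (a).} Define a natural transformation
\[
\eta_W : \ind_{\End_L(I_{P\cap L}^L(E_B))}^{\End_G(I_P^G(E_B))} \mc E_L(W) \;\longrightarrow\; \mc E\bigl(I_{PL}^G W\bigr)
\]
by $\phi \otimes \psi \mapsto I_{PL}^G(\phi) \circ \psi$, where $\phi \in \Hom_L(I_{P\cap L}^L(E_B), W)$ and $\psi \in \End_G(I_P^G(E_B))$. The tensor relation is respected exactly because \eqref{eq:4.33} is the map $\chi \mapsto I_{PL}^G(\chi)$, and the right $\End_G(I_P^G(E_B))$-module structures are compatible by construction (both act by precomposition with $\psi$).

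To see $\eta$ is an isomorphism, observe that both source and target are functors $\Rep(L)^{\mf s_L} \to \End_G(I_P^G(E_B))\mi\Mod$ that preserve small colimits: the source is the composition of the equivalence $\mc E_L$ with the induction of modules (a left adjoint, hence colimit-preserving), while the target composes the exact functor $I_{PL}^G$ with $\Hom_G(I_P^G(E_B), -)$, which preserves colimits because $I_P^G(E_B)$ is a progenerator. Since $I_{P\cap L}^L(E_B)$ is itself a progenerator of $\Rep(L)^{\mf s_L}$, any $W$ is a colimit of copies of $I_{P\cap L}^L(E_B)$, so it suffices to check $\eta_{I_{P\cap L}^L(E_B)}$. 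In that case the source collapses to $\End_L \otimes_{\End_L} \End_G = \End_G(I_P^G(E_B))$, the target becomes $\Hom_G(I_P^G(E_B), I_P^G(E_B)) = \End_G(I_P^G(E_B))$ via transitivity, and $\eta$ is manifestly the identity.

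\textbf{Main obstacle.} The routine content is light, but the delicate point is bookkeeping: verifying compatibility of the \emph{right} $\End_G$-module structures through Bernstein's second adjointness in (b) requires a careful trace through the unit and counit, and in (a) the use of colimit-preservation genuinely depends on $I_P^G(E_B)$ being a compact projective generator rather than merely a generator. Both aspects are where one must appeal to the precise sense in which $I_P^G(E_B)$ is a progenerator as established in \cite{BeRu,Ren}.
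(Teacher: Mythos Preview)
The paper does not supply its own proof of this proposition: it is quoted verbatim from \cite[Proposition 1.8.5.1]{Roc1} and used as a black box. Your argument is therefore not competing with anything in the present paper, and what you have written is essentially the standard proof one finds in Roche's notes. Part (b) is exactly right: second adjointness $\Hom_G(I_{PL}^G(-),V)\cong\Hom_L(-,J_{\overline{P}L}^G V)$ together with the fact that $I_{P\cap L}^L(E_B)$ lies in $\Rep(L)^{\mf s_L}$ gives the isomorphism, and naturality of the adjunction in the first variable is precisely what makes the $\End_L(I_{P\cap L}^L(E_B))$-module structures agree. Part (a) is also correct; the reduction to checking $\eta$ on the single object $I_{P\cap L}^L(E_B)$ via colimit-preservation is the clean way to do it, and your identification of the obstacle (that one really needs compact projectivity, not mere generation) is accurate.
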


We want to find elements of $\End_G \big( I_P^G (E_B) \big)$ that do not come from
$\End_M (E_B)$. Harish-Chandra devised by now standard intertwining operators for 
$I_P^G (E)$. However, they arise as a rational functions of $\sigma \in \mc O$, so their images 
lie in $I_P^G (E \otimes_\C \C (X_\nr (M)))$ and they may have poles. We will exhibit variations 
which have fewer singularities.

We denote the $M$-representation \eqref{eq:2.3} on $E_B$ by $\sigma_B$.
Similarly we have the $M$-representation $\sigma_{K(B)}$ on
\[
E_{K(B)} = E \otimes_\C K(B) = E_B \otimes_B K(B) = E \otimes_\C \C (X_\nr (M)) . 
\]
The specialization at $\chi \in X_\nr (M)$ from \eqref{eq:2.spec} is a $M$-homomorphism
\[
\mr{sp}_\chi : (\sigma_B,E_B) \to (\sigma \otimes \chi,E) . 
\]
It extends to the subspace of $E_{K(B)}$ consisting of functions that are regular at $\chi$.

Let $\delta_P : P \to \R_{>0}$ be the modular function. We realize $I_P^G (E)$ as
\[
\{ f : G \to E \mid f \text{ is smooth, } f(u m g) = \sigma (m) \delta_P^{1/2}(m) f(g)
\; \forall g \in G, m \in M, u \in U \} .
\]
As usual $I_P^G (\sigma)(g)$ is right translation by $g$. With $I_P^G$, we can regard
$\mr{sp}_\chi$ also as a $G$-homomorphism
\[
I_P^G (\sigma_B, E_B) \to I_P^G (\sigma \otimes \chi, E) .
\]
Fix a maximal $F$-split torus $\mc A_0$ in $\mc G$, contained in $\mc M$. Let $x_0$ be a special 
vertex in the apartment of the extended Bruhat--Tits building of $(\mc G,F)$ associated to $\mc A_0$. 
Its isotropy group $K = G_{x_0}$ is a good maximal compact subgroup of $G$, so it 
contains representatives for all elements of the Weyl group $W(G,A_0)$ and $G = P K$
by the Iwasawa decomposition.  

The vector space $I_P^G (E)$ is naturally in bijection with 
\begin{multline*}
I_{P \cap K}^K (E) = \{ f : K \to E \mid f \text{ is smooth, } f(u m k) 
= \sigma (m) f(k) \\ \forall k \in K, m \in M \cap K, u \in U \cap K\} .
\end{multline*}
Notice that this space is the same for $(\sigma,E)$ and $(\sigma \otimes \chi,E)$, 
for any $\chi \in X_\nr (M)$. 

\subsection{Harish-Chandra's operators $J_{P'|P}$} \

Let $P' = M U'$ be another parabolic subgroup of $G$ with Levi factor $M$. 
Following \cite[\S IV.1]{Wal} we consider the $G$-map
\begin{equation}\label{eq:3.13}
\begin{array}{cccc}
J_{P' | P}(\sigma) : & I_P^G (E) & \to & I_{P'}^G (E) \\
& f & \mapsto & \big[ g \mapsto \int_{(U \cap U') \backslash U'} f (u' g) \textup{d} u' \big]
\end{array} .
\end{equation}
The integral does not always converge. Rather, $J_{P'|P}$ should be considered as a map
\[
\begin{array}{ccc}
X_\nr (M) \times I_{P \cap K}^K (E) & \to & I_{P' \cap K}^K (E) \\
(\chi, f) & \mapsto & J_{P' | P}(\sigma \otimes \chi) f 
\end{array} ,
\]
where $I_{P \cap K}^K (E)$ is identified with $I_P^G (\sigma \otimes \chi,E)$ as above. With this 
interpretation $J_{P'|P}$ is rational in the variable $\chi$ \cite[Th\'eor\`eme IV.1.1]{Wal}.
In yet other words, it defines a map
\begin{equation}\label{eq:3.1}
\begin{array}{ccc}
I_{P \cap K}^K (E) & \to & I_{P' \cap K}^K (E) \otimes_\C \C (X_\nr (M)) \\
f & \mapsto & \big[ \chi \mapsto J_{P' | P}(\sigma \otimes \chi) f \big] 
\end{array} . 
\end{equation}
For $h \in G$, let $\lambda (h)$ be the left translation operator on functions on $G$:
\[
\lambda (h) f : g \mapsto f (h^{-1} g) . 
\]
For every $w \in W(G,M)$ we choose a representative $\tilde w \in N_K (M)$ (that is is
possible because the maximal compact subgroup $K$ is in good position with respect to 
$A_0 \subset M$). Then $w(P) := \tilde w P \tilde w^{-1}$ is a parabolic subgroup of $G$ 
with Levi factor $M$ and unipotent radical $\tilde w U \tilde w^{-1}$. 
For any $\pi \in \Rep (M)$, $\lambda (\tilde w)$ gives a $G$-isomorphism
\[
\lambda (\tilde w) : I_{P'}^G (\pi) \to I_{w (P')}^G (\tilde w \cdot \pi) .
\]
We let $w \cdot E_B$ (resp. $w \cdot E_{K(B)}$) be the vector space $E_B$ (resp. $E_{K(B)}$)
endowed with the representation $\tilde w \cdot \sigma$ (resp. $\tilde w \cdot \sigma_{K(B)}$).

Using \cite[Th\'eor\`eme IV.1.1]{Wal} we define
\begin{equation}\label{eq:3.2}
\begin{array}{cccc}
J_{K(B),w} : & I_P^G (E_B) & \to & I_P^G (w \cdot E_{K(B)}) \\
& f & \mapsto & \big[ \chi \mapsto \lambda (\tilde w) J_{w^{-1}(P) | P}(\sigma \otimes \chi) 
\mr{sp}_\chi (f) \big]
\end{array}.
\end{equation}
It follows from \cite[Proposition IV.2.2]{Wal} that $J_{P'|P}$ and $J_{K(B),w}$ extend to 
$G$-isomorphisms
\begin{equation}\label{eq:3.3}
\begin{array}{lcccl}
J_{P'|P} & : & I_P^G (E_{K(B)}) & \to & I_{P'}^G (E_{K(B)}) , \\
J_{K(B),w} & : & I_P^G (E_{K(B)}) & \to & I_P^G (w \cdot E_{K(B)}) .
\end{array}
\end{equation}
The algebra $B$ embeds in $\End_G (I_P^G (E_B))$ and in $\End_G (I_{P'}^G (E_{K(B)}))$ via 
\eqref{eq:2.4} and parabolic induction. That makes $J_{P'|P}$ and $J_{K(B),w}$ $B$-linear.

The group $W(G,M)$ acts on $B = \C[X_\nr (M)]$ and $K(B) = \C (X_\nr (M))$ by
\[
w \cdot b_m = b_{w(m)} = b_{\tilde w m \tilde w^{-1}}, \qquad (w \cdot b)(\chi) = b (w^{-1} \chi) ,
\]
for $w \in W(G,M), m \in M, b \in K(B), \chi \in X_\nr (M)$. This determines $M$-isomorphisms 
\[
\begin{array}{cccc}
\tau_w : & (\tilde w \cdot \sigma_B,w \cdot E_B) & \to & ((\tilde w \cdot \sigma)_B, (w \cdot E)_B) \\
& (\tilde w \cdot \sigma_{K(B)},w \cdot E_{K(B)}) & \to & 
((\tilde w \cdot \sigma)_{K(B)}, (w \cdot E)_{K(B)}) \\
& e \otimes b & \mapsto & e \otimes w \cdot b 
\end{array}.
\]
With functoriality we obtain $G$-isomorphisms
\[
I_{P'}^G (w (E_B)) \to I_{P'}^G ((w \cdot E)_B) \quad \text{and} \quad 
I_{P'}^G (w (E_{K(B)})) \to I_{P'}^G ((w \cdot E)_{K(B)}),
\]
which we also denote by $\tau_w$. Composition with \eqref{eq:3.2} gives
\[
\tau_w \circ J_{K(B),w} : I_P^G (E_B) \to I_P^G ((w \cdot E)_{K(B)}) . 
\]
In order to associate to $w$ an element of $\Hom_G \big( I_P^G (E_B), I_P^G (E_{K(B)}) \big)$, 
it remains to construct a suitable $G$-intertwiner from $I_P^G ((w \cdot E)_{K(B)})$ to 
$I_P^G (E_{K(B)})$. For this we do not want to use $\tau_{w^{-1}} \circ J_{K(B),w^{-1}}$, 
then we would end up with a simple-minded $G$-automorphism of $I_P^G (E_{K(B)})$ (essentially 
multiplication with an element of $K(B)$). We rather employ an idea from \cite{Hei2}: 
construct a $G$-intertwiner $I_P^G (w \cdot E) \to I_P^G (E)$ and extend it to 
$I_P^G ((w \cdot E)_B) \to I_P^G (E_B)$ by making it constant on $X_\nr (M)$.

With this motivation we analyse the poles of the operators $J_{P'|P}$ and $J_{K(B),w}$. 
They are closely related to zeros of the Harish-Chandra $\mu$-functions. Namely, for
$\alpha \in \Sigma_\red (M)$:
\begin{equation}\label{eq:3.33}
J_{P \cap M_\alpha | s_\alpha (P \cap M_\alpha)} (\sigma \otimes \chi) J_{s_\alpha (P \cap M_\alpha) 
| P \cap M_\alpha}(\sigma \otimes \chi) = \frac{\text{constant}}{\mu^{M_\alpha}(\sigma \otimes \chi)}
\end{equation}
as rational functions of $\chi \in X_\nr (M)$ \cite[\S IV.3 and V.2]{Wal}.

\begin{prop}\label{prop:3.2}
Let $P' = M U'$ be a parabolic subgroup of $G$ with Levi factor $M$, and consider $J_{P'|P}$ in the form 
\eqref{eq:3.1}.
\enuma{
\item All the poles of $J_{P'|P}$ occur at 
\[
\bigcup\nolimits_{\alpha \in \Sigma_{\mc O,\mu}(P) \cap \Sigma_{\mc O,\mu}(\overline{P'})}
\{ \chi \in X_\nr (M) : \mu^{M_\alpha}(\sigma \otimes \chi) = 0 \} .
\]
\item Suppose that $\chi_2 \in X_\nr (M)$ satisfies $\mu^{M_\alpha}(\sigma \otimes \chi_2) = 0$ for 
precisely one $\alpha \in \Sigma_{\mc O,\mu}(P) \cap \Sigma_{\mc O,\mu}(\overline{P'})$. Then
$J_{P'|P}$ has a pole of order one at $\chi_2$ and 
\[
(X_\alpha (\chi) - X_\alpha (\chi_2)) J_{P'|P}(\sigma \otimes \chi) : I_P^G (\sigma \otimes \chi)
\to I_{P'}^G (\sigma \otimes \chi)
\]
is bijective for all $\chi$ in a certain neighborhood of $\chi_2$ in $X_\nr (M)$.
\item There exists a neighborhood $V_1$ of 1 in $X_\nr (M)$ on which
\[
\prod_{\alpha \in \Sigma_{\mc O,\mu}(P) \cap \Sigma_{\mc O,\mu}(\overline{P'})}
(X_\alpha - 1) J_{P'|P} : I_{P \cap K}^K (E) \to I_{P \cap K}^K (E) \otimes_\C \C (X_\nr (M))
\]
has no poles. The specialization of this operator to $\chi \in V_1$
is a $G$-isomorphism $I_P^G (\sigma \otimes \chi) \to I_{P'}^G (\sigma \otimes \chi)$.
}
\end{prop}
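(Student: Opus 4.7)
The plan is to reduce everything to the rank-one case by factoring $J_{P'|P}$ through a minimal gallery of adjacent parabolics from $P$ to $P'$. Any two parabolics with Levi factor $M$ can be connected by a sequence $P = P_0, P_1, \ldots, P_n = P'$ with $P_{i+1} = s_{\alpha_i}(P_i)$, where $\alpha_0,\dots,\alpha_{n-1}$ enumerates $\Sigma_\red(A_M) \cap \Sigma_\red(P) \cap \Sigma_\red(\overline{P'})$. The multiplicativity of Harish-Chandra's intertwiners (\cite[\S IV.3]{Wal}) then gives
\[
J_{P'|P}(\sigma \otimes \chi) = J_{P_n|P_{n-1}}(\sigma \otimes \chi) \circ \cdots \circ J_{P_1|P_0}(\sigma \otimes \chi)
\]
as rational functions of $\chi$. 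Each factor is a rank-one intertwiner whose analytic behaviour is governed, via \eqref{eq:3.33}, by $\mu^{M_{\alpha_i}}$ on $\mc O$.

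For (a), if $\alpha_i \notin \Sigma_{\mc O,\mu}$ then $\mu^{M_{\alpha_i}}$ is a nonzero constant on $\mc O$, so \eqref{eq:3.33} together with the rationality of \cite[Th\'eor\`eme IV.1.1]{Wal} forces $J_{P_{i+1}|P_i}$ and $J_{P_i|P_{i+1}}$ to be holomorphic and bijective on all of $X_\nr(M)$. Hence only the factors indexed by $\alpha_i \in \Sigma_{\mc O,\mu}(P) \cap \Sigma_{\mc O,\mu}(\overline{P'})$ can contribute singularities, which proves (a). For any such $\alpha_i$, formula \eqref{eq:3.22} shows that $\mu^{M_{\alpha_i}}(\sigma \otimes \chi)$ vanishes to order exactly $2$ along each of the hyperplanes $\{X_{\alpha_i} = 1\}$ and (when this zero is actually present) $\{X_{\alpha_i} = -1\}$. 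Applying the rank-one structure theory of \cite[\S V]{Wal} and \cite[\S 1]{Sil2} to \eqref{eq:3.33}, each of $J_{P_{i+1}|P_i}$ and $J_{P_i|P_{i+1}}$ has a pole of order exactly one along every such hyperplane.

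For (b), at the given $\chi_2$ only the factor with $\alpha_i = \alpha$ has a singularity, and it is of order one; every other factor is bijective there. Hence $J_{P'|P}$ has a simple pole at $\chi_2$, and $(X_\alpha(\chi) - X_\alpha(\chi_2)) J_{P'|P}(\sigma \otimes \chi)$ is holomorphic in a neighborhood. To see that it is bijective at $\chi_2$, note that $J_{P_i|P_{i+1}}(\sigma \otimes \chi) J_{P_{i+1}|P_i}(\sigma \otimes \chi)$ equals $\mathrm{const}/\mu^{M_\alpha}(\sigma \otimes \chi)$, so its rescaling by $(X_\alpha(\chi) - X_\alpha(\chi_2))^2$ is holomorphic and nonzero at $\chi_2$ by the order-$2$ vanishing of $\mu^{M_\alpha}$; this forces both rescaled rank-one factors to be individually bijective at $\chi_2$, while the remaining factors of $J_{P'|P}$ are already bijective there. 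Bijectivity on a neighborhood follows because on each $K$-isotypical component the operator is a holomorphically varying endomorphism of a fixed finite-dimensional space. For (c), shrink $V_1$ so that $X_\alpha(\chi) \neq -1$ for every $\alpha \in \Sigma_{\mc O,\mu}(P) \cap \Sigma_{\mc O,\mu}(\overline{P'})$ and $\chi \in V_1$; then the only singular hyperplanes of $J_{P'|P}$ meeting $V_1$ are the $\{X_\alpha = 1\}$, each contributing a simple pole, so $\prod_\alpha (X_\alpha - 1) J_{P'|P}$ is holomorphic on $V_1$ and bijective by the same factor-wise argument.

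The main obstacle is splitting the order-$2$ pole of $J_{P_i|P_{i+1}} J_{P_{i+1}|P_i}$ symmetrically between its two factors: rationality and \eqref{eq:3.33} only bound the sum of pole orders, and pinning each individually to exactly one requires the detailed rank-one analysis of Harish-Chandra intertwiners (equivalently, the explicit normalization that makes each $J$ a product of a single linear factor and a holomorphic bijective operator).
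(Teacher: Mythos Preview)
Your approach is essentially the paper's: factor $J_{P'|P}$ along a minimal chain of adjacent parabolics and reduce all three assertions to the rank-one case. The paper carries this out by first observing that in the adjacent case $J_{P'|P}$ is parabolically induced from $J_{P'\cap M_\alpha\,|\,P\cap M_\alpha}$, and then citing \cite[Proposition~1.10]{Hei2} for $\alpha\notin\Sigma_{\mc O,\mu}$ and \cite[Lemme~1.8]{Hei2} for $\alpha\in\Sigma_{\mc O,\mu}$; your sketch of the rank-one analysis via \eqref{eq:3.33} and \eqref{eq:3.22} is the content of those lemmas.

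One remark: the ``main obstacle'' you flag at the end --- that \eqref{eq:3.33} only controls the \emph{sum} of pole orders of $J_{P_{i+1}|P_i}$ and $J_{P_i|P_{i+1}}$, not each separately --- applies equally to your treatment of the case $\alpha_i\notin\Sigma_{\mc O,\mu}$. There you asserted that a constant nonzero composite ``forces'' each factor to be holomorphic and bijective, but a priori one factor could have a pole cancelled by a zero of the other. Ruling this out needs the same rank-one input (convergence of the defining integral in a cone and explicit continuation, as in \cite[\S IV--V]{Wal} or \cite[Proposition~1.10]{Hei2}) that you invoke for the singular case; it is not a consequence of \eqref{eq:3.33} and rationality alone.
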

\begin{proof}
As in \cite[p. 279]{Wal} we define
\[
d (P,P') = | \{ \alpha \in \Sigma_\red (A_M) : \alpha \text{ is positive with respect to
both } P \text{ and } \overline{P'} \} | .
\]
Choose a sequence of parabolic subgroups $P_i = M U_i$ such that $d(P_i ,P_{i+1}) = 1$,
$P_0 = P$ and $P_{d(P,P')} = P'$. From \cite[p. 283]{Wal} we know that 
\[
J_{P'|P} = J_{P'|P_{d(P,P')-1}} \circ \cdots \circ J_{P_2 |P_1} \circ J_{P_1|P} . 
\]
In this way we reduce the whole proposition to the case $d(P,P') = 1$. 
Assume that, and let $\alpha \in \Sigma_\red (A_M)$ be the unique element which 
is positive respect to both $P$ and $\overline{P'}$. 

When $\alpha \not\in \Sigma_{\mc O,\mu}$, \cite[Proposition 1.10]{Hei2} says that the
specialization of $J_{P'|P}$ at any $\chi \in X_\nr (M)$ is regular and bijective. That
proves parts (a) and (c) for such a $P'$, while (b) is vacuous because $\mu^{M_\alpha}$
is constant on $\mc O$ \cite[Theorem 1.6]{Sil3}.

Suppose now that $\alpha \in \Sigma_{\mc O,\mu}$. We have
\[
(U \cap U') \backslash U' \cong U_{-\alpha} \subset M_\alpha .  
\]
Hence $J_{P'|P}$ arises by induction from $J_{P' \cap M_\alpha | P \cap M_\alpha}$,
and it suffices to consider the latter operator. We apply \cite[Lemme 1.8]{Hei2} with
$M_\alpha$ in the role of $G$, that yields parts (a) and (b) of our proposition.
Part (c) follows because $X_\alpha - 1$ has a zero of order one at $\{X_\alpha = 1\}$.
\end{proof}

\subsection{The auxiliary operators $\rho_w$} \

With Proposition \ref{prop:3.2} we define, for $w \in W(G,M)$, a $G$-homomorphism
\[
\rho'_{\sigma \otimes \chi,w} = \lambda (\tilde w) \mr{sp}_\chi 
\prod_{\alpha \in \Sigma_{\mc O,\mu} (P) \cap \Sigma_{\mc O,\mu}(w^{-1}(\overline P))\hspace{-12mm}}
\hspace{-9mm} (X_\alpha - 1) J_{w^{-1}(P)|P} (\sigma \otimes \chi) : 
I_P^G (\sigma \otimes \chi) \to I_P^G (\tilde{w}(\sigma \otimes \chi))
\]
We note that $\rho'_{\sigma \otimes \chi,w}$ is not canonical, because it depends on the 
choice of a representative $\tilde w \in N_K (M)$ for $w$.

\begin{lem}\label{lem:3.3}
For $w \in W(\Sigma_{\mc O,\mu})$, $\rho'_{\sigma,w}$ arises by parabolic induction 
from an $M$-isomorphism $\rho^{-1}_{\sigma,w} : (\sigma,E) \to (\tilde{w} \cdot \sigma ,E)$.
\end{lem}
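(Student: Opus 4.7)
The plan is to exhibit an $M$-isomorphism $\rho^{-1}_{\sigma,w}: \sigma \to \tilde w\cdot\sigma$ (which exists by Condition \ref{cond:3.1}) satisfying $\rho'_{\sigma,w} = I_P^G(\rho^{-1}_{\sigma,w})$. By Proposition \ref{prop:3.2}(c) the operator $\rho'_{\sigma,w}$ is already a $G$-isomorphism $I_P^G(\sigma)\to I_P^G(\tilde w\cdot\sigma)$, so the content of the lemma is precisely that it lies in the image of the parabolic induction map $I_P^G: \Hom_M(\sigma,\tilde w\cdot\sigma) \to \Hom_G(I_P^G(\sigma), I_P^G(\tilde w\cdot\sigma))$.

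My first step is to reduce to simple reflections. Fix a reduced expression $w = s_{\alpha_1}\cdots s_{\alpha_\ell}$ with $\alpha_i \in \Delta_{\mc O,\mu}$, and use the factorization $J_{w^{-1}(P)|P} = J_{P_\ell|P_{\ell-1}} \circ \cdots \circ J_{P_1|P_0}$ from \cite[p.\,283]{Wal} through the parabolics $P_i := s_{\alpha_i}\cdots s_{\alpha_1}(P)$. Since the roots in $\Sigma_{\mc O,\mu}(P)\cap\Sigma_{\mc O,\mu}(w^{-1}(\overline P))$ correspond bijectively to the simple reflections of the reduced expression, the product $\prod_\alpha (X_\alpha-1)$ distributes across the factors and writes $\rho'_{\sigma,w}$ as a composition of operators of the form $\rho'_{\sigma',\, s_\beta}$ with $\sigma'\cong\sigma$ (by Condition \ref{cond:3.1}) and $\beta\in\Delta_{\mc O,\mu}$. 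Since parabolic induction is a functor, the problem reduces to proving the lemma for $w = s_\alpha$ with $\alpha \in \Delta_{\mc O,\mu}$.

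For such a simple reflection, $\Sigma_{\mc O,\mu}(P)\cap\Sigma_{\mc O,\mu}(s_\alpha(\overline P))=\{\alpha\}$ and the integral in \eqref{eq:3.13} runs over $U_{-\alpha}\subset M_\alpha$; hence $J_{s_\alpha(P)|P}$ is obtained by parabolic induction from the parabolic $PM_\alpha$ of $G$ of the rank-one operator $J^{M_\alpha}_{\overline{P\cap M_\alpha}|P\cap M_\alpha}$. Consequently $\rho'_{\sigma,s_\alpha}$ is induced from the $M_\alpha$-operator $\tau_\alpha := \lambda(\tilde s_\alpha)\,\mr{sp}_1\bigl((X_\alpha-1) J^{M_\alpha}_{\overline{P\cap M_\alpha}|P\cap M_\alpha}(\sigma\otimes\chi)\bigr)$, and the claim becomes that $\tau_\alpha$ equals $I^{M_\alpha}_{P\cap M_\alpha}$ of some $M$-isomorphism $\sigma \to \tilde s_\alpha\cdot\sigma$. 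Fixing any such $M$-iso $j_\alpha$ and defining $T := I^{M_\alpha}_{P\cap M_\alpha}(j_\alpha)^{-1}\circ\tau_\alpha \in \End_{M_\alpha}(I^{M_\alpha}_{P\cap M_\alpha}(\sigma))$, it will suffice to show that $T$ is a scalar, because rescaling $j_\alpha$ by this scalar then produces the required $\rho^{-1}_{\sigma,s_\alpha}$.

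The scalarity of $T$ is the main obstacle. By Condition \ref{cond:3.1}, $\mu^{M_\alpha}(\sigma)=0$, so by \eqref{eq:3.22} the rank-one intertwining operator has a simple pole at $\chi=1$ which the factor $(X_\alpha-1)$ cancels exactly (Proposition \ref{prop:3.2}(b)), leaving a nonzero residue. The underlying reason for scalarity is the classical Knapp--Stein / Silberger principle that for reflections $s_\alpha$ in the subgroup $W(\Sigma_{\mc O,\mu})$ of the stabilizer $W(M_\alpha,M)_\sigma$ -- namely those generated by reflections with vanishing $\mu$-function -- the normalized self-intertwining operator acts as a single scalar on the entire parabolically induced representation, rather than distinguishing its irreducible constituents, because $s_\alpha$ is trivial in the Knapp--Stein $R$-group $R(\sigma)$. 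The concrete task will be to verify that the particular normalization $(X_\alpha-1)$ adopted here is compatible with the Silberger normalization up to a global nonzero constant. This I would check by computing the induced action of $\tau_\alpha$ on the Jacquet module $J_{P\cap M_\alpha}\bigl(I^{M_\alpha}_{P\cap M_\alpha}(\sigma)\bigr)\cong\sigma\oplus\sigma$ via the geometric lemma (both subquotients isomorphic to the supercuspidal $\sigma$ because $\tilde s_\alpha\cdot\sigma\cong\sigma$) and matching it against the explicit residue formula extracted from \eqref{eq:3.22}; supercuspidality of $\sigma$ and Frobenius reciprocity then promote scalarity on the Jacquet module to scalarity on all of $I^{M_\alpha}_{P\cap M_\alpha}(\sigma)$.
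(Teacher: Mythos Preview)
Your strategy is workable but takes a longer route than the paper's. The paper avoids both the reduction to simple reflections and the rank-one reduction: it compares $\rho'_{\sigma\otimes\chi,w}$, as a rational family in $\chi$, directly with Harish-Chandra's normalized operator ${}^\circ c_{P|P}(w,\sigma\otimes\chi)$ of \cite[\S V.3]{Wal} and \cite{Sil1}. For generic $\chi$ the representations $I_P^G(\sigma\otimes\chi)$ are irreducible, so the two families differ by a single rational function of $\chi$. Specializing at $\chi=1$, the Knapp--Stein theorem of \cite{Sil1} gives that ${}^\circ c_{P|P}(w,\sigma)$ is the identity on $I_P^G(\sigma)$, while Proposition~\ref{prop:3.2} guarantees $\rho'_{\sigma,w}$ is an isomorphism there. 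Fixing any $M$-isomorphism $\phi_{\tilde w}:\tilde w\cdot\sigma\to\sigma$, one concludes at once that $I_P^G(\phi_{\tilde w})\circ\rho'_{\sigma,w}$ is a nonzero scalar, so $\rho'_{\sigma,w}=I_P^G(z\,\phi_{\tilde w}^{-1})$ for some $z\in\C^\times$.

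The core input is the same Knapp--Stein fact you identify, and your invocation of it is correct. What the paper's argument buys is that generic irreducibility already forces proportionality of the two rational families globally in $\chi$, so no separate computation is needed to match normalizations at the special point. In your approach, the reduction to simple reflections carries nontrivial bookkeeping you gloss over (distributing the factors $(X_\alpha-1)$ correctly across the chain $P_0,\dots,P_\ell$, and tracking that the intermediate operators are $\rho'_{\tilde s_i\cdots\tilde s_\ell\cdot\sigma,\,s_\beta}$ rather than $\rho'_{\sigma,s_\beta}$). More substantively, your final verification---computing the action of $\tau_\alpha$ on the Jacquet module $\sigma\oplus\sigma$ and ``matching it against the explicit residue formula extracted from~\eqref{eq:3.22}''---is not carried out, and~\eqref{eq:3.22} gives the $\mu$-function rather than the Jacquet-module action of the intertwining operator; bridging the two is itself nontrivial. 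The efficient mechanism for showing that your $\tau_\alpha$ is proportional to the Silberger-normalized operator is precisely the generic-irreducibility argument the paper uses, applied to the rational family $\chi\mapsto J_{s_\alpha(P)|P}(\sigma\otimes\chi)$ rather than to its value at $\chi=1$.
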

\begin{proof}
We compare $\rho'_{\sigma \otimes \chi}$ with Harish-Chandra's operator \cite[\S V.3]{Wal} 
\[
{}^\circ c_{P|P}(w,\sigma \otimes \chi) \in \Hom_{G \times G} 
\big( \End_\C (\sigma \otimes \chi,E), \End_\C (\tilde w (\sigma \otimes \chi)) \big) .
\]
By Proposition \ref{prop:3.2} and \cite[Lemme V.3.1]{Wal} both are rational as functions
of $\chi \in X_\nr (M)$, and regular on a neighborhood of 1. For generic $\chi$ the
$G$-representations $I_P^G (\sigma \otimes \chi)$ and $I_P^G (\tilde{w}(\sigma \otimes \chi))$ 
are irreducible, so there ${}^\circ c_{P|P}(w,\sigma \otimes \chi)$ specializes to a
scalar times conjugation by $\rho'_{\sigma \otimes \chi,w}$. It follows that 
$\rho'_{\sigma \otimes \cdot,w}$ equals a rational function times the
intertwining operator associated by Harish-Chandra to $w$ and $\sigma$.

Let us make this more precise. By Condition \ref{cond:3.1} there exists an $M$-isomorphism
\[
\phi_{\tilde w} : (\tilde w \cdot \sigma ,E) \to (\sigma,E) . 
\]
For any $\chi \in X_\nr (M)$, it gives an $M$-isomorphism $\tilde w \cdot \sigma \otimes 
w \chi \to \sigma \otimes w \chi$. Consider the $G$-homomorphism 
\begin{equation}\label{eq:3.5}
I_P^G (\phi_{\tilde w}) \circ \rho'_{\sigma \otimes \chi,w} :
I_P^G (\sigma \otimes \chi,E) \to I_P^G (\sigma \otimes w \chi,E) .
\end{equation}
By the above, this is equal to a rational function times the operator
\begin{equation}\label{eq:3.6}
{}^\circ c_{P|P}(w,\sigma \otimes \chi) \in 
\Hom_G (I_P^G (\sigma \otimes \chi), I_P^G (\sigma \otimes w \chi))
\end{equation}
considered in \cite{Sil1,Sil2}. By the easier part of the Knapp--Stein theorem for $p$-adic 
groups \cite[p.244]{Sil1}, \cite[\S 5.2.4]{Sil2}, the operator \eqref{eq:3.6} specializes at 
$\chi = 1$ to the identity, while by Proposition 
\ref{prop:3.2} the operator \eqref{eq:3.5} specializes at $\chi = 1$ to an isomorphism.
Hence \eqref{eq:3.5} for $\chi = 1$ is a nonzero scalar multiple of the identity operator and 
\[
\rho'_{\sigma,w} = z I_P^G (\phi_{\tilde w})^{-1} = I_P^G (z \phi_{\tilde w}^{-1})
\]
for some $z \in \C^\times$.
\end{proof}

From $\rho'_{\sigma,w}$ and Lemma \ref{lem:3.3} we obtain an isomorphism of
$M$-$B$-representations
\[
\rho^{-1}_{\sigma,w} \otimes \mr{id}_B : (\sigma_B, E \otimes_\C B) \to 
( (\tilde w \cdot \sigma)_B, E \otimes_\C B) .
\]
Applying $I_{P'}^G$ with $P' = M U'$, this yields an isomorphism of $G$-$B$-representations
\begin{equation}\label{eq:3.7}
I_{P'}^G (\rho^{-1}_{\sigma,w} \otimes \mr{id}_B) : 
I_{P'}^G (E_B) \to I_{P'}^G ((\tilde w \cdot E)_B) 
\end{equation}
whose specialization at $P' = P, \chi = 1$ is $\rho'_{\sigma,w}$. (However, its specialization
at other $\chi \in X_\nr (M)$ need not be equal to $\rho'_{\sigma \otimes \chi,w}$.)
To comply with the notation from \cite{Hei2} we define
\begin{equation}\label{eq:3.11}
\rho_{P',w} = I_{P'}^G (\rho_{\sigma,w} \otimes \mr{id}_B) : 
I_{P'}^G ((\tilde w \cdot E)_B) \to I_{P'}^G (E_B) .
\end{equation}
Following the same procedure with $K(B)$ instead of $B$, we can also regard $\rho_{P',w}$
as an isomorphism of $G$-$B$-representations
\[
I_{P'}^G (\rho_{\sigma,w} \otimes \mr{id}_{K(B)}) : 
I_{P'}^G ((\tilde w \cdot E)_{K(B)}) \to I_{P'}^G (E_{K(B)}) .
\]
When $P' = P$, we often suppress it from the notation.
We need a few calculation rules for the operators $\rho_{P',w}$.

\begin{lem}\label{lem:3.4}
Let $w,w_1,w_2 \in W(\Sigma_{\mc O,\mu})$.
\enuma{
\item $J_{P'|P}(\sigma \otimes \cdot) \circ \rho_{P,w} = \rho_{P',w} \circ 
J_{P'|P}(\tilde w \sigma \otimes \cdot) : 
I_P^G ((\tilde w \cdot E)_{K(B)}) \to I_{P'}^G (E_{K(B)})$.
\item As operators $I_{w_2^{-1} w_1^{-1} (P)}^G (E_{K(B)}) \to I_P^G (E_{K(B)})$:
\begin{multline*}
\rho_{w_1} \tau_{w_1} \lambda (\tilde{w_1}) \rho_{w_1^{-1}(P),w_2} \tau_{w_2} \lambda (\tilde{w_2}) = \\
\prod\nolimits_\alpha \Big( \mr{sp}_{\chi = 1} \frac{\mu^{M_\alpha} (\sigma \otimes \cdot)}{(X_\alpha - 1)
(X_\alpha^{-1} - 1)} \Big) \rho_{w_1 w_2} \tau_{w_1 w_2} \lambda (\widetilde{w_1 w_2}) 
\end{multline*}
where the product runs over $\Sigma_{\mc O,\mu}(P) \cap \Sigma_{\mc O,\mu}(w_2^{-1}(\overline P))
\cap \Sigma_{\mc O,\mu}(w_2^{-1} w_1^{-1} (P))$.
\item For $r \in R(\mc O)$:
\[
\lambda (\tilde r) \rho_{r^{-1}(P),w} \lambda (\tilde r)^{-1} = 
\rho_{P,\tilde{r} \cdot \sigma, r w r^{-1}}   
\lambda \big( \widetilde{r w r^{-1}} \tilde{r} \tilde{w}^{-1} \tilde{r}^{-1} \big).
\]
} 
\end{lem}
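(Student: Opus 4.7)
The plan is to handle parts (a)--(c) in order, with (a) being formal, (b) carrying the main computational content, and (c) a direct covariance check. For part (a), the identity is the naturality of Harish-Chandra's intertwining operator $J_{P'|P}$. The integral formula \eqref{eq:3.13} makes $J_{P'|P}$ a natural transformation of functors $I_P^G \Rightarrow I_{P'}^G$, rational in the unramified character parameter: for any $M$-homomorphism $\phi\colon V_1\to V_2$ one has $I_{P'}^G(\phi)\circ J_{P'|P}(\pi_1)=J_{P'|P}(\pi_2)\circ I_P^G(\phi)$. Applying this with $\phi=\rho_{\sigma,w}\otimes\mr{id}_{K(B)}\colon(\tilde w\cdot\sigma_{K(B)},E_{K(B)})\to(\sigma_{K(B)},E_{K(B)})$, and recalling from \eqref{eq:3.11} that $\rho_{P,w}=I_P^G(\phi)$ and $\rho_{P',w}=I_{P'}^G(\phi)$, yields (a) immediately.

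For part (b), the strategy is to compare both sides as $G$-$K(B)$-homomorphisms $I_{w_2^{-1}w_1^{-1}(P)}^G(E_{K(B)})\to I_P^G(E_{K(B)})$ built by parabolic induction and $\lambda,\tau$-twists from $M$-intertwiners, and then evaluate the resulting scalar via Lemma \ref{lem:3.3}. Concretely, by Lemma \ref{lem:3.3} each $\rho_{\sigma,w}^{-1}$ is characterized, after parabolic induction and stripping $\lambda(\tilde w)$, as the specialization at $\chi=1$ of $\prod_\alpha(X_\alpha-1)J_{w^{-1}(P)|P}(\sigma\otimes\cdot)$. Substituting this into the left-hand side and using part (a) to slide $\rho_{w_1^{-1}(P),w_2}$ past $\lambda(\tilde w_1)$ converts the whole composition into a product of $J_{P''|P'''}$-type operators. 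Transitivity $J_{P''|P'}\circ J_{P'|P}=J_{P''|P}$ along length-additive chains collapses the generic part of the composition into $J_{w_2^{-1}w_1^{-1}(P)|P}$, which is exactly the $J$-factor hidden in the right-hand side. Length-additivity however fails for the roots $\alpha\in\Sigma_{\mc O,\mu}(P)\cap\Sigma_{\mc O,\mu}(w_2^{-1}(\overline P))\cap\Sigma_{\mc O,\mu}(w_2^{-1}w_1^{-1}(P))$ --- those flipped by $w_2^{-1}$ and then flipped back by $w_1^{-1}$. For each such $\alpha$, after routing through an intermediate parabolic, one encounters a pairing $J_{P|s_\alpha P}\circ J_{s_\alpha P|P}$ that by \eqref{eq:3.33} equals a constant divided by $\mu^{M_\alpha}(\sigma\otimes\cdot)$. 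Each such $\alpha$ also contributes two factors $(X_\alpha-1)$ from the $\rho$-normalizations, and specialization at $\chi=1$ combines these into the scalar $\mr{sp}_{\chi=1}\!\bigl(\mu^{M_\alpha}(\sigma\otimes\cdot)/(X_\alpha-1)(X_\alpha^{-1}-1)\bigr)$ for each such $\alpha$. The remaining $\tau$-factors and $\lambda$-factors collapse to $\tau_{w_1w_2}\lambda(\widetilde{w_1w_2})$, with the cocycle discrepancy between $\tilde w_1\tilde w_2$ and $\widetilde{w_1w_2}$ absorbed into the intrinsic scalar ambiguity (Schur's lemma) of $\rho_{\sigma,w_1w_2}$.

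For part (c), this is a direct covariance calculation. Conjugation by $\lambda(\tilde r)$ carries $I_{r^{-1}(P)}^G$ to $I_P^G$ and simultaneously twists the $M$-action on the coefficient module by $\tilde r$. Since $\rho_{r^{-1}(P),w}=I_{r^{-1}(P)}^G(\rho_{\sigma,w}\otimes\mr{id})$, its conjugate by $\lambda(\tilde r)$ is $I_P^G$ applied to the $\tilde r$-twist of $\rho_{\sigma,w}$. By Lemma \ref{lem:3.3}, this twisted $M$-intertwiner satisfies the same characterization used to define $\rho_{P,\tilde r\cdot\sigma,rwr^{-1}}$ (via $J_{(rwr^{-1})^{-1}(P)|P}$ with $\tilde r\cdot\sigma$ in place of $\sigma$), and naturality of the $J$'s under $\tilde r$-conjugation matches the two sides at the level of intertwining operators. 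The only asymmetry is that the chosen representative $\widetilde{rwr^{-1}}\in N_K(M)$ need not equal $\tilde r\tilde w\tilde r^{-1}$; their ratio lies in $M$ and enters as the left-translation $\lambda(\widetilde{rwr^{-1}}\tilde r\tilde w^{-1}\tilde r^{-1})$ displayed on the right-hand side.

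The main obstacle will clearly be step (b): correctly identifying which roots contribute a Plancherel pairing versus which telescope cleanly via multiplicativity, and matching the resulting scalar with the stated product of $\mu^{M_\alpha}/(X_\alpha-1)(X_\alpha^{-1}-1)$ factors. Handling the interplay between the $\chi$-constant operators $\rho_{P,w}$ and the $\chi$-rational operators $J_{P'|P}$, and correctly tracking the specialization at $\chi=1$ prescribed by Lemma \ref{lem:3.3} together with the $(X_\alpha-1)$ normalizations, will require careful bookkeeping.
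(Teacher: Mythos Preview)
Your approach matches the paper's: part (a) is exactly the naturality argument the paper gives; part (c) is the same covariance computation, which the paper carries out by writing down the explicit definitions of $\rho_{r^{-1}(P),w}^{-1}$ and $\rho_{P,\tilde r\cdot\sigma,rwr^{-1}}^{-1}$ via the normalized $J$-operators and observing that $\lambda(\tilde r)$-conjugation maps one to the other up to the displayed $\lambda$-factor. For part (b) the paper simply cites \cite[Proposition 2.4]{Hei2}, and your sketch is essentially an outline of that argument.

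One correction in your sketch of (b): part (a) does not let you ``slide $\rho_{w_1^{-1}(P),w_2}$ past $\lambda(\tilde w_1)$''---part (a) commutes $\rho$ with $J$, not with $\lambda$. What you actually need first is the covariance identity $\tau_{w_1}\lambda(\tilde w_1)\rho_{P',w_2}\lambda(\tilde w_1)^{-1}\tau_{w_1}^{-1}$, which the paper notes is precisely what \cite{Hei2} denotes by $\rho_{w_2}$ on the $w_1$-twisted space; only after that do you invoke part (a) to push $\rho$'s across the $J$-operators that appear when you unwind the definitions at $\chi=1$. Also, your remark about the ``cocycle discrepancy between $\tilde w_1\tilde w_2$ and $\widetilde{w_1w_2}$'' being absorbed by Schur ambiguity is unnecessary: the combination $\rho_w\tau_w\lambda(\tilde w)$ is independent of the choice of lift $\tilde w$ (this is \cite[Proposition 3.1]{Hei2}, recalled just after the definition of $A_w$), so the identity in (b) holds on the nose with the stated scalar.
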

\begin{proof}
(a) In this setting $J_{P'|P}$ is invertible \eqref{eq:3.3}, so we can reformulate the claim as
\[
J_{P'|P}(\tilde w \cdot \sigma \otimes \cdot )^{-1} \circ \rho^{-1}_{P',w} \circ 
J_{P'|P}(\sigma \otimes \cdot ) = \rho^{-1}_{P,w} .
\]
The left hand side one first transfers everything from $I_P^G$ to $I_{P'}^G$ by means of 
$\int_{(U \cap U') \backslash U'}$, then we apply $\rho_{P',w}^{-1} = I_{P'}^G (\rho''_{\sigma,w}
\otimes \mr{id}_B)$ and finally we transfer back from $I_{P'}^G$ to $I_P^G$ (in the opposite fashion). 
In view of \eqref{eq:3.7}, this is just a complicated way to express 
$\rho_{P,w}^{-1} = I_P^G (\rho''_{\sigma,w} \otimes \mr{id}_B)$.\\
(b) The map
\[
\tau_{w_1} \lambda (\tilde{w_1}) \rho_{P',w_2} \lambda (\tilde{w_1})^{-1} \tau_{w_1}^{-1} :
I_{w_1 (P')}^G ((\tilde{w_1} \tilde{w_2} \cdot E)_{K(B)}) \to 
I_{w_1 (P')}^G ((\tilde{w_1} \cdot E)_{K(B)})
\]
is denoted simply $\rho_{w_2}$ in \cite{Hei2}. We note that part (a) proves the first formula
of \cite[Proposition 2.4]{Hei2} in larger generality, without a condition on $w$. Knowing this, the 
claim can shown in the same way as the second part of \cite[Propsition 2.4]{Hei2} (on page 729).\\
(c) By definition
\[
\begin{array}{lll}
\rho_{r^{-1}(P),w}^{-1} & = & \lambda (\tilde{s_\alpha}) \mr{sp}_{\chi = 1} \prod_\alpha
(X_\alpha (\chi) - 1) J_{w^{-1} r^{-1} (P)|r^{-1}(P)}(\sigma \otimes \chi) , \\
\rho_{P,\tilde r \cdot \sigma,r w r^{-1}}^{-1} & = 
& \lambda \big( \widetilde{r w r^{-1}} \big) \mr{sp}_{\chi = 1} \prod_\beta 
(X_\beta (\chi) - 1) J_{r w^{-1} r^{-1} (P)|P}(\tilde r \cdot \sigma \otimes \chi) .
\end{array}
\]
Here $\alpha$ runs over $\Sigma_{\mc O,\mu}(r^{-1}P) \cap \Sigma_{\mc O,\mu}(w^{-1} r^{-1} \overline{P})$
and $\beta$ over
\[
\Sigma_{\mc O,\mu}(P) \cap \Sigma_{\mc O,\mu}(r w^{-1} r^{-1}\overline{P}) =
r \big( \Sigma_{\mc O,\mu}(r^{-1}P) \cap \Sigma_{\mc O,\mu}(w^{-1} r^{-1} \overline{P}) \big) .
\]
It follows that
\begin{align*}
\lambda (\tilde r) \rho_{r^{-1}(P),w}^{-1} \lambda (\tilde r)^{-1} & 
= \lambda \big( \tilde{r} \tilde{w} \tilde{r}^{-1} \big) \mr{sp}_{\chi = 1} \prod\nolimits_\beta 
(X_\beta (\chi) - 1) J_{r w^{-1} r^{-1} (P)|P}(\tilde r \cdot \sigma \otimes \chi) \\
& = \lambda \big( \tilde{r} \tilde{w} \tilde{r}^{-1} \widetilde{r w r^{-1}}^{-1} \big)
\rho_{P,\tilde r \cdot \sigma,r w r^{-1}}^{-1} .
\end{align*}
Taking inverses yields the claim.
\end{proof}

Now we associate similar operators to elements of the group $R(\mc O)$ from \eqref{eq:3.19} and 
\eqref{eq:3.8}. We may assume that the representatives $\tilde w \in N_K (M)$ are chosen so that
\begin{equation}\label{eq:3.10}
\widetilde{r w_{\mc O}} = \tilde r \tilde{w_{\mc O}} 
\text{ for all } r \in R(\mc O), w_{\mc O} \in W (\Sigma_{\mc O,\mu}) .
\end{equation}
For $r \in R(\mc O)$, Proposition \ref{prop:2.2} and \cite[Proposition 1.10]{Hei2} say that
$J_{r(P)|P}$ is rational and regular on $X_\nr (M)$, and that its specialization at any 
$\chi$ is a $G$-isomorphism $I_P^G (\sigma \otimes \chi) \to I_{r(P)}^G (\sigma \otimes \chi)$.
For such $r$ we construct an analogue of $\rho_w$ in a simpler way. 
Let $\chi_r$ be as in \eqref{eq:3.27} and pick an $M$-isomorphism
\begin{equation}\label{eq:3.12}
\rho_{\sigma,r} : \tilde r \cdot \sigma \to \sigma \otimes \chi_r .
\end{equation}
Recall $\rho_{\chi_r}$ from \eqref{eq:2.25}. It combines
with $\rho_{\sigma,r}$ to an $M$-isomorphism
\[
\rho_{\sigma,r} \otimes \rho_{\chi_r}^{-1} : ((\tilde r \cdot \sigma)_B ,E_B) \to (\sigma_B,E_B),
\]
which is not $B$-linear when $\chi_r \neq 1$.
With parabolic induction we obtain a $G$-isomorphism
\[
\rho_{P',r} = I_{P'}^G ({\rho_{\sigma,r}} \otimes \rho_{\chi_r}^{-1}) : 
I_{P'}^G ((\tilde r \cdot \sigma)_B ,E_B) \to I_{P'}^G (\sigma_B ,E_B) .
\]
The same works with $K(B)$ instead of $B$. 

We note that Lemma \ref{lem:3.4}.a also applies to $\rho_r$, with the same proof:
\begin{equation}\label{eq:3.9}
J_{P'|P}(\sigma \otimes \cdot) \circ \rho_{P,r} = \rho_{P',r} \circ 
J_{P'|P}(\tilde r \sigma \otimes \chi_r^{-1} \otimes \cdot) : 
I_P^G ((\tilde r \cdot E)_{K(B)}) \to I_{P'}^G (E_{K(B)}). 
\end{equation}
For an arbitrary $w \in W(M,\mc O)$, we use \eqref{eq:3.8} and \eqref{eq:3.10} to write 
$\tilde w = \tilde r \tilde{w_{\mc O}}$ with $r \in R(\mc O)$ and $w_{\mc O} \in W(\Sigma_{\mc O,\mu})$. 
Then we put $\chi_w = \chi_r$ and
\[
\begin{array}{ccccccc}
\rho_{\sigma,w} & = & \rho_{\sigma,r} \rho_{\sigma,w_{\mc O}} & : & 
\tilde{w} \cdot \sigma \otimes \chi & \to & \sigma \otimes \chi_r \chi , \\
\rho_{P',w} & = & \rho_{P',r} \tau_r \lambda (\tilde r) \rho_{r^{-1}(P'),w_{\mc O}} 
\lambda (\tilde r)^{-1} \tau_r^{-1} & : & 
I_{P'}^G ((w \cdot E)_B) & \to & I_{P'}^G (E_B) .
\end{array} 
\]
Let us discuss the multiplication relations between all the operators constructed in this section
and the $\phi_\chi$ with $\chi \in X_\nr (M,\sigma)$ from \eqref{eq:2.24}. 
Via $I_{P'}^G$, we regard $\phi_\chi$ also as an element of $\Hom_G (I_{P'}^G (E_B))$. We note that
\begin{equation}\label{eq:3.26}
\mr{sp}_{\chi'} \circ \phi_\chi = \mr{sp}_{\chi'} \circ (\phi_{\sigma,\chi} \otimes \rho_\chi^{-1}) =
\phi_{\sigma,\chi} \otimes \mr{sp}_{\chi' \chi^{-1}} \qquad \chi' \in X_\nr (M) .
\end{equation}
From the very definition of $J_{P'|P}$ in \eqref{eq:3.13} we see that
\[
\phi_{\sigma,\chi}^{-1} \circ J_{P'|P}(\sigma \otimes \chi' \chi) \circ \phi_{\sigma,\chi} =
J_{P'|P}(\sigma \otimes \chi') ,
\]
which quickly implies
\begin{equation}\label{eq:3.14}
J_{P|P'} \circ \phi_\chi = \phi_\chi \circ J_{P|P'} 
\in \Hom_G \big( I_P^G (E_{K(B)}), I_{P'}^G (E_{K(B)}) \big) .
\end{equation}
For any $w \in W(M,\mc O)$, we take 
\begin{equation}\label{eq:3.32}
\phi_{\tilde w \cdot \sigma, w (\chi)} \in 
\Hom_M (\tilde w \cdot \sigma, \tilde w \cdot \sigma \otimes w (\chi))
\end{equation}
equal to $\phi_{\sigma,\chi}$ as $\C$-linear map. Next we define 
\[
\phi_{w (\chi)}^w = \phi_{\tilde w \cdot \sigma, w (\chi)} \otimes \rho_{w (\chi)}^{-1} \in
\End_M ( (w \cdot E)_B) ,
\]
and we tacitly extend to an element of $\End_G ( I_P^G (w \cdot E)_B )$ by functoriality. Then
\begin{equation}\label{eq:3.15}
\tau_w \lambda (\tilde w) \phi_\chi = \tau_w \lambda (\tilde w) (\phi_{\sigma,\chi} \otimes \rho_\chi^{-1}) 
= (\phi_{\tilde w \cdot \sigma, w (\chi)} \otimes \rho_{w (\chi)}^{-1}) \tau_w \lambda (\tilde w) =
\phi^w_{w (\chi)} \tau_w \lambda (\tilde w) .
\end{equation}
By the irreducibility of $\sigma$ there exists a $z \in \C^\times$ such that
\begin{equation}\label{eq:3.16}
{\rho_{\sigma,w}} \phi_{\tilde w \cdot \sigma,w (\chi)} = 
z \phi_{\sigma,w(\chi)} {\rho_{\sigma,w}} : \tilde w \cdot \sigma \to \sigma \otimes w(\chi) .
\end{equation}
With that we compute
\begin{equation}\label{eq:3.17}
\begin{aligned}
\rho_w \circ \phi_{w (\chi)}^w & = I_P^G \big( ({\rho_{\sigma,w}} \otimes \mr{id}_B) 
(\phi_{\tilde w \cdot \sigma,w (\chi)} \otimes \rho_{w(\chi)}^{-1}) \big) \\
& = I_P^G \big( z \phi_{\sigma,w(\chi)} {\rho_{\sigma,w}} \otimes \rho_{w(\chi)}^{-1} \big) \\
& = z I_P^G \big( \phi_{\sigma,w(\chi)} \otimes \rho_{w(\chi)}^{-1} \big) 
I_P^G \big( {\rho_{\sigma,w}} \otimes \mr{id}_B \big) \; = \; z \phi_{w(\chi)} \rho_w .
\end{aligned}
\end{equation}
From \eqref{eq:3.14}--\eqref{eq:3.17} we deduce that
\[
\rho_w \tau_w \lambda (\tilde w) J_{w^{-1}(P)|P'} \phi_\chi = z \phi_{w (\chi)} \tau_w 
\lambda (\tilde w) J_{w^{-1}(P)|P'} \in \Hom_G \big( I_{P'}^G (E_B), I_P^G (E_{K(B)}) \big) .
\]

\section{Endomorphism algebras with rational functions}
\label{sec:rational}

\subsection{The operators $A_w$} \

Let $B$ and the $\phi_\chi$ with $\chi \in X_\nr (M,\sigma)$ from \eqref{eq:2.24} act on
$I_P^G (E_B)$ and $I_P^G (E_{K(B)})$ by parabolically inducing their actions on
$E_B$ and $E_{K(B)}$.
For $w \in W(M,\mc O)$ we combine the operators $J_{K(B),w}, \tau_w$ and $\rho_w$
from Section \ref{sec:intertwining} to a $G$-homomorphism
\[
A_w = \rho_w \circ \tau_w \circ J_{K(B),w} : I_P^G (E_B) \to I_P^G (E_{K(B)}) . 
\]
With \eqref{eq:3.3} we can also regard $A_w$ as an invertible element of 
$\End_G \big( I_P^G (E_{K(B)}) \big)$. According to \cite[Proposition 3.1]{Hei2}, 
$A_w$ does not depend on the choice of the representative $\tilde w \in N_K (M)$
of $w$. Hence $A_w$ is canonical for $w \in W(\Sigma_{\mc O,\mu})$, while for
$w \in R(\mc O)$ it depends on the choices of $\chi_r$ and $\rho''_r \in 
\Hom_M (\sigma, \tilde r \cdot \sigma \otimes \chi_r^{-1})$. Further 
\cite[Lemme 3.2]{Hei2} says that, for every $\chi \in X_\nr (M)$ such that
$J_{w^{-1}(P)|P}(\sigma \otimes \chi)$ is regular:
\[
\mr{sp}_{(w \chi) \chi_w} A_w (v) = \rho_w \lambda (\tilde w) 
J_{w^{-1}(P)|P}(\sigma \otimes \chi) \mr{sp}_\chi (v)  \qquad v \in I_P^G (E_B) .
\]
Consequently, for any $b \in B = \C [X_\nr (M)]$:
\begin{multline}\label{eq:4.1}
\mr{sp}_{(w \chi) \chi_w} A_w (b v) = \rho_w \circ \lambda (\tilde w) \circ 
J_{w^{-1}(P)|P}(b(\chi) \mr{sp}_\chi (v)) \\
= b (\chi) \mr{sp}_{(w \chi) \chi_w} A_w (v)
= \mr{sp}_{(w \chi) \chi_w} \big( (w \cdot b)_{\chi_w^{-1}} A_w (v) \big) .
\end{multline}
In view of Proposition \ref{prop:3.2}.a, this holds for $\chi$ in a nonempty Zariski-open
subset of $X_\nr (M)$. Thus
\begin{equation}\label{eq:4.2}
A_w \circ b = (w \cdot b)_{\chi_w^{-1}} \circ A_w \qquad \in
\Hom_G \big( I_P^G (E_B), I_P^G (E_{K(B)}) \big) .
\end{equation}
From \eqref{eq:3.14}--\eqref{eq:3.17} we see that for all $w \in W(M,\mc O), \chi \in X_\nr (M,\sigma)$
there exists a $z (w,\chi) \in \C^\times$ such that
\begin{equation}\label{eq:4.7}
A_w \circ \phi_\chi = z (w,\chi) \phi_{w(\chi)} \circ A_w 
\qquad \in \Hom_G \big( I_P^G (E_B), I_P^G (E_{K(B)}) \big).
\end{equation}
Compositions of the operators $A_w$ are not as straightforward as one could expect.

\begin{prop}\label{prop:4.1}
Let $w_1, w_2 \in W(\Sigma_{\mc O,\mu})$.
\enuma{
\item As $G$-endomorphisms of $I_P^G (E_{K(B)})$:
\begin{align*}
A_{w_1} \circ A_{w_2} & = \prod\nolimits_\alpha \Big(  \mr{sp}_{\chi = 1} \frac{\mu^{M_\alpha}
(\sigma \otimes \cdot)}{ (X_\alpha - 1)(X_\alpha^{-1} - 1)} \Big) \mu^{M_\alpha}(\sigma 
\otimes w_2^{-1} w_1^{-1} \cdot)^{-1} A_{w_1 w_2} \\
& = A_{w_1 w_2} \prod\nolimits_\alpha \Big( \mr{sp}_{\chi = 1} \frac{\mu^{M_\alpha}
(\sigma \otimes \cdot)}{ (X_\alpha - 1)(X_\alpha^{-1} - 1)} \Big) \mu^{M_\alpha}(\sigma 
\otimes \cdot)^{-1} 
\end{align*}
where the products run over $\Sigma_{\mc O,\mu}(P) \cap \Sigma_{\mc O,\mu}(w_2^{-1}(\overline P))
\cap \Sigma_{\mc O,\mu}(w_2^{-1} w_1^{-1} (P))$.
\item If $\ell_{\mc O}(w_1 w_2) = \ell_{\mc O}(w_1) + \ell_{\mc O}(w_2)$, then 
$A_{w_1 w_2} = A_{w_1} \circ A_{w_2}$.
\item For $\alpha \in \Delta_{\mc O,\mu}$: 
\[
A_{s_\alpha}^2 = \frac{4 c'_{s_\alpha}}{(1 - q_{\alpha}^{-1})^2 
(1 + q_{\alpha*}^{-1})^2 \mu^{M_\alpha}(\sigma \otimes \cdot)} .
\]
}
\end{prop}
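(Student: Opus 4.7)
The proof centers on (a); parts (b) and (c) follow quickly from it. For (a), I would expand
\[
A_{w_1} A_{w_2} = \rho_{w_1} \tau_{w_1} J_{K(B),w_1} \rho_{w_2} \tau_{w_2} J_{K(B),w_2}
\]
and sort the factors into a ``$J$-part'' and a ``$\rho$-part''. To accomplish this I would push $J_{K(B),w_1}$ past $\rho_{w_2}\tau_{w_2}$ using Lemma \ref{lem:3.4}(a), which lets $J_{P'|P}$ pass through the induction-of-$\rho$ operators at the cost of replacing the parabolic $P$ by $w_1^{-1}(P)$ on the $\rho$-factor.

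Once reorganised, the resulting $J$-string is of the form $\lambda(\tilde{w_1}) J_{w_1^{-1}(P)|P} \cdot \lambda(\tilde{w_2}) J_{w_2^{-1}(P)|P}$, and repeated application of Harish-Chandra's identity \eqref{eq:3.33} collapses this to $\lambda(\widetilde{w_1 w_2}) J_{w_2^{-1} w_1^{-1}(P)|P}$ divided by one factor $\mu^{M_\alpha}(\sigma \otimes w_2^{-1} w_1^{-1}\cdot)$ for each doubly--traversed root, i.e.\ for each $\alpha \in \Sigma_{\mc O,\mu}(P) \cap \Sigma_{\mc O,\mu}(w_2^{-1}\overline{P}) \cap \Sigma_{\mc O,\mu}(w_2^{-1} w_1^{-1}(P))$. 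Meanwhile the $\rho$-string reduces by Lemma \ref{lem:3.4}(b) to $\rho_{w_1 w_2} \tau_{w_1 w_2} \lambda(\widetilde{w_1 w_2})$ multiplied by precisely the product of $\mr{sp}_{\chi=1}\bigl[\mu^{M_\alpha}(\sigma \otimes \cdot)/(X_\alpha-1)(X_\alpha^{-1}-1)\bigr]$ over the same index set. Recombining and recognising $A_{w_1 w_2} = \rho_{w_1 w_2}\tau_{w_1 w_2}J_{K(B),w_1 w_2}$ yields the first equality of (a). The second equality follows by moving the scalar $\mu^{M_\alpha}(\sigma\otimes w_2^{-1}w_1^{-1}\cdot)^{-1}$ across $A_{w_1 w_2}$ via \eqref{eq:4.2}, noting that $\chi_w = 1$ for $w \in W(\Sigma_{\mc O,\mu})$ by Condition \ref{cond:3.1}.

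Part (b) is then immediate: length additivity $\ell_{\mc O}(w_1 w_2) = \ell_{\mc O}(w_1) + \ell_{\mc O}(w_2)$ is equivalent to the combinatorial statement that no positive root of $\Sigma_{\mc O,\mu}(P)$ is sent to a negative root by $w_2$ and then brought back to a positive root by $w_1 w_2$; hence the triple intersection in (a) is empty, the product is $1$, and $A_{w_1} A_{w_2} = A_{w_1 w_2}$. For (c), take $w_1 = w_2 = s_\alpha$ with $\alpha \in \Delta_{\mc O,\mu}$: the triple intersection reduces to $\{\alpha\}$, while $s_\alpha^2 = 1$ gives $A_{s_\alpha^2} = \mr{id}$. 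Using the identity $(X_\alpha - 1)(X_\alpha^{-1} - 1) = (1 - X_\alpha)(1 - X_\alpha^{-1})$, formula \eqref{eq:3.22} simplifies to
\[
\frac{\mu^{M_\alpha}(\sigma \otimes \cdot)}{(X_\alpha - 1)(X_\alpha^{-1} - 1)} = \frac{c'_{s_\alpha}(1+X_\alpha)(1+X_\alpha^{-1})}{(1-q_\alpha^{-1}X_\alpha)(1-q_\alpha^{-1}X_\alpha^{-1})(1+q_{\alpha*}^{-1}X_\alpha)(1+q_{\alpha*}^{-1}X_\alpha^{-1})} ,
\]
and evaluation at $X_\alpha = 1$ gives $4 c'_{s_\alpha}/[(1-q_\alpha^{-1})^2(1+q_{\alpha*}^{-1})^2]$. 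Combined with the $\mu^{M_\alpha}(\sigma \otimes \cdot)^{-1}$ coming from (a), this is exactly (c).

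The principal obstacle is the bookkeeping in (a): one must verify that the $\mu$-factors produced by the Harish-Chandra composition of the $J$-operators and the $\mu/(X_\alpha - 1)(X_\alpha^{-1} - 1)$-factors produced by Lemma \ref{lem:3.4}(b) are indexed by the same root set, and that the combined scalars match on both sides. This amounts to carefully tracking which parabolic is source and target at each step, and is the technical core of the proposition.
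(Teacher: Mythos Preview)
Your proposal is correct and follows essentially the same approach as the paper's proof, which defers to \cite[Proposition 3.3 and Corollaire 3.4]{Hei2} after noting that Lemma \ref{lem:3.4} supplies the two key formulas needed there. You have in effect spelled out the Heiermann argument: separate the $\rho$-string from the $J$-string via Lemma \ref{lem:3.4}(a), compose each string separately (Lemma \ref{lem:3.4}(b) for the $\rho$'s, the Harish-Chandra composition law for the $J$'s), and match the resulting root-indexed products; the second equality in (a) comes from \eqref{eq:4.2}, (b) from the empty triple intersection under length additivity, and (c) from specialising (a) via \eqref{eq:3.22}, exactly as the paper indicates.
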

\begin{proof}
The second equality in part (a) is an instance of \eqref{eq:4.2}.

Lemma \ref{lem:3.4} is equivalent to two formulas established in \cite[Proposition 2.4]{Hei2} for
classical groups. With those at hand, the parts (a) and (b) can be shown in the same way as
\cite[Proposition 3.3 and Corollaire 3.4]{Hei2}. Part (c) is a special case of part (a), made
explicit with \eqref{eq:3.22}.
\end{proof}

For $r \in R (\mc O)$, Proposition \ref{prop:3.2}.a implies that $J_{r^{-1}(P)|P}$ does not have
any poles on $\mc O$. Hence it maps $I_P^G (E_B)$ to itself, and
\begin{equation}\label{eq:4.6}
A_r = \rho_{P,r} \tau_r \lambda (\tilde r) J_{r^{-1}(P)|P} \; \in \; \End_G (I_P^G (E_B)) .
\end{equation}
The maps $A_r$ with $r \in R(\mc O)$ behave more multiplicatively than in Proposition \ref{prop:4.1}, 
but still they do not form a group homomorphism in general.

\begin{prop}\label{prop:4.2}
Let $r,r_1,r_2 \in R(\mc O)$ and $w, w' \in W(\Sigma_{\mc O,\mu})$.
\enuma{
\item Write $\chi (r_1,r_2) = \chi_{r_1} r_1 (\chi_{r_2}) \chi_{r_1 r_2}^{-1} \in X_\nr (M,\sigma)$ 
and recall $\phi_{\chi (r_1,r_2)}$ from \eqref{eq:2.25}. There exists a 
$\natural (r_1,r_2) \in \C^\times$ such that
\[
A_{r_1} \circ A_{r_2} = \natural (r_1,r_2) \phi_{\chi (r_1,r_2)} \circ A_{r_1 r_2} . 
\]
\item $A_r \circ A_w = A_{r w}$.
\item There exists a $\natural (w',r) \in \C^\times$ such that
\[
A_{w'} \circ A_r = \natural (w',r) \phi_{w' (\chi_r^{-1}) \chi_r} \circ A_{w' r} .
\]
If $w' (\chi_r) = \chi_r$, then $\natural (w',r) = 1$ and $A_{w'} \circ A_r = A_{w' r}$.
}
\end{prop}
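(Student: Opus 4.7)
The plan is to expand each operator as $A_{?} = \rho_{?} \tau_{?} \lambda(\tilde{\cdot}) J_{?^{-1}(P)|P}$, transport the middle $\rho$-factor past the adjacent $J$ via the $R(\mc O)$-analogue of Lemma~\ref{lem:3.4}(a) recorded in \eqref{eq:3.9}, and then compose the two $J$-factors and the two $\rho$-factors separately. The decisive simplification compared with Proposition~\ref{prop:4.1} is that any $r \in R(\mc O)$ stabilises $\Sigma_{\mc O,\mu}(P)$, so the intersection $\Sigma_{\mc O,\mu}(P) \cap \Sigma_{\mc O,\mu}(r^{-1}(\overline P))$ is empty. By Proposition~\ref{prop:3.2}(a) the operator $J_{r^{-1}(P)|P}$ therefore has no poles on $\mc O$, and composing it with a neighbouring $J$ produces no rational $\mu$-factor: one uses $J_{P''|P'} \circ J_{P'|P} = J_{P''|P}$ along a chain through the relevant parabolics, exactly as in \cite[p.~283]{Wal}.

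For (a), after applying \eqref{eq:3.9} once to move $\rho_{P,r_2}\tau_{r_2}\lambda(\tilde{r_2})$ past $J_{r_1^{-1}(P)|P}$, the $J$-factors collapse to $J_{(r_1 r_2)^{-1}(P)|P}(\sigma \otimes \,\cdot\,)$ and the $\rho$-factors collect into $I_P^G$ applied to an $M$-intertwiner $\widetilde{r_1 r_2}\cdot \sigma \to \sigma$ followed by a translation $\rho_{\chi_{r_1} r_1(\chi_{r_2})}^{-1}$. On the other hand $\rho_{P,r_1 r_2}$ is built from an $M$-intertwiner $\widetilde{r_1 r_2}\cdot \sigma \to \sigma$ whose image lies in the $\chi_{r_1 r_2}$-isotypic summand. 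These two $M$-intertwiners differ by the unramified character $\chi(r_1,r_2) = \chi_{r_1} r_1(\chi_{r_2})\chi_{r_1 r_2}^{-1} \in X_\nr(M,\sigma)$, so by the Schur-type uniqueness behind the definition \eqref{eq:2.24} of $\phi_{\chi(r_1,r_2)}$ they agree up to a scalar $\natural(r_1,r_2) \in \C^\times$ times $\phi_{\sigma,\chi(r_1,r_2)}$. Packaging the translation $\rho_{\chi(r_1,r_2)}^{-1}$ back in and inducing yields
\[
A_{r_1} \circ A_{r_2} = \natural(r_1,r_2)\, \phi_{\chi(r_1,r_2)} \circ A_{r_1 r_2}.
\]

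Parts (b) and (c) follow by the same mechanism, with $W(\Sigma_{\mc O,\mu})$ in the second resp.\ first slot. In (b), Condition~\ref{cond:3.1} gives $\chi_w = 1$ for $w \in W(\Sigma_{\mc O,\mu})$, so \eqref{eq:3.9} introduces no character shift; combined with the length-additivity $\ell_{\mc O}(rw) = \ell_{\mc O}(w)$ and Proposition~\ref{prop:4.1}(b) this forces $A_r \circ A_w = A_{rw}$ on the nose. For (c), pulling $A_r$ through $A_{w'}$ via \eqref{eq:3.15}--\eqref{eq:3.17} and \eqref{eq:4.2} replaces $\chi_r$ by $w'(\chi_r)$; the discrepancy $w'(\chi_r^{-1})\chi_r$ lies in $X_\nr(M,\sigma)$ by Lemma~\ref{lem:3.7}, and the same Schur argument as in (a) absorbs it into $\natural(w',r)\,\phi_{w'(\chi_r^{-1})\chi_r}$. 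When $w'(\chi_r) = \chi_r$ this discrepancy is trivial, forcing $\natural(w',r) = 1$ and $A_{w'} \circ A_r = A_{w'r}$.

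The main obstacle is the bookkeeping in (a): one has to track carefully how the translation operators $\rho_{\chi_{r_i}}^{-1}$ interact with $J_{r_1^{-1}(P)|P}$ via \eqref{eq:3.9}, and then correctly compare the composite $M$-intertwiner $\rho_{\sigma,r_1}\circ\rho_{\sigma,r_2}$ (landing in the $\chi_{r_1} r_1(\chi_{r_2})$-twist of $\sigma$) with $\rho_{\sigma,r_1 r_2}$ (landing in the $\chi_{r_1 r_2}$-twist). Once this identification is made, the appearance of $\phi_{\chi(r_1,r_2)}$ is forced by Proposition~\ref{prop:2.2}(a) and the scalar $\natural(r_1,r_2)$ is exactly the one coming from the defining relation \eqref{eq:2.29} of the twisted group algebra on $X_\nr(M,\sigma)$.
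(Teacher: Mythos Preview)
Your overall strategy---expand $A_{?}$, commute $\rho$ past $J$ via Lemma~\ref{lem:3.4}(a) and \eqref{eq:3.9}, then compose the $J$-factors cleanly because $r \in R(\mc O)$ forces $\Sigma_{\mc O,\mu}(P) \cap \Sigma_{\mc O,\mu}(r^{-1}(\overline P)) = \emptyset$---is exactly the paper's approach for part~(a), and your execution of (a) is essentially correct. There are, however, genuine gaps in your treatment of (b), (c), and in the closing paragraph.

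\textbf{Part (b).} You invoke Proposition~\ref{prop:4.1}(b), but that result is stated and proved only for $w_1, w_2 \in W(\Sigma_{\mc O,\mu})$; it says nothing when one factor lies in $R(\mc O)$. The actual reason $A_r \circ A_w = A_{rw}$ holds on the nose is that $A_{rw}$ is \emph{defined} via $\rho_{P,rw} = \rho_{P,r}\, \tau_r \lambda(\tilde r)\, \rho_{r^{-1}(P),w}\, \lambda(\tilde r)^{-1} \tau_r^{-1}$ (see the paragraph after \eqref{eq:3.9}). The paper simply verifies \eqref{eq:4.9} by expanding both sides and using Lemma~\ref{lem:3.4}(a); no length argument or reference to Proposition~\ref{prop:4.1} is needed or valid here.

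\textbf{Part (c).} The equations \eqref{eq:3.15}--\eqref{eq:3.17} govern how $\phi_\chi$ interacts with $\tau_w\lambda(\tilde w)$ and $\rho_w$; they do not let you ``pull $A_r$ through $A_{w'}$''. The paper's argument is different: it writes $w' r = r(r^{-1}w'r)$, uses part~(b) to express $A_{w'r}$, and then invokes Lemma~\ref{lem:3.4}(c) to rewrite $\lambda(\tilde r) \rho_{r^{-1}(P),w} \lambda(\tilde r)^{-1}$ as $\rho_{P,\tilde r \cdot \sigma, rw r^{-1}}$ times a correction. The scalar $\natural(w',r)$ then arises from Schur's lemma comparing the two $M$-intertwiners in \eqref{eq:4.13}. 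Your sketch does not supply this mechanism. Moreover, the claim that $w'(\chi_r)=\chi_r$ ``forces $\natural(w',r)=1$'' requires the explicit identity $\rho_{\sigma,r}^{-1} \circ \rho_{\sigma,w'} \circ \rho_{\tilde{w'}\sigma,r} = \rho_{\tilde r \cdot \sigma, r^{-1}w'r}$, which is what the paper checks at the end of its proof of (c); it does not follow automatically from the vanishing of $w'(\chi_r^{-1})\chi_r$.

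\textbf{The final paragraph.} The scalar $\natural(r_1,r_2)$ does \emph{not} come from the cocycle relation \eqref{eq:2.29} of the $\phi_\chi$'s on $X_\nr(M,\sigma)$. It comes from comparing two $G$-intertwiners $I^G_{r_2^{-1}r_1^{-1}(P)}(\sigma\otimes\chi) \to I^G_P(\sigma\otimes\chi_{r_1}r_1(\chi_{r_2})\chi)$ at generic $\chi$, where irreducibility forces them to be proportional (see \eqref{eq:4.5}). The relation \eqref{eq:2.29} is about products $\phi_\chi \phi_{\chi'}$ of operators intrinsic to $M$, whereas here you are comparing compositions of the $\rho_{\sigma,r_i}$'s, which involve the choices \eqref{eq:3.12}; these are different sources of non-canonicity.
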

\begin{proof}
(a) By \eqref{eq:3.12}
\[
\sigma \otimes \chi_{r_1 r_2} \cong \widetilde{r_1 r_2} \cdot \sigma \cong \tilde{r_1}
\tilde{r_2} \cdot \sigma \cong \tilde{r_1} \cdot (\sigma \otimes \chi_{r_2}) \cong 
\tilde{r_1} \cdot \sigma \otimes r_1 (\chi_{r_2}) \cong \sigma \otimes \chi_{r_1} r_1 (\chi_{r_2}) .
\]
Hence the unramified characters $\chi_{r_1 r_2}$ and $\chi_{r_1} r_1 (\chi_{r_2})$ differ only
by an element $\chi_c \in X_\nr (M,\sigma)$ (as already used in the statement).
With \eqref{eq:3.9} we compute
\begin{align*}
A_{r_1} \circ A_{r_2} & = \rho_{r_1} \tau_{r_1} \lambda (\tilde{r_1}) J_{r_1^{-1}(P)|P}(\sigma \otimes \cdot)
\rho_{r_2} \tau_{r_2} \lambda (\tilde{r_2}) J_{r_2^{-1}(P)|P}(\sigma \otimes \cdot) \\
& = \rho_{P,r_1} \tau_{r_1} \lambda (\tilde{r_1}) \rho_{r^{-1}(P),r_2} J_{r_1^{-1}(P)|P}(\tilde{r_2} \cdot 
\sigma \otimes \cdot) \tau_{r_2} \lambda (\tilde{r_2}) J_{r_2^{-1}(P)|P}(\sigma \otimes \cdot) \\
& = \rho_{P,r_1} \tau_{r_1} \lambda (\tilde{r_1}) \rho_{r^{-1}(P),r_2} \tau_{r_2} \lambda (\tilde{r_2}) 
J_{r_2^{-1}r_1^{-1}(P)|r_2^{-1} P}(\sigma \otimes \cdot) J_{r_2^{-1}(P)|P}(\sigma \otimes \cdot) .
\end{align*}
Now we use that $r_1,r_2 \in R(\mc O)$, which by \cite[Proposition 1.9]{Hei2} or \cite[IV.3.(4)]{Wal} 
implies that the J-operators in the previous line compose in the expected way. Hence
\begin{equation}\label{eq:4.3}
A_{r_1} \circ A_{r_2} = \rho_{P,r_1} \tau_{r_1} \lambda (\tilde{r_1}) \rho_{r^{-1}(P),r_2} 
\tau_{r_2} \lambda (\tilde{r_2}) J_{r_2^{-1}r_1^{-1}(P)|P}(\sigma \otimes \cdot) .
\end{equation}
Comparing \eqref{eq:4.3} with the definition of $A_{r_1 r_2}$, we see that it remains to relate
\begin{equation}\label{eq:4.4}
\rho_{P,r_1} \tau_{r_1} \lambda (\tilde{r_1}) \rho_{r^{-1}(P),r_2} \tau_{r_2} \lambda (\tilde{r_2}) 
\end{equation}
to $\rho_{P,r_1 r_2} \tau_{r_1 r_2} \lambda (\widetilde{r_1 r_2})$. Both \eqref{eq:4.4} and 
\[
\phi_{\chi (r_1,r_2)} \rho_{P,r_1 r_2} \tau_{r_1 r_2} \lambda (\widetilde{r_1 r_2})
\]
give $G$-homomorphisms 
\[
I_{r_2^{-1} r_1^{-1}(P)}^G (\sigma \otimes \chi) \to I_P^G \big( (\sigma \otimes \chi_{r_1} r_1 
(\chi_{r_2})) \otimes \chi_{r_1}^{-1} r_1 (\chi_{r_2}^{-1}) \chi \big) 
\]
that are constant in $\chi \in X_\nr (M)$, because $\tilde{r_i} \in K$. 
For generic $\chi$ the involved $G$-representations are irreducible, so then
\begin{equation}\label{eq:4.5}
\mr{sp}_\chi \rho_{P,r_1} \tau_{r_1} \lambda (\tilde{r_1}) \rho_{r^{-1}(P),r_2} \tau_{r_2} 
\lambda (\tilde{r_2}) = \natural (\chi) \mr{sp}_\chi \phi_{\chi (r_1,r_2)} 
\rho_{P,r_1 r_2} \tau_{r_1 r_2} \lambda (\widetilde{r_1 r_2})
\end{equation}
for some $\natural (\chi) \in \C^\times$. But then $\natural (\chi)$ does not depend on $\chi$ 
(for generic $\chi$), so it is a constant $\natural (r_1,r_2)$ and in fact \eqref{eq:4.5} 
already holds without specializing at $\chi$. With 
\eqref{eq:4.3} we find the required expression for $A_{r_1} \circ A_{r_2}$.\\
(b) Pick any $\chi \in X_\nr (M,\sigma)$. With Lemma \ref{lem:3.4}.a one easily computes
\begin{multline}\label{eq:4.9}
\mr{sp}_\chi A_r \circ A_w = \mr{sp}_\chi A_{r w} = \\
\rho_{P,r} \lambda (\tilde r) \rho_{r^{-1}(P),w} \lambda (\tilde{w}) J_{w^{-1} r^{-1} 
(P)|P}(\sigma \otimes w^{-1} r^{-1} (\chi \chi_r^{-1})) \mr{sp}_{w^{-1} r^{-1} (\chi \chi_r^{-1})} .
\end{multline}
(c) We relate this to part (b) by setting $w = r^{-1} w' r$. 
By Lemma \ref{lem:3.4}, \eqref{eq:4.9} becomes 
\begin{equation}\label{eq:4.12}
\rho_{P,r} \rho_{P, \tilde r \cdot \sigma,w} \lambda (\tilde{w'}) \lambda (\tilde r) 
J_{w^{-1} r^{-1} (P)|P}(\sigma \otimes w^{-1} r^{-1} 
(\chi \chi_r^{-1})) \mr{sp}_{w^{-1} r^{-1} (\chi \chi_r^{-1})} .
\end{equation}
A similar computation yields
\begin{multline}\label{eq:4.11}
\mr{sp}_{\chi'} A_{w'} \circ A_r = \\ 
\rho_{P,w'} \lambda (\tilde{w'}) \rho_{w'^{-1}(P),r} 
\lambda (\tilde r) J_{w^{-1} r^{-1} (P)|P}(\sigma \otimes w^{-1} r^{-1} 
(\chi') r^{-1}(\chi_r^{-1})) \mr{sp}_{w^{-1} r^{-1} (\chi') r^{-1} (\chi_r^{-1})} 
\end{multline}
Thus it remains to compare
\begin{equation}\label{eq:4.10}
\rho_{P,r} \rho_{P, \tilde r \cdot \sigma,w} \mr{sp}_{\chi \chi_r^{-1}} \quad \text{and} \quad
\rho_{P,w'} \lambda (\tilde{w'}) \rho_{w'^{-1}(P),r} \lambda (\tilde{w'})^{-1} 
\mr{sp}_{\chi' w' (\chi_r^{-1})} .
\end{equation}
Lemma \ref{lem:3.7} guarantees that $w'(\chi_r^{-1}) \chi_r \in X_\nr (M,\sigma)$. Recall the 
convention \eqref{eq:3.32}. By Schur's lemma there exists a $\natural (w',r) \in \C^\times$ such that
\begin{equation}\label{eq:4.13}
\rho_{\sigma,r} \rho_{\tilde r \cdot \sigma,w}  = \natural (w',r) \phi_{\sigma,w'(\chi_r^{-1}) 
\chi_r} \rho_{\sigma,w'} \rho_{\tilde{w'} \sigma,r} : 
\tilde{w'} \tilde{r} \sigma \otimes \chi \chi_r^{-1} \to \sigma \otimes \chi .
\end{equation}
Instead of $A_{r w r^{-1}} \circ A_r$ we consider $\phi_{w' (\chi_r^{-1}) \chi_r} \circ 
A_{w'} \circ A_r$. Set $\chi' = \chi w'(\chi_r) \chi_r^{-1}$, compose \eqref{eq:4.11} on 
the left with $\natural (w',r) \phi_{\sigma,w' (\chi_r^{-1}) \chi_r}$ and recall \eqref{eq:3.31}. 
With \eqref{eq:4.13} we find
\begin{multline*}
\mr{sp}_\chi \natural (w',r) \phi_{w' (\chi_r^{-1}) \chi_r} A_{w'} A_r \; = \;
\natural (w',r) \phi_{\sigma,w' (\chi_r^{-1}) \chi_r} \mr{sp}_{\chi'} A_{w'} A_r = \\ 
\rho_{\sigma,r} \rho_{\tilde r \cdot \sigma,w} \lambda (\tilde{w'}) 
\lambda (\tilde r) J_{w^{-1} r^{-1} (P)|P}(\sigma \otimes w^{-1} r^{-1} 
(\chi \chi_r^{-1})) \mr{sp}_{w^{-1} r^{-1} (\chi \chi_r^{-1})} .
\end{multline*}
The last line equals \eqref{eq:4.12} and \eqref{eq:4.11}. This holds for every 
$\chi \in X_\nr (M)$, so we obtain the desired expression for $A_{w'} \circ A_r$.

If in addition $w' (\chi_r) = \chi_r$, then
\[
\rho_{\sigma,r}^{-1} \circ \rho_{\sigma,w'} \circ \rho_{\tilde{w'} \sigma,r} = 
\rho_{\tilde r \cdot \sigma,w} .
\]
In that case the two sides of \eqref{eq:4.10} are equal (with $\chi' = \chi$).
\end{proof}

With Bernstein's geometric lemma we can determine the rank of $\End_G (I_P^G (E_B))$
as $B$-module:

\begin{lem}\label{lem:5.1}
The $B$-module $\End_G (I_P^G (E_B))$ admits a filtration with successive subquotients 
isomorphic to $\Hom_M (w \cdot E_B,E_B)$, where $w \in W(M,\mc O)$.
This same holds for $\Hom_G \big( I_P^G (E_B),I_P^G (E_{K(B)})\big)$, with subquotients
$\Hom_M (w \cdot E_B,E_{K(B)})$.
\end{lem}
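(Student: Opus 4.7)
\textbf{Proof plan for Lemma \ref{lem:5.1}.}
The plan is to combine Bernstein's geometric lemma with an adjunction that lands us on the side where exactness is free. First I will use Frobenius reciprocity (the first adjunction) to rewrite
\[
\End_G \big( I_P^G (E_B) \big) \;\cong\; \Hom_M \big( E_B, J_P^G I_P^G (E_B) \big) ,
\]
and similarly $\Hom_G \big( I_P^G (E_B), I_P^G (E_{K(B)}) \big) \cong \Hom_M \big( E_B, J_P^G I_P^G (E_{K(B)}) \big)$. The point of choosing this direction is that $E_B \cong \ind_{M^1}^M (E)$ is a progenerator of $\Rep (M)^{\mc O}$, hence projective in that block, so $\Hom_M (E_B, -)$ is exact on $\Rep (M)^{\mc O}$ and annihilates the other Bernstein components.

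Next I apply Bernstein's geometric lemma to $J_P^G I_P^G (E_B)$. It supplies a finite filtration by $M$-subrepresentations whose successive subquotients are of the form
\[
I_{M \cap w(P)}^M \circ (\tilde w \cdot) \circ J_{M \cap w^{-1}(P)}^M (E_B) ,
\]
with $w$ running over a set of representatives for the double cosets $W^M \backslash W(G,A_0) / W^M$. Because $(\sigma,E)$ is supercuspidal, $E_B$ (as a $B$-family of supercuspidals) is killed by every proper Jacquet functor on $M$; so the only nonzero subquotients are those for which $M \cap w^{-1}(P) = M$, i.e. $w \in W(G,M)$, and they collapse to $\tilde w \cdot E_B$. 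The same filtration, tensored with $K(B)$ over $B$ (which is flat), gives an analogous filtration of $J_P^G I_P^G (E_{K(B)})$ with subquotients $\tilde w \cdot E_{K(B)}$.

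Now I apply $\Hom_M (E_B,-)$ to these filtrations. Exactness of this functor on $\Rep (M)^{\mc O}$ transfers the filtration verbatim, and since $\Hom_M (E_B, \tilde w \cdot E_B) = 0$ when $w \cdot \mc O \neq \mc O$, only $w \in W(M,\mc O)$ survives. The resulting filtration of $\End_G (I_P^G (E_B))$ has successive subquotients
\[
\Hom_M (E_B, \tilde w \cdot E_B) \;\cong\; \Hom_M (\tilde w^{-1} \cdot E_B, E_B) , \qquad w \in W(M,\mc O),
\]
where the isomorphism is by translating by $\tilde w^{-1}$; reindexing $w \mapsto w^{-1}$ gives the statement. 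The same procedure applied to $J_P^G I_P^G (E_{K(B)})$ yields the analogous filtration with subquotients $\Hom_M (\tilde w \cdot E_B, E_{K(B)})$.

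The one step that requires care is the exactness used in the last paragraph: had I instead used second adjointness to pass to $\Hom_M (J_{\overline P}^G I_P^G (E_B), E_B)$, only left-exactness would be available and one would be forced to compare generic $B$-ranks to upgrade inclusions of subquotients into isomorphisms. Using the first adjunction together with the projectivity of the progenerator $E_B$ in its block bypasses this issue entirely and is the main point of the argument.
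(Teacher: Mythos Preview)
Your overall strategy is sound, but you have the two adjunctions swapped. To obtain
\[
\End_G \big( I_P^G (E_B) \big) \;\cong\; \Hom_M \big( E_B,\, r^G_{?} I_P^G (E_B) \big)
\]
you must resolve the \emph{first} argument of the Hom, which requires Bernstein's \emph{second} adjointness $(I_P^G \dashv r_{\overline P}^G)$; the Jacquet functor on the right is therefore $r_{\overline P}^G$, not $J_P^G$. Likewise, in your final paragraph, the alternative you describe is standard Frobenius reciprocity $(r_P^G \dashv I_P^G)$, giving $\Hom_M (r_P^G I_P^G (E_B), E_B)$ with $P$, not $\overline P$. These are labeling errors only; the geometric lemma applies to $r_{\overline P}^G I_P^G$ with the same double-coset index set and the same subquotients, so after correcting the parabolic your argument goes through.

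With that fix, your route is genuinely different from the paper's. The paper uses standard Frobenius reciprocity to write $\End_G (I_P^G (E_B)) \cong \Hom_M (r_P^G I_P^G (E_B), E_B)$ and then asserts that the geometric-lemma filtration of $r_P^G I_P^G (E_B)$ ``induces a filtration with subquotients isomorphic to $\Hom_M (w \cdot E_B, E_B)$''. That step applies the \emph{contravariant} functor $\Hom_M(-,E_B)$ to a filtration and tacitly uses exactness, which would require $E_B$ to be injective in $\Rep (M)^{\mc O}$---not obvious, and not argued. Your version instead applies the \emph{covariant} functor $\Hom_M (E_B,-)$, whose exactness follows immediately from the projectivity of the progenerator $E_B$ in its block. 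So, once corrected, your argument is cleaner precisely at the point you flagged. The reindexing $\Hom_M (E_B, w \cdot E_B) \cong \Hom_M (w^{-1} \cdot E_B, E_B)$ and $w \mapsto w^{-1}$ at the end is fine.
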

\begin{proof}
This is similar to \cite[Proposition 1.8.4.1]{Roc1}. Let $r_P^G : \Rep (G) \to \Rep (M)$ be 
the normalized Jacquet restriction functor associated to $P = M U$. By Frobenius reciprocity
\begin{equation}\label{eq:7.20}
\Hom_G (I_P^G (E_B), I_P^G (E_B) ) \cong \Hom_M (r_P^G I_P^G (E_B), E_B) .
\end{equation}
According to Bernstein's geometric lemma \cite[Th\'eor\`eme VI.5.1]{Ren}, 
$r_P^G I_P^G (E_B)$ has a filtration whose successive subquotients are
\[
I^M_{(M \cap w^{-1} M w)(M \cap P)} \circ w \circ r^M_{M \cap w M w^{-1})(M \cap P)} E_B
\]
with $w \in W(M,A_0) \backslash W(G,A_0) / W(M,A_0)$. That induces a filtration of
\eqref{eq:7.20} with subquotients isomorphic to 
\begin{equation}\label{eq:7.21}
\Hom_M \Big( I^M_{(M \cap w^{-1} M w)(M \cap P)} \circ w \circ r^M_{M \cap w M w^{-1})
(M \cap P)} E_B , E_B \Big) .
\end{equation}
By the Bernstein decomposition and the definition of $W(M,\mc O)$, \eqref{eq:7.21} is zero
unless $w \in W(M,\mc O)$. For $w \in W(M,\mc O)$, \eqref{eq:7.21} simplifies to 
$\Hom_M (w \cdot E_B , E_B)$, which we can analyse further with \eqref{eq:2.22}.
Thus \eqref{eq:7.20} has a filtration with subquotients
\begin{equation}\label{eq:7.22}
\Hom_M (w \cdot E_B , E_B) \cong \bigoplus_{\chi \in X_\nr (M,\sigma)} \phi_{\chi} B = 
\bigoplus_{\chi \in X_\nr (M,\sigma)} B \phi_{\chi} 
\end{equation}
where $w$ runs through $W(M,\mc O)$. The same considerations apply to\\
$\Hom_G \big( I_P^G (E_B),I_P^G (E_{K(B)})\big)$.
\end{proof}

Now we can generalize \cite[Theorem 3.8]{Hei2} and describe the space of\\ 
$G$-homomorphisms that we are after in this subsection:

\begin{thm}\label{thm:4.3}
As vector spaces over $K(B) = \C (X_\nr (M))$:
\[
\Hom_G \big( I_P^G (E_B), I_P^G (E_{K(B)}) \big) = \bigoplus_{w \in W(M,\mc O)} \bigoplus_{\chi \in
X_\nr (M,\sigma)} K(B) A_w \phi_\chi .
\]
\end{thm}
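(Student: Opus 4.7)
The plan is to prove the theorem in two steps: a $K(B)$-dimension count matching the cardinality of the indexing set $\{(w,\chi) : w \in W(M,\mc O),\chi \in X_\nr(M,\sigma)\}$, followed by $K(B)$-linear independence of the proposed spanning set $\{A_w \phi_\chi\}$. For the dimension count, I would apply the proof of Lemma \ref{lem:5.1} verbatim with $I_P^G(E_{K(B)})$ in place of $I_P^G(E_B)$ on the target side: Frobenius reciprocity, Bernstein's geometric lemma, and the Bernstein decomposition together yield a filtration of $\Hom_G(I_P^G(E_B), I_P^G(E_{K(B)}))$, compatible with its $K(B)$-module structure, whose successive subquotients are $\Hom_M(w \cdot E_B, E_{K(B)})$ for $w \in W(M,\mc O)$. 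By \eqref{eq:2.22} each such subquotient is a $K(B)$-vector space of dimension $|X_\nr(M,\sigma)|$, so the total $K(B)$-dimension equals $|W(M,\mc O)| \cdot |X_\nr(M,\sigma)|$, precisely the cardinality of the indexing set.

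For linear independence I would argue by generic specialization. Combining \eqref{eq:4.1}, \eqref{eq:4.2}, \eqref{eq:3.31}, and \eqref{eq:2.26}, a direct calculation gives
\[
\mr{sp}_{\chi_w w(\chi) w(\chi_1)} \circ A_w \phi_\chi \;=\; \rho_w \lambda(\tilde w)\,J_{w^{-1}(P)|P}(\sigma \otimes \chi\chi_1)\,\phi_{\sigma,\chi}\,\mr{sp}_{\chi_1},
\]
so $A_w \phi_\chi$ transports elements of ``$B$-weight $\chi_1$'' to ``$K(B)$-weight $\chi_2 = \chi_w w(\chi) w(\chi_1)$.'' Suppose a relation $\sum_{(w,\chi)} f_{w,\chi} A_w \phi_\chi = 0$ holds with $f_{w,\chi} \in K(B)$. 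Choose a nonempty Zariski-open $U \subset X_\nr(M)$ on which, for all $(w,\chi)$: every $f_{w,\chi}$ is regular, $J_{w^{-1}(P)|P}(\sigma \otimes \chi\chi_1)$ is regular and bijective (Proposition \ref{prop:3.2}), and the target weights $\chi_w w(\chi) w(\chi_1)$ are pairwise distinct as $(w,\chi)$ varies. The last condition cuts out a proper subvariety $w'(\chi_1) w(\chi_1)^{-1} = \chi_w w(\chi)\chi_{w'}^{-1} w'(\chi')^{-1}$ whenever $w \neq w'$ (by faithfulness of $W(M,\mc O)$ acting on the complex torus $X_\nr(M)$), and for $w = w'$, $\chi \neq \chi'$ it is vacuous. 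Fix $\chi_1 \in U$ and a nonzero $v_0 \in I_P^G(\sigma \otimes \chi_1)$, lifted to $v \in I_P^G(E_B)$ with $\mr{sp}_{\chi_1}(v) = v_0$. Applying $\mr{sp}_{\chi_2}$ to the vanishing sum at each distinct target $\chi_2 = \chi_w w(\chi) w(\chi_1)$ isolates one summand $f_{w,\chi}(\chi_1) \cdot \rho_w \lambda(\tilde w) J(\sigma \otimes \chi\chi_1) \phi_{\sigma,\chi} v_0 = 0$; the intertwiner on the right is an isomorphism, so $f_{w,\chi}(\chi_1) = 0$. Since $U$ is Zariski-dense, $f_{w,\chi} = 0$ identically.

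The main obstacle is the bookkeeping for the weight-shift formula and, especially, verifying that the map $(w,\chi) \mapsto \chi_w w(\chi) w(\chi_1)$ is injective on $W(M,\mc O) \times X_\nr(M,\sigma)$ for generic $\chi_1$. This rests on the faithful action of $W(M,\mc O)$ on $X_\nr(M)$ combined with the $W(M,\mc O)$-stability of $X_\nr(M,\sigma)$. Once these points are verified, the separation of target weights reduces the relation to individual equations, and each is then killed by the bijectivity of the generic specialization of $\rho_w \lambda(\tilde w) J_{w^{-1}(P)|P} \phi_{\sigma,\chi}$; combined with the dimension count this yields the asserted direct sum decomposition.
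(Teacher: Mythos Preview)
Your dimension count via Lemma~\ref{lem:5.1} and \eqref{eq:7.22} is correct and is exactly what the paper does. The overall shape of the linear-independence argument is also right, but the isolation step has a gap.

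You fix a source point $\chi_1$, choose a lift $v\in I_P^G(E_B)$ with $\mr{sp}_{\chi_1}(v)=v_0\neq 0$, and then claim that evaluating $\sum_{w,\chi} f_{w,\chi}\,A_w\phi_\chi\,v$ at the target $\chi_2=\chi_w w(\chi)w(\chi_1)$ singles out the $(w,\chi)$-term. But your own weight-shift formula shows that at that same $\chi_2$ the $(w',\chi')$-term reads $v$ at the \emph{source} point $w'^{-1}(\chi_2\chi_{w'}^{-1})(\chi')^{-1}$, which is typically different from $\chi_1$; since your lift $v$ has no reason to vanish there, the other summands do not drop out. (A smaller slip: $f_{w,\chi}$ multiplies on the target side, so it is evaluated at $\chi_2$, not at $\chi_1$.)

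The repair uses only what you already have. One option: fix a generic $\chi_2$, observe that the \emph{source} points $\chi_1(w,\chi)=w^{-1}(\chi_2\chi_w^{-1})\chi^{-1}$ are pairwise distinct (this is your injectivity computation read backwards), and choose $v$ with prescribed values at those finitely many points so as to kill all but one term; this forces $f_{w,\chi}(\chi_2)=0$ for generic $\chi_2$. A cleaner option avoids specialization: by \eqref{eq:2.26} and \eqref{eq:4.2} each $A_w\phi_\chi$ normalizes $K(B)$, acting by the automorphism $b\mapsto b\circ T_{w,\chi}^{-1}$ for the transformation $T_{w,\chi}\colon \chi_1\mapsto \chi_w w(\chi)w(\chi_1)$ of $X_\nr(M)$. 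Your injectivity check says exactly that these transformations, hence these field automorphisms, are pairwise distinct, and then the classical Artin--Dedekind lemma gives $K(B)$-linear independence immediately. This is the route the paper takes: it cites \cite[Proposition~3.7]{Hei2}, with Proposition~\ref{prop:2.2} supplying the $\phi_\chi$-part and \eqref{eq:4.2} the commutation relation that drives the Artin-type argument.
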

\begin{proof}
We need Proposition \ref{prop:2.2} and \eqref{eq:4.2}. With those, the proof (for classical groups)
in \cite[Proposition 3.7]{Hei2} applies and shows that the operators $\phi_\chi A_w$ with 
$w \in W(M,\mc O)$ and $\chi \in X_\nr (M,\sigma)$ are linearly independent over $K(B)$. 
Further by \eqref{eq:7.22}, with the second $E_B$ replaced by $E_{K(B)}$, the dimension of\\
$\Hom_G \big( I_P^G (E_B), I_P^G (E_{K(B)}) \big)$ over $K(B)$ is exactly
$|X_\nr (M,\sigma)| \, |W(M,\mc O)|$.
\end{proof}

Since all elements of $K(B) A_w \phi_\chi$ extend naturally to $G$-endomorphisms of\\
$I_P^G (E_{K(B)})$, Theorem \ref{thm:4.3} shows that 
$\Hom_G \big( I_P^G (E_B), I_P^G (E_{K(B)}) \big)$ is a subalgebra of
$\End_G (I_P^G (E_{K(B)}))$.
The multiplication relations from Proposition \ref{prop:4.2} become more transparant
if we work with the group $W(M,\sigma,X_\nr (M))$ from \eqref{eq:3.29}. For 
$\chi_c \in X_\nr (M,\sigma), r \in R(\mc O)$ and $w \in W(\Sigma_{\mc O,\mu})$ we define
\[
A_{\chi_c \mf{r} \mf{w}} = \phi_{\chi_c} A_r A_w \in \End_G (I_P^G (E_{K(B)})) . 
\]
By \eqref{eq:2.29} and Propositions \ref{prop:4.1} and \ref{prop:4.2}, all the 
$A_{\chi \mf{r} \mf{w}}$ are invertible in \\
$\End_G (I_P^G (E_{K(B)}))$. By \eqref{eq:2.26} and \eqref{eq:4.2}, for $b \in \C (X_\nr (M))$:
\begin{equation}\label{eq:4.16}
A_{\chi_c \mf{r} \mf{w}} \, b \, A_{\chi_c \mf{r} \mf{w}}^{-1} = 
(r w \cdot b)_{\chi_c^{-1} \chi_r^{-1}} = b \circ (\chi_c \mf{r} \mf{w})^{-1} \in \C (X_\nr (M)) .
\end{equation}
This implies that we may change the order of the factors in Theorem \ref{thm:4.3}, for any 
$w \in W(M,\mc O)$, $\chi_c \in X_\nr (M,\sigma)$:
\begin{equation}\label{eq:4.8}
B A_w \phi_\chi = A_w \phi_\chi B \qquad \text{and} \qquad
K(B) A_w \phi_\chi = A_w \phi_\chi K(B) .
\end{equation}

\subsection{The operators $\mc T_w$} \label{par:Tw} \

To simplify the multiplication relations between the $A_w$, we will introduce a variation.
For any $\alpha \in \Sigma_{\mc O,\mu}$ we write
\[
g_\alpha = \frac{(1 - X_\alpha)(1 + X_\alpha) (1 - q_\alpha^{-1}) (1 + q_{\alpha*}^{-1})}
{2 (1 - q_\alpha^{-1} X_\alpha) (1 + q_{\alpha*}^{-1} X_\alpha)} \in \C (X_\nr (M)) .
\]
By \eqref{eq:3.21} and Lemma \ref{lem:3.8}.a
\begin{equation}\label{eq:4.24}
w \cdot g_\alpha = g_{w (\alpha)} \qquad \alpha \in \Sigma_{\mc O,\mu}, w \in W(M,\mc O) .
\end{equation}
Our alternative version of $A_{s_\alpha} \; (\alpha \in \Delta_{\mc O,\mu})$ is
\begin{equation}\label{eq:4.25}
\mc{T}_{s_\alpha} = g_\alpha A_{s_\alpha} = 
\frac{(1 - X_\alpha)(1 + X_\alpha) (1 - q_\alpha^{-1}) (1 + q_{\alpha*}^{-1} )}
{2 (1 - q_\alpha^{-1} X_\alpha)(1 + q_{\alpha*}^{-1} X_\alpha)} A_{s_\alpha} .
\end{equation}
By Proposition \ref{prop:4.1} the only poles of $A_{s_\alpha}$ are those of $(\mu^{M_\alpha})^{-1}$,
and by Proposition \ref{prop:3.2} they are simple. A glance at \eqref{eq:4.25} then reveals that 
\begin{equation}\label{eq:4.26}
\text{the poles of } \mc{T}_{s_\alpha} \text{ are at }  \{X_\alpha = q_\alpha \}
\text{ and, if } b_{s_\alpha} > 0, \text{ at } \{ X_\alpha = -q_{\alpha*} \} .
\end{equation}

\begin{prop}\label{prop:4.6}
The map $s_\alpha \mapsto \mc{T}_{s_\alpha}$ extends to a group homomorphism $w \mapsto \mc T_w$
from $W(\Sigma_{\mc O,\mu})$ to the multiplicative group of $\End_G (I_P^G (E_{K(B)}))$.
\end{prop}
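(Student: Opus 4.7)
The plan is to verify the Coxeter relations of $W(\Sigma_{\mc O,\mu})$ for the operators $\mc T_{s_\alpha}$ and invoke the Coxeter presentation. Since $W(\Sigma_{\mc O,\mu})$ is generated by the $s_\alpha$ with $\alpha \in \Delta_{\mc O,\mu}$, subject to the quadratic relations $s_\alpha^2 = 1$ and the braid relations, it suffices to check that the $\mc T_{s_\alpha}$ satisfy these same relations in the group of invertible elements of $\End_G(I_P^G(E_{K(B)}))$. I would then define $\mc T_w = \mc T_{s_{\alpha_1}} \cdots \mc T_{s_{\alpha_k}}$ for any reduced expression $w = s_{\alpha_1} \cdots s_{\alpha_k}$, and show this is well-defined and multiplicative.

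For the quadratic relation, observe that for $w \in W(\Sigma_{\mc O,\mu})$ we have $\chi_w = 1$, so \eqref{eq:4.16} gives $A_{s_\alpha} b A_{s_\alpha}^{-1} = s_\alpha \cdot b$ for any $b \in \C(X_\nr(M))$. Together with \eqref{eq:4.24} this yields
\[
\mc T_{s_\alpha}^2 = g_\alpha A_{s_\alpha} g_\alpha A_{s_\alpha} = g_\alpha (s_\alpha \cdot g_\alpha) A_{s_\alpha}^2 = g_\alpha g_{-\alpha} A_{s_\alpha}^2 .
\]
A direct computation of $g_\alpha g_{-\alpha}$ from its definition, compared with the formula \eqref{eq:3.22} for $\mu^{M_\alpha}$, gives $g_\alpha g_{-\alpha} = \bigl((1-q_\alpha^{-1})^2(1+q_{\alpha *}^{-1})^2 / (4 c'_{s_\alpha})\bigr) \mu^{M_\alpha}(\sigma \otimes \cdot)$. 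Multiplying by the value of $A_{s_\alpha}^2$ from Proposition \ref{prop:4.1}(c) makes everything cancel, yielding $\mc T_{s_\alpha}^2 = 1$.

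For the braid relations, the key observation is that for any reduced expression $w = s_{\alpha_1} \cdots s_{\alpha_k}$, repeated use of \eqref{eq:4.16} and \eqref{eq:4.24} gives
\[
\mc T_{s_{\alpha_1}} \cdots \mc T_{s_{\alpha_k}} = \Bigl( \prod_{i=1}^{k} g_{\beta_i} \Bigr) A_{s_{\alpha_1}} \cdots A_{s_{\alpha_k}}, \qquad \beta_i = s_{\alpha_1} \cdots s_{\alpha_{i-1}}(\alpha_i) .
\]
By Proposition \ref{prop:4.1}(b), the product $A_{s_{\alpha_1}} \cdots A_{s_{\alpha_k}}$ equals $A_w$ and depends only on $w$. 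The unordered set $\{\beta_1,\ldots,\beta_k\}$ equals $\Sigma_{\mc O,\mu}(P) \cap w(\Sigma_{\mc O,\mu}(\overline{P}))$, a standard reduced-expression invariant of $w$, and since the $g_\beta$ live in the commutative ring $\C(X_\nr(M))$ their product does not depend on the ordering. Hence both sides of any braid relation equal $\bigl(\prod_{\beta \in \Sigma_{\mc O,\mu}(P) \cap w_0(\Sigma_{\mc O,\mu}(\overline{P}))} g_\beta\bigr) A_{w_0}$, where $w_0$ is the common value of the two braid words.

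Putting these together, the map $s_\alpha \mapsto \mc T_{s_\alpha}$ respects both families of defining relations of the Coxeter group $W(\Sigma_{\mc O,\mu})$, so it extends uniquely to a group homomorphism $w \mapsto \mc T_w$. I expect the main obstacle to be the explicit verification of $g_\alpha g_{-\alpha} A_{s_\alpha}^2 = 1$: although it is just bookkeeping, it is the place where the precise shape of the normalizing factor $g_\alpha$ (with its twin numerator factors $(1-X_\alpha)(1+X_\alpha)$ and denominator $(1-q_\alpha^{-1}X_\alpha)(1+q_{\alpha*}^{-1}X_\alpha)$) matches exactly the zeros and poles of $\mu^{M_\alpha}$ appearing in $A_{s_\alpha}^2$, including the constants $c'_{s_\alpha}$ and $(1-q_\alpha^{-1})^2(1+q_{\alpha*}^{-1})^2$.
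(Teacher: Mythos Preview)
Your proof is correct and follows essentially the same approach as the paper: verify the quadratic relation via $g_\alpha g_{-\alpha} A_{s_\alpha}^2 = 1$ using Proposition~\ref{prop:4.1}(c), and verify the braid relations by pulling the $g$-factors past the $A$'s using \eqref{eq:4.24} and then invoking Proposition~\ref{prop:4.1}(b). The only cosmetic difference is that the paper restricts to rank-two braid words and checks case by case that the two sequences $\{\alpha, s_\alpha(\beta), s_\alpha s_\beta(\alpha), \ldots\}$ and $\{\beta, s_\beta(\alpha), \ldots\}$ give the same set of positive roots, whereas you invoke the general Coxeter-theoretic fact that the inversion set $\{\beta_i\} = \Sigma_{\mc O,\mu}(P) \cap w(\Sigma_{\mc O,\mu}(\overline P))$ is independent of the reduced expression; these are equivalent arguments.
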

\begin{proof}
It suffices to check that the relations in the standard presentation of the Coxeter group
$W(\Sigma_{\mc O,\mu})$ are respected. For the quadratic relations, consider any $\alpha \in 
\Delta_{\mc O,\mu}$. With \eqref{eq:4.2} and Proposition \ref{prop:4.1}.c we compute
\begin{align*}
\mc{T}_{s_\alpha}^2 & = g_\alpha A_{s_\alpha} g_\alpha A_{s_\alpha} = 
g_\alpha g_{-\alpha} A_{s_\alpha}^2 \\
& = \frac{(1 - X_\alpha)(1 + X_\alpha)(1 - X_\alpha^{-1})(1 - X_\alpha^{-1}) \; c'_{s_\alpha}}{
(1 - q_\alpha^{-1} X_\alpha) (1 + q_{\alpha*}^{-1} X_\alpha) (1 - q_\alpha^{-1} 
X_\alpha^{-1})(1 + q_{\alpha*}^{-1} X_\alpha^{-1}) \mu^{M_\alpha}(\sigma \otimes \cdot) } = 1.
\end{align*}
For the braid relations, let $\alpha, \beta \in \Delta_{\mc O,\mu}$ with $s_\alpha s_\beta$ of 
order $m_{\alpha \beta} \geq 2$. Then
\[
s_\alpha s_\beta s_\alpha \cdots = s_\beta s_\alpha s_\beta \cdots \qquad 
\text{(with } m_{\alpha \beta} \text{ factors on both sides)},
\] 
and this is an element of $W(\Sigma_{\mc O,\mu})$ of length $m_{\alpha \beta}$. We know from
Proposition \ref{prop:4.1}.b that 
\begin{equation}\label{eq:4.27}
A_{s_\alpha} A_{s_\beta} A_{s_\alpha} \cdots = A_{s_\beta} A_{s_\alpha} A_{s_\beta} \cdots 
\text{(with } m_{\alpha \beta} \text{ factors on both sides)}.
\end{equation}
Applying \eqref{eq:4.24} repeatedly, we find
\begin{equation}\label{eq:4.28}
\mc T_{s_\alpha} \mc T_{s_\beta} \mc T_{s_\alpha} \cdots = g_\alpha (s_\alpha \cdot g_\beta) 
(s_\alpha s_\beta \cdot g_\alpha) \cdots A_{s_\alpha} A_{s_\beta} A_{s_\alpha} \cdots =
\big( \prod\nolimits_\gamma g_\gamma \big) A_{s_\alpha} A_{s_\beta} A_{s_\alpha} \cdots ,
\end{equation}
where the product runs over $\{ \alpha, s_\alpha (\beta), s_\alpha s_\beta (\alpha), \ldots \}$.
Similarly
\begin{equation}\label{eq:4.29}
\mc T_{s_\beta} \mc T_{s_\alpha} \mc T_{s_\beta} \cdots = 
\big( \prod\nolimits_{\gamma'} g_{\gamma'} \big) A_{s_\beta} A_{s_\alpha} A_{s_\beta} \cdots ,
\end{equation}
where $\gamma'$ runs through $\{ \beta, s_\beta (\alpha), s_\beta s_\alpha (\beta), \ldots \}$.

We claim that $\{ \alpha, s_\alpha (\beta), s_\alpha s_\beta (\alpha), \ldots \}$ is precisely
the set of positive roots in the root system spanned by $\{\alpha,\beta\}$. To see this, one has
to check it for each of the four reduced root systems of rank 2 ($A_1 \times A_1, A_2, B_2, G_2$). 
In every case, it is an easy calculation.

Of course this applies also to $\{ \beta, s_\beta (\alpha), s_\beta s_\alpha (\beta), \ldots \}$.
Hence the products in \eqref{eq:4.28} and \eqref{eq:4.29} run over the same set. In combination
with \eqref{eq:4.27} that implies 
\[
\mc T_{s_\alpha} \mc T_{s_\beta} \mc T_{s_\alpha} \cdots = 
\mc T_{s_\beta} \mc T_{s_\alpha} \mc T_{s_\beta} \cdots ,
\]
as required.
\end{proof}

Since $\mc T_w$ is the product of $A_w$ with an element of $K(B)$, the relation \eqref{eq:4.2}
remains valid:
\begin{equation}\label{eq:4.30}
\mc T_w \circ b = (w \cdot b) \circ \mc T_w \qquad b \in K(B), w \in W(\Sigma_{\mc O,\mu}) .
\end{equation}
The $\mc T_w$ also satisfy analogues of \eqref{eq:4.7} and Proposition \ref{prop:4.2}.c:

\begin{lem}\label{lem:4.7}
Let $w \in W(\Sigma_{\mc O,\mu}), r \in R(\mc O)$ and $\chi_c \in X_\nr (M,\sigma)$.
\enuma{
\item $A_r \mc T_w \phi_{\chi_c} = z(rw,\chi_c) \phi_{rw (\chi_c)} A_r \mc T_w$.
\item $\mc T_w A_r = \natural (w,r) \phi_{w (\chi_r^{-1}) \chi_r} A_r \mc T_{r^{-1} w r}$.

If $w (\chi_r) = \chi_r$, then $\natural (w,r) = 1$ and $A_r^{-1} \mc T_w A_r = \mc T_{r^{-1} w r}$.
}
\end{lem}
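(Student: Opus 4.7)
The plan is to reduce both identities to computations with the $A$-operators, for which Propositions 4.1 and 4.2 give the multiplication relations and \eqref{eq:2.26}, \eqref{eq:4.2}, \eqref{eq:4.7} give the commutation rules. The starting observation, implicit in the proof of Proposition 4.6, is the factorization $\mc T_w = f_w A_w$ with
\[
f_w = \prod\nolimits_{\gamma \in I(w)} g_\gamma \in K(B),
\]
where $I(w) \subset \Sigma_{\mc O,\mu}(P)$ is the inversion set of $w$; this uses that $\chi_{s_\alpha} = 1$ for $s_\alpha \in W(\Sigma_{\mc O,\mu})$, so \eqref{eq:4.2} reduces to $A_{s_\alpha} b = (s_\alpha \cdot b) A_{s_\alpha}$.

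For part (a), pushing $f_w$ through $A_r$ by \eqref{eq:4.2} and then invoking Proposition 4.2(b) gives $A_r \mc T_w = (r \cdot f_w)_{\chi_r^{-1}} A_{rw}$. Applying \eqref{eq:4.7} yields $A_{rw}\phi_{\chi_c} = z(rw,\chi_c)\phi_{rw(\chi_c)}A_{rw}$, and by \eqref{eq:2.26} the only remaining step is to show that $(r\cdot f_w)_{\chi_r^{-1}}$ is invariant under $rw(\chi_c)$-translation. Since $\chi_c \in X_\nr(M,\sigma)$ and $W(M,\mc O)$ normalizes $X_\nr(M,\sigma)$, we have $rw(\chi_c) \in X_\nr(M,\sigma)$; and $f_w$ is invariant under $X_\nr(M,\sigma)$-translations because each $X_\gamma$ descends from $M_\sigma^2/M^1$, on which $X_\nr(M,\sigma)$ is trivial.

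For part (b), I would rewrite both sides. On the left, Proposition 4.2(c) together with Lemma 3.7 (which gives $w(\chi_r^{-1})\chi_r \in X_\nr(M,\sigma)$), \eqref{eq:2.26}, and the $X_\nr(M,\sigma)$-invariance of $f_w$ yield
\[
\mc T_w A_r = \natural(w,r)\, \phi_{w(\chi_r^{-1})\chi_r}\, f_w\, A_{wr}.
\]
On the right, the identity $r \cdot f_{r^{-1}wr} = f_w$ --- which follows from $I(r^{-1}wr) = r^{-1}(I(w))$ (since $r \in R(\mc O)$ preserves $\Sigma_{\mc O,\mu}(P)$) combined with \eqref{eq:4.24} --- together with \eqref{eq:4.2} and Proposition 4.2(b) (applicable because $r^{-1}wr \in W(\Sigma_{\mc O,\mu})$), gives
\[
A_r \mc T_{r^{-1}wr} = (f_w)_{\chi_r^{-1}}\, A_{wr}.
\]
Matching the two expressions reduces the whole identity to the single assertion $f_w = (f_w)_{\chi_r^{-1}}$, i.e.\ the $\chi_r^{-1}$-translation invariance of $f_w$.

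The hard part is precisely this last invariance, and it amounts to extending Lemma 3.8(b) from $\Delta_{\mc O,\mu}$ to all of $\Sigma_{\mc O,\mu}$. For $\gamma \in \Sigma_{\mc O,\mu}$ write $\gamma = w_1(\alpha)$ with $\alpha \in \Delta_{\mc O,\mu}$ and $w_1 \in W(\Sigma_{\mc O,\mu})$; Proposition 3.5(a) gives $h_\gamma^\vee = w_1(h_\alpha^\vee)$, so $X_\gamma(\chi_r) = X_\alpha(w_1^{-1}\chi_r)$. Lemma 3.7 places $w_1^{-1}\chi_r$ in $\chi_r X_\nr(M,\sigma)$, while $X_\alpha$ is trivial on $X_\nr(M,\sigma)$, so $X_\gamma(\chi_r) = X_\alpha(\chi_r) \in \{1,-1\}$ by Lemma 3.8(b), and in the $-1$ case Lemma 3.8(a) ensures $q_\gamma = q_{\gamma*}$. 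A direct inspection of the formula for $g_\gamma$ shows that exactly these symmetries make $g_\gamma$ invariant under $\chi_r^{-1}$-translation, hence the same holds for $f_w$. Finally, when $w(\chi_r) = \chi_r$ the factor $\phi_{w(\chi_r^{-1})\chi_r}$ is trivial and the corresponding clause of Proposition 4.2(c) forces $\natural(w,r) = 1$, producing the clean conjugation formula $A_r^{-1} \mc T_w A_r = \mc T_{r^{-1}wr}$.
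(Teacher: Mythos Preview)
Your argument is correct and rests on the same ingredients as the paper's proof. The organizational difference is that the paper treats part (b) by first handling a single simple reflection $s_\alpha$ and then iterating via part (a) along a reduced expression, whereas you work directly with the closed factorization $\mc T_w = f_w A_w$, $f_w = \prod_{\gamma \in I(w)} g_\gamma$, and reduce the whole identity to the single assertion $(f_w)_{\chi_r^{-1}} = f_w$. Your derivation of this last point --- pulling $X_\gamma(\chi_r)$ back to a simple root via Proposition~3.5(a) and Lemma~3.7, then invoking Lemma~3.8 to get $X_\gamma(\chi_r)\in\{1,-1\}$ with $q_\gamma=q_{\gamma*}$ in the $-1$ case, and checking the resulting symmetry of $g_\gamma$ --- is in fact more explicit than the paper's treatment: at the step ``With \eqref{eq:4.24} we obtain \eqref{eq:4.17}'' the paper is silently using exactly this $\chi_r$-invariance of $g_\alpha$ (since $A_r^{-1} g_\alpha A_r = g_\alpha\circ\mf r$ involves the translation by $\chi_r$, not just the linear action of $r$). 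Both routes amount to the same computation; yours surfaces the hidden use of Lemma~3.8(b).
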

\begin{proof}
(a) In view of \eqref{eq:4.7}, it suffices to consider $r=1$ and $w=s_\alpha$ with
$\alpha \in \Delta_{\mc O,\mu}$. The element $X_\alpha \in \C [X_\nr (M)]$ is
$X_\nr (M,\sigma)$-invariant, so $g_\alpha \in \C (X_\nr (M))$ is also $X_\nr (M,\sigma)$-invariant.
Then \eqref{eq:4.7} implies
\[
\mc T_{s_\alpha} \phi_{\chi_c} = g_\alpha A_{s_\alpha} \phi_{\chi_c} =
g_\alpha z(s_\alpha,\chi_c) \phi_{s_\alpha (\chi_c)} A_{s_\alpha} = 
z(s_\alpha,\chi_c) \phi_{s_\alpha (\chi_c)} \mc T_{s_\alpha} .
\]
(b) First we consider the case $w = s_\alpha$ with $\Delta_{\mc O,\mu}$. By Proposition \ref{prop:4.2}.c
\[
\mc T_{s_\alpha} A_r = g_\alpha A_{s_\alpha} A_r = g_\alpha \natural (s_\alpha,r) 
\phi_{s_\alpha (\chi_r^{-1}) \chi_r} A_r A_{r^{-1} s_\alpha r} 
\]
With \eqref{eq:4.24} we obtain
\begin{equation}\label{eq:4.17}
\mc T_{s_\alpha} A_r = \natural (s_\alpha,r) \phi_{s_\alpha (\chi_r^{-1}) \chi_r} A_r 
g_{r^{-1}(\alpha)} A_{r^{-1} s_\alpha r}
= \natural (s_\alpha,r) \phi_{s_\alpha (\chi_r^{-1}) \chi_r} A_r \mc T_{r^{-1} s_\alpha r} .
\end{equation}
For a general $w \in W (\Sigma_{\mc O,\mu})$ we pick a reduced expression $w = s_{\alpha_1}
s_{\alpha_2} \cdots s_{\alpha_k}$. Then part (a) enables us to apply \eqref{eq:4.17} repeatedly.
Each time we move one $\mc T_{s_{\alpha_i}}$ over $A_r$, we pick up the same correction factors
as we would with $A$'s instead of $\mc T$'s. As the desired formula with just $A$'s is known
from Proposition \ref{prop:4.2}.c, this procedure yields the correct formula.

If $w(\chi_r) = \chi_r$, then the special case of Proposition \ref{prop:4.2}.c applies.
\end{proof}

Let $\chi_c \in X_\nr (M,\sigma), r \in R(\mc O), w \in W(\Sigma_{\mc O,\mu})$ we write,
like we did for $A_{\chi_c \mf{r} \mf{w}}$:
\begin{equation}\label{eq:4.18}
\mc T_{\chi_c \mf{r} \mf{w}} = \phi_{\chi_c} A_r \mc T_w \in \End_G (I_P^G (E_{K(B)})).
\end{equation}
Recall that the $\phi_{\chi_c}$ can be normalized so that $\phi_{\chi_c}^{-1} = \phi_{\chi_c^{-1}}$.
Similarly, we can normalize the $A_r$ so that $A_r^{-1} = A_{r^{-1}}$. Then
\[
\natural (\chi_c, \chi_c^{-1}) = 1 \quad \text{and} \quad \natural (r,r^{-1}) = 1.
\]

\begin{lem}\label{lem:4.4}
Let $\chi_c, \chi'_c \in X_\nr (M,\sigma), r,r' \in R(\mc O), w, w' \in W(\Sigma_{\mc O,\mu})$.
\enuma{
\item There exists a $\natural (\chi_c \mf{r} \mf{w},\chi'_c \mf{r'} \mf{w'}) \in \C^\times$ such that
\[
\mc T_{\chi_c \mf{r} \mf{w}} \circ \mc T_{\chi'_c \mf{r'} \mf{w'}} = \natural (\chi_c \mf{r} \mf{w},
\chi'_c \mf{r'} \mf{w'}) \mc T_{\chi_c \mf{r} \mf{w} \chi'_c \mf{r'} \mf{w'}} .
\]
\item If in addition $r = \chi'_c = 1$ and $w(\chi_{r'}) = \chi_{r'}$, then 
$\natural (\chi_c \mf{w},\mf{r'} \mf{w'}) = 1$.
\item The map $\natural : W(M,\sigma,X_\nr (M))^2 \to \C^\times$ is a 2-cocycle.
}
\end{lem}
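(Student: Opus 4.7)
The plan is to prove (a) by a direct computation, derive (b) as a specialization, and obtain (c) from associativity. For part (a), I would expand
\[
\mc T_{\chi_c \mf{r} \mf{w}} \circ \mc T_{\chi'_c \mf{r'} \mf{w'}} \;=\; \phi_{\chi_c} A_r \mc T_w \phi_{\chi'_c} A_{r'} \mc T_{w'}
\]
and reduce the right-hand side stepwise to the canonical form $\phi_{\chi''} A_{rr'} \mc T_{r'^{-1} w r' w'}$, where the $R(\mc O)$- and $W(\Sigma_{\mc O,\mu})$-factors in the decomposition \eqref{eq:3.30} of $(\chi_c \mf{r} \mf{w})(\chi'_c \mf{r'} \mf{w'})$ are $rr'$ and $r'^{-1} w r' w'$. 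The reduction steps are: (i) move $\phi_{\chi'_c}$ past $\mc T_w$ via the consequence $\mc T_w \phi_\chi = z(w,\chi) \phi_{w(\chi)} \mc T_w$ of \eqref{eq:4.7} (using that each $g_\alpha$ is $X_\nr(M,\sigma)$-invariant), then past $A_r$ again by \eqref{eq:4.7}; (ii) merge the resulting $\phi$ with $\phi_{\chi_c}$ via \eqref{eq:2.29}; (iii) move $A_{r'}$ past $\mc T_w$ by Lemma \ref{lem:4.7}(b); (iv) transfer the new $\phi$-factor past $A_r$ by \eqref{eq:4.7}; (v) combine $A_r A_{r'}$ by Proposition \ref{prop:4.2}(a); (vi) collect $\phi$'s by \eqref{eq:2.29}; and (vii) fuse $\mc T_{r'^{-1} w r'} \mc T_{w'}$ into $\mc T_{r'^{-1} w r' w'}$ by Proposition \ref{prop:4.6}. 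Each step contributes a constant in $\C^\times$, and the product of these constants is declared to be $\natural(\chi_c \mf{r} \mf{w}, \chi'_c \mf{r'} \mf{w'})$.

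The matching of the final $\phi$-subscript with the $X_\nr(M,\sigma)$-part $\chi''$ of the group product is the crux: explicit expansion gives $(\chi_c \mf{r} \mf{w})(\chi'_c \mf{r'} \mf{w'}) = \chi'' \, \mf{rr'} \, \mf{r'^{-1} w r' w'}$ with $\chi'' = \chi(r,r') \chi_c \cdot r(w(\chi_{r'}) \chi_{r'}^{-1}) \cdot rw(\chi'_c)$. This agreement is forced conceptually because both $\mc T_{\chi_c \mf{r} \mf{w}} \circ \mc T_{\chi'_c \mf{r'} \mf{w'}}$ and $\mc T_{(\chi_c \mf{r} \mf{w})(\chi'_c \mf{r'} \mf{w'})}$ implement the same conjugation action on $K(B)$ by \eqref{eq:4.16}, so by Theorem \ref{thm:4.3} they lie in the same one-dimensional $K(B)$-subspace of $\Hom_G(I_P^G(E_B), I_P^G(E_{K(B)}))$; tracking the scalars from steps (i)--(vii) then confirms that the ratio is in $\C^\times$ rather than merely in $K(B)$.

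Part (b) is the inspection of this computation under the specialization $r = \chi'_c = 1$ and $w(\chi_{r'}) = \chi_{r'}$: steps (i)--(ii) and (v) either disappear or trivialize, and the hypothesis activates the second clause of Lemma \ref{lem:4.7}(b), giving $\mc T_w A_{r'} = A_{r'} \mc T_{r'^{-1} w r'}$ with $\natural(w,r') = 1$ and no $\phi$-factor. The product therefore collapses directly to $\phi_{\chi_c} A_{r'} \mc T_{r'^{-1} w r' w'} = \mc T_{\chi_c \mf{r'} \mf{r'^{-1} w r' w'}}$, which is the canonical form of $(\chi_c \mf{w})(\mf{r'} \mf{w'})$, so $\natural(\chi_c \mf{w}, \mf{r'} \mf{w'}) = 1$. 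Part (c) is the standard associativity argument: for $a, b, c \in W(M,\sigma, X_\nr(M))$, two applications of (a) to the identity $(\mc T_a \mc T_b) \mc T_c = \mc T_a (\mc T_b \mc T_c)$ in $\End_G(I_P^G(E_{K(B)}))$ give
\[
\natural(a,b) \natural(ab,c) \mc T_{abc} \;=\; \natural(a,bc) \natural(b,c) \mc T_{abc} ,
\]
and since $\mc T_{abc}$ is invertible (being a product of invertible operators $\phi_{\chi''}$, $A_{r''}$, $\mc T_{w''}$) the two coefficients must agree, yielding the 2-cocycle identity. The main obstacle is the bookkeeping in part (a): the conceptual argument via conjugation on $K(B)$ guarantees the correct output, but writing out each of the seven swaps in the right order and verifying that the cumulative $\phi$-subscript equals $\chi''$ exactly is where the real work lies.
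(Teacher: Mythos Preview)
Your proposal is correct and follows essentially the same approach as the paper: both expand $\phi_{\chi_c} A_r \mc T_w \phi_{\chi'_c} A_{r'} \mc T_{w'}$ and reduce it to canonical form via Lemma \ref{lem:4.7}, Proposition \ref{prop:4.2}(a), \eqref{eq:2.29}, and Proposition \ref{prop:4.6}, then specialize for (b) and invoke associativity for (c). The only cosmetic differences are that the paper defers all $\phi$-merging to a single final step and justifies the resulting $\phi$-subscript by noting that each move preserves the underlying element of $W(M,\sigma,X_\nr(M))$, rather than via your conjugation-on-$K(B)$ remark.
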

\begin{proof}
(a) In the setting of Proposition \ref{prop:4.2} we write $r_3 = r r'$ and $w_3 = r^{-1} w r$.
That gives the following equalities in $W(M,\sigma,X_\nr (M))$:
\begin{equation}\label{eq:4.14}
\mf{r} \mf{r'} = \chi (r,r') \mf{r_3} \quad \text{ and } \quad
\mf{w} \mf{r'} = (w (\chi_{r'}^{-1}) \chi_{r'}) \mf{r'} \mf{w_3} .
\end{equation}
Thus the already established Propositions \ref{prop:4.2} and \ref{prop:4.6}, as well as
\eqref{eq:2.29} and Lemma \ref{lem:4.7} can be regarded as instances of the statement.

We denote equality up to nonzero scalar factors by $\dot{=}$. With aforementioned available
instances we compute
\begin{align}
\nonumber \mc T_{\chi_c \mf{r} \mf{w}} \circ \mc T_{\chi'_c \mf{r'} \mf{w'}} & = 
\phi_{\chi_c} A_r \mc T_w \phi_{\chi'_c} \mc A_{r'} \mc T_{w'} \\
\nonumber & \; \dot{=} \; \phi_{\chi_c} \phi_{r w (\chi'_c)} A_r \mc T_w A_{r'} \mc T_{w'} \\
\nonumber & \; \dot{=} \; \phi_{\chi_c} \phi_{r w (\chi'_c)} A_r \phi_{w (\chi_{r'}^{-1}) \chi_{r'}} 
A_{r'} \mc T_{r'^{-1} w r'} \mc T_{w'} \\
\label{eq:4.15} & \; \dot{=} \; \phi_{\chi_c} \phi_{r w (\chi'_c)} \phi_{r (w (\chi_{r'}^{-1}) \chi_{r'})} 
A_r A_{r'} \mc T_{r'^{-1} w r' w'} \\
\nonumber & \; \dot{=} \; \phi_{\chi_c} \phi_{r w (\chi'_c)} \phi_{r (w (\chi_{r'}^{-1}) \chi_{r'})} 
\phi_{\chi (r,r')} A_{r r'} \mc T_{r'^{-1} w r' w'} \\
\nonumber & \; \dot{=} \; \phi_{\chi_c r w (\chi'_c) r (w (\chi_{r'}^{-1}) \chi_{r'}) \chi (r,r')} 
A_{r r'} \mc T_{r'^{-1} w r' w'} 
\end{align}
In each of the above steps we preserved the underlying element of $W(M,\sigma,X_\nr (M))$, so
in the notation from \eqref{eq:4.14}
\[
\chi_c \mf{r} \mf{w} \chi'_c \mf{r'} \mf{w'} =  
\chi_c r w (\chi'_c) r (w (\chi_{r'}^{-1}) \chi_{r'}) \chi (r,r') \mf{r_3} \mf{w_3} \mf{w'} .
\]
(b) When $r = \chi'_c = 1$ and $w(\chi_{r'}) = \chi_{r'}$, the second, fifth and sixth steps 
of \eqref{eq:4.15} become trivial. Thanks to Propositions \ref{prop:4.1}.b and 
\ref{prop:4.2}.c the third and fourth steps become equalities, so the entire calculation
consists of equalities.\\
(c) This follows from the associativity of $\End_G (I_P^G (E_{K(B)}))$.
\end{proof}

By \eqref{eq:4.16} and \eqref{eq:4.30}
\begin{equation}\label{eq:4.19}
\mc T_{\chi_c \mf{r} \mf{w}} \, b \, \mc T_{\chi_c \mf{r} \mf{w}}^{-1} = 
(r w \cdot b)_{\chi_c^{-1} \chi_r^{-1}} = b \circ (\chi_c \mf{r} \mf{w})^{-1} \in \C (X_\nr (M)) .
\end{equation}
We embed the twisted group algebra $\C [W(M,\sigma,X_\nr (M)),\natural]$ in \\
$\Hom_G (I_P^G (E_B), I_P^G (E_{K(B)}))$ with the operators $\mc T_{\chi_c \mf{r} \mf{w}}$.
Then Theorem \ref{thm:4.3} and Lemma \ref{lem:4.4} show that the multiplication map
\[
K(B) \otimes_\C \C [W(M,\sigma,X_\nr (M)),\natural] \to 
\Hom_G \big( I_P^G (E_B), I_P^G (E_{K(B)}) \big)
\]
is bijective. That and \eqref{eq:4.19} can be formulated as:

\begin{cor}\label{cor:5.6}
The algebra $\Hom_G \big( I_P^G (E_B), I_P^G (E_{K(B)}) \big)$ is the crossed product 
\[
\C (X_\nr (M)) \rtimes \C [W(M,\sigma,X_\nr (M)),\natural]
\]
with respect to the canonical action of $W(M,\sigma,X_\nr (M))$ on $\C (X_\nr (M)) = K(B)$.
\end{cor}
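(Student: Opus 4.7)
The plan is to assemble this corollary as a direct consequence of Theorem \ref{thm:4.3}, Lemma \ref{lem:4.4}, and the conjugation rule \eqref{eq:4.19}, rather than do any new computation. The corollary is essentially a bookkeeping statement: we have constructed a basis of operators $\mc T_{\chi_c \mf{r} \mf{w}}$ indexed by elements of $W(M,\sigma,X_\nr(M))$, and we have to check that (i) together with $K(B)$ they span the Hom-space, (ii) they satisfy the twisted group algebra relations with 2-cocycle $\natural$, and (iii) they conjugate $K(B)$ through the prescribed action.

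First I would verify that the operators $\mc T_{\chi_c \mf{r} \mf{w}} = \phi_{\chi_c} A_r \mc T_w$ together with $K(B)$ span $\Hom_G(I_P^G(E_B), I_P^G(E_{K(B)}))$. Theorem \ref{thm:4.3} gives the decomposition
\[
\Hom_G\big(I_P^G(E_B), I_P^G(E_{K(B)})\big) = \bigoplus_{w \in W(M,\mc O)} \bigoplus_{\chi \in X_\nr(M,\sigma)} K(B) A_w \phi_\chi ,
\]
and by the bijectivity of the parametrization \eqref{eq:3.30}, the elements $\chi_c \mf{r} \mf{w}$ (with $\chi_c \in X_\nr(M,\sigma)$, $r \in R(\mc O)$, $w \in W(\Sigma_{\mc O,\mu})$) parametrize $W(M,\sigma,X_\nr(M))$ bijectively. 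Since $\mc T_w$ differs from $A_w$ by an invertible element $\prod g_\alpha \in K(B)^\times$, and by \eqref{eq:4.7}, \eqref{eq:4.30}, \eqref{eq:4.19} we can freely move scalars in $K(B)$ and elements $\phi_{\chi_c}$ across, the family $\{K(B) \, \mc T_{\chi_c \mf{r} \mf{w}}\}$ reshuffles into the same direct sum. Hence the multiplication map
\[
K(B) \otimes_\C \C[W(M,\sigma,X_\nr(M)),\natural] \longrightarrow \Hom_G\big(I_P^G(E_B), I_P^G(E_{K(B)})\big)
\]
is a $K(B)$-linear bijection, by comparison of $K(B)$-dimensions (each equal to $|X_\nr(M,\sigma)| \cdot |W(M,\mc O)|$) and linear independence inherited from Theorem \ref{thm:4.3}.

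Next I would identify the algebra structure on the left with the stated crossed product. The cocycle property of $\natural$ is exactly Lemma \ref{lem:4.4}(c), so $\C[W(M,\sigma,X_\nr(M)),\natural]$ is a well-defined twisted group algebra, and Lemma \ref{lem:4.4}(a) says that its multiplication is realized inside $\End_G(I_P^G(E_{K(B)}))$ by the $\mc T_{\chi_c \mf{r} \mf{w}}$. The crossed product relation follows from \eqref{eq:4.19}, which asserts
\[
\mc T_{\chi_c \mf{r} \mf{w}} \cdot b = (b \circ (\chi_c \mf{r} \mf{w})^{-1}) \cdot \mc T_{\chi_c \mf{r} \mf{w}},
\]
precisely the defining commutation rule for $K(B) \rtimes \C[W(M,\sigma,X_\nr(M)),\natural]$ with respect to the canonical action of $W(M,\sigma,X_\nr(M))$ on $K(B) = \C(X_\nr(M))$.

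Since all the hard work was already done in Sections \ref{sec:intertwining} and the earlier part of Section \ref{sec:rational}, this proof is essentially just an assembly. The one small subtlety to be careful about is that the $\mc T_w$ for $w \in W(\Sigma_{\mc O,\mu})$ are genuine group-like (Proposition \ref{prop:4.6}) whereas the $A_r$ for $r \in R(\mc O)$ and the $\phi_{\chi_c}$ only compose up to scalars, so the cocycle $\natural$ is concentrated in the $R(\mc O) \times X_\nr(M,\sigma)$ part and the interaction of $R(\mc O)$ with $W(\Sigma_{\mc O,\mu})$. The main conceptual obstacle — if any — is the verification that $\natural$ really is a 2-cocycle on the whole group, but this is precisely Lemma \ref{lem:4.4}(c), which follows from associativity. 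So the corollary requires no new ideas beyond packaging.
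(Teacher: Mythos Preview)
Your proposal is correct and follows essentially the same approach as the paper: the paper's argument (given in the paragraph immediately preceding the corollary) likewise assembles Theorem \ref{thm:4.3} and Lemma \ref{lem:4.4} to obtain the bijectivity of the multiplication map $K(B) \otimes_\C \C[W(M,\sigma,X_\nr(M)),\natural] \to \Hom_G(I_P^G(E_B), I_P^G(E_{K(B)}))$, and then invokes \eqref{eq:4.19} for the crossed product structure. Your extra remark that the $\mc T_w$ differ from the $A_w$ by units of $K(B)$, and hence yield the same $K(B)$-span, is the one minor step the paper leaves implicit.
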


We end this section with some investigations of the structure of $\End_G (I_P^G (E_B))$.
By the theory of the Bernstein centre \cite[Th\'eor\`eme 2.13]{BeDe}, its centre is
\begin{equation}\label{eq:6.19}
Z \big( \End_G (I_P^G (E_B)) \big) = \C [\mc O]^{W(M,\mc O)} = 
\C [X_\nr (M)]^{W(M,\sigma,X_\nr (M))} . 
\end{equation}
From Lemma \ref{lem:5.1} and \eqref{eq:7.22} we know that $\End_G (I_P^G (E_B))$ is a free 
$B$-module of rank $|W(M,\sigma,X_\nr (M))|$. The $\phi_{\chi_c}$ with $\chi_c \in X_\nr (M,\sigma)$
and the $A_r$ with $r \in R(\mc O)$ belong to $\End_G (I_P^G (E_B))$, but the $A_w$ with
$w \in W(\Sigma_{\mc O,\mu}) \setminus \{1\}$ do not, because they have poles. To see whether
these poles can be removed in a simple way, we analyse their residues.

\begin{lem}\label{lem:6.1}
Let $\alpha \in \Delta_{\mc O,\mu}$ and $\chi_\pm \in X_\nr (M)$ with 
$X_\alpha (\chi_\pm) = \pm 1$.
\enuma{
\item $\mr{sp}_{\chi_+} (1 - X_\alpha) A_{s_\alpha}$ is a scalar multiple of
$\mr{sp}_{\chi^+} \phi_{\chi_+ s_\alpha ( \chi_+^{-1})}$.

If $s_\alpha (\chi_+) = \chi_+$, then $\mr{sp}_{\chi_+} (1 - X_\alpha) A_{s_\alpha} = 
\pm \mr{sp}_{\chi_+}$. If $\chi_+ \in X_\nr (M_\alpha)$, then
$\mr{sp}_{\chi_+} (1 - X_\alpha) A_{s_\alpha} = \mr{sp}_{\chi_+}$.
\item Suppose that $b_{s_\alpha} > 0$. Then $\mr{sp}_{\chi_-} (1 + X_\alpha) A_{s_\alpha}$ 
is a scalar multiple of \\
$\mr{sp}_{\chi_-} \phi_{\chi_- s_\alpha ( \chi_-^{-1})}$.
In case $s_\alpha (\chi_-) = \chi_-$:
\[
\mr{sp}_{\chi_-} (1 + X_\alpha) A_{s_\alpha} = \pm \frac{(1 + q_\alpha^{-1}) 
(1 - q_{\alpha*}^{-1}) }{(1 - q_\alpha^{-1}) (1 + q_{\alpha*}^{-1})} \mr{sp}_{\chi_-} .
\]
}
\end{lem}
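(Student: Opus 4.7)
The plan is to show that $\mr{sp}_{\chi_\pm}(1 \mp X_\alpha) A_{s_\alpha}$ factors through the specialization $\mr{sp}_{s_\alpha(\chi_\pm)}$ to a $G$-isomorphism $\bar T_\pm \colon I_P^G(\sigma \otimes s_\alpha(\chi_\pm)) \to I_P^G(\sigma \otimes \chi_\pm)$, to identify $\bar T_\pm$ with a scalar multiple of $I_P^G(\phi_{\sigma,\chi_c^\pm})$ where $\chi_c^\pm := \chi_\pm s_\alpha(\chi_\pm^{-1}) \in X_\nr(M,\sigma)$ via generic irreducibility, and in the symmetric case $s_\alpha(\chi_\pm) = \chi_\pm$ to pin down the scalar from the quadratic relation in Proposition \ref{prop:4.1}(c).

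By \eqref{eq:4.2} with $\chi_{s_\alpha} = 1$, one has $A_{s_\alpha} \circ b = (s_\alpha \cdot b) \circ A_{s_\alpha}$ for $b \in B$. Combined with $\mr{sp}_{\chi_\pm}((s_\alpha \cdot b) v) = b(s_\alpha(\chi_\pm)) \mr{sp}_{\chi_\pm}(v)$, this shows that $\mr{sp}_{\chi_\pm}(1 \mp X_\alpha) A_{s_\alpha}$ is $B$-linear with $B$ acting on the source through evaluation at $s_\alpha(\chi_\pm)$, so it factors as $\bar T_\pm \circ \mr{sp}_{s_\alpha(\chi_\pm)}$. Unwinding $A_{s_\alpha} = \rho_{s_\alpha} \tau_{s_\alpha} J_{K(B),s_\alpha}$ and invoking Proposition \ref{prop:3.2}(b), the prefactor $(1 \mp X_\alpha)$ precisely cancels the simple pole of $J_{s_\alpha(P)|P}(\sigma \otimes \cdot)$ at $s_\alpha(\chi_\pm)$, and the regularized residue is a bijection; hence $\bar T_\pm$ is a $G$-isomorphism.

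Since $\mu^{M_\alpha}(\sigma \otimes \chi_\pm) = 0$, \eqref{eq:3.4} gives $\sigma \otimes s_\alpha(\chi_\pm) \cong \sigma \otimes \chi_\pm$, so $\chi_c^\pm \in X_\nr(M,\sigma)$ and $I_P^G(\phi_{\sigma,\chi_c^\pm})$ furnishes another $G$-isomorphism between the same two spaces. For $\chi_\pm$ generic on the subvariety $\{X_\alpha = \pm 1\} \subset X_\nr(M)$, the representation $I_P^G(\sigma \otimes \chi_\pm)$ is irreducible, whence the relevant $\Hom$-space is one-dimensional and $\bar T_\pm = c(\chi_\pm) \cdot I_P^G(\phi_{\sigma,\chi_c^\pm})$ for a scalar $c(\chi_\pm)$. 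Combined with \eqref{eq:3.26}, which rewrites $\mr{sp}_{\chi_\pm} \phi_{\chi_c^\pm}$ as $I_P^G(\phi_{\sigma,\chi_c^\pm}) \circ \mr{sp}_{s_\alpha(\chi_\pm)}$, this proves the first assertion of each of (a) and (b) on a Zariski-dense set, and holomorphic dependence of both sides extends the identity to every $\chi_\pm$ allowed by the lemma.

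To pin down the scalar when $s_\alpha(\chi_\pm) = \chi_\pm$, so that $\bar T_\pm \in \End_G(I_P^G(\sigma \otimes \chi_\pm))$, use $A_{s_\alpha}(1 \mp X_\alpha) = (1 \mp X_\alpha^{-1}) A_{s_\alpha}$, Proposition \ref{prop:4.1}(c), and the explicit formula \eqref{eq:3.22} for $\mu^{M_\alpha}$ to obtain the identity of rational functions
\[
\bigl[(1 \mp X_\alpha) A_{s_\alpha}\bigr]^2 = \frac{4(1-q_\alpha^{-1}X_\alpha)(1-q_\alpha^{-1}X_\alpha^{-1})(1+q_{\alpha*}^{-1}X_\alpha)(1+q_{\alpha*}^{-1}X_\alpha^{-1})}{(1-q_\alpha^{-1})^2(1+q_{\alpha*}^{-1})^2(1 \pm X_\alpha)(1 \pm X_\alpha^{-1})},
\]
in which the $\mu$-pole has been cancelled. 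Specialization at $X_\alpha = +1$ yields $1$ (for (a)) and at $X_\alpha = -1$ yields $\bigl[(1+q_\alpha^{-1})(1-q_{\alpha*}^{-1})\bigr]^2/\bigl[(1-q_\alpha^{-1})(1+q_{\alpha*}^{-1})\bigr]^2$ (for (b)). Since $\bar T_\pm$ is a scalar endomorphism by (generic) Schur, the stated formulas follow up to a sign. For $\chi_+ \in X_\nr(M_\alpha)$, finally, $\sigma \otimes \chi_+$ extends to $M_\alpha$ and the whole computation reduces to the rank-one Levi $M_\alpha$, where the Knapp--Stein normalization used in the proof of Lemma \ref{lem:3.3} forces the sign to be $+1$. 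The main obstacle is precisely this last sign determination: the quadratic identity leaves a $\pm$ ambiguity, and fixing it in the $X_\nr(M_\alpha)$-case requires carefully matching the normalization conventions for $\rho_{\sigma,s_\alpha}$, $\lambda(\tilde{s_\alpha})$, and the Harish-Chandra intertwining operator inside $M_\alpha$; the remaining steps are routine residue extraction and a standard Schur-type argument.
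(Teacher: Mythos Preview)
Your argument is broadly along the same lines as the paper's, and the residue-plus-Schur portion is fine. Two points are worth noting.

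First, a stylistic/robustness difference: for the first assertion you invoke generic irreducibility of $I_P^G(\sigma\otimes\chi_\pm)$ on the hypersurface $\{X_\alpha=\pm1\}$ and then extend by continuity. The paper instead observes that both $\bar T_\pm$ and $\phi_{\sigma,\chi_c^\pm}$ are parabolically induced from $M_\alpha$-level operators, and that $I_{P\cap M_\alpha}^{M_\alpha}(\sigma\otimes\chi_\pm)$ is irreducible for \emph{every} $\chi_\pm$ with $X_\alpha(\chi_\pm)=\pm1$ (this is \cite[\S4.2]{Sil2}). So Schur applies directly at each point, with no density/continuity step needed. Your route works, but the $M_\alpha$-reduction is cleaner and more transparent.

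Second, and more substantively: the determination of the $+$ sign when $\chi_+\in X_\nr(M_\alpha)$ is not actually completed in your proposal --- you flag it as ``the main obstacle'' and describe it as requiring ``carefully matching the normalization conventions'', but do not carry this out. The paper resolves this without any delicate matching. By connectedness of $X_\nr(M_\alpha)$ and algebraic dependence of the sign, it suffices to treat $\chi_+=1$. There one simply unwinds $A_{s_\alpha}=\rho_{s_\alpha}\tau_{s_\alpha}J_{K(B),s_\alpha}$ to get
\[
\mr{sp}_{\chi=1}(1-X_\alpha)A_{s_\alpha}
= I_P^G(\rho_{\sigma,s_\alpha})\circ\bigl(\mr{sp}_{\chi=1}\lambda(\tilde s_\alpha)(X_\alpha-1)J_{s_\alpha(P)|P}(\sigma\otimes\cdot)\bigr).
\]
But the bracketed expression is, \emph{by construction} (Lemma~\ref{lem:3.3} and \eqref{eq:3.7}--\eqref{eq:3.11}), precisely the definition of $\rho'_{\sigma,s_\alpha}=I_P^G(\rho_{\sigma,s_\alpha}^{-1})$. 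So the composite is the identity, giving $+\mr{sp}_{\chi=1}$ on the nose. No Knapp--Stein normalization needs to be invoked beyond what already went into defining $\rho_{\sigma,s_\alpha}$; the point is that $\rho_{\sigma,s_\alpha}$ was built exactly as the inverse of the regularized intertwiner at $\chi=1$.
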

\emph{Remark.} With a closer analysis of the operators $J_{s_\alpha (P)|P}(\sigma \otimes \chi)$,
as in \cite[\S IV.1]{Wal}, it could be possible to prove that the signs $\pm$ in this lemma
are always $+1$.
\begin{proof}
(a) By Proposition \ref{prop:3.2}, $\mr{sp}_{\chi_+} (1 - X_\alpha) A_{s_\alpha}$ defines
a $G$-isomorphism\\ $I_P^G (\sigma \otimes s_\alpha (\chi_+)) \to I_P^G (\sigma \otimes \chi_+)$,
parabolically induced from an $M_\alpha$-isomorphism
\[
I_{P \cap M_\alpha}^{M_\alpha}(\sigma \otimes \chi_+) \to
I_{P \cap M_\alpha}^{M_\alpha}(\sigma \otimes s_\alpha (\chi_+)). 
\]
The same holds for $\mr{sp}_{\chi^+} \phi_{\chi_+ s_\alpha ( \chi_+^{-1})}$. Further, 
$I_{P \cap M_\alpha}^{M_\alpha}(\sigma \otimes \chi_+)$ is irreducible \cite[\S 4.2]{Sil2}. 
By Schur's lemma, these two operators are scalar multiples of each other.

Suppose now that $s_\alpha (\chi_+) = \chi_+$. By the above $\mr{sp}_{\chi_+} (1 - X_\alpha) 
A_{s_\alpha} \in \C \mr{sp}_{\chi_+}$. From Proposition \ref{prop:4.6} and
\eqref{eq:4.25} we see that in fact $\mr{sp}_{\chi_+} (1 - X_\alpha) A_{s_\alpha} = 
\pm \mr{sp}_{\chi_+}$.

The variety $X_\nr (M_\alpha) \subset X_\nr (M)$ is connected and fixed pointwise by
$s_\alpha$. The sign in $\pm \mr{sp}_{\chi_+}$ established above depends algebraically
on $\chi_+$, so it is constant on $X_\nr (M_\alpha)$. Therefore it suffices to consider 
$\chi_+ = 1$. Let us unravel the definitions:
\begin{align}\nonumber
\mr{sp}_{\chi = 1} (1 - X_\alpha) A_{s_\alpha} & =
\mr{sp}_{\chi = 1} (1 - X_\alpha) \rho_{s_\alpha} \tau_{s_\alpha} J_{K(B),s_\alpha} \\
\label{eq:4.31} & = I_P^G (\rho_{\sigma,s_\alpha} \otimes \mr{sp}_{\chi = 1}) (1 - X_\alpha)
\tau_{s_\alpha} \lambda (\tilde{s_\alpha}) J_{s_\alpha (P) | P}(\sigma \otimes \cdot) \\
\nonumber & = I_P^G (\rho_{\sigma,s_\alpha} \otimes \mr{sp}_{\chi = 1})\tau_{s_\alpha} 
\lambda (\tilde{s_\alpha}) (1- X_\alpha^{-1}) J_{s_\alpha (P) | P}(\sigma \otimes \cdot) \\
\nonumber & =  I_P^G (\rho_{\sigma,s_\alpha}) \mr{sp}_{\chi = 1} \lambda (\tilde{s_\alpha}) 
(X_\alpha - 1) J_{s_\alpha (P) | P}(\sigma \otimes \cdot) .
\end{align}
By construction $I_P^G (\rho_{\sigma,s_\alpha})$ is the inverse of
\[
\mr{sp}_{\chi = 1} \tau_{s_\alpha} \lambda (\tilde{s_\alpha}) (X_\alpha - 1) 
J_{s_\alpha (P) | P}(\sigma \otimes \cdot) = \mr{sp}_{\chi = 1} \lambda (\tilde{s_\alpha}) 
(X_\alpha - 1) J_{s_\alpha (P) | P}(\sigma \otimes \cdot) ,
\]
see \eqref{eq:3.7} and \eqref{eq:3.11}. We find 
$\mr{sp}_{\chi = 1} (X_\alpha - 1) A_{s_\alpha} = \mr{sp}_{\chi = 1}$.\\
(b) This is analogous to part (a). For the second claim we use that
$(g_\alpha A_{s_\alpha})^2 = 1$ and
\begin{equation}\label{eq:4.32}
\mr{sp}_{\chi_-} (g_\alpha A_{s_\alpha}) = \frac{(1 - q_\alpha^{-1}) (1 + q_{\alpha*}^{-1})}
{(1 + q_\alpha^{-1}) (1 - q_{\alpha*}^{-1})} \mr{sp}_{\chi_-} (1 + X_\alpha) A_{s_\alpha} . \qedhere
\end{equation}
\end{proof}

\begin{rem}
Lemma \ref{lem:6.1} shows that some of the poles of $A_{s_\alpha}$ may occur at
$\chi$'s that are not fixed by $s_\alpha$, so those poles cannot be removed by
adding an element of $\C (X_\nr (M))$ to $A_{s_\alpha}$. For instance, suppose that $X_\alpha / 3
\in X^* (X_\nr (M))$ and $(X_\alpha / 3)(\chi) = e^{2 \pi i/3}$. Then $X_\alpha (\chi) = 1$
but $(X_\alpha / 3) (s_\alpha \chi) = e^{-2\pi i / 3}$. 
Similar considerations apply to $\mc T_{s_\alpha}$. In particular the method from 
\cite[\S 5]{Hei2} does not apply in our generality. 

This problem is only made worse by the possible nontriviality of $\natural$ on $X_\nr (M,\sigma)^2$.
Although we expect that there exist $|W(M,\sigma,X_\nr (M))|$ elements that generate \\
$\End_G (I_P^G (E_B))$ as $B$-module, we do not have good candidates.
That renders it hard to find a nice presentation of $\End_G (I_P^G (E_B))$.
\end{rem}

\section{Analytic localization on subsets on $X_\nr (M)$}
\label{sec:localization}

In this and the upcoming sections, when we talk about modules for an algebra, we tacitly
mean right modules.  Each of the algebras $H$ we consider has a large commutative subalgebra $A$ 
such that $H$ has finite rank as $A$-module. 
For an $H$-module $V$, we denote the set of $A$-weights by Wt$(V)$. 

Every finite dimensional $H$-module $V$ decomposes canonically, as $A$-module, as the direct
sum of the subspaces
\[
V_\chi := \{ v \in V : (a - a(\chi))^{\dim V} (v) = 0 \}
\]
for $\chi \in \mr{Wt}(V)$. For this reason it is much easier to work with representations
of finite length. We denote the category of finite dimensional right $H$-modules by $H-\fMod$.
For a subset $U \subset \Irr (A)$, we let $H-\Modf{U}$ be the full subcategory of $H-\fMod$ formed
by the modules all whose $A$-weights lie in $U$.

For Rep$(G)^{\mf s}$, the role of weights is played by the cuspidal support. When $\pi \in 
\mr{Rep}(G)^{\mf s}$ has finite length, we define Sc$(\pi)$ as the set of $\sigma' \in \mc O$
which appear in the Jacquet restriction $J_P^G (\pi)$.

Lemma \ref{lem:6.1} does not provide enough control over the poles of $A_{s_\alpha}$ to deal
with all of them in one stroke. 
Therefore we approach $\End_G (I_P^G (E_B))$ via localization on suitable subsets of $X_\nr (M)$.
Let $U$ be a $W(M,\sigma,X_\nr (M))$-stable subset of $X_\nr (M)$, open with respect to the analytic
topology. Then $U$ is a complex submanifold of $X_\nr (M)$, so we can consider the algebra $C^{an}(U)$ 
of complex analytic functions on $U$. The natural map $\C [X_\nr (M)] \to C^{an}(U)$
is injective because $U$ is Zariski dense in $X_\nr (M)$. This and \eqref{eq:6.19} enable us
to construct the algebra
\[
\End_G (I_P^G (E_B))^\an_U := \End_G (I_P^G (E_B)) \underset{B^{W (M,\sigma,X_\nr (M))}}{\otimes} 
C^\an (U)^{W(M,\sigma,X_\nr (M))} . 
\]
Its centre is
\begin{equation}\label{eq:7.29}
Z \big( \End_G (I_P^G (E_B))^\an_U \big) = C^\an (U)^{W(M,\sigma,X_\nr (M))} .
\end{equation}
We note that by \cite[Lemma 4.4]{Opd-Sp}
\begin{equation}\label{eq:7.5}
\C [X_\nr (M)] \underset{\C [X_\nr (M)]^{W(M,\sigma,X_\nr (M))}}{\otimes} 
C^\an (U)^{W(M,\sigma,X_\nr (M))} \cong C^\an (U) .
\end{equation}
The subalgebra $C^\an (U)$ of $\End_G (I_P^G (E_B))^\an_U$ plays the same role as
$B = \C [X_\nr (M)]$ in $\End_G (I_P^G (E_B))$.
\begin{rem}\label{rem:6.7}
The set of $\C [X_\nr (M)]$-weights of any module for $\End_G (I_P^G (E_B))$ or 
$\End_G (I_P^G (E_B))^\an_U$ is stable under the subgroup $X_\nr (M,\sigma) R (\mc O) \subset
W(M,\sigma,X_\nr (M))$, because $\mc T_{\chi_c \mf r}$ belongs to $\End_G (I_P^G (E_B))$ 
for all $\chi_c \in X_\nr (M,\sigma), r \in R(\mc O)$ and this element satisfies \eqref{eq:4.30}.
\end{rem}

\begin{lem}\label{lem:7.1}
There are natural equivalences between the following categories:
\begin{enumerate}[(i)]
\item $\End_G (I_P^G (E_B))_U^\an -\fMod$;
\item $\End_G (I_P^G (E_B)) -\Modf{U}$, or equivalently those $V \in \End_G (I_P^G (E_B)) -\fMod$ 
with all weights of the centre in $U / W(M,\sigma,X_\nr (M))$;
\item finite length representations in $\Rep (G)^{\mf s}$, whose cuspidal support is contained in
$\{ \sigma \otimes \chi : \chi \in U \}$.
\end{enumerate}
These equivalences commute with parabolic induction and Jacquet restriction in the sense of
Proposition \ref{prop:4.5}.
\end{lem}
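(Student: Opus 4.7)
The plan is to split the three-way equivalence into two pieces: (ii) $\Leftrightarrow$ (iii) comes from Bernstein's equivalence $\mc E$, while (i) $\Leftrightarrow$ (ii) is a standard analytic localization argument.

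First I would establish (ii) $\Leftrightarrow$ (iii). By \eqref{eq:4.equiv}, $\mc E$ is already an equivalence $\Rep (G)^{\mf s} \to \End_G (I_P^G (E_B))\text{-}\Mod$, so it suffices to match the two finiteness-plus-support conditions. Since $\End_G (I_P^G (E_B))$ has large centre (by \eqref{eq:6.19}) and is a free $B$-module of finite rank (Lemma \ref{lem:5.1}), its finite length modules are exactly its finite dimensional modules, and they correspond via $\mc E$ to the admissible, hence finite length, representations in $\Rep (G)^{\mf s}$. The dictionary between cuspidal support and $B$-weights comes from Proposition \ref{prop:4.5}(b) applied with $L = M$: the restriction $\mr{Res}\, \mc E (V)$ coincides with $\mc E_M (\mr{pr}_{\mf s_M} \circ J^G_{\overline P} (V))$, and by \eqref{eq:2.36} the $B$-weights of any module over $\End_M (E_B) = B \rtimes \C [X_\nr (M,\sigma),\natural]$ form precisely the preimage in $X_\nr (M)$ of the set of supercuspidal constituents of $J^G_{\overline P}(V)$, i.e.\ of $\mr{Sc}(V)$. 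Since $U$ is $X_\nr (M,\sigma)$-stable, the two conditions "all $B$-weights of $\mc E (V)$ lie in $U$" and "$\mr{Sc}(V) \subset \{ \sigma \otimes \chi : \chi \in U \}$" become equivalent.

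For (i) $\Leftrightarrow$ (ii), I write $W = W(M,\sigma,X_\nr (M))$ for brevity. The restriction functor from $\End_G (I_P^G (E_B))^\an_U\text{-}\fMod$ to $\End_G (I_P^G (E_B))\text{-}\fMod$ lands in $\Modf{U}$: if $V$ is finite dimensional over the localized algebra, then $C^\an (U)^W$ acts through a finite dimensional quotient, hence so does its subalgebra $B^W$, with central weights in $U/W$; combined with Remark \ref{rem:6.7}, all $B$-weights of $V$ lie in $U$. Conversely, for $V \in \End_G (I_P^G (E_B))\text{-}\Modf{U}$, the $B^W$-action factors through a finite product of local Artinian quotients supported on a finite subset $S \subset U/W$. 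The canonical map $B^W \to C^\an (U)^W$ induces an isomorphism on the completion at each point of $S$ (analytic and polynomial functions agree on formal neighbourhoods), so the $B^W$-action extends uniquely to a $C^\an (U)^W$-action on $V$. Via \eqref{eq:7.5}, the whole algebra action then extends uniquely to $\End_G (I_P^G (E_B))^\an_U$, and the two functors are mutually inverse by construction.

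Finally, the compatibility with parabolic induction and Jacquet restriction is essentially automatic: the embedding \eqref{eq:4.33} is compatible with analytic localization on the same $U \subset X_\nr (M)$ (viewed inside both Weyl group quotients), and Proposition \ref{prop:4.5} transports this via $\mc E$ and $\mc E_L$. The main technical subtlety is the localization step above: $C^\an (U)^W$ is vastly larger than $B^W$, yet because finite dimensional modules are supported on finitely many central characters they only see the completions at those finitely many points, where the two algebras agree. Everything else is routine.
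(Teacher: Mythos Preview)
Your proof is correct and follows essentially the same approach as the paper. The paper's own proof is extremely terse: for (ii) $\Leftrightarrow$ (iii) it simply cites \eqref{eq:4.equiv} and the construction of the $B$-action in \eqref{eq:2.4}, and for (i) $\Leftrightarrow$ (ii) it defers entirely to \cite[Proposition 4.3]{Opd-Sp}. You have unpacked both steps, which is helpful. Two minor remarks: your invocation of Remark~\ref{rem:6.7} is a forward reference and is not actually needed---the equivalence of the two formulations in (ii) follows directly from $U$ being $W(M,\sigma,X_\nr(M))$-stable and $B$ being finite over $B^{W(M,\sigma,X_\nr(M))}$; and for (ii) $\Leftrightarrow$ (iii) the paper takes a slightly more direct route via the specialization maps \eqref{eq:2.spec} rather than going through Proposition~\ref{prop:4.5}(b) with $L=M$, but your route via Jacquet restriction is equally valid.
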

\begin{proof}
The equivalence between (ii) and (iii) follows from \eqref{eq:4.equiv} and the way the
$B$-action on $I_P^G (E_B)$ is constructed in \eqref{eq:2.4}. We saw in Proposition \ref{prop:4.5}
how it relates to induction and restriction.

The equivalence between (i) and (ii) is analogous to \cite[Proposition 4.3]{Opd-Sp}.
For a Levi subgroup $L$ of $G$ containing $M$ there are analogous algebras
\[
\End_L (I_{P \cap L}^L (E_B)) \subset \End_L (I_{P \cap L}^L (E_B))_U^\an .
\] 
The equivalence between (i) and (ii) for these two algebras works in the same way, basically
it only depends on the inclusion $\C [X_\nr (M)] \subset C^\an (U)$. Hence these equivalences
of categories commute with induction and restriction between the level of $L$ and the level of $G$.
\end{proof}

Now we specialize to very specific submanifolds of $X_\nr (M)$. Let
\begin{equation}\label{eq:7.2}
X_\nr (M) = X_\unr (M) \times X_\nr^+ (M) = 
\Hom (M / M^1 , S^1) \times \Hom (M / M^1, \R_{>0})
\end{equation}
be the polar decomposition of the complex torus $X_\nr (M)$. Fix a unitary unramified
character $u \in X_\unr (M)$. The following condition is a variation on 
\cite[Condition 4.9]{Opd-Sp} and \cite[Condition 2.1.1]{SolAHA}.
\begin{cond}\label{cond:7.U}
Let $U_u$ be a (small) connected open neighborhood of $u$ in $X_\nr (M)$, such that 
\begin{itemize}
\item $U_u$ is stable under the stabilizer of $u$ in $W(M,\sigma,X_\nr (M))$ and under $X_\nr^+ (M)$;
\item $W(M,\sigma,X_\nr (M)) u \cap U_u = \{u\}$;
\item $\Re (X_\alpha (\chi u^{-1})) > 0$ for all $\alpha \in \Sigma_{\sigma \otimes u},
\chi \in U_u$.
\end{itemize}
\end{cond}
We note that such a neighborhood $U_u$ always exists because $W(M,\sigma,X_\nr (M))$ is finite
and its action on $X_\nr (M)$ preserves the polar decomposition \eqref{eq:7.2}. 
The first two bullets of Condition \ref{cond:7.U} entail that $W(M,\sigma,X_\nr (M)) U_u$ is 
homeomorphic to $W(M,\sigma,X_\nr (M)) u \times U_u$. 
The last bullet implies that, if $\mu^{M_\alpha}(\sigma \otimes \chi) = 0$ for some $\chi \in U_u$ and 
$\alpha \in \Sigma_\red (A_M)$, then $\mu^{M_\alpha}(\sigma \otimes u) = 0$. This replaces the 
conditions on $U_u$ in relation to the functions $c_\alpha$ in \cite{Opd-Sp,SolAHA}.

In the remainder of this section we consider
\begin{equation}\label{eq:7.4}
U := W(M,\sigma,X_\nr (M)) U_u ,
\end{equation}
an open neighborhood of $W(M,\sigma,X_\nr (M)) u X_\nr^+ (M)$. By Lemma \ref{lem:7.1} the
family of algebras $\End_G (I_P^G (E_B))_U^\an$, for all possible $u \in X_\unr (M)$, 
suffices to study the entire category of finite length representations in $\Rep (G)^{\mf s}$.

We want to find a presentation of $\End_G (I_P^G (E_B))_U^\an$, as explicit as possible.
For $w \in W(M,\sigma,X_\nr (M))$ we write $U_{wu} = w (U_u)$. By \eqref{eq:7.5} 
$\End_G (I_P^G (E_B))_U^\an$ contains the element $1_{wu} \in C^\an (U)$ defined by
\[
1_{wu} (\chi) = \left\{
\begin{array}{l@{\quad}l}
1 & \chi \in U_{wu} \\
0 & \chi \in U \setminus U_{wu} 
\end{array}
\right. .
\]
The $1_{wu}$ with $wu \in W(M,\sigma,X_\nr (M)) u$ form a system of mutually orthogonal
idempotents in $C^\an (U)$ and
\begin{equation}\label{eq:7.6}
1_U = \sum\nolimits_{wu \in W(M,\sigma,X_\nr (M)) u} 1_{wu} .
\end{equation}
This yields a decomposition of $C^{an}(U)$-modules
\begin{align*}
\End_G (I_P^G (E_B))_U^\an & = \bigoplus_{wu, vu \in W(M,\sigma,X_\nr (M)) u} 1_{wu} 
\End_G (I_P^G (E_B))_U^\an 1_{vu} \\
& =\bigoplus_{wu, vu \in W(M,\sigma,X_\nr (M)) u} C^\an (U_{wu})
\End_G (I_P^G (E_B))_U^\an C^\an (U_{vu}) .
\end{align*}
Here the submodules with $wu = vu$ are algebras, while those with $wu \neq vu$ are not.

\begin{lem}\label{lem:7.2}
The inclusion $1_u \End_G (I_P^G (E_B))_U^\an 1_u \to \End_G (I_P^G (E_B))_U^\an$ 
is a Morita equivalence.  
\end{lem}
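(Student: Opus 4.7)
The plan is to apply the standard full-idempotent criterion: the inclusion $1_u A 1_u \hookrightarrow A$, with $A := \End_G(I_P^G(E_B))_U^\an$, is a Morita equivalence if and only if $1_u$ is a full idempotent in $A$, i.e.\ $A\,1_u\,A = A$. Since $1_U = \sum_{vu \in W(M,\sigma,X_\nr(M))\,u} 1_{vu}$ is an orthogonal decomposition of the unit (disjointness is guaranteed by the second bullet of Condition \ref{cond:7.U}), it suffices to check that each $1_{wu}$ lies in $A\,1_u\,A$.

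The natural candidates for elements witnessing $1_{wu} \in A\,1_u\,A$ come from the intertwining operators $\mc T_w = \mc T_{\chi_c \mf r \mf v} \in \End_G(I_P^G(E_{K(B)}))$ constructed in \S\ref{par:Tw}. Using the conjugation rule \eqref{eq:4.19}, one computes $\mc T_w\,1_u\,\mc T_w^{-1} = 1_u \circ w^{-1} = 1_{wu}$. Provided that both $\mc T_w$ and $\mc T_w^{-1}$ define elements of $A$, this would yield
\[
1_{wu} = (\mc T_w\,1_u)\,(\mc T_w^{-1}\,1_{wu}) \in (1_{wu}\,A\,1_u)\cdot(1_u\,A\,1_{wu}) \subset A\,1_u\,A,
\]
as desired.

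The main obstacle is precisely this regularity verification $\mc T_w \in A$, since a priori $\mc T_w$ is only a rational operator on $X_\nr(M)$ with possibly nonremovable poles. The plan for this step is to shrink $U_u$ suitably. By \eqref{eq:4.26}, the poles of $\mc T_{s_\alpha}$ for $\alpha \in \Delta_{\mc O,\mu}$ lie on $\{X_\alpha = q_\alpha\}$ and (when $b_{s_\alpha} > 0$) on $\{X_\alpha = -q_{\alpha*}\}$; since $q_\alpha, q_{\alpha*} > 1$, these loci do not meet the unitary subtorus $X_\unr(M)$. Every unitary character $vu$ in the finite orbit $W(M,\sigma,X_\nr(M))\,u$ therefore admits an open neighborhood disjoint from all such pole loci, and one shrinks $U_u$ (preserving Condition \ref{cond:7.U}) so that it is contained in the intersection of these neighborhoods. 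Then $\mc T_v$ is regular on $U = W(M,\sigma,X_\nr(M))\,U_u$ for every $v \in W(\Sigma_{\mc O,\mu})$, and since $\phi_{\chi_c}$ and $A_r$ already lie in $\End_G(I_P^G(E_B))$ by \eqref{eq:2.24} and \eqref{eq:4.6}, the general $\mc T_w = \phi_{\chi_c}\,A_r\,\mc T_v$ may be represented in $\End_G(I_P^G(E_{K(B)}))$ as $b^{-1}a$ with $a \in \End_G(I_P^G(E_B))$ and $b \in B$ nowhere vanishing on $U$; this places $\mc T_w$ in $A$. The inverse $\mc T_w^{-1}$ is handled identically, using that $\mc T_w\,\mc T_{w^{-1}} \in \C^\times \cdot \mathrm{id}$ by Lemma \ref{lem:4.4}. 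Combining these ingredients yields $A\,1_u\,A = A$, hence the desired Morita equivalence.
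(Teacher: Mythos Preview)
Your overall strategy (full-idempotent criterion, then produce $1_{wu}$ from $\mc T_w 1_u \mc T_w^{-1}$) matches the paper, but the regularity step has a genuine gap. You propose to shrink $U_u$ so that it avoids all the pole loci $\{X_\alpha = q_\alpha\}$ and $\{X_\alpha = -q_{\alpha*}\}$. This is impossible while preserving Condition~\ref{cond:7.U}: that condition requires $U_u$ to be stable under $X_\nr^+(M)$, so $U_u \supset u\,X_\nr^+(M)$. If $X_\alpha(u)=1$ (which does occur, e.g.\ whenever $s_\alpha$ fixes $u$), then $\{X_\alpha = q_\alpha\}$ meets $u\,X_\nr^+(M)$ and hence $U_u$, no matter how much you shrink in the unitary directions. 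Thus $\mc T_{s_\alpha}$ is genuinely singular on $U_u$ in those cases, and your claim that $\mc T_v$ is regular on all of $U$ for every $v\in W(\Sigma_{\mc O,\mu})$ is false.

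The paper avoids this obstruction by not asking for regularity of all $\mc T_v$, but only of $\mc T_v 1_u$ for a carefully chosen $v$. It takes $w = \chi_c \mf r \mf v$ of \emph{minimal length} with $wu$ prescribed, and proves by induction on $\ell_{\mc O}(v)$ that $\mc T_v$ has no poles on $U_u$ (and $\mc T_{v^{-1}}$ none on $U_{vu}$). The minimal-length hypothesis guarantees that in a reduced expression $v = s_\alpha v'$ one has $v'u \neq vu$, hence $s_\alpha(U_{v'u}) \cap U_{v'u} = \emptyset$, which forces $X_\alpha \neq \pm 1$ on $U_{v'u}$; combined with the $X_\nr^+(M)$-stability this rules out $X_\alpha = q_\alpha$ and $X_\alpha = -q_{\alpha*}$ there as well. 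In other words, the very $X_\nr^+(M)$-stability that breaks your shrinking argument is precisely the ingredient the paper uses to transfer the disjointness of components into absence of poles. Your proof can be repaired by inserting this minimal-length choice and the inductive argument in place of the shrinking step.
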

\begin{proof}
The Morita bimodules are $\End_G (I_P^G (E_B))_U^\an 1_u$ and $1_u \End_G (I_P^G (E_B))_U^\an$.
Most of the required properties are automatically fulfilled, it only remains to verify
that $1_u$ is a full idempotent in $Y := \End_G (I_P^G (E_B))_U^\an$:
\begin{equation}\label{eq:7.7}
Y 1_u Y \text{ should equal } Y .
\end{equation}
In view of \eqref{eq:7.6}, it suffices to show that $1_{wu} \in Y 1_u Y$ for all
$w \in W(M,\sigma,X_\nr (M))$. 

By \eqref{eq:3.30} there exist $\chi_c \in X_\nr (M,\sigma), r \in R(\mc O)$ and 
$v \in W(\Sigma_{\mc O,\mu})$ such that $w = \chi_c \mf{r} \mf{v}$. We may and will assume that 
$\ell_{\mc O}(\chi_c \mf{r} \mf{v})$ is minimal under the condition $\chi_c \mf{r} \mf{v} u = w u$. 
By \eqref{eq:4.30} 
\[
\mc T_v 1_{u} \mc T_{v^{-1}} = 1_{vu} \mc T_v \mc T_v^{-1} = 1_{vu} .
\]
We claim that $\mr{sp}_{v \chi} \mc T_v$ and $\mr{sp}_\chi \mc T_v^{-1}$ are regular for all 
$\chi \in U_u$, or equivalently
\begin{equation}\label{eq:7.9}
\mc T_v \text{ does not have poles on } U_u \text{ and } \mc T_v^{-1} 
\text{ does not have poles on } U_{vu} .
\end{equation}
We will prove this with induction to $\ell_{\mc O}(v)$. The case $\ell_{\mc O}(v) = 0$ is trivial.
For the induction step, write $v = s_\alpha v'$ with $\alpha \in \Delta_{\mc O,\mu}$ and 
$\ell_{\mc O}(v') = \ell_{\mc O}(v) - 1$. By the minimality of $\ell_{\mc O}(\chi_c \mf{r} \mf{v})$, 
$v' u \neq vu$. Then
\[
s_\alpha (U_{v' u}) \cap U_{v' u} = U_{v u} \cap U_{v' u} = \emptyset. 
\]
By \cite[Lemma 3.15]{Lus-Gr} $X_\alpha ( \chi ) \neq 1$ for all $\chi \in U_{v' u}$
and, if $X_\alpha \in 2 X^* (X_\nr (M))$, also $X_\alpha ( \chi ) \neq -1$ for all 
$\chi \in U_{v' u}$. If $\mc T_{s_\alpha}$ would have a pole on $U_{v'u}$, then
$U_{v' u} = X_\nr^+ (M) U_{v' u}$ and \eqref{eq:4.26} entail that
\begin{equation}\label{eq:7.8}
\exists \chi \in U_{v' u} : X_\alpha (\chi) = 1 \text{ or }
b_{s_\alpha} > 0 \text{ and } \exists \chi \in U_{v' u} : X_\alpha (\chi) = -1.
\end{equation}
That would contradict the already derived properties of $U_{v' u}$, so 
$\mc T_{s_\alpha}$ is regular on $U_{v' u}$. Notice that $v'$ has minimal length for sending
$u$ to $X_\nr (M,\sigma) R(\mc O)^{-1} v' u$, because $v$ has minimal length under the condition
$v u \in X_\nr (M,\sigma) R(\mc O)^{-1} w u$. Hence the induction hypothesis applies, and it tells
us that $\mc T_{v'}$ is regular on $U_u$. We conclude that $\mc T_v = \mc T_{s_\alpha} \mc T_{v'}$ 
is regular on $U_u$.

If we replay the argument up to \eqref{eq:7.8} with $U_{vu}$ and $U_{vu'}$ exchanged, we arrive
at the conclusion that $\mc T_{s_\alpha}$ is regular on $U_{vu}$. By the induction hypothesis
$\mc T_{v'}^{-1}$ does not have poles on $U_{v' u}$. Therefore $\mc T_v^{-1} = \mc T_{v'}^{-1}
\mc T_{s_\alpha}$ is regular on $U_{vu}$, affirming \eqref{eq:7.9}.

From \eqref{eq:7.9} and \eqref{eq:4.30} we obtain $\mc T_v 1_u = 1_{vu} \mc T_v \in Y$
and $\mc T_{v^{-1}} 1_{vu} = 1_u \mc T_v \in Y$. By \eqref{eq:4.6} and \eqref{eq:2.10}:
\[
\mc T_{\chi_c \mf{r} \mf{v}} 1_u = \phi_{\chi_c} A_r \mc T_v 1_u \in Y \quad \text{and}
\quad 1_u \mc T_{\mf{v}^{-1} \mf{r}^{-1} \chi_c^{-1}} = 
1_u \mc T_v^{-1} A_r^{-1} \phi_{\chi_c}^{-1} \in Y.
\]
Then \eqref{eq:4.19} says
\[
1_{wu} = 1_{\chi_c \mf{r} \mf {v} u} = 
\mc T_{\chi_c \mf{r} \mf{v}} 1_u \mc T_{\mf{v} \mf{r}^{-1} \chi_c^{-1}} = 
\phi_{\chi_c} A_r \mc T_v 1_u 1_u \mc T_v^{-1} A_r^{-1} \phi_{\chi_c}^{-1} \in Y.
\] 
This confirms \eqref{eq:7.7}.
\end{proof}

For later use we analyse the Morita equivalence from Lemma \ref{lem:7.2} more deeply.

\begin{lem}\label{lem:6.6}
\enuma{
\item There are equivalences of categories
\[
\begin{array}{ccc}
\!\! 1_u \End_G (I_P^G (E_B))_U^\an 1_u -\Mod \!\! & \longleftrightarrow & 
\End_G (I_P^G (E_B))_U^\an -\Mod \\
V 1_u & \text{\reflectbox{$\mapsto$}} & V \\
V_u & \mapsto & \!\! 
V_u \underset{1_u \End_G (I_P^G (E_B))_U^\an 1_u}{\otimes} \End_G (I_P^G (E_B))_U^\an \!\! 
\end{array} .
\]
\item Let $V \in \End_G (I_P^G (E_B))_U^\an -\Mod$. The $\C [X_\nr (M)]$-weights of $V 1_u$
are precisely the $\C [X_\nr (M)]$-weights of $V$ that lie in $U_u$.
\item Let $W^u$ be a set of shortest length representatives for\\
$W(M,\sigma,X_\nr (M)) / W (M,\sigma,X_\nr (M))_u$ and let
$V_u \in 1_u \End_G (I_P^G (E_B))_U^\an 1_u -\fMod$. The $\C [X_\nr (M)]$-weights
of $V_u \underset{1_u \End_G (I_P^G (E_B))_U^\an 1_u}{\otimes} \End_G (I_P^G (E_B))_U^\an$ are 
\[
\{ \chi_c \mf{r} \mf{w} (\chi) : \chi \text{ is a } \C [X_\nr (M)]\text{-weight of } V_u \}.
\]
}
\end{lem}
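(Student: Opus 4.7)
For (a), I would invoke Lemma \ref{lem:7.2}: since $1_u$ is a full idempotent in $Y := \End_G(I_P^G(E_B))_U^\an$, standard Morita theory yields that the functors $V \mapsto V 1_u$ and $V_u \mapsto V_u \otimes_{1_u Y 1_u} 1_u Y$ are inverse equivalences. The identification of $V_u \otimes_{1_u Y 1_u} Y$ with $V_u \otimes_{1_u Y 1_u} 1_u Y$ is immediate from $V_u \cdot 1_u = V_u$, so that $v \otimes y = v \otimes 1_u y$.

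For (b), any $V \in Y\text{-}\fMod$ decomposes canonically as the direct sum of generalized $\C[X_\nr(M)]$-weight spaces $V[\chi]$. The center $C^\an(U)^{W(M,\sigma,X_\nr(M))}$ (see \eqref{eq:7.29}) acts via finitely many characters on $V$, each corresponding to a $W(M,\sigma,X_\nr(M))$-orbit in $U$, so every weight $\chi$ of $V$ lies in $U$. The element $1_u \in C^\an(U)$ commutes with $\C[X_\nr(M)]$, hence preserves $V[\chi]$; on it $1_u$ must act as a scalar (its action differs from $1_u(\chi) \cdot \mr{id}$ by a nilpotent operator $N$, and the idempotency $1_u^2 = 1_u$ forces $N = 0$ once one checks both cases $1_u(\chi) \in \{0,1\}$). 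Since $1_u(\chi) = 1$ iff $\chi \in U_u$, we get $V \cdot 1_u = \bigoplus_{\chi \in U_u} V[\chi]$, with precisely the stated weights.

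The heart of the matter is (c). I would first establish the free-module decomposition
\[
1_u Y \;=\; \bigoplus_{w \in W^u} (1_u Y 1_u) \cdot \mc T_{w^{-1}}
\]
as a left $1_u Y 1_u$-module. The key input, generalizing \eqref{eq:7.9}, is that for any shortest-length $w \in W^u$ the operator $\mc T_w$ is regular on $U_u$ and $\mc T_{w^{-1}}$ is regular on $U_{wu}$. The second regularity follows from the first once one notes that $\ell_{\mc O}(w^{-1}) = \ell_{\mc O}(w)$ and that $w^{-1}$ is a shortest-length representative of the coset $W(M,\sigma,X_\nr(M))_u w^{-1}$ (corresponding to sending $wu$ back to $u$), so the inductive argument of Lemma \ref{lem:7.2} applies symmetrically. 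Granted regularity, $\mc T_{w^{-1}} 1_{wu} = 1_u \mc T_{w^{-1}}$ is a well-defined element of $1_u Y 1_{wu}$, and left multiplication by $\mc T_{w^{-1}}$ is an isomorphism $1_u Y 1_u \xrightarrow{\sim} 1_u Y 1_{wu}$ of left $1_u Y 1_u$-modules, with inverse induced by right multiplication by $\mc T_w$. Summing over $1 = \sum_{w \in W^u} 1_{wu}$ yields the claimed decomposition.

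From this one obtains
\[
V_u \otimes_{1_u Y 1_u} 1_u Y \;=\; \bigoplus_{w \in W^u} V_u \otimes \mc T_{w^{-1}}
\]
as right $C^\an(U)$-modules, and a short computation using \eqref{eq:4.30} applied to $\mc T_{w^{-1}}$ shows that a weight vector $v \in V_u$ of weight $\chi \in U_u$ produces a weight vector $v \otimes \mc T_{w^{-1}}$ of weight $w(\chi)$. Writing $w = \chi_c \mf{r} \mf{w}' \in W^u$ via \eqref{eq:3.30}, this is precisely $\chi_c \mf{r} \mf{w}'(\chi)$; letting $w$ range over $W^u$ and $\chi$ over the weights of $V_u$ exhausts the list in (c). The main technical obstacle is the pole analysis of $\mc T_{w^{-1}}$ on $U_{wu}$: this is exactly where the shortest-length condition defining $W^u$ and Condition \ref{cond:7.U} enter, in complete parallel with the inductive proof of Lemma \ref{lem:7.2}.
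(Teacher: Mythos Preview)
Your arguments for (a) and (b) coincide with the paper's, which simply invokes Lemma~\ref{lem:7.2} for (a) and the definition of $1_u$ for (b).

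For (c) your route is genuinely different and in fact cleaner than the paper's. The paper does not prove the free left $1_u Y 1_u$-module decomposition $1_u Y = \bigoplus_{w\in W^u}(1_uY1_u)\mc T_{w^{-1}}$; instead it argues that $V$ is spanned by the $V_u^{\chi,n}\mc T_w^{-1}$ via Corollary~\ref{cor:5.6}, and then rules out extra weights by a direct contradiction argument with generalized eigenvectors. Your free-module decomposition short-circuits that last step: once $V_u\otimes_{1_uY1_u}1_uY=\bigoplus_{w\in W^u}V_u\mc T_{w^{-1}}$ holds as $C^\an(U)$-modules, the weight set is automatically the union of the weight sets of the summands. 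Note also that both regularity claims you need are already contained in \eqref{eq:7.9}; your separate symmetry argument for $\mc T_{w^{-1}}$ on $U_{wu}$ is unnecessary once one observes that $\mc T_{w^{-1}}$ and $\mc T_w^{-1}$ differ only by the nonzero scalar $\natural(w^{-1},w)$.

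Two small points to tighten. First, the isomorphism $1_uY1_u\xrightarrow{\sim}1_uY1_{wu}$ is given by \emph{right} multiplication $a\mapsto a\,\mc T_{w^{-1}}$, not left multiplication (your formula $(1_uY1_u)\cdot\mc T_{w^{-1}}$ is correct; only the word is wrong). Second, you only check that genuine weight vectors $v$ of weight $\chi$ map to weight vectors of weight $w(\chi)$, which shows the listed weights occur but not that there are no others. To close this, observe that for any $f\in C^\an(U)$ one has $(v\,\mc T_{w^{-1}})\cdot f=(v\cdot(w^{-1}\!\cdot f))\,\mc T_{w^{-1}}$, so the $C^\an(U)$-module $V_u\mc T_{w^{-1}}$ is isomorphic to $V_u$ with the action twisted by $w$; hence its generalized weight decomposition is exactly the $w$-translate of that of $V_u$, and no further weights can appear.
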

\begin{proof}
(a) follows from the explicit form of the bimodules in Lemma \ref{lem:7.2}.\\
(b) is clear from the definition of $1_u$.\\
(c) Since $V_u$ has finite dimension, we can decompose it according to its $\C[X_\nr (M)]$-weights
(or equivalently its $C^\an (U_u)$-weights). For every such weight $\chi \in U_u$ and every 
$n \in \Z_{\geq 1}$ we write
\[
V_u^{\chi,n} = \{ v \in V_u : v (f - f(\chi))^n = 0 \; \forall f \in C^\an (U_u) \}.
\]
Then $V_u = \sum_{\chi,n} V_u^{\chi,n}$. From Corollary \ref{cor:5.6} we see that
\[
\mr{sp}_\chi (\End_G (I_P^G (E_B))_U^\an) \quad \text{is spanned by} \quad
\{ \mr{sp}_\chi (f_{w,\chi} \mc T_w) : w \in W(M,\sigma,X_\nr (M)) \} ,
\]
where $f_{w,\chi} \in \C [X_\nr (M)]$ is such that $\mr{sp}_\chi (f_{w,\chi} \mc T_w)$ 
is regular and nonzero (or zero if that is not possible). Hence 
\[
V := V_u \underset{1_u \End_G (I_P^G (E_B))_U^\an 1_u}{\otimes} \End_G (I_P^G (E_B))_U^\an
\]
equals $\sum_{\chi,n,w} V_u^{\chi,n} f_{w,\chi} \mc T_w$. Since 
$V_u^{\chi,n} f_{w,\chi} \mc T_w \subset V_u$ for $W(M,\sigma,X_\nr (M))_u$, 
\[
V = \sum\nolimits_{\chi,n} \sum\nolimits_{w \in W^u} V_u^{\chi,n} f_{w^{-1},\chi} \mc T_w^{-1} .
\]
From \eqref{eq:7.9} we know that $\mr{sp}_\chi \mc T_w^{-1}$ is regular for all $\chi \in U_u$
and $w \in W^u$, so we can take $f_{w^{-1},\chi} = 1_u$. For $v \in V_u^{\chi,1}$ 
we see from \eqref{eq:4.30} that 
\[
v \mc T_w^{-1} f = v (w^{-1} f) \mc T_w^{-1} \mc T_{\chi_c \mf{r}} =
f (w \chi) v \mc T_w^{-1} \qquad \forall f \in C^\an (U) .
\]
Hence $v \mc T_w^{-1}$ is a $C^\an (U)$-weight vector for the weight $w \chi$. It remains to see
that $V$ has no other $C^\an (U)$-weights. Suppose 
\[
\sum\nolimits_{\chi,n \geq 1} \sum\nolimits_{w \in W^u} v_{\chi,n,w^{-1}} \mc T_w^{-1} \in V
\]
is a weight vector not in $\sum_{\chi} \sum_{w \in W^u} V_u^{\chi,1} \mc T_w^{-1}$,
where $v_{\chi,n,w^{-1}} \in V_u^{\chi,n}$. Then the same holds for 
\[
v := \sum\nolimits_{\chi,n > 1} \sum\nolimits_{w \in W^u} v_{n,\chi,w^{-1}} \mc T_w^{-1},
\] 
so $v_{\chi,n,w^{-1}} \in V_u^{\chi,n} \setminus \{0\}$ for some $\chi,n>1,w$. We may assume that 
$n$ is minimal for this property. Then direct computation of $v (f - f(w \chi))^{n-1}$ shows that for
some $f \in C^\an (U)$ it has a nonzero term in $\sum_{\chi} \sum_{w \in W^u} V_u^{\chi,1} 
\mc T_w^{-1}$. Therefore $v$ cannot be a weight vector, and we indeed found all 
$\C [X_\nr (M)]$-weights already.
\end{proof}

Lemma \ref{lem:6.6}.a is compatible with parabolic induction and restriction, but we have to be
careful with the formulation. Let $L$ be a Levi subgroup of $G$ containing $M$ and let
$W_L (M,\sigma,X_\nr (M))$ be the version of $W (M,\sigma,X_\nr (M))$ for $L$. Then
\[
U_L := W_L (M,\sigma,X_\nr (M)) U_u
\]
is a union of connected components of $U$ and $C^\an (U_L)$ is a subalgebra of $C^\an (U)$. 
From $I_{PL}^G $ and \eqref{eq:7.5} we obtain a natural injective algebra homomorphism
\[
\End_L (I_{P \cap L}^L (E_B))_{U_L}^\an \to \End_G (I_P^G (E_B))_U^\an .
\]
We warn that in general this homomorphism is not unital, so naive restriction may not send unital
modules to unital modules. Instead, we define a functor
\[
\begin{array}{cccc}
\Res_{U_L} : & \End_G (I_P^G (E_B))_U^\an -\Mod & \longrightarrow & 
\End_L (I_{P \cap L}^L (E_B))_{U_L}^\an -\Mod \\
& V & \mapsto & V 1_{U_L} 
\end{array}.
\]
On the other hand, the restricted homomorphism
\[
1_u \End_L (I_{P \cap L}^L (E_B))_{U_L}^\an 1_u \to 1_u \End_G (I_P^G (E_B))_U^\an 1_u
\quad \text{is unital.}
\]

\begin{lem}\label{lem:6.5}
The following diagrams, with horizontal maps from Lemma \ref{lem:6.6}.a, commute:
\[
\begin{array}{ccc}
1_u \End_G (I_P^G (E_B))_U^\an 1_u -\Mod & \longleftrightarrow & \End_G (I_P^G (E_B))_U^\an -\Mod \\
\uparrow \ind & & \uparrow \ind \\
1_u \End_L (I_{P \cap L}^L (E_B))_{U_L}^\an 1_u -\Mod & \longleftrightarrow &
\End_L (I_{P \cap L}^L (E_B))_{U_L}^\an -\Mod \\[3mm]
1_u \End_G (I_P^G (E_B))_U^\an 1_u -\Mod & \longleftrightarrow & \End_G (I_P^G (E_B))_U^\an -\Mod \\
\downarrow \Res & & \downarrow \Res_{U_L} \\
1_u \End_L (I_{P \cap L}^L (E_B))_{U_L}^\an 1_u -\Mod & \longleftrightarrow &
\End_L (I_{P \cap L}^L (E_B))_{U_L}^\an -\Mod
\end{array}
\]
\end{lem}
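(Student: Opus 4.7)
My plan is to verify the restriction diagram directly and then deduce the induction diagram by uniqueness of left adjoints. Write $H_G = \End_G (I_P^G (E_B))_U^\an$ and $H_L = \End_L (I_{P \cap L}^L (E_B))_{U_L}^\an$, and note that because $u \in U_u \subset U_L$ the idempotent $1_u$ (which lies in both algebras) satisfies $1_u \cdot 1_{U_L} = 1_{U_L} \cdot 1_u = 1_u$, where $1_{U_L}$ is the unit of $H_L$ and a (typically non-unit) central idempotent of $H_G$.

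For the restriction square, take $V \in H_G$-Mod. Going right then down yields $(V \cdot 1_{U_L}) \cdot 1_u = V \cdot 1_u$. Going down then left yields $V \cdot 1_u$ regarded as an $1_u H_L 1_u$-module via the now-unital subalgebra inclusion $1_u H_L 1_u \hookrightarrow 1_u H_G 1_u$. These are the same underlying abelian group, and the action of any $1_u h 1_u$ with $h \in H_L$ is in both cases given by right-multiplication by its image in $H_G$. Thus the identity map is a natural isomorphism of functors $H_G\text{-Mod} \to 1_u H_L 1_u\text{-Mod}$. The analogous statement for the opposite horizontal direction (the inverse Morita functor going up) is equivalent to this by standard abstract nonsense, since the horizontal arrows are equivalences of categories.

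For the induction square, I invoke uniqueness of left adjoints. On the top row, $\ind = (-) \otimes_{1_u H_G 1_u} 1_u H_G$ is left adjoint to restriction along $1_u H_L 1_u \hookrightarrow 1_u H_G 1_u$. On the $L$-to-$G$ column, the functor $(-) \otimes_{H_L} H_G$ is left adjoint to $\Res_{U_L} = (-) \cdot 1_{U_L}$ by the Hom--tensor adjunction (taking into account that the unit of $H_L$ is $1_{U_L}$, so that for $W \in H_G\text{-Mod}$ the naive $\Hom_{H_G}(H_G, W) = W$ becomes unital over $H_L$ only after multiplying by $1_{U_L}$, giving exactly $\Res_{U_L} W$); the bottom row is analogous. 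The vertical Morita functors of Lemma \ref{lem:6.6}(a), being equivalences of categories, serve as their own adjoints (via the quasi-inverse). Taking left adjoints termwise in the restriction diagram just established reverses every arrow whose adjoint we need and produces the induction diagram, whose commutativity now follows from uniqueness of adjoints up to a (unique) natural isomorphism.

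The main obstacle I foresee is the non-unitality of the localized inclusion $H_L \hookrightarrow H_G$: the identity of $H_L$ is the proper central idempotent $1_{U_L}$ of $H_G$, so $\Res_{U_L}$ is not naive restriction of scalars but must be defined as multiplication by $1_{U_L}$ in order to land in unital $H_L$-modules. Once this bookkeeping is in place the restriction diagram collapses to the tautology $V \cdot 1_u = (V \cdot 1_{U_L}) \cdot 1_u$, and the induction diagram comes for free by adjunction.
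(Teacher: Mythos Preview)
Your argument is essentially correct, and your handling of the restriction square is exactly the paper's: both reduce to the identity $V\cdot 1_u = (V\cdot 1_{U_L})\cdot 1_u$, which follows from $1_u 1_{U_L} = 1_u$. Your careful remark about the non-unital inclusion $H_L \hookrightarrow H_G$ (forcing $\Res_{U_L}$ to be $(-)\cdot 1_{U_L}$ rather than naive restriction) is apt and matches how the paper set things up.

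Where you diverge is in the induction square. You deduce it from the restriction square via uniqueness of left adjoints; this is valid, once one checks (as you do) that $(-)\otimes_{H_L} H_G$ is left adjoint to $\Res_{U_L}$ despite the non-unitality, and that the Morita equivalences are their own adjoints. The paper instead verifies the induction square directly, observing that with horizontal maps taken left-to-right every arrow in that square is an induction $(-)\otimes_A B$ along a (possibly non-unital) subalgebra inclusion, so both ways around compute $(-)\otimes_{1_u H_L 1_u} H_G$ by transitivity of induction. The paper's route is shorter and avoids any discussion of adjoints; your route is more conceptual and makes explicit why the two squares are dual to each other.

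One notational slip to fix: you write ``On the top row, $\ind = (-) \otimes_{1_u H_G 1_u} 1_u H_G$ is left adjoint to restriction along $1_u H_L 1_u \hookrightarrow 1_u H_G 1_u$.'' This is garbled --- the $\ind$ arrows are the vertical ones, not on a row, and the formula you wrote is the horizontal Morita functor, not the induction. For the left column you mean $\ind = (-)\otimes_{1_u H_L 1_u} 1_u H_G 1_u$. This does not affect the substance of your argument.
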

\begin{proof}
Consider the first diagram, with horizontal maps from left to right. It commutes because all arrows
are inductions from subalgebras. Next consider the second diagram with horizontal maps from right
to left. It commutes because $1_u 1_{U_L} V = 1_u V$.

As the horizontal maps are equivalences, the diagrams remain commutative if we reverse the directions
of one or two horizontal arrows.
\end{proof}

\noindent Lemma \ref{lem:7.2} tells us that we should understand the subalgebra 
$1_u \End_G (I_P^G (E_B))_U^\an 1_u$ of $\End_G (I_P^G (E_B))_U^\an$ better. Let $C^\me (U)$ 
be the ring of meromorphic functions on $U$. We proceed via 
\begin{equation}\label{eq:7.28}
\begin{aligned}
& \Hom_G \big( I_P^G (E_B),I_P^G (E_{K(B)}) \big) \subset \\
& \End_G (I_P^G (E_B)) \underset{B^{W (M,\sigma,X_\nr (M))}}{\otimes} 
C^\me (U)^{W(M,\sigma,X_\nr (M))} =: \End_G (I_P^G (E_B))_U^\me .
\end{aligned}
\end{equation}
For the same reasons as in \eqref{eq:7.5}, $C^\me (U)$ embeds in $\End_G (I_P^G (E_B))_U^\me$.

For $\chi_c \in X_\nr (M,\sigma)$ and $w \in W(M,\mc O)$, \eqref{eq:4.16} says that
\begin{equation}\label{eq:7.13}
1_u \phi_{\chi_c} A_w 1_u = \left\{
\begin{array}{cc}
1_u \phi_{\chi_c} A_w = \phi_{\chi_c} A_w 1_u & \chi_c \mf{w} (u) = u \\
0 & \text{otherwise}
\end{array}
\right. 
\end{equation}
Since $X_\nr (M,\sigma)$ acts freely on $X_\nr (M)$, for a given $w \in W(M,\mc O)$ there exists
at most one $\chi_c = \chi_c (w) \in X_\nr (M,\sigma)$ such that $\chi_c \mf{w}$ fixes $u$. Let
$W(M,\mc O)_{\sigma \otimes u}$ be the $W(M,\mc O)$-stabilizer of $\sigma \otimes u \in \Irr (M)$.
Then
\begin{equation}\label{eq:7.11}
\begin{array}{cccc}
\Omega_u : & W (M,\mc O)_{\sigma \otimes u} & \to & W(M,\sigma, X_\nr (M))_u \\
& w & \mapsto & \chi_c (w) \mf{w}
\end{array}
\end{equation}
is a group isomorphism. With Theorem \ref{thm:4.3}, \eqref{eq:4.25} and \eqref{eq:7.13} this yields
\begin{equation}\label{eq:7.14}
\begin{aligned}
1_u \End_G (I_P^G & (E_B))_U^\me 1_u =
\bigoplus_{w \in W(M,\mc O)_{\sigma \otimes u}} C^{me}(U_u) A_{\Omega_u (w)} = \\
& \bigoplus_{w \in W(M,\mc O)_{\sigma \otimes u}} C^{me}(U_u) A_{\Omega_u (w)} =
\bigoplus_{w \in W(M,\mc O)_{\sigma \otimes u}} C^{me}(U_u) \mc{T}_{\Omega_u (w)}.
\end{aligned}
\end{equation}

\subsection{Localized endomorphism algebras with meromorphic functions} \

Consider the set of roots 
\[
\Sigma_{\sigma \otimes u} := \{ \alpha \in \Sigma_\red (A_M) : \mu^{M_\alpha}(\sigma \otimes u) = 0 \} .
\]
This is a root system \cite[\S 1]{Sil1}, that can be shown with the same argument as for Proposition
\ref{prop:3.5}.c. The parabolic subgroup $P = MU$ of $G$ determines 
a positive system $\Sigma_{\sigma \otimes u}(P)$ and a basis $\Delta_{\sigma \otimes u}$ of 
$\Sigma_{\sigma \otimes u}$. The relevant R-group (the Knapp--Stein R-group) is 
\[
R(\sigma \otimes u) = \{ w \in W(M,\mc O)_{\sigma \otimes u} : w ( \Sigma_{\sigma \otimes u}(P) )
= \Sigma_{\sigma \otimes u}(P) \}
\]
Like in \eqref{eq:3.8} 
\begin{equation}\label{eq:7.12}
W(M,\mc O)_{\sigma \otimes u} = W(\Sigma_{\sigma \otimes u}) \rtimes R(\sigma \otimes u) .
\end{equation}
We note that $R(\sigma \otimes u)$ need not be contained in $R(\mc O)$, even though 
$W(\Sigma_{\sigma \otimes u}) \subset W(\Sigma_{\mc O,\mu})$.

To obtain generators of $1_u \End_G (I_P^G (E_B))^\an_U 1_u$ with nice and simple relations, we vary 
on the previous constructions.
We follow the setup from Sections \ref{sec:root}--\ref{sec:rational}, but now with base point
$\sigma \otimes u$ of $\mc O$, root system $\Sigma_{\sigma \otimes u}$, Weyl group
$W(\Sigma_{\sigma \otimes u})$ and R-group $R(\sigma \otimes u)$. On $X_\nr (M)$ we have new
functions $X_\alpha^u (\chi) := X_\alpha (u)^{-1} X_\alpha (\chi)$. We recall that by \eqref{eq:3.22}
$X_\alpha (u) \in \{1,-1\}$ for all $\alpha \in \Sigma_{\sigma \otimes u}$.

Further, $E$ is by default 
endowed with the $M$-representation $\sigma \otimes u$, and we get a slightly different version
$J_{P'|P}^u$ of $J_{P'|P}$. Instead of $\rho'_{\sigma,w}$ we use
\[
\rho'_{\sigma \otimes u,w} := \lambda (\tilde w) \mr{sp}_{\chi = 1} 
\prod_{\alpha \in \Sigma_{\sigma \otimes u}(P) \cap \Sigma_{\sigma \otimes u}(\overline{w^{-1} P})} 
(X_\alpha^u - 1) J^u_{w^{-1} (P) | P} (\sigma \otimes u\chi) .
\]
Then Lemmas \ref{lem:3.3} and \ref{lem:3.4} remain true with obvious small modifications. 
In particular, in Lemma \ref{lem:3.4}.b we have to replace the product over
\[
\alpha \in \Sigma_{\mc O,\mu}(P) \cap \Sigma_{\mc O,\mu}(w_2^{-1}(\overline P))
\cap \Sigma_{\mc O,\mu}(w_2^{-1} w_1^{-1} (P))
\]
by the analogous product (with $\sigma \otimes u$ instead of $\sigma$) over
\[
\alpha \in \Sigma_{\sigma \otimes u}(P) \cap \Sigma_{\sigma \otimes u}(w_2^{-1}(\overline P))
\cap \Sigma_{\sigma \otimes u}(w_2^{-1} w_1^{-1} (P)) .
\]
For $r \in R(\sigma \otimes u)$ we can now take $\chi_r = 1$ and 
\begin{equation}\label{eq:5.5}
\rho_{\sigma \otimes u,r} : \tilde r (\sigma \otimes u) \to \sigma \otimes u .
\end{equation}
With these we define $\rho^u_{P',rw}$ and
\[
A^u_{rw} = \rho^u_{rw} \circ \tau_{rw} \circ J^u_{K(B),rw} 
\qquad r w \in W(M,\mc O)_{\sigma \otimes u}.
\]
as before. The superscripts $u$ are meant to distinguish these operators from their ancestors 
without $u$ (or rather with $u = 1$). Then \eqref{eq:4.2} becomes
\begin{equation}\label{eq:7.15}
A^u_{rw} \circ b = (rw \cdot b) \circ A^u_{rw} \qquad b \in \C (X_\nr (M)) .
\end{equation}
Let $w_1,w_2 \in W(\Sigma_{\sigma \otimes u})$ and $r_1,r_2 \in R(\sigma \otimes u)$. 
By Proposition \ref{prop:4.1}:
\begin{equation}\label{eq:7.16}
A^u_{w_1} \circ A^u_{w_2} = \prod\nolimits_\alpha \Big( \mr{sp}_{\chi = 1} \frac{\mu^{M_\alpha}
(\sigma \otimes u \otimes \cdot)}{ (X_\alpha^u - 1)((X_\alpha^u)^{-1} - 1)} \Big) \mu^{M_\alpha}
(\sigma \otimes u \otimes w_2^{-1} w_1^{-1} \cdot)^{-1} A^u_{w_1 w_2} ,
\end{equation}
where the product runs over
\[
\alpha \in \Sigma_{\sigma \otimes u}(P) \cap \Sigma_{\sigma \otimes u}(w_2^{-1}(\overline P))
\cap \Sigma_{\sigma \otimes u}(w_2^{-1} w_1^{-1} (P)) .
\]
In particular for $\alpha \in \Delta_{\sigma \otimes u}$:
\begin{equation}\label{eq:7.26}
(A^u_{s_\alpha})^2 = \frac{4 c'_{s_\alpha}}{(1 - X_\alpha (u) q_\alpha^{-1} )^2
(1 + X_\alpha (u) q_{\alpha*}^{-1} )^2 \mu^{M_\alpha}(\sigma \otimes u \otimes \cdot)} .
\end{equation}
Similarly Proposition \ref{prop:4.2} yields the following multiplication rules:
\begin{equation}\label{eq:7.17}
\begin{aligned}
& A^u_{r_1} \circ A^u_{w_1} = A^u_{r_1 w_1} , \\
& A^u_{w_2} \circ A^u_{r_2} = A^u_{w_2 r_2} ,\\
& A^u_{r_1} \circ A^u_{r_2} = \natural_u (r_1,r_2) A^u_{r_1 r_2} .
\end{aligned}
\end{equation}
Here $\natural_u$ is a two-cocycle $R(\sigma \otimes u)^2 \to \C^\times$. 
By appropriate normalizations of the $\rho_{\sigma \otimes u,r}$ we can achieve that
\begin{equation}\label{eq:7.18}
\natural_u (1,r) = \natural_u (r,1) = 1 \qquad \text{and} \qquad \natural_u (r,r^{-1}) = 1
\end{equation}
for all $r \in R(\sigma \otimes u)$. In other words, we may assume that $A_1^u = 1$ and
$(A^u_r)^{-1} = A^u_{r^{-1}}$.

The arguments for Theorem \ref{thm:4.3} apply only partially in the current situation,
because we may have fewer than $|W(M,\mc O)|$ operators $A^u_{rw}$. Rather, 
\cite[Proposition 3.7]{Hei2} shows that in $\End_G (I_P^G (E_B)) \otimes_B K(B)$
\begin{equation}\label{eq:7.19}
\{ A^u_{rw} : rw \in W(M,\mc O)_{\sigma \otimes u} \} \text{ is } K(B)\text{-linearly independent.}
\end{equation}
We note that \eqref{eq:7.16} and \eqref{eq:7.26} mean that, when $X_\alpha (u) = -1$, in 
effect the roles of $q_\alpha$ and $q_{\alpha*}$ are exchanged. With that in mind we define
\[
g_\alpha^u = \frac{(1 - X_\alpha^2) (1 - X_\alpha (u) q_\alpha^{-1})
(1 + X_\alpha (u) q_{\alpha*}^{-1}) }{2 (1 - X_\alpha (u) q_\alpha^{-1} X_\alpha)
(1 + X_\alpha (u) q_{\alpha*}^{-1} X_\alpha)} \in \C (X_\nr (M)) ,
\]
and $\mc{T}_{s_\alpha}^u := g_\alpha^u A_{s_\alpha}^u$. This gives rise to elements 
$\mc T_w^u$ for $w \in W(\Sigma_{\sigma \otimes u})$, which satisfy the analogues of 
Proposition \ref{prop:4.6}, Lemma \ref{lem:4.7} and Lemma \ref{lem:4.4} -- with 
$W(M,\mc O)_{\sigma \otimes u}$ instead of $W(M,\sigma,X_\nr (M))$. 

To show that the set \eqref{eq:7.19} spans \eqref{eq:7.14} as $C^{me}(U_u)$-module,
we vary on the proofs of \cite[Th\'eor\`eme 3.8]{Hei2} and of Theorem \ref{thm:4.3}.

\begin{lem}\label{lem:7.3}
Regard $C^{an}(U), \Hom_G \big( I_P^G (E_B), I_P^G (E_{K(B)}) \big)$ and $C^{me} (U)$ as
subsets of, respectively, $\End_G (I_P^G (E \otimes_\C C^{an}(U))), 
\Hom_G \big( I_P^G (E \otimes_\C C^{an}(U)), I_P^G (E \otimes_\C C^{me}(U)) \big)$ and
$\End_G (I_P^G (E \otimes_\C C^{me}(U)))$. 
\enuma{
\item Then $1_u \End_G (I_P^G (E_B))_U^\me 1_u$ equals
\[
\mr{span} \Big( C^{me}(U_u) \Hom_G \big( I_P^G (E_B), I_P^G (E_{K(B)}) \big) C^{an}(U_u) \Big) =
\bigoplus_{w \in W(M,\mc O)_{\sigma \otimes u}} C^{me}(U_u) A^u_w .
\]
\item The elements $A_r^u \mc T_w^u$ with $rw \in W(M,\mc O)_{\sigma \otimes u}$ span
a subalgebra isomorphic to $\C [W(M,\mc O)_{\sigma \otimes u}, \natural_u]$, where $\natural_u$
is the 2-cocycle from \eqref{eq:7.17}. This provides an algebra isomorphism 
\[
1_u \End_G (I_P^G (E_B))_U^\me 1_u \cong 
C^\me (U_u) \rtimes \C [W(M,\mc O)_{\sigma \otimes u}, \natural_u],
\]
where we take the crossed product with respect to the canonical action of \\
$W(M,\mc O)_{\sigma \otimes u}$ on $C^\me (U_u)$.
}
\end{lem}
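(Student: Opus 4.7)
\emph{For part (a).} The second equality is already in hand from \eqref{eq:7.14}, except that it is written with the basis $\mc T_{\Omega_u(w)}$ rather than $A^u_w$. My plan is to show that the $A^u_w$ form an alternative $C^{me}(U_u)$-basis. First I would check that each $A^u_w$ with $w \in W(M,\mc O)_{\sigma \otimes u}$ actually lives in $1_u \End_G(I_P^G(E_B))^\me_U 1_u$: by \eqref{eq:7.15} and the fact that $w$ fixes $u$, the operator $A^u_w$ satisfies $A^u_w \cdot 1_u = 1_u \cdot A^u_w$, and since $A^u_w$ is rational on $X_\nr(M)$ and regular near $u$ (after meromorphic localization, by the definition of $J^u_{K(B),w}$), it indeed belongs to the meromorphic localization on $U_u$. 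Next, by \eqref{eq:7.19} the family $\{A^u_w : w \in W(M,\mc O)_{\sigma \otimes u}\}$ is $K(B)$-linearly independent, hence also $C^{me}(U_u)$-linearly independent once we restrict meromorphic functions from $X_\nr(M)$ to $U_u$. Since the cardinality of this family matches the $C^{me}(U_u)$-rank of $1_u \End_G(I_P^G(E_B))^\me_U 1_u$ computed in \eqref{eq:7.14}, the direct sum decomposition follows.

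\emph{For the first equality in (a).} The localization $\End_G(I_P^G(E_B))^\me_U$ is obtained by tensoring $\End_G(I_P^G(E_B))$ with $C^\me(U)^{W(M,\sigma,X_\nr(M))}$ over the Bernstein centre. On the one hand, any element of the target algebra is a finite $C^{me}(U_u)$-linear combination of the $A^u_w$, each of which arises from $\Hom_G(I_P^G(E_B), I_P^G(E_{K(B)}))$ by clearing the denominators with rational functions on $X_\nr(M)$, so the inclusion $\supseteq$ holds. Conversely, any element $f \cdot h \cdot g$ with $h \in \Hom_G(I_P^G(E_B), I_P^G(E_{K(B)}))$, $f \in C^{me}(U_u)$, $g \in C^{an}(U_u)$ is visibly in $1_u\End_G(I_P^G(E_B))^\me_U 1_u$ because $f g \in C^{me}(U_u)$ and, by Theorem \ref{thm:4.3} localized at $U_u$, $h$ decomposes as a $K(B)$-combination of the $A^u_w$.

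\emph{For part (b).} With the normalizations \eqref{eq:5.5} and \eqref{eq:7.18}, I would verify the twisted-group multiplication relations on the $A^u_r \mc T^u_w$ directly. The analogue of Proposition \ref{prop:4.6} (applied with base point $\sigma \otimes u$) gives a group homomorphism $w \mapsto \mc T^u_w$ from $W(\Sigma_{\sigma \otimes u})$ to invertible elements. Combining this with the analogue of Lemma \ref{lem:4.7}(b)--- noting that for $r \in R(\sigma \otimes u)$ we may take $\chi_r = 1$, so the hypothesis $w(\chi_r) = \chi_r$ is automatic and hence $\mc T^u_w A^u_r = A^u_r \mc T^u_{r^{-1} w r}$ --- together with \eqref{eq:7.17}, I obtain
\[
(A^u_{r_1} \mc T^u_{w_1})(A^u_{r_2} \mc T^u_{w_2}) = A^u_{r_1} A^u_{r_2} \mc T^u_{r_2^{-1} w_1 r_2} \mc T^u_{w_2} = \natural_u(r_1, r_2) A^u_{r_1 r_2} \mc T^u_{r_2^{-1} w_1 r_2 w_2} .
\]
Using the semidirect product decomposition $(r_1 w_1)(r_2 w_2) = (r_1 r_2)(r_2^{-1} w_1 r_2 w_2)$ from \eqref{eq:7.12}, this is exactly the multiplication in $\C[W(M,\mc O)_{\sigma \otimes u}, \natural_u]$ where $\natural_u$ is extended by $1$ to the $W(\Sigma_{\sigma \otimes u})$-factor and to mixed products.

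\emph{Assembling the crossed product.} Finally, \eqref{eq:7.15} (and the corresponding formula for $\mc T^u_w$) reads $A^u_{rw} \cdot b = (rw \cdot b) \cdot A^u_{rw}$ for all $b \in C^\me(U_u)$, which is the defining relation of a crossed product with respect to the canonical $W(M,\mc O)_{\sigma \otimes u}$-action on $C^\me(U_u)$. Combined with the basis statement of part (a), this gives the algebra isomorphism
\[
1_u \End_G (I_P^G (E_B))_U^\me 1_u \;\cong\; C^\me (U_u) \rtimes \C [W(M,\mc O)_{\sigma \otimes u}, \natural_u].
\]
The step most prone to subtlety is the bookkeeping in part (b): one must check that no additional $\chi_c$-twists appear when crossing $\mc T^u_w$ past $A^u_r$, which hinges on the fact that at the base point $\sigma \otimes u$ we can (and do) normalise so that all the relevant $\chi_r$ equal $1$ --- this is the payoff for working with the localized setup rather than with the global algebra treated in Section \ref{sec:rational}.
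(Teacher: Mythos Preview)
Your proof is correct and, for part (b), follows exactly the route the paper sketches (``can be shown in the same way as at the end of Section~\ref{sec:rational}''): you spell out the computation
\[
(A^u_{r_1}\mc T^u_{w_1})(A^u_{r_2}\mc T^u_{w_2}) = \natural_u(r_1,r_2)\,A^u_{r_1 r_2}\mc T^u_{r_2^{-1}w_1 r_2 w_2},
\]
using that $\chi_r = 1$ for $r \in R(\sigma\otimes u)$ so the special case of Lemma~\ref{lem:4.7}(b) applies. That is exactly right.

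For part (a) you take a slightly different path from the paper. The paper recomputes the $C^\me(U_u)$-dimension of the span
\[
\mr{span}\big(C^\me(U_u)\,\Hom_G(I_P^G(E_B),I_P^G(E_{K(B)}))\,C^\an(U_u)\big)
\]
directly, by running the geometric-lemma filtration from Lemma~\ref{lem:5.1} and observing that after sandwiching by $C^\me(U_u)$ and $C^\an(U_u)$ only the subquotients with $w\in W(M,\mc O)_{\sigma\otimes u}$ and $\chi_c = 1$ survive. You instead cite \eqref{eq:7.14}, which already identifies $1_u\End_G(I_P^G(E_B))^\me_U 1_u$ as a free $C^\me(U_u)$-module of rank $|W(M,\mc O)_{\sigma\otimes u}|$, and then match bases via \eqref{eq:7.19}. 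Both arguments are valid; yours is shorter because it reuses \eqref{eq:7.14}, while the paper's filtration argument is more self-contained and makes visible \emph{why} the other $w$ and $\chi_c$ drop out (namely, $C^\me(U_u)\phi_{\chi_c}C^\an(U_u)=0$ for $\chi_c\neq 1$ by disjointness of the $U_{wu}$). One small imprecision: the $A^u_w$ already lie in $\Hom_G(I_P^G(E_B),I_P^G(E_{K(B)}))$ as rational intertwiners, so no ``clearing of denominators'' is needed to place them in the span.
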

\begin{proof}
(a) Recall from Lemma \ref{lem:5.1} and \eqref{eq:7.22} that $\Hom_G \big( I_P^G (E_B), 
I_P^G (E_{K(B)}) \big)$ has a filtration with successive subquotients isomorphic to
\[
\Hom_M (w \cdot E_B, E_{K(B)}) \cong 
\bigoplus\nolimits_{\chi_c \in X_\nr (M,\sigma)} \phi_{\chi_c} K(B) ,
\]
where $w$ runs through $W(M,\mc O)_{\sigma \otimes u}$. Considering the left hand side as 
a subset of $\Hom_M \big( w \cdot (E \otimes_\C C^{an}(U)), E \otimes_\C C^{me}(U) \big)$, 
we can compose it on the left with $C^{me}(U)$ and on the right with $C^{an}(U)$. 
Using \eqref{eq:2.26} we find
\begin{multline*}
\mr{span} \big( C^{me}(U_u) \Hom_M (w \cdot E_B, E_{K(B)}) C^{an}(U_u) \big) \cong \\
\left\{ \begin{array}{cl}
\bigoplus_{\chi \in X_\nr (M,\sigma)} C^{me}(U_u) \phi_{\chi} 
C^{an}(U_u) & w X_\nr (M,\sigma) U_u \cap U_u \neq \emptyset \\
0 & \text{otherwise}
\end{array} \right. .
\end{multline*}
In view of the construction of $U_u$, the above condition on $w$ is equivalent to $w \in
W(M,\mc O)_{\sigma \otimes u}$. Furthermore $C^{me}(U_u) \phi_{\chi} C^{an}(U_u) = 0$
for all $\chi \in X_\nr (M,\sigma) \setminus \{1\}$. We conclude that 
\begin{equation}\label{eq:7.23}
\mr{span} \big( C^{me}(U_u) \Hom_G \big( I_P^G (E_B), I_P^G (E_{K(B)}) \big) C^{an}(U_u) \big)
\end{equation}
has a filtration with subquotients isomorphic to 
\[
\mr{span} \big( C^{me}(U_u) \Hom_M (w \cdot E_B, E_{K(B)}) C^{an}(U_u) \big) \cong C^{me}(U_u) 
\qquad w \in W(M,\mc O)_{\sigma \otimes u} .
\]
In particular \eqref{eq:7.23} has dimension $|W(M,\mc O)_{\sigma \otimes u}|$ over
$C^{me}(U_u)$ (notice that $C^{me}(U_u)$ is a field because $U_u$ is connected).
By \eqref{eq:7.19} the $A^u_w$ are $C^{me}(U_u)$-linearly independent, so
$\bigoplus_{w \in W(M,\mc O)_{\sigma \otimes u}} C^{me}(U_u) A^u_w$
is the whole of \eqref{eq:7.23}.\\
(b) The properties involving the elements $A_r^u \mc T_w^u$ can be shown in the same way as 
at the end of Section \ref{sec:rational}.
\end{proof}

\begin{rem}\label{rem:6.4}
The elements $A_r^u \mc T_w^u$ from Lemma \ref{lem:7.3}.b multiply like the elements 
of $W(M,\mc O)_{\sigma \otimes u}$ (up to a 2-cocycle trivial on $W(\Sigma_{\sigma \otimes u})$), 
they normalize $C^{me}(U_u)$ and act on it in a prescribed way. Furthermore, our construction 
consists entirely of steps needed to achieve those properties. In this sense, these elements
$A_r^u \mc T_w^u$ are canonical up to rescaling the $A_r^u$.
\end{rem}

\subsection{Localized endomorphism algebras with analytic functions} \

We set out to find a $C^\an (U_u)$-basis of $1_u \End_G (I_P^G (E_B))_U^\an 1_u$. An element of 
$1_u \End_G (I_P^G (E_B))_U^\me 1_u$ lies in $1_u \End_G (I_P^G (E_B))_U^\an 1_u$
precisely when it does not have any poles on $U_u$.
By \eqref{eq:7.16} the poles of $A_{s_\alpha} \; (\alpha \in \Delta_{\sigma \otimes u})$
are precisely the zeros of $\mu^{M_\alpha}(\sigma \otimes u \otimes \cdot)$. In view of
Condition \ref{cond:7.U}, the only poles on $U_u$ are those at $\{ X_\alpha^u = 1 \} =
\{ X_\alpha = X_\alpha (u) \}$. The intersection of this set with $U_u$ is connected and
equals 
\[
U_u^{s_\alpha} = u X_\nr (M_\alpha) \cap U_u.
\]
By Lemma \ref{lem:6.1}.a, for $\alpha \in \Delta_{\sigma \otimes u}, \chi \in U_u^{s_\alpha}$:
\begin{equation}\label{eq:6.2} 
\mr{sp}_\chi (1 - X_\alpha^u) A_{s_\alpha}^u = \mr{sp}_\chi .
\end{equation}
For $\alpha \in \Sigma_{\mc O,\mu}$ we define
\[
f_\alpha = \frac{X_\alpha^2 (q_\alpha q_{\alpha*} - 1) + 
X_\alpha (q_\alpha - q_{\alpha*}) }{X_\alpha^2 - 1} =
\frac{X_\alpha^u (q_\alpha q_{\alpha*} - 1) + 
X_\alpha (u) (q_\alpha - q_{\alpha*}) }{X_\alpha^u - (X_\alpha^u)^{-1}} . 
\]
When $q_{\alpha*} = 1$ (which happens for most roots), $f_\alpha$ reduces to
\[
\frac{(X_\alpha + 1) (q_\alpha - 1)}{X_\alpha - X_\alpha^{-1}} =
\frac{q_\alpha - 1}{1 - X_\alpha^{-1}} .
\]
By \eqref{eq:3.21} and Lemma \ref{lem:3.8}.a
\begin{equation}\label{eq:4.23}
w \cdot f_\alpha = f_{w (\alpha)} \qquad \alpha \in \Sigma_{\mc O,\mu}, w \in W(M,\mc O) .
\end{equation}
One checks that, for $\alpha \in \Delta_{\sigma \otimes u}, \chi \in U_u^{s_\alpha}$:
\begin{equation}\label{eq:5.3}
\begin{split}
\mr{sp}_\chi \big( (1 - X_\alpha^u) f_\alpha \big) =
\mr{sp}_\chi \Big( \frac{q_\alpha q_{\alpha*} - 1 + 
X_\alpha (u) (q_\alpha - q_{\alpha*}) }{-1 - (X_\alpha^u)^{-1}} \Big) \\
= -(q_\alpha - X_\alpha (u)) (q_{\alpha*} + X_\alpha (u)) / 2 . 
\end{split}
\end{equation}
By \eqref{eq:6.2} and \eqref{eq:5.3}, the element
\[
\frac{(q_\alpha - X_\alpha (u)) (q_{\alpha*} + X_\alpha (u))}{2}
A_{s_\alpha} + f_\alpha \in \Hom_G \big( I_P^G (E_B), I_P^G (E_{K(B)}) \big)
\]
does not have any poles on $U_u$. Therefore 
\[
T_{s_\alpha}^u := 1_u \frac{(q_\alpha - X_\alpha (u)) (q_{\alpha*} + X_\alpha (u))}{2}
A_{s_\alpha} + 1_u f_\alpha 
\]
belongs to $1_u \End_G (I_P^G (E_B))_U^\an 1_u$. We note that
\begin{align}\nonumber
1 + f_\alpha = \frac{X_\alpha^2 q_\alpha q_{\alpha*} - 1 + 
X_\alpha (q_\alpha - q_{\alpha*}) }{X_\alpha^2 - 1} =
\frac{( X_\alpha^u q_\alpha - X_\alpha (u)) (X_\alpha^u q_{\alpha*} + X_\alpha (u))
}{(X_\alpha^u)^2 - 1} ,\\
\label{eq:5.1} T_{s_\alpha}^u = 1_u (1 + f_{-\alpha}) \mc T_{s_\alpha}^u + 1_u f_\alpha =
\mc T_{s_\alpha}^u (1 + f_\alpha) 1_u + f_\alpha 1_u \in 1_u \End_G (I_P^G (E_B))_U^\an 1_u .
\end{align}
The quadratic relations for the operators $T_{s_\alpha}^u$ read:

\begin{lem}\label{lem:6.3}
$(T_{s_\alpha}^u + 1_u)( T_{s_\alpha}^u - q_\alpha q_{\alpha*} 1_u) = 0$ 
for $\alpha \in \Delta_{\sigma \otimes u}$.
\end{lem}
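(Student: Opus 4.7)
Write $T = T_{s_\alpha}^u$ and $\mc T = \mc T_{s_\alpha}^u$, suppressing the idempotent $1_u$ (everything happens in $1_u \End_G(I_P^G(E_B))_U^\me 1_u$). From \eqref{eq:5.1} we have the two equivalent expressions
\[
T \;=\; \mc T(1 + f_\alpha) + f_\alpha \;=\; (1 + f_{-\alpha})\mc T + f_\alpha .
\]
The plan is to compute $T^2$ using only the two identities
\[
\mc T^2 \;=\; 1, \qquad b\,\mc T \;=\; \mc T\,(s_\alpha \cdot b) \quad\text{for } b \in C^\me(U_u).
\]
The first one is the analogue of Proposition \ref{prop:4.6} (the quadratic relation for $\mc T_{s_\alpha}$, now in the $u$-version), which follows from \eqref{eq:7.16}, \eqref{eq:7.26} and the explicit form of $g_\alpha^u$ together with the Harish-Chandra formula \eqref{eq:3.22}, using $X_\alpha(u)^2 = 1$. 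The second one is the analogue of \eqref{eq:4.30} for the $u$-version, noting $s_\alpha \cdot f_\alpha = f_{-\alpha}$ by \eqref{eq:4.23}.

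Expanding $T^2$ using the first form on the left and the second on the right, and then moving every $\mc T$ to the right via $b\,\mc T = \mc T\,(s_\alpha b)$, one obtains
\begin{align*}
T^2 &= \mc T(1+f_\alpha)\mc T(1+f_\alpha) + \mc T(1+f_\alpha)f_\alpha + f_\alpha \mc T (1+f_\alpha) + f_\alpha^2 \\
&= \mc T^2 (1+f_{-\alpha})(1+f_\alpha) + (f_{-\alpha}+f_\alpha)\,\mc T(1+f_\alpha) + f_\alpha^2 \\
&= (1+f_{-\alpha})(1+f_\alpha) + (f_\alpha+f_{-\alpha})(T - f_\alpha) + f_\alpha^2 \\
&= 1 + (f_\alpha+f_{-\alpha}) + (f_\alpha+f_{-\alpha})\,T ,
\end{align*}
where in the penultimate line I used $\mc T(1+f_\alpha) = T - f_\alpha$, and the final line simplifies because the $f_\alpha f_{-\alpha}$ and $-f_\alpha f_{-\alpha}$ terms cancel.

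It remains to verify the elementary identity
\[
f_\alpha + f_{-\alpha} \;=\; q_\alpha q_{\alpha*} - 1 ,
\]
which is a direct calculation from the explicit rational expression for $f_\alpha$: the numerators combine to $(X_\alpha^2 - 1)(q_\alpha q_{\alpha*}-1)$ over the common denominator $X_\alpha^2 - 1$, the $X_\alpha(q_\alpha - q_{\alpha*})$ terms cancelling. Substituting gives $T^2 = 1 + (q_\alpha q_{\alpha*}-1)(1 + T)$, which rearranges to $(T+1)(T - q_\alpha q_{\alpha*}) = 0$. Reinserting the idempotent $1_u$ yields the statement. There is no real obstacle here; the only mildly subtle input is the quadratic relation $\mc T^2 = 1$ for the $u$-normalized operator, which requires the specific form of $g_\alpha^u$ to absorb the factors $X_\alpha(u)^{\pm 1}$.
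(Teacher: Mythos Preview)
Your proof is correct and follows essentially the same approach as the paper: both rely on $(\mc T_{s_\alpha}^u)^2 = 1$, the commutation $\mc T_{s_\alpha}^u b = (s_\alpha \cdot b)\mc T_{s_\alpha}^u$, and the scalar identity $f_\alpha + f_{-\alpha} = q_\alpha q_{\alpha*} - 1$. The only cosmetic difference is that you compute $T^2$ and then factor the resulting quadratic relation, whereas the paper expands $(T+1)(T - q_\alpha q_{\alpha*})$ directly and shows each piece is annihilated by the factor $f_\alpha + f_{-\alpha} + 1 - q_\alpha q_{\alpha*}$.
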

\begin{proof}
With \eqref{eq:5.1} and the multiplication rules for $\mc T_{s_\alpha}^u$ in \eqref{eq:4.30}
we compute
\begin{multline}\label{eq:5.4}
(T_{s_\alpha}^u + 1_u)( T_{s_\alpha}^u - q_\alpha q_{\alpha*} 1_u) = \\
1_u \big( (1+ f_{-\alpha}) \mc T_{s_\alpha}^u + f_\alpha 1_u + 1_u \big) \big( (1+ f_{-\alpha}) 
\mc T_{s_\alpha}^u + f_\alpha - q_\alpha q_{\alpha*}  \big) = \\
1_u \Big( (1 + f_{-\alpha})(1 + f_\alpha) \mc T_{s_\alpha}^u + (1 + f_{-\alpha}) \mc T_{s_\alpha}^u
(f_\alpha - q_\alpha q_{\alpha*} + f_{-\alpha} + 1) +
(f_\alpha + 1)(f_\alpha - q_\alpha q_{\alpha*} ) \Big) \\
= 1_u \Big( ( 1 + f_{-\alpha}) \mc T_{s_\alpha}^u (f_\alpha + f_{-\alpha} + 1 - q_\alpha q_{\alpha*} ) 
+ (1 + f_\alpha) (f_\alpha + f_{-\alpha} + 1 - q_\alpha q_{\alpha*} ) \Big).
\end{multline}
By direct calculation $f_\alpha + f_{-\alpha} + 1 - q_\alpha q_{\alpha*} = 0$,
so the last line of \eqref{eq:5.4} reduces to 0.
\end{proof}

With \eqref{eq:4.2} we compute, for $b \in C^\me (U_u)$:
\begin{align}\nonumber
b \, T_{s_\alpha}^u & = \frac{(q_\alpha - X_\alpha (u)) (q_{\alpha*} + 
X_\alpha (u))}{2} A_{s_\alpha}^u (s_\alpha \cdot b) + f_\alpha b \\
\label{eq:6.9} & = T_{s_\alpha}^u (s_\alpha \cdot b) + f_\alpha (b - s_\alpha \cdot b) \\
\nonumber & = T_{s_\alpha}^u (s_\alpha \cdot b) + \big( q_\alpha q_{\alpha*} - 1 + 
X_\alpha^{-1} (q_\alpha - q_{\alpha*}) \big) (1 - X_\alpha^{-2})^{-1} (b - s_\alpha \cdot b) .
\end{align}
Any $w \in W(\Sigma_{\sigma \otimes u})$ can be written as a reduced word 
$s_1 s_2 \cdots s_{\ell_{\mc O}(w)}$ in the generators $s_\alpha$ with $\alpha \in 
\Delta_{\sigma \otimes u}$. We pick such a reduced expression and we define
\begin{equation}\label{eq:6.6}
T_w^u = T_{s_1}^u T_{s_2}^u \cdots T^u_{s_{\ell_{\mc O}(w)}} \in 1_u \End_G (I_P^G (E_B))_U^\an 1_u. 
\end{equation}

\begin{lem}\label{lem:6.2}
Let $w \in W(\Sigma_{\sigma \otimes u})$ and $r \in R(\sigma \otimes u)$.
\enuma{
\item The operator \eqref{eq:6.6} does not depend on the choice of the reduced expression
$w = s_1 s_2 \cdots s_{\ell_{\mc O}(w)}$.
\item $(A_r^ u)^{-1} T_w^u A_r^u = T^u_{r^{-1} w r}$.
}
\end{lem}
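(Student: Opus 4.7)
\textbf{Proof plan for Lemma \ref{lem:6.2}.}

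My overall strategy is to show that the elements $T_{s_\alpha}^u$ satisfy both the quadratic relation of Lemma \ref{lem:6.3} and the braid relations of $W(\Sigma_{\sigma \otimes u})$, so that $w \mapsto T_w^u$ is well-defined by Matsumoto's theorem; part (b) then follows by a direct conjugation computation.

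For part (a), I work in the algebra
\[
1_u \End_G \big( I_P^G (E_B) \big)_U^\me 1_u \;\cong\; C^\me (U_u) \rtimes \C[W(M,\mc O)_{\sigma \otimes u},\natural_u]
\]
provided by Lemma \ref{lem:7.3}(b), and expand each $T_{s_\alpha}^u$ using \eqref{eq:5.1} as $(1+f_{-\alpha})\mc T_{s_\alpha}^u + f_\alpha$. I already know from the $\sigma \otimes u$-analogue of Proposition \ref{prop:4.6} that the $\mc T_w^u$ with $w \in W(\Sigma_{\sigma \otimes u})$ satisfy all the Coxeter relations, and that the conjugation rule \eqref{eq:4.30} becomes $\mc T_{s_\alpha}^u \, b = (s_\alpha \cdot b)\, \mc T_{s_\alpha}^u$ for $b \in C^\me (U_u)$. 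Combined with \eqref{eq:6.9}, this means the subalgebra of $C^\me(U_u) \rtimes \C[W(\Sigma_{\sigma \otimes u})]$ generated by $C^\me(U_u)$ and the $T_{s_\alpha}^u$ is precisely the image of the Lusztig--Demazure presentation, and the braid relation $T_{s_\alpha}^u T_{s_\beta}^u T_{s_\alpha}^u \cdots = T_{s_\beta}^u T_{s_\alpha}^u T_{s_\beta}^u \cdots$ (with $m_{\alpha\beta}$ factors) reduces, after expanding both sides via \eqref{eq:5.1} and collecting coefficients of each $\mc T_v^u$ with $v$ in the rank-two parabolic generated by $\{s_\alpha,s_\beta\}$, to a purely combinatorial identity among rational functions $f_\gamma$ for $\gamma$ in that rank-two subsystem. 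This is the main computational core; I plan to handle it by a case-by-case check for the four possible rank-2 root systems ($A_1 \times A_1, A_2, B_2, G_2$), using $f_\alpha + f_{-\alpha} = q_\alpha q_{\alpha*}-1$ (which already appeared in the proof of Lemma \ref{lem:6.3}) and the $W$-equivariance \eqref{eq:4.23} of $\alpha \mapsto f_\alpha$. The hardest case will be $B_2$ with unequal parameters; I expect that a bookkeeping argument identifying the relation with the known Bernstein-presentation braid relation in an affine Hecke algebra $\mc H(\Sigma_{\sigma \otimes u}, \lambda,\lambda^*)$ will finish it.

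For part (b), I use that $r \in R(\sigma \otimes u)$ has $\chi_r = 1$ in the $u$-shifted setup (see \eqref{eq:5.5}), so \eqref{eq:7.15} gives $(A_r^u)^{-1} b A_r^u = r^{-1} \cdot b$ for $b \in C^\me(U_u)$, and the $u$-analogue of Lemma \ref{lem:4.7}(b) (whose $\natural$-factor vanishes since $\chi_r=1$) yields $(A_r^u)^{-1} \mc T_{s_\alpha}^u A_r^u = \mc T_{s_{r^{-1}(\alpha)}}^u$. Combined with the $u$-analogue of \eqref{eq:4.23} ($r \cdot f_\alpha = f_{r(\alpha)}$), this gives
\[
(A_r^u)^{-1} T_{s_\alpha}^u A_r^u \;=\; (1 + f_{-r^{-1}(\alpha)})\, \mc T_{s_{r^{-1}(\alpha)}}^u + f_{r^{-1}(\alpha)} \;=\; T_{s_{r^{-1}(\alpha)}}^u .
\]
Since $r$ permutes $\Delta_{\sigma \otimes u}$, applying this to a reduced expression $w = s_1 \cdots s_k$ produces the reduced expression $r^{-1} w r = s_{r^{-1}\alpha_1} \cdots s_{r^{-1}\alpha_k}$, and part (a) lets me conclude $(A_r^u)^{-1} T_w^u A_r^u = T_{r^{-1} w r}^u$.

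The main obstacle is the braid-relation verification in part (a); everything else is mechanical once that is in place. I expect the cleanest way to package the rank-2 check will be to identify the algebra generated by $C^\me(U_u)$ and the $T_{s_\alpha}^u$ with the analytic localization of a Lusztig affine Hecke algebra with parameters $q_\alpha, q_{\alpha*}$, where the braid relations are classical.
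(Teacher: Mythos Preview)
Your proposal is correct, and the paper takes precisely the route you anticipate in your final paragraph rather than the direct rank-2 computation you sketch first. Concretely, the paper introduces the affine Hecke algebra $\mc H$ attached to $\Sigma_{\sigma\otimes u}$ with labels $\lambda(h_\alpha^\vee)=\log(q_\alpha q_{\alpha*})/\log(q_F)$ and $\lambda^*(h_\alpha^\vee)=\log(q_\alpha q_{\alpha*}^{-1})/\log(q_F)$, and invokes Lusztig's result \cite[Proposition 5.2]{Lus-Gr} that the elements $\tau_{s_\alpha}=(T_{s_\alpha}+1)(1+f_\alpha)^{-1}-1$ in $\mc H\otimes\C(X_\nr(M))^{W(\Sigma_{\sigma\otimes u})}$ satisfy exactly the same relations as your $\mc T_{s_\alpha}^u$ (squares to $1$, braid relations, and the $s_\alpha$-twisted commutation with functions). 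This yields an algebra homomorphism into $1_u\End_G(I_P^G(E_B))_U^\me 1_u$ sending $\tau_{s_\alpha}\mapsto\mc T_{s_\alpha}^u$, and a comparison of \eqref{eq:5.1} with the definition of $\tau_{s_\alpha}$ shows it sends $T_{s_\alpha}\mapsto T_{s_\alpha}^u$. The braid relations for $T_{s_\alpha}^u$ are then inherited for free from the defining presentation of $\mc H$, so no case-by-case rank-2 check is needed. Your plan for part (b) matches the paper's argument: conjugate each $T_{s_\alpha}^u$ past $A_r^u$ (the paper does this via the $A_{s_\alpha}^u$-form of $T_{s_\alpha}^u$ and Proposition \ref{prop:4.2}.c, you via the $\mc T_{s_\alpha}^u$-form and Lemma \ref{lem:4.7}.b; both work since $\chi_r=1$), observe that $r$ preserves lengths so the conjugated product is again a reduced expression, and invoke part (a).
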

\begin{proof}
(a) In view of the defining relations in the Coxeter group $W(\Sigma_{\sigma \otimes u})$, it 
suffices to show the following statement. Let $\alpha, \beta \in \Delta_{\sigma \otimes u}$ with 
$s_\alpha s_\beta$ of order $m_{\alpha \beta} > 1$. Then
\begin{equation}\label{eq:6.7}
T_{s_\alpha}^u T_{s_\beta}^u T_{s_\alpha}^u \cdots T_{s_{\alpha / \beta}}^u = 
T_{s_\beta}^u T_{s_\alpha}^u T_{s_\beta}^u \cdots T_{s_{\beta / \alpha}}^u 
\qquad ( m_{\alpha \beta} \text{ factors on both sides).}
\end{equation}
Consider the affine Hecke algebra $\mc H$ with root system $\{ h_\alpha^\vee : \alpha \in 
\Sigma_{\sigma \otimes u} \}$, torus $X_\nr (M)$, parameter $q_F^{1/2}$ and labels 
\[
\lambda (h_\alpha^\vee) = \log (q_\alpha q_{\alpha*} ) / \log (q_F) ,\; 
\lambda^* (h_\alpha^\vee) = \log (q_\alpha q_{\alpha*}^{-1}) / \log (q_F) .
\]
By definition $\mc H$ is generated by a subalgebra $\C [X_\nr (M)]$ and elements 
$T_{s_\alpha} \; (\alpha \in \Delta_{\sigma \otimes u})$ that satisfy:
\begin{itemize}
\item the braid relations \eqref{eq:6.7} from $W(\Sigma_{\sigma \otimes u})$;
\item $(T_{s_\alpha} + 1)( T_{s_\alpha} - q_\alpha q_{\alpha*} ) = 0$;
\item $b T_{s_\alpha} = T_{s_\alpha} (s_\alpha \cdot b) + f_\alpha (b - s_\alpha \cdot b)
\quad b \in \C [X_\nr (M)]$.
\end{itemize} 
Alternatively, $\mc H \otimes_{\C [X_\nr (M)]^{W(\Sigma_{\sigma \otimes u})}} 
\C (X_\nr (M))^{W(\Sigma_{\sigma \otimes u})}$ can be generated by\\
$\C (X_\nr (M))$ and the elements 
\begin{equation}\label{eq:5.6}
\tau_{s_\alpha} := (T_{s_\alpha} + 1) (1 + f_\alpha)^{-1} - 1 .
\end{equation}
These elements stem from \cite[\S 5.1]{Lus-Gr}, where $1 + f_\alpha$ is denoted $\mc G (\alpha)$.
By \cite[Proposition 5.2]{Lus-Gr} they satisfy the same relations as our $\mc T_{s_\alpha}^u$,
namely Proposition \ref{prop:4.6} and \eqref{eq:4.30}. 
That gives a new presentation of $\mc H \otimes_{\C 
[X_\nr (M)]^{W(\Sigma_{\sigma \otimes u})}} \C (X_\nr (M))^{W(\Sigma_{\sigma \otimes u})}$, 
with defining relations
\begin{itemize}
\item the braid relations from $W(\Sigma_{\sigma \otimes u})$ (but now for the $\tau_{s_\alpha}$);
\item $\tau_{s_\alpha}^2 = 1$; 
\item $b \tau_{s_\alpha} = \tau_{s_\alpha} (s_\alpha \cdot b) \quad b \in \C (X_\nr (M))$.
\end{itemize}
We map $U_u$ to $X_\nr (M)$ by considering $u \in U_u$ and $1 \in X_\nr (M)$ as basepoints,
so $\chi \mapsto u^{-1} \chi$. That gives an injection
\begin{equation}\label{eq:5.2}
\C (X_\nr (M)) \to C^\me (U_u) .
\end{equation}
We checked that the $\tau_{s_\alpha}$ and the $\mc T_{s_\alpha}^u$ satisfy the same relations. 
Hence there is a unique algebra homomorphism 
\[
\mc H \otimes_{\C [X_\nr (M)]^{W(\Sigma_{\sigma \otimes u})}} 
\C (X_\nr (M))^{W(\Sigma_{\sigma \otimes u})} \longrightarrow 1_u \End_G (I_P^G (E_B))_U^\me 1_u
\]
that extends \eqref{eq:5.2} and sends $\tau_w$ to $\mc T_w^u$ for $w \in W(\Sigma_{\sigma \otimes u})$.
From \eqref{eq:5.1} and \eqref{eq:5.6} we see that $T_{s_\alpha}$ is mapped $T_{s_\alpha}^u$ for
$\alpha \in \Delta_{\sigma \otimes u}$.
As the $T_{s_\alpha}$ satisfy the braid relations \eqref{eq:6.7}, so do the $T_{s_\alpha}^u$.\\
(b) From the definition of $T_{s_\alpha}^u$ and Proposition \ref{prop:4.2}.c we obtain 
\begin{equation}\label{eq:6.10}
T_{s_\alpha}^u A_r^u = 
1_u \frac{(q_\alpha - X_\alpha (u)) (q_{\alpha*} + X_\alpha (u))}{2} 
A_r^u A_{s_{r^{-1} \alpha}} + A_r^u 1_u f_{r^{-1} \alpha} = A_r^u T^u_{s_{r^{-1} \alpha}} .
\end{equation}
Recall from \eqref{eq:7.18} that $(A_r^u )^{-1} = A^u_{r^{-1}}$.
Applying that and \eqref{eq:6.10} repeatedly, we find
\begin{equation}\label{eq:6.8}
(A_r^u)^{-1} T_w^u A_r^u = (A_r^u)^{-1} T^u_{s_1} T^u_{s_2} \cdots T^u_{s_{\ell_{\mc O}(w)}} A_r^u 
= T^u_{r^{-1} s_1 r} T^u_{r^{-1} s_2 r} \cdots T^u_{r^{-1} s_{\ell_{\mc O}(w)} r} . 
\end{equation}
Since conjugation by $r \in R(\sigma \otimes u)$ preserves the lengths of elements of 
$W(\Sigma_{\sigma \otimes u})$,
\[
r^{-1} w r = (r^{-1} s_1 r) (r^{-1} s_2 r) \cdots (r^{-1} s_{\ell_{\mc O}(w)} r)
\]
is a reduced expression. Now part (a) guarantees that the right hand side of \eqref{eq:6.8}
equals $T_{r^{-1} w r}^u$. 
\end{proof}

The arguments from \cite[\S 5]{Hei2} apply to the operators $A^u_r$ and $T_w^u$ in\\
$\Hom_G (I_P^G (E_B), I_P^G (E_{K(B)}))$, provided that we only look at $U_u \subset X_\nr (M)$.
In particular \cite[Proposition 5.9]{Hei2} proves that, for any $\chi \in U_u$:
\begin{equation}\label{eq:7.25}
\{ \mr{sp}_\chi A^u_r T^u_w : rw \in W(M,\mc O)_{\sigma \otimes u} \}
\text{ is } \C\text{-linearly independent} 
\end{equation}
in $\Hom_G \big( I_P^G (E_B),I_P^G (E, \sigma \otimes \chi)\big)$.

\begin{thm}\label{thm:7.4}
The algebra
\[
1_u \End_G (I_P^G (E_B))_U^\an 1_u = 
\mr{span} \big( C^\an (U_u) \End_G (I_P^G (E_B)) C^\an (U_u) \big)
\]
can be expressed as
\[
\bigoplus_{r \in R(\sigma \otimes u)} \bigoplus_{w \in W(\Sigma_{\sigma \otimes u})}
C^\an (U_u) \, A^u_r \, T^u_w \, =  \bigoplus_{w \in W(\Sigma_{\sigma \otimes u})}
\bigoplus_{r \in R(\sigma \otimes u)}  T^u_w \, A^u_r \, C^\an (U_u) .
\]
\end{thm}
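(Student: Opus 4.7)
The plan is to bootstrap from the meromorphic description in Lemma~\ref{lem:7.3} to the claimed analytic one, using the pointwise linear independence \eqref{eq:7.25} to forbid spurious poles in the coefficients. First I will verify that each generator lies in the analytic algebra and that they are $C^\an(U_u)$-linearly independent. For $r \in R(\sigma \otimes u)$, since $r$ preserves $\Sigma_{\sigma \otimes u}(P)$, Proposition~\ref{prop:3.2}(a) says $J^u_{r^{-1}(P)|P}$ has no poles on $U_u$, so $A^u_r$ is regular there; each $T^u_w$ was constructed precisely so that the pole of $A^u_{s_\alpha}$ along $U^{s_\alpha}_u$ cancels against $f_\alpha$ via Lemma~\ref{lem:6.1} and \eqref{eq:5.3}. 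Linear independence will come from Lemma~\ref{lem:7.3}(a) combined with a triangular change of basis: the expansion $T^u_w = \bigl(\prod_i c_{\alpha_i} (s_1 \cdots s_{i-1})(g^u_{\alpha_i})\bigr) A^u_w + (\text{terms of strictly shorter length})$ together with $A^u_{rw} = A^u_r A^u_w$ from \eqref{eq:7.17} is triangular with invertible meromorphic diagonal, so $\bigoplus_{r,w} C^\me(U_u) A^u_r T^u_w$ equals $\bigoplus_v C^\me(U_u) A^u_v = 1_u \End_G(I_P^G(E_B))_U^\me 1_u$, and the $A^u_r T^u_w$ are $C^\me(U_u)$-linearly independent a fortiori over $C^\an(U_u)$.

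For the reverse containment, given $x \in 1_u \End_G(I_P^G(E_B))_U^\an 1_u$, the previous step lets me write $x = \sum_{r,w} h_{r,w} A^u_r T^u_w$ uniquely with $h_{r,w} \in C^\me(U_u)$; the crux is to show every $h_{r,w}$ is analytic on $U_u$. Suppose not. I would pick an irreducible component $D$ of the polar locus of some $h_{r,w}$, a generic smooth point $\chi_0 \in D$ not meeting any other polar divisor of the $h_{r',w'}$, and a local analytic defining function $t$ for $D$ near $\chi_0$. Let $N$ be the maximum pole order of the $h_{r,w}$ along $D$. Then $t^N x = \sum_{r,w}(t^N h_{r,w}) A^u_r T^u_w$ has locally analytic coefficients at $\chi_0$, and analyticity of $x$ together with $t(\chi_0)=0$ forces $\mr{sp}_{\chi_0}(t^N x) = 0$. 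Term-by-term specialization yields the nontrivial relation $\sum_{\mathrm{ord}_D h_{r,w}^{-1} = N} [t^N h_{r,w}](\chi_0) \cdot \mr{sp}_{\chi_0}(A^u_r T^u_w) = 0$, contradicting \eqref{eq:7.25}. Hence every $h_{r,w} \in C^\an(U_u)$.

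The second decomposition follows from the commutation rules. Formula \eqref{eq:7.15} gives $f A^u_r = A^u_r (r^{-1} \cdot f)$ with $r^{-1} \cdot f \in C^\an(U_u)$; formula \eqref{eq:6.9} gives $f T^u_{s_\alpha} = T^u_{s_\alpha}(s_\alpha \cdot f) + f_\alpha(f - s_\alpha \cdot f)$, where the correction is analytic on $U_u$ because the only pole of $f_\alpha$ meeting $U_u$ lies along $U^{s_\alpha}_u$, precisely the zero locus of $f - s_\alpha \cdot f$. Iterating along a reduced expression for $w$ shows that left and right multiplication by $C^\an(U_u)$ span the same bimodule, yielding $\bigoplus_{r,w} C^\an(U_u) A^u_r T^u_w = \bigoplus_{w,r} T^u_w A^u_r C^\an(U_u)$. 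The main obstacle is the pole analysis in paragraph two: in the multivariable setting of $U_u$, polar loci are codimension-one subvarieties rather than isolated points, so one must pick a generic smooth point of each irreducible component, and the pointwise linear independence \eqref{eq:7.25} is precisely the input that turns this into a uniform contradiction.
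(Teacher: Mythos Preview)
Your proof is correct and follows essentially the same strategy as the paper: reduce via Lemma~\ref{lem:7.3} to showing that the meromorphic coefficients $h_{r,w}$ in the expansion $x=\sum h_{r,w}A^u_rT^u_w$ are actually analytic, and use the pointwise linear independence \eqref{eq:7.25} to rule out poles. The only cosmetic difference is that the paper argues directly (for each $\chi\in U_u$, the specialization $\mr{sp}_\chi(x)$ has a unique expansion in the basis $\{\mr{sp}_\chi A^u_rT^u_w\}$, forcing $h_{r,w}(\chi)$ to be a well-defined scalar at every point), whereas you phrase the same idea as a contradiction via a local defining function for a putative polar divisor; your added derivation of the right-module decomposition from \eqref{eq:7.15} and \eqref{eq:6.9} is a reasonable supplement, as the paper states but does not separately prove that half.
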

\begin{proof}
By \eqref{eq:7.15}, for all $rw \in W(M,\mc O)_{\sigma \otimes u}$,
\[
1_u A^u_r T^u_w = 1_u A^u_r T^u_w 1_u = A^u_r T^u_w 1_u
\] 
and it is a nonzero element of $1_u \End_G (I_P^G (E_B))_U^\an 1_u$. 
Lemma \ref{lem:7.3} tells us that 
\[
\bigoplus_{rw \in W(M,\mc O)_{\sigma \otimes u}} \hspace{-5mm} C^\an (U_u) A^u_r T^u_w \;
\subset \; 1_u \End_G (I_P^G (E_B))_U^\an 1_u \; \subset \;
\bigoplus_{rw \in W(M,\mc O)_{\sigma \otimes u}} \hspace{-5mm} C^\me (U_u) A^u_r T^u_w .
\]
With \eqref{eq:7.25} that entails, for any $\chi \in U_u$, the $\C$-vector space
\[
\{ \mr{sp}_\chi (A) : A \in 1_u \End_G (I_P^G (E_B))_U^\an 1_u \}
\text{ has a basis } \{ \mr{sp}_\chi A^u_r T^u_w : rw \in W(M,\mc O)_{\sigma \otimes u} \} .
\]
Suppose that $f_{rw} \in C^\me (U_u)$ and
\[
A := \sum\nolimits_{rw \in W(M,\mc O)_{\sigma \otimes u}} f_{rw} A^u_r T^u_w \in
1_u \End_G (I_P^G (E_B))_U^\an 1_u .
\]
It remains to show that all the $f_{rw}$ belong to $C^\an (U_u)$. Consider any $\chi \in U_u$.
By the above there are unique $z_{rw} \in \C$ such that 
\[
\mr{sp}_\chi (A) = 
\sum\nolimits_{rw \in W(M,\mc O)_{\sigma \otimes u}} z_{rw} \mr{sp}_\chi A^u_r T^u_w.
\] 
Then $f_{rw} (\chi) = z_{rw}$. Hence none of the $f_{rw}$ has a pole at any $\chi \in U_u$.
In other words, they are analytic.
\end{proof}

Theorem \ref{thm:7.4} and the multiplication rules \eqref{eq:7.15}, \eqref{eq:7.17},
Lemma \ref{lem:6.3}, \eqref{eq:6.9}, \eqref{eq:6.6}, Lemma \ref{lem:6.2} provide a presentation
of $1_u \End_G (I_P^G (E_B))_U^\an 1_u$. We note that it is quite similar to an affine Hecke
algebra (when $R(\sigma \otimes u) = 1$) or to a twisted affine Hecke algebra 
\cite[Proposition 2.2]{AMS3}. The only difference is that the complex torus $T$ in the
definition of an affine Hecke algebra has been replaced by the complex manifold $U_u$,
and $\C [T]$ by $C^\an (U_u)$.

This observation enables us to compute the centre of $1_u \End_G (I_P^G (E_B))_U 1_u$
with the methods from \cite[Proposition 3.11]{Lus-Gr} and \cite[\S 1.2]{SolAHA}:
\begin{equation}\label{eq:7.30}
Z \big( 1_u \End_G (I_P^G (E_B))_U^\an 1_u \big) = C^{an}(U_u)^{W(M,\mc O)_{\sigma \otimes u}} .
\end{equation}

\section{Link with graded Hecke algebras}
\label{sec:GHA}

We will provide an easier presentation of $1_u \End_G (I_P^G (E_B))_U 1_u$, which 
comes from a graded Hecke algebra \cite{Lus-Gr}. 
Let us recall the construction of the graded Hecke algebras that we want to use,
starting from the affine Hecke algebra $\mc H$ in the proof of Lemma \ref{lem:6.2}.
We replace the complex torus $X_\nr (M) = \Hom (M/M^1,\C^\times)$ by its Lie algebra
\[
\mr{Lie}(X_\nr (M)) = \Hom (M/M^1 ,\C) = a_M^* \otimes_\R \C .
\]
The algebra of regular functions $\C [X_\nr (M)] = \C [M / M^1]$ is replaced by
the algebra of polynomial functions
\[
\C [\mr{Lie}(X_\nr (M))] = \C [a_M^* \otimes_\R \C ] = S(a_M \otimes_\R \C) ,
\]
where $S$ denotes the symmetric algebra of a vector space. The group $W(M,\mc O)$ 
acts naturally on Lie$(X_\nr (M))$ and on $\C [a_M^* \otimes_\R \C]$.

Recall that in Proposition \ref{prop:3.5} we associated to every $\alpha \in 
\Sigma_{\mc O,\mu}$ elements $h_\alpha^\vee \in M / M^1 \subset a_M, 
\alpha^\sharp \in a_M^*$. These elements form root systems $\Sigma^\vee_{\mc O}$ 
and $\Sigma_{\mc O}$, respectively. The quadruple
\[
\big( a_M, \{ h_\alpha^\vee ,\alpha \in \Sigma_{\sigma \otimes u} \}, a_M^* ,
\{ \alpha^\sharp : \alpha \in \Sigma_{\sigma \otimes u} \}, 
\{ h_\alpha^\vee ,\alpha \in \Delta_{\sigma \otimes u} \} \big)
\]
will be denoted $\tilde{\mc R}_u$, and is sometimes called a degenerate root datum.
Let $k^u : \Sigma_{\sigma \otimes u} \to \R_{\geq 0}$ be the 
$W(M,\mc O)_{\sigma \otimes u}$-invariant parameter function
\[
k^u_\alpha = \left\{ \begin{array}{ll}
\log (q_\alpha) & \text{if } X_\alpha (u) = 1  \\
\log (q_{\alpha*}) & \text{if } X_\alpha (u) = -1 
\end{array} \right. .
\]
We also need the 2-cocycle 
\[
\natural_u : R(\sigma \otimes u)^2 = 
(W (M,\mc O)_{\sigma \otimes u} / W(\Sigma_{\sigma \otimes u}) )^2 \to \C^\times
\]
from Lemma \ref{lem:7.3}. It gives rise to a twisted group algebra 
$\C [W(M,\mc O)_{\sigma \otimes u}, \natural_u]$
with basis $\{ N_w : w \in W(M,\mc O)_{\sigma \otimes u} \}$ and multiplication rules
\[
N_{w_1} N_{w_2} = \natural_u (w_1, w_2) N_{w_1 w_2} .
\]
\begin{lem}\label{lem:8.5}
Recall the bijection $\Omega_u : W(M,\mc O)_{\sigma \otimes u} \to W(M,\sigma, X_\nr (M))_u$
from \eqref{eq:7.11}. The 2-cocycles $\natural_u$ and $\natural \circ \Omega_u$ of
$W(M,\mc O)_{\sigma \otimes u}$ are cohomologous. Further, $\Omega_u$ induces a canonical
algebra isomorphism
\[
\widetilde{\Omega_u} : \C [W(M,\mc O)_{\sigma \otimes u}, \natural_u] \to 
\C [W(M,\sigma,X_\nr (M))_u , \natural ].
\]
\end{lem}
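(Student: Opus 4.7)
The strategy is to exhibit two natural $C^\me(U_u)$-bases of $1_u \End_G(I_P^G(E_B))_U^\me 1_u$, each indexed by $W(M,\mc O)_{\sigma \otimes u}$ — one producing the cocycle $\natural_u$ and the other producing $\natural \circ \Omega_u$ — and to compare them by evaluation at $\chi = u$.

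By Lemma \ref{lem:7.3}(b), the family $\{A^u_r T^u_v : r \in R(\sigma \otimes u),\, v \in W(\Sigma_{\sigma \otimes u})\}$ indexed by $w = rv$ via \eqref{eq:7.12} is a $C^\me(U_u)$-basis realizing the crossed-product structure with cocycle $\natural_u$. On the other hand, \eqref{eq:7.14} provides a second $C^\me(U_u)$-basis $\{\mc T_{\Omega_u(w)}\}_{w \in W(M,\mc O)_{\sigma \otimes u}}$. The map $\Omega_u$ is a group homomorphism: since $X_\nr(M,\sigma)$ acts freely on $X_\nr(M)$, the unique lift of an element of $W(M,\mc O)_{\sigma \otimes u}$ to $W(M,\sigma,X_\nr(M))_u$ is automatically multiplicative. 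Because $1_u \mc T_{v'} = \mc T_{v'} 1_u$ for $v' \in W(M,\sigma,X_\nr(M))_u$ (as $v'$ fixes $u$), Lemma \ref{lem:4.4}(a) yields the cocycle $\natural \circ \Omega_u$ for this second basis.

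Both bases span the same rank-one $C^\me(U_u)$-submodule in each $w$-isotypic component and intertwine $C^\me(U_u)$ via the same action $b \mapsto w \cdot b$ (by \eqref{eq:4.19}, \eqref{eq:7.15}, and Lemma \ref{lem:6.2}), so there is a unique $f_w \in C^\me(U_u)^\times$ with $\mc T_{\Omega_u(w)} = f_w A^u_{r(w)} T^u_{v(w)}$. Computing $\mc T_{\Omega_u(w_1)} \mc T_{\Omega_u(w_2)}$ in the two descriptions yields
\[
(\natural \circ \Omega_u)(w_1,w_2) = \natural_u(w_1,w_2) \cdot f_{w_1}\,(w_1 \cdot f_{w_2})\, f_{w_1 w_2}^{-1},
\]
and the left-hand side being in $\C^\times$ forces the quantity $\delta f(w_1,w_2) := f_{w_1}(w_1 \cdot f_{w_2}) f_{w_1 w_2}^{-1}$ to be a constant in $\C^\times$.

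\textbf{Main obstacle.} The delicate step is to promote $\delta f$ to the coboundary of a scalar 1-cochain. For this I will show that both $\mc T_{\Omega_u(w)}$ and $A^u_{r(w)} T^u_{v(w)}$ are regular and nonzero at $\chi = u$. The second is analytic on $U_u$ by construction (see \eqref{eq:5.1} for $T^u_v$, together with Proposition \ref{prop:3.2}(a) for $A^u_r$ with $r \in R(\sigma \otimes u)$). For the first, decompose $\Omega_u(w) = \chi'_c \mf{r'} \mf{v'}$ as in \eqref{eq:3.29}--\eqref{eq:3.30}, so that $\mc T_{\Omega_u(w)} = \phi_{\chi'_c} A_{r'} \mc T_{v'}$ by \eqref{eq:4.18}: the first two factors are regular everywhere on $X_\nr(M)$, while the pole loci of $\mc T_{v'}$ described by \eqref{eq:4.26} avoid $u$ because $X_\alpha(u)$ lies on the unit circle but the possible poles $X_\alpha \in \{q_\alpha, -q_{\alpha*}\}$ stay away from $\pm 1$ for the contributing roots. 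Setting $c_w := f_w(u) \in \C^\times$, and using that $w_1$ fixes $u$ so $(w_1 \cdot f_{w_2})(u) = f_{w_2}(u) = c_{w_2}$, evaluation of the identity above at $\chi = u$ gives $\natural \circ \Omega_u = \natural_u \cdot \delta c$ with $\delta c(w_1,w_2) = c_{w_1} c_{w_2} c_{w_1 w_2}^{-1}$. This proves (a); for (b) the map $\widetilde{\Omega_u}(N_w) := c_w^{-1} N_{\Omega_u(w)}$ intertwines the twisted multiplications by the coboundary identity, and its canonicality reflects that of $\Omega_u$, the two bases, and evaluation at $u$, given the normalizations \eqref{eq:7.18} and the choices of $\phi_\chi$ from Section \ref{sec:cuspidal}.
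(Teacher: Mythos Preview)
Your overall strategy—exhibit two bases of $1_u \End_G(I_P^G(E_B))_U^\me 1_u$ indexed by $W(M,\mc O)_{\sigma\otimes u}$, compare them, and evaluate the transition functions at $\chi=u$—is exactly the paper's approach. However, there is a genuine slip in the operators you use on the ``$\natural_u$-side''.

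You invoke Lemma~\ref{lem:7.3}(b), which is about the elements $A^u_r\,\mc T^u_v$ (with the \emph{calligraphic} $\mc T^u_v$ satisfying $(\mc T^u_{s_\alpha})^2=1$ and the simple cross relation $\mc T^u_v b=(v\cdot b)\mc T^u_v$), but you then write and reason with the \emph{non}-calligraphic $T^u_v$ of \eqref{eq:5.1} and \eqref{eq:6.6}. Those obey the Hecke relations $(T^u_{s_\alpha}+1_u)(T^u_{s_\alpha}-q_\alpha q_{\alpha*}1_u)=0$ (Lemma~\ref{lem:6.3}) and the Bernstein--Lusztig cross relation \eqref{eq:6.9} with its extra term $f_\alpha(b-s_\alpha\cdot b)$. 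Consequently $\{A^u_r T^u_v\}$ does \emph{not} realize the crossed product with cocycle $\natural_u$, and $T^u_v$ does \emph{not} intertwine $C^\me(U_u)$ via $b\mapsto v\cdot b$; so your comparison of the two bases and the cocycle identity $(\natural\circ\Omega_u)=\natural_u\cdot\delta f$ do not follow as written.

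The fix is simply to replace $T^u_v$ by $\mc T^u_v$ throughout. Then Lemma~\ref{lem:7.3}(b) gives the crossed-product structure with cocycle $\natural_u$, and the intertwining claim holds by the $u$-version of \eqref{eq:4.30}. For regularity at $u$ you should cite the $u$-analogue of \eqref{eq:4.26} (the poles of $\mc T^u_{s_\alpha}$ lie where $|X^u_\alpha|>1$, hence off the unitary locus), not \eqref{eq:5.1}; and Lemma~\ref{lem:6.2} is irrelevant. With this correction your argument coincides with the paper's: the paper writes $A^u_r\mc T^u_w=f^u_{rw}\,\mc T_{\Omega_u(rw)}$, specializes at $u$ (both sides being regular there by \eqref{eq:4.26}), and reads off \eqref{eq:8.27}, which is precisely your coboundary identity with $c_w=f^u_{w}(u)$.
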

\begin{proof}
By definition
\begin{equation}\label{eq:8.1}
A_{r'}^u \mc T_{w'}^u \circ A_{r}^u \mc T_w^u = \natural_u (r' w', rw) A_{r'r}^u \mc T^u_{r^{-1} w' r w} 
\end{equation}
for $rw, r'w' \in W(M,\mc O)_{\sigma \otimes u}$. For a generic $\chi \in X_\nr (M)$, 
$I_P^G (\sigma \otimes \chi)$ and $I_P^G (\sigma \otimes \Omega_u (rw)^{-1} \chi)$ are
irreducible. Both $\mr{sp}_\chi A_r^u \mc T_w^u$ and $\mr{sp}_\chi \mc T_{\Omega_u (rw)}$ are
$G$-homomorphisms 
\[
I_P^G (\sigma \otimes \chi) \to I_P^G (\sigma \otimes \Omega_u (rw)^{-1} \chi) ,
\]
so they differ only by a scalar factor (at least away from their poles). Furthermore 
$\mr{sp}_\chi A_r^u \mc T_w^u$ and $\mr{sp}_\chi \mc T_{\Omega_u (rw)}$ are rational functions 
of $\chi \in X_\nr (M)$, so there exists a unique $f_{rw}^u \in \C (X_\nr (M))$ such that
\begin{equation}\label{eq:8.2}
A_r^u \mc T_w^u = f_{rw}^u \mc T_{\Omega_u (rw)} .
\end{equation}
By \eqref{eq:4.26} $\mc T_{\Omega_u (rw)}$ and $A_r^u \mc T_w^u$ are regular at every unitary
$\chi \in X_\nr (M)$, in particular at $u$. Specializing \eqref{eq:8.1} at $u$ and combining
it with \eqref{eq:8.2}, we obtain
\begin{multline*}
f_{r'w'}^u (u) \mr{sp}_u (\mc T_{\Omega_u (r'w')}) f_{rw}^u (u) \mr{sp}_u (\mc T_{\Omega_u (rw)})
= \mr{sp}_u (A_{r'}^u \mc T_{w'}^u) \mr{sp}_u (A_r^u \mc T_w^u) = \\
\natural_u (r'w',rw) \mr{sp}_u (A_{r'r}^u \mc T^u_{r^{-1} w' r w}) =
\natural_u (r'w',rw) f_{r'w'rw}^u (u) \mr{sp}_u (\mc T_{\Omega_u (r' w' r w)}) . 
\end{multline*}
On the other hand, by Lemma \ref{lem:4.7} 
\[
\mr{sp}_u (\mc T_{\Omega_u (r'w')}) \mr{sp}_u (\mc T_{\Omega_u (rw)}) =
\natural (\Omega_u (r'w'),\Omega_u (rw)) \mr{sp}_u (\mc T_{\Omega_u (r'w'rw)}) .
\]
Comparing the expressions with $\mc T_{\Omega_u (?)}$ we find that
\begin{equation}\label{eq:8.27}
\frac{\natural (\Omega_u (r'w'),\Omega_u (rw))}{\natural_u (r'w',rw)} =
\frac{f_{r'w'rw}^u (u)}{f_{r'w'}^u (u) f_{rw}^u (u)} .
\end{equation}
By definition, this says that $\natural_u$ and $\natural \circ \Omega_u$ are cohomologous 
2-cocycles. Moreover \eqref{eq:8.27} shows that 
\[
A_r^u T_w^u \mapsto f_{rw}^u (u) \mc T_{\Omega_u (rw)} \qquad r \in R(\sigma \otimes u),
w \in W(\Sigma_{\sigma \otimes u})
\]
defines the algebra isomorphism $\widetilde{\Omega_u}$ we were looking for. It is canonical
because every $f_{rw}^u$ is unique.
\end{proof}

The graded Hecke algebra associated to the above data is the vector space\\
$\C [a_M^* \otimes_\R \C] \otimes_\C \C [W(M,\mc O)_{\sigma \otimes u}, \natural_u]$ with 
multiplication rules
\begin{enumerate}[(i)]
\item $\C [a_M^* \otimes_\R \C]$ and $\C [W(M,\mc O)_{\sigma \otimes u}, \natural_u]$
are embedded as subalgebras;
\item for $f \in \C [a_M^* \otimes_\R \C]$ and $\alpha \in \Delta_{\sigma \otimes u}$:
\[
f N_{s_\alpha} - N_{s_\alpha} (s_\alpha \cdot f) = 
k^u_\alpha \frac{f - s_\alpha \cdot f}{h_\alpha^\vee} ;
\]
\item $N_r f = (r \cdot f) N_r$ for $f \in \C [a_M^* \otimes_\R \C]$ and 
$r \in R(\sigma \otimes u)$.
\end{enumerate}
This algebra is denoted
\begin{equation}\label{eq:8.10}
\mh H (\tilde{\mc R}_u, W(M,\mc O)_{\sigma \otimes u}, k^u, \natural_u) .
\end{equation}
Weights of representations of this algebra are by default with respect to the commutative
subalgebra $\C [a_M^* \otimes_\R \C]$.
An advantage of \eqref{eq:8.10} over $\End_G (I_P^G (E_B))$ is that a lot is known about its 
representation theory, even with arbitrary parameters $k^u_\alpha$. 
It is easy to see that the centre of \eqref{eq:8.10} is
\[
Z \big( \mh H (\tilde{\mc R}_u, W(M,\mc O)_{\sigma \otimes u}, k^u, \natural_u) \big) =
\C [ a_M^* \otimes_\R \C ]^{W(M,\mc O)_{\sigma \otimes u}} .
\]
To interpolate between $1_u \End_G (I_P^G (E_B))_U^\an 1_u$ and a graded Hecke algebra, we need
a version of the latter with analytic functions instead of polynomials. 
Let $\tilde U \subset a_M^* \otimes_\R \C$ be an open $W(M,\mc O)_{\sigma \otimes u}$-stable subset.
Like in \cite[\S 1.5]{SolAHA} we consider the algebra 
\begin{multline*}
\mh H (\tilde{\mc R}_u, W(M,\mc O)_{\sigma \otimes u}, k^u, \natural_u)_{\tilde U}^\an := \\
\mh H (\tilde{\mc R}_u, W(M,\mc O)_{\sigma \otimes u}, k^u, \natural_u) 
\underset{\C [a_M^* \otimes_\R \C]^{W(M,\mc O)_{\sigma \otimes u}}}{\otimes} 
C^\an (\tilde U)^{W(M,\mc O)_{\sigma \otimes u}} .
\end{multline*}
As vector space it is
\begin{equation}\label{eq:8.3}
\mh H (\tilde{\mc R}_u, W(M,\mc O)_{\sigma \otimes u}, k^u, \natural_u)_{\tilde U}^\an =
C^\an (\tilde U) \otimes_\C \C [W(M,\mc O)_{\sigma \otimes u}, \natural_u] ,
\end{equation}
compare with Theorem \ref{thm:7.4}. The multiplication relations (ii) and (iii) in the definition of 
$\mh H (\tilde{\mc R}_u , W(M,\mc O)_{\sigma \otimes u}, k^u, \natural_u)$ now hold for all 
$f \in C^{an}(\tilde U)$.

\begin{lem}\label{lem:8.2}
\enuma{
\item The following categories are naturally equivalent:
\begin{itemize}
\item $\mh H (\tilde{\mc R}_u, W(M,\mc O)_{\sigma \otimes u}, k^u, 
\natural_u)_{\tilde U}^\an -\fMod$;
\item $\mh H (\tilde{\mc R}_u, W(M,\mc O)_{\sigma \otimes u}, k^u, \natural_u) -\Modf{\tilde U}$.
\end{itemize}
\item For a Levi subgroup $L$ of $G$ containing $M$, there is a version of \\
$\mh H (\tilde{\mc R}_u, W(M,\mc O)_{\sigma \otimes u}, k^u, \natural_u)$ that uses only those
elements of $W(M,\mc O)_{\sigma \otimes u}$ and $\Sigma_{\sigma \otimes u}$ that come from $L$.
With that as parabolic subalgebra, the above equivalence of categories commutes with parabolic
induction and restriction in the same sense as Lemma \ref{lem:7.1}.
}
\end{lem}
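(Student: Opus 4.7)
The plan is to mimic the proof of Lemma \ref{lem:7.1}, replacing the pair $(B, \End_G(I_P^G(E_B)))$ with the pair $(\C [a_M^* \otimes_\R \C], \mh H (\tilde{\mc R}_u, W(M,\mc O)_{\sigma \otimes u}, k^u, \natural_u))$. The underlying mechanism is the same: the graded Hecke algebra $\mh H := \mh H (\tilde{\mc R}_u, W(M,\mc O)_{\sigma \otimes u}, k^u, \natural_u)$ is a finitely generated module over its centre $Z(\mh H) = \C [a_M^* \otimes_\R \C]^{W(M,\mc O)_{\sigma \otimes u}}$, so every finite length $\mh H$-module is finite dimensional and decomposes canonically into generalized $\C [a_M^* \otimes_\R \C]$-weight spaces, with weights clustered in finitely many $W(M,\mc O)_{\sigma \otimes u}$-orbits.

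For part (a), first I would invoke the analogue of \eqref{eq:7.5} from \cite[Lemma 4.4]{Opd-Sp}: since $\tilde U$ is open in $a_M^* \otimes_\R \C$ and stable under $W(M,\mc O)_{\sigma \otimes u}$, one has a natural identification
\[
\C [a_M^* \otimes_\R \C] \underset{\C [a_M^* \otimes_\R \C]^{W(M,\mc O)_{\sigma \otimes u}}}{\otimes} C^\an(\tilde U)^{W(M,\mc O)_{\sigma \otimes u}} \cong C^\an(\tilde U) .
\]
Combined with the PBW-type decomposition \eqref{eq:8.3}, this shows that $\mh H^\an_{\tilde U}$ is a free $C^\an(\tilde U)$-module with basis $\{N_w : w \in W(M,\mc O)_{\sigma \otimes u}\}$, and that the multiplication rules (ii), (iii) in the definition of $\mh H$ extend to all $f \in C^\an(\tilde U)$ by continuity of the Demazure-type difference operators. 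Next I would construct the equivalence explicitly: any $V \in \mh H^\an_{\tilde U}-\fMod$ restricts to an $\mh H$-module whose $\C [a_M^* \otimes_\R \C]$-weights are forced to lie in $\tilde U$, because $C^\an(\tilde U)$ acts and the unit $1_{\tilde U}$ acts as the identity. Conversely, given $V \in \mh H-\Modf{\tilde U}$, I would use the weight decomposition $V = \bigoplus_{\lambda \in \mr{Wt}(V)} V^{\lambda}$ with $\mr{Wt}(V) \subset \tilde U$ to define an action of $f \in C^\an(\tilde U)$ on each weight space via its Taylor expansion at $\lambda$; nilpotency of $(x - \lambda(x))|_{V^\lambda}$ for $x \in a_M \otimes_\R \C$ guarantees that only finitely many terms of the expansion contribute, so the extension is well-defined and unique, and automatically compatible with the $N_w$. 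These two functors are mutually inverse.

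For part (b), given a Levi $L \supset M$, the root subsystem $\Sigma_{\sigma \otimes u} \cap \Sigma(L,M)$ and the subgroup $W_L(M,\mc O)_{\sigma \otimes u}$ generated by its Weyl group together with $R(\sigma \otimes u) \cap W(L,M)$ form the data for the parabolic subalgebra $\mh H_L \subset \mh H$; the cocycle $\natural_u$ restricts, and the analytic localization $\mh H_{L,\tilde U_L}^\an$ is built as before. Parabolic induction is ordinary algebraic induction $\mr{ind}_{\mh H_L}^{\mh H}$, restriction is ordinary restriction; both preserve finite dimensionality since $\mh H$ is a free $\mh H_L$-module of finite rank. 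Compatibility with the equivalence of (a) is formal: induction commutes with the base change $- \otimes_{Z(\mh H_L)} C^\an(\tilde U_L)^{W_L}$ because the relevant modules are finitely generated, and restriction commutes by the same argument applied to the injection $Z(\mh H_L)^\an \hookrightarrow Z(\mh H)^\an$ (one uses the idempotent $1_{\tilde U_L} \in C^\an(\tilde U)$ to cut out the correct component, exactly as in Lemma \ref{lem:6.5}).

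The main obstacle is to verify carefully that the Demazure-type relation $f N_{s_\alpha} - N_{s_\alpha}(s_\alpha \cdot f) = k^u_\alpha (f - s_\alpha \cdot f)/h_\alpha^\vee$ extends from $\C[a_M^* \otimes_\R \C]$ to all $f \in C^\an(\tilde U)$: the fraction $(f - s_\alpha \cdot f)/h_\alpha^\vee$ must remain analytic on $\tilde U$, which follows because $f - s_\alpha \cdot f$ vanishes on the $s_\alpha$-fixed hyperplane $\{h_\alpha^\vee = 0\}$ and $C^\an(\tilde U)$ is closed under this division (a standard fact for holomorphic functions). Once this is in place, all computations reduce to the algebraic case and the rest of the proof is routine bookkeeping.
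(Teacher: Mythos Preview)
Your proposal is correct and follows essentially the same approach as the paper. The paper's own proof is a single sentence referring to \cite[Proposition 4.3]{Opd-Sp} and Lemma \ref{lem:7.1}, so you have simply unpacked the details of that standard localization argument (weight decomposition plus Taylor expansion to extend the $C^\an(\tilde U)$-action), together with the check that the Demazure-type relation (ii) survives the passage to analytic functions, which the paper records just before \eqref{eq:8.3} without proof.
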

\begin{proof}
This can be shown in the same way as \cite[Proposition 4.3]{Opd-Sp} and Lemmas \ref{lem:7.1}.
\end{proof}

We can also involve meromorphic functions on $\tilde U$, in an algebra 
\begin{multline}\label{eq:8.5}
\mh H (\tilde{\mc R}_u, W(M,\mc O)_{\sigma \otimes u}, k^u, \natural_u)_{\tilde U}^\me := \\
\mh H (\tilde{\mc R}_u, W(M,\mc O)_{\sigma \otimes u}, k^u, \natural_u) 
\underset{\C [a_M^* \otimes_\R \C]^{W(M,\mc O)_{\sigma \otimes u}}}{\otimes} 
C^{me}(\tilde U)^{W(M,\mc O)_{\sigma \otimes u}} ,
\end{multline}
which as vector space equals
\[
C^{me}(\tilde U) \otimes_\C \C [W(M,\mc O)_{\sigma \otimes u}, \natural_u] .
\]
As in \cite[\S 5.1]{Lus-Gr} we define, for $\alpha \in \Delta_{\sigma \otimes u}$, the 
following element of \eqref{eq:8.5}:
\begin{equation}\label{eq:8.9}
\tilde{\mc T}_{s_\alpha} = -1 + (N_{s_\alpha} + 1) \frac{h_\alpha^\vee}{k^u_\alpha + h_\alpha^\vee} . 
\end{equation}
According to \cite[Proposition 5.2]{Lus-Gr}, $s_\alpha \mapsto \tilde{\mc T}_{s_\alpha}$ extends
uniquely to a group homomorphism $w \mapsto \tilde{\mc T}_w$ from $W(\Sigma_{\sigma \otimes u})$
to the multiplicative group of \eqref{eq:8.5}, and
\[
\tilde{\mc T}_w f = (w \cdot f) \tilde{\mc T}_w 
\qquad f \in C^{me}(\tilde U), w \in W(\Sigma_{\sigma \otimes u}) .
\] 
An argument analogous to Lemma \ref{lem:6.2}.b shows that 
\[
N_r \tilde{\mc T}_w N_r^{-1} = \tilde{\mc T}_{r w r^{-1}} \qquad
r \in R(\sigma \otimes u), w \in W(\Sigma_{\sigma \otimes u}) .
\]
It is easy to see from \eqref{eq:8.9} that the $\C (a_M^* \otimes_\R \C)$-span of the
$\tilde{\mc T}_w$ coincides with the $\C (a_M^* \otimes_\R \C)$-span of the $N_w \; 
(w \in W(\Sigma_{\sigma \otimes u})$. With \eqref{eq:8.3} that yields 
\begin{multline}\label{eq:8.6}
\mh H (\tilde{\mc R}_u, W(M,\mc O)_{\sigma \otimes u}, k^u, \natural_u)_{\tilde U}^\me = \\
\mr{span} \big( C^\me (\tilde U) \mh H (\tilde{\mc R}_u, W(M,\mc O)_{\sigma \otimes u}, k^u, \natural_u) \big) 
= \bigoplus\nolimits_{rw \in W(M,\mc O)_{\sigma \otimes u}} N_r \tilde{\mc T}_w C^{me}(\tilde U) .
\end{multline}
In view of the above multiplication relations, \eqref{eq:8.6} means that the algebra 
\eqref{eq:8.5} is a crossed product 
\begin{equation}\label{eq:8.4}
C^{me}(\tilde U) \rtimes \C [W(M,\mc O)_{\sigma \otimes u}, \natural_u] , 
\end{equation}
where the latter factor is spanned by the $N_r \tilde{\mc T}_w$. We note that these elements 
$N_r \tilde{\mc T}_w$ are canonical in the same sense as Remark \ref{rem:6.4}.

Now we specialize to a particular $\tilde U$. The analytic map
\[
\begin{array}{cccc}
\exp_u : & a_M^* \otimes_\R \C = \Hom (M/M^1 ,\C) & \to & \Hom (M/M^1, \C^\times) = X_\nr (M) \\
& \lambda & \mapsto & u \exp (\lambda) 
\end{array}
\]
is a $W(M,\mc O)_{\sigma \otimes u}$-equivariant covering. Notice that
\[
\exp_u (a_M^*) = u X_\nr^+ (M) .
\]
Let $\log (U_u)$ be the connected component of $\exp_u^{-1}(U_u)$ that contains 0. 
By Condition \ref{cond:7.U} $\exp_u : \log (U_u) \to U_u$ is an isomorphism of analytic varieties.
In particular $f \mapsto f \circ \exp_u$ provides $W(M,\mc O)_{\sigma \otimes u}$-equivariant 
algebra isomorphisms
\[
C^{an}(U_u) \to C^{an}(\log (U_u)) \quad \text{and} \quad C^{me}(U_u) \to C^{me}(\log (U_u))
\]
From Lemma \ref{lem:7.3}, \eqref{eq:8.6} and the multiplication relations in these algebras,
we see that $\exp_u$ induces an algebra isomorphism
\[
\begin{array}{cccc}
\Phi_u : & 1_u \End_G (I_P^G (E_B))_U^\me 1_u & \to & 
\mh H (\tilde{\mc R}_u, W(M,\mc O)_{\sigma \otimes u}, k^u, \natural_u)_{\log (U_u)}^\me \\
& f A^u_r \mc T^u_w & \mapsto & (f \circ \exp_u) N_r \tilde{\mc T}_w 
\end{array}.
\]

\begin{prop}\label{prop:8.3}
The algebra homomorphism $\Phi_u$ is canonical. It restricts to an algebra isomorphism
\[
1_u \End_G (I_P^G (E_B))_U^\an 1_u \longrightarrow 
\mh H (\tilde{\mc R}_u, W(M,\mc O)_{\sigma \otimes u}, k^u, \natural_u )_{\log (U_u)}^\an .
\]
For a Levi subgroup $L$ of $G$ containing $M$, we looked at parabolic subalgebras in Lemmas
\ref{lem:6.5} and \ref{lem:8.2}. For any such $L$, $\Phi_u$ restricts to an isomorphism
between the respective parabolic subalgebras.
\end{prop}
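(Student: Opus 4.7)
My plan is to construct $\Phi_u$ first as an isomorphism of meromorphic algebras and then verify that it restricts to the analytic subalgebras. Both meromorphic algebras admit crossed-product presentations over $W(M,\mc O)_{\sigma \otimes u}$ with the same 2-cocycle $\natural_u$: by Lemma~\ref{lem:7.3}(b) the source is spanned over $C^\me (U_u)$ by the $A_r^u \mc T_w^u$, while \eqref{eq:8.6} gives the analogous description of the target with basis the $N_r \tilde{\mc T}_w$. Since $\exp_u : \log (U_u) \to U_u$ is a $W(M,\mc O)_{\sigma \otimes u}$-equivariant biholomorphism, pullback identifies $C^\me (U_u)$ with $C^\me (\log (U_u))$ as $W(M,\mc O)_{\sigma \otimes u}$-algebras. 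Together with the identity match of twisted group algebras (both use $\natural_u$ by construction, and in each the $\mc T_w^u$ resp.\ $\tilde{\mc T}_w$ give an untwisted group homomorphism from $W(\Sigma_{\sigma \otimes u})$ by Proposition \ref{prop:4.6}, Lemma~\ref{lem:4.7} and \cite[Proposition 5.2]{Lus-Gr}), this makes the rule $f A_r^u \mc T_w^u \mapsto (f \circ \exp_u) N_r \tilde{\mc T}_w$ into an algebra isomorphism. Canonicality follows from Remark~\ref{rem:6.4}: both $A_r^u \mc T_w^u$ and $N_r \tilde{\mc T}_w$ are characterized (up to simultaneous rescaling of the $A_r^u$ and $N_r$) as the unique lifts of $W(M,\mc O)_{\sigma \otimes u}$ that normalize the commutative subalgebra via the prescribed $W$-action and multiply according to $\natural_u$.

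The heart of the proof is the analyticity statement. By Theorem~\ref{thm:7.4} the source analytic subalgebra is generated over $C^\an (U_u)$ by the $A_r^u$ and the $T_{s_\alpha}^u$, while the target analytic subalgebra \eqref{eq:8.3} is generated over $C^\an (\log (U_u))$ by the $N_r$ and $N_{s_\alpha}$. Since $\Phi_u (A_r^u) = N_r$ and $\Phi_u$ matches the function subalgebras, it suffices to show $\Phi_u (T_{s_\alpha}^u) \in C^\an (\log (U_u)) + C^\an (\log (U_u)) N_{s_\alpha}$ and, conversely, that $N_{s_\alpha}$ lies in the image. Starting from \eqref{eq:5.1} and rewriting \eqref{eq:8.9} as
\[
\tilde{\mc T}_{s_\alpha} = -k^u_\alpha / (k^u_\alpha + h_\alpha^\vee) + N_{s_\alpha} h_\alpha^\vee / (k^u_\alpha + h_\alpha^\vee) ,
\]
a direct computation using the graded commutation relation to move $N_{s_\alpha}$ past $(1 + f_{-\alpha}) \circ \exp_u$ yields, after simplification,
\[
\Phi_u (T_{s_\alpha}^u) = \frac{h_\alpha^\vee (f_\alpha \circ \exp_u) - k^u_\alpha}{k^u_\alpha + h_\alpha^\vee} + N_{s_\alpha} \cdot \frac{h_\alpha^\vee ( 1 + f_\alpha \circ \exp_u )}{k^u_\alpha + h_\alpha^\vee} .
\]

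Analyticity amounts to showing that the pole of $f_\alpha \circ \exp_u$ along $\{\lambda (h_\alpha^\vee) = 0\}$ and the apparent poles along $\{k^u_\alpha + h_\alpha^\vee = 0\}$ all cancel. A direct Taylor expansion gives $f_\alpha \circ \exp_u \sim (q_\alpha - X_\alpha (u))(q_{\alpha*} + X_\alpha (u)) / (2 \lambda (h_\alpha^\vee))$ near $\lambda (h_\alpha^\vee) = 0$, so $h_\alpha^\vee (f_\alpha \circ \exp_u)$ and $h_\alpha^\vee (1 + f_\alpha \circ \exp_u)$ extend analytically across this locus with finite nonzero values. At $h_\alpha^\vee = -k^u_\alpha$, i.e.\ $X_\alpha = X_\alpha (u) q_\alpha^{-1}$ when $X_\alpha(u)=1$ (or $X_\alpha(u)q_{\alpha*}^{-1}$ when $X_\alpha(u)=-1$), a direct evaluation shows $f_\alpha = -1$ in the first case and an analogous identity in the second, which makes both numerators vanish, so each quotient by $k^u_\alpha + h_\alpha^\vee$ is analytic throughout $\log (U_u)$. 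For the converse inclusion, the same expansion shows the $N_{s_\alpha}$-coefficient specializes at $\lambda = 0$ to $(q_\alpha - X_\alpha(u))(q_{\alpha*} + X_\alpha (u))/(2 k^u_\alpha)$, which is nonzero under the standing hypothesis $q_\alpha > 1$, so this coefficient is invertible on $\log (U_u)$ after possibly shrinking $U_u$; inverting it recovers $N_{s_\alpha}$ as an analytic combination of $\Phi_u (T_{s_\alpha}^u)$, $C^\an (\log (U_u))$ and the scalar $k^u_\alpha$. Parabolic compatibility for a Levi $L \supset M$ is then a formal consequence: both parabolic subalgebras are spanned over the corresponding analytic function rings by those $A_r^u \mc T_w^u$ resp.\ $N_r \tilde{\mc T}_w$ with $rw \in W_L (M,\mc O)_{\sigma \otimes u}$, and $\Phi_u$ preserves this indexing. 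The main technical obstacle is the explicit pole-cancellation computation just sketched; everything else reduces to the matching crossed-product presentations and the $W$-equivariance of $\exp_u$.
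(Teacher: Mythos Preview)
Your overall strategy matches the paper's: build $\Phi_u$ as an isomorphism of the meromorphic crossed products via $\exp_u^*$, argue canonicality from Remark~\ref{rem:6.4}, and then reduce the analytic statement to the single generators $T_{s_\alpha}^u$ and $N_{s_\alpha}$. Your displayed formula for $\Phi_u (T_{s_\alpha}^u)$ is correct and equivalent to the paper's: adding $1$ to both sides collapses it to
\[
\Phi_u (1_u + T_{s_\alpha}^u) \;=\; (N_{s_\alpha} + 1)\,\frac{h_\alpha^\vee\,(1 + f_\alpha \circ \exp_u)}{k^u_\alpha + h_\alpha^\vee},
\]
which is the form the paper uses. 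Your pole-cancellation checks at $h_\alpha^\vee = 0$ and $h_\alpha^\vee = -k^u_\alpha$ are right, so the forward inclusion $\Phi_u(T_{s_\alpha}^u) \in \mh H(\ldots)^\an_{\log(U_u)}$ is fine.

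There is a genuine gap in the converse. You claim the $N_{s_\alpha}$-coefficient is invertible ``after possibly shrinking $U_u$'', but Condition~\ref{cond:7.U} forces $U_u$ to be $X_\nr^+(M)$-stable, so $\log(U_u)$ always contains the entire real subspace $a_M^*$ and cannot be shrunk in that direction. Moreover Corollary~\ref{cor:8.4} needs the isomorphism precisely for modules with weights in all of $a_M^*$, so a locally-near-$0$ statement is too weak. What you must show is that the analytic function
\[
\frac{h_\alpha^\vee}{e^{2h_\alpha^\vee}-1}\cdot
\frac{(e^{h_\alpha^\vee} q_\alpha - X_\alpha(u))(e^{h_\alpha^\vee} q_{\alpha*} + X_\alpha(u))}{k^u_\alpha + h_\alpha^\vee}
\]
is invertible on the whole of $\log(U_u)$. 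This is exactly what the paper checks factor by factor, using the third bullet of Condition~\ref{cond:7.U} (i.e.\ $|\mathrm{Im}\,h_\alpha^\vee|<\pi/2$) to see that $h_\alpha^\vee/(e^{2h_\alpha^\vee}-1)$ has no zeros or poles, that the factor $e^{h_\alpha^\vee} q_{\alpha(*)} + X_\alpha(u)$ without the matching exponent never vanishes, and that the remaining quotient has a single removable zero exactly at $h_\alpha^\vee = -k^u_\alpha$ with nonzero limit (l'H\^opital). Once you have this global invertibility, inverting the scalar immediately gives $N_{s_\alpha}+1 \in \Phi_u\big(1_u \End_G(I_P^G(E_B))_U^\an 1_u\big)$, and your parabolic-compatibility paragraph then goes through as written.
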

\begin{proof}
We note that as linear map $\Phi_u$ can be expressed in terms of
Lemma \ref{lem:7.3}.b and \eqref{eq:8.4} as
\begin{equation}\label{eq:8.26}
\exp_u^* \otimes \mr{id} : C^{me}(U) \rtimes \C [W(M,\mc O)_{\sigma \otimes u}, \natural_u] 
\to C^{me}(\log (U_u)) \rtimes \C [W(M,\mc O)_{\sigma \otimes u}, \natural_u] . 
\end{equation}
As discussed in Remark \ref{rem:6.4}, the basis elements $A_r^u \mc T_u^w$ and $N_r \tilde{\mc T_w}$ 
are constructed in a canonical way, apart from possible renormalizations of the $A_r^u$ and the 
$N_r$. But the multiplication relations between the $N_r$ are defined in terms of the multiplication 
rules for the $A_r^u$, so that automatically works in the same way for the source and the target 
of $\Phi_u$. Hence \eqref{eq:8.26} shows that $\Phi_u$ is canonical.

By construction $\Phi_u (C^{an}(U_u)) = C^{an}(\log (U_u))$ and $\Phi_u (1_u A_r^u) = N_r$, where
$1_u A^u_r \in 1_u \End_G (I_P^G (E_B))_U^\an 1_u$ and $N_r \in \mh H (\tilde{\mc R}_u, 
W(M,\mc O)_{\sigma \otimes u}, k^u, \natural_u)$. Hence, by Theorem \ref{thm:7.4} it suffices 
to show that
\[
\Phi_u (T^u_w) \in \mh H (\tilde{\mc R}_u, W(M,\mc O)_{\sigma \otimes u}, k^u, \natural_u)_{\log (U_u)}^\an 
\quad \text{and} \quad \Phi_u^{-1}(N_w) \in 1_u \End_G (I_P^G (E_B))_U^\an 1_u 
\]
for all $w \in W(\Sigma_{\sigma \otimes u})$. The argument for that is a variation on
\cite[Theorem 9.3]{Lus-Gr} and \cite[Theorem 2.1.4]{SolAHA}. By \eqref{eq:6.6} for the $T^u_w$,
it suffices to consider $w = s_\alpha$ with $\alpha \in \Delta_{\sigma \otimes u}$. We compute
\begin{align*}
\Phi_u (1_u + T^u_{s_\alpha}) & = \Phi_u ( (1_u + 1_u \mc T^u_{s_\alpha}) (1 + f_\alpha ) ) \\
& = (1 + \tilde{\mc T}^u_{s_\alpha}) (1 + f_\alpha \circ \exp_u ) \\
& = (N_{s_\alpha} + 1) \Big( \frac{h_\alpha^\vee}{k^u_\alpha + h_\alpha^\vee} \Big)
\Big( \frac{X_\alpha^2 q_\alpha q_{\alpha*} + X_\alpha (q_\alpha - q_{\alpha*} ) - 1}{X_\alpha^2 - 1} 
\Big) \circ \exp_u \\
& = (N_{s_\alpha} + 1) \frac{h_\alpha^\vee}{k^u_\alpha + h_\alpha^\vee} \frac{ e^{2 h_\alpha^\vee} 
q_\alpha q_{\alpha*}  + 
X_\alpha(u) e^{h_\alpha^\vee} (q_\alpha - q_{\alpha*}) - 1}{e^{2 h_\alpha^\vee} - 1} \\
& = (N_{s_\alpha} + 1)  \Big( \frac{h_\alpha^\vee}{e^{2 h_\alpha^\vee} - 1}\Big)
\Big( \frac{ (e^{h_\alpha^\vee} q_\alpha - X_\alpha (u)) (e^{h_\alpha^\vee} q_{\alpha*} + 
X_\alpha (u))}{k^u_\alpha + h_\alpha^\vee} \Big)
\end{align*}
By Condition \ref{cond:7.U}, for all $v \in \log (U_u)$
\begin{equation}\label{eq:8.7}
h_\alpha^\vee (v) = \log (X_\alpha (u^{-1} \exp_u (v))) \text{ has imaginary part in } (-\pi /2, \pi / 2) .
\end{equation}
Hence $\frac{h_\alpha^\vee}{e^{2 h_\alpha^\vee} - 1}$ is an invertible analytic function on $\log (U_u)$. 
When $X_\alpha (u) = 1$, \eqref{eq:8.7} entails that $e^{h_\alpha^\vee} q_F^{b_{s_\alpha}} + X_\alpha (u)$
is an invertible and analytic on $\log (U_u)$, and by l'Hopital's rule, so is
\[
\frac{ e^{h_\alpha^\vee} q_\alpha - X_\alpha (u)}{k^u_\alpha + h_\alpha^\vee} =
\frac{ e^{h_\alpha^\vee} q_\alpha - 1}{\log (q_\alpha) + h_\alpha^\vee} .
\]
Similarly, when $X_\alpha (u) = -1$, $e^{h_\alpha^\vee} q_\alpha - X_\alpha (u)$ and 
\[
\frac{ e^{h_\alpha^\vee} q_{\alpha*} + X_\alpha (u)}{k^u_\alpha + h_\alpha^\vee} =
\frac{ e^{h_\alpha^\vee} q_{\alpha*} - 1}{\log (q_{\alpha*}) + h_\alpha^\vee}
\]
are invertible analytic functions on $\log (U_u)$.
The above computation and these considerations about invertibility allow us to conclude that
\[
\Phi_u (1_u + T^u_{s_\alpha}) = 1_u + \Phi_u (T^u_{s_\alpha}) \in 
\mh H (\tilde{\mc R}_u, W(M,\mc O)_{\sigma \otimes u}, k^u, \natural_u )_{\log (U_u)}^\an .
\]
Applying $\Phi_u^{-1}$ to the entire computation and rearranging, we obtain
\[
\Phi_u^{-1}(N_{s_\alpha} + 1) = (1_u + 1_u T^u_{s_\alpha}) 
\Big( \frac{h_\alpha^\vee (e^{h_\alpha^\vee} q_\alpha - X_\alpha (u)) (e^{h_\alpha^\vee} 
q_{\alpha*} + X_\alpha (u))}{(e^{2 h_\alpha^\vee} - 1)(k^u_\alpha + h_\alpha^\vee)} \Big)^{-1}
\circ \exp_u^{-1} .
\]
We just argued that the function between the large brackets is invertible and analytic on $\log (U_u)$.
So its composition with $\exp_u^{-1}$ is invertible and analytic on $U_u$. In particular
$\Phi_u^{-1}(N_{s_\alpha} + 1) = \Phi_u^{-1}(N_{s_\alpha}) + 1_u$ lies in
$1_u \End_G (I_P^G (E_B))_U^\an 1_u$.

On both sides of \eqref{eq:8.26}, the parabolic subalgebra (with meromorphic functions) associated to 
$L$ is obtained by using only the elements of $W(M,\mc O)_{\sigma \otimes u}$ that come from $L$. 
Clearly $\Phi_u$ restricts to an isomorphism between those subalgebras. The above calculations can
be restricted to those subalgebras, and then they show that $\Phi_u$ also provides an isomorphism
between the parabolic subalgebras with analytic functions.
\end{proof}

\nopagebreak

\section{Classification of irreducible representations}
\label{sec:class}

\subsection{Description in terms of graded Hecke algebras} \

In Sections \ref{sec:localization} and \ref{sec:GHA} we investigated the
following algebra homomorphisms:
\begin{equation}\label{eq:8.8}
\begin{split}
\End_G (I_P^G (E_B)) \; \hookrightarrow \; \End_G (I_P^G (E_B))_U^\an \hookleftarrow 
1_u \End_G (I_P^G (E_B))_U^\an 1_u \\
\xrightarrow{\sim} \; \mh H (\tilde{\mc R}_u, W(M,\mc O)_{\sigma \otimes u}, k^u, \natural_u 
)_{\log (U_u)}^\an \; \hookleftarrow \; \mh H (\tilde{\mc R}_u, W(M,\mc O)_{\sigma \otimes u}, 
k^u, \natural_u ) .
\end{split}
\end{equation}

\begin{cor}\label{cor:8.4}
There are equivalences between the following categories:
\begin{enumerate}[(i)]
\item $\mh H (\tilde{\mc R}_u, W(M,\mc O)_{\sigma \otimes u}, k^u, \natural_u )-\Modf{a_M^*}$;
\item $\End_G (I_P^G (E_B))-\Modf{W(M,\sigma,X_\nr (M)) u X_\nr^+ (M)}$;
\item $\{ \pi \in \Rep_{\mr f} (G)^{\mf s} : 
\mr{Sc}(\pi) \subset W(M,\mc O) \{ \sigma \otimes u \chi : \chi \in X_\nr^+ (M) \}$.
\end{enumerate}
Once the 2-cocycle $\natural_u$ has been fixed (by normalizing the elements $A_u^r$ and $N_r$),
these equivalences are canonical. The equivalences commute with parabolic induction and 
Jacquet restriction, in the sense of Proposition \ref{prop:4.5}.
\end{cor}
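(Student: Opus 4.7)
The plan is to chain together the four equivalences/isomorphisms assembled in \eqref{eq:8.8}, restricting at each stage to the subcategory cut out by the appropriate weight condition and checking that the weight conditions match under each step.

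First, observe that since $u \in U_u$ and $U_u$ is stable under $X_\nr^+(M)$ and $U = W(M,\sigma,X_\nr(M))U_u$ is $W(M,\sigma,X_\nr(M))$-stable, we have $W(M,\sigma,X_\nr(M)) u X_\nr^+(M) \subset U$. So (ii) makes sense as a subcategory of $\End_G(I_P^G(E_B))-\Modf{U}$. Lemma \ref{lem:7.1} gives a natural equivalence between $\End_G(I_P^G(E_B))-\Modf{U}$ and the full subcategory of $\Rep_{\mr f}(G)^{\mf s}$ whose cuspidal support lies in $\{\sigma\otimes\chi : \chi \in U\}$. Since this equivalence identifies the $B$-weight of a module on one side with $\sigma$ twisted by the corresponding character on the other side, restricting to the sub-weight-set $W(M,\sigma,X_\nr(M)) u X_\nr^+(M)$ on the algebra side corresponds exactly to restricting the cuspidal support to $W(M,\mc O)\{\sigma\otimes u\chi : \chi \in X_\nr^+(M)\}$ (using Remark \ref{rem:6.7} and the relation between $W(M,\sigma,X_\nr(M))$ and $W(M,\mc O)$). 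This gives the equivalence (ii) $\leftrightarrow$ (iii).

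Next, apply the Morita equivalence of Lemma \ref{lem:7.2}, with the detailed weight translation provided by Lemma \ref{lem:6.6}. Under $V \mapsto V 1_u$, the $\C[X_\nr(M)]$-weights of the corner module $V 1_u$ are precisely the weights of $V$ that lie in $U_u$; conversely, starting from $V_u$ with weights in $U_u$ and inducing up, the resulting weights form the $W(M,\sigma,X_\nr(M))$-orbit of the weights of $V_u$. Because $U_u \cap W(M,\sigma,X_\nr(M)) u X_\nr^+(M) = u X_\nr^+(M)$ (by the second bullet of Condition \ref{cond:7.U}), modules in (ii) correspond under this Morita equivalence to modules over $1_u \End_G(I_P^G(E_B))_U^\an 1_u$ whose weights all lie in $u X_\nr^+(M)$.

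Now apply the canonical algebra isomorphism $\Phi_u$ of Proposition \ref{prop:8.3}. By construction $\Phi_u$ restricts to $\exp_u^*: C^\an(U_u) \to C^\an(\log U_u)$ on the commutative part, and $\exp_u$ identifies $a_M^* \subset \log(U_u)$ bijectively with $u X_\nr^+(M) \subset U_u$. Hence modules with weights in $u X_\nr^+(M)$ transform into $\mh H(\tilde{\mc R}_u, W(M,\mc O)_{\sigma\otimes u}, k^u, \natural_u)_{\log(U_u)}^\an$-modules whose $\C[a_M^*\otimes_\R \C]$-weights all lie in $a_M^*$. Finally, Lemma \ref{lem:8.2} identifies this subcategory with $\mh H(\tilde{\mc R}_u, W(M,\mc O)_{\sigma\otimes u}, k^u, \natural_u)-\Modf{a_M^*}$, yielding (i). Compatibility with parabolic induction and Jacquet restriction follows step-by-step: Proposition \ref{prop:4.5} handles $\mc E$, the same Lemma \ref{lem:7.1} handles the analytic localization, Lemma \ref{lem:6.5} handles the Morita equivalence, Proposition \ref{prop:8.3} says $\Phi_u$ restricts to parabolic subalgebras, and Lemma \ref{lem:8.2} handles the final step.

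The main technical point to watch is the precise weight bookkeeping through the Morita equivalence of Lemma \ref{lem:7.2}: one must verify that restricting to weights in $U_u \cap W(M,\sigma,X_\nr(M)) u X_\nr^+(M) = u X_\nr^+(M)$ on the corner side is equivalent, under induction, to insisting on all $W(M,\sigma,X_\nr(M))$-translates of these weights on the big side. This is exactly what Lemma \ref{lem:6.6}(b)--(c) provides, so no new computation is needed; the hardest part is really just verifying that the three specified weight sets correspond correctly under the passage from $X_\nr(M)$ to $a_M^*\otimes_\R\C$ via $\exp_u$ and under the action of $W(M,\sigma,X_\nr(M))$, but Condition \ref{cond:7.U} was designed precisely to make these identifications clean.
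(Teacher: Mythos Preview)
Your proposal is correct and follows essentially the same approach as the paper: chain the equivalences in \eqref{eq:8.8} via Lemmas~\ref{lem:7.1}, \ref{lem:7.2}, \ref{lem:8.2} and Proposition~\ref{prop:8.3}, restricting at each step to the appropriate weight subset. If anything, your version is more explicit than the paper's in tracking how the weight conditions $a_M^*$, $uX_\nr^+(M)$, and $W(M,\sigma,X_\nr(M))uX_\nr^+(M)$ correspond under $\exp_u$ and the Morita equivalence (via Lemma~\ref{lem:6.6} and Condition~\ref{cond:7.U}), and in assembling the parabolic induction/restriction compatibility step by step.
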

\begin{proof}
Since $\natural_u$ is given, we may apply Proposition \ref{prop:8.3}. 
By Lemmas \ref{lem:7.1}, \ref{lem:7.2}, \ref{lem:7.3}, \ref{lem:8.2} and Proposition \ref{prop:8.3},
the homomorphisms \eqref{eq:8.8} provide canonical equivalences between the categories of
finite dimensional modules of the respective algebras, with the restriction that we only 
consider modules all whose central weights lie in, respectively $U / W(M,\sigma,X_\nr (M))$ (twice),
$U_u / W(M,\mc O)_{\sigma \otimes u}$ and $\log (U_u) / W(M,\mc O)_{\sigma \otimes u}$ (twice).
Restricting from $\log (U_u)$ to $a_M^*$ and from $U_u$ to $u X_\nr^+ (M)$, we obtain the equivalence 
between (i) and (ii).

The equivalence between (ii) and (iii) can be shown in the same way as in Lemma \ref{lem:7.2}.
It is always canonical.

The compatibility with parabolic induction and (Jacquet) restriction was already checked in all
the results we referred to in this proof.
\end{proof}

Sometimes it is more convenient to use left modules instead of right modules. That could have
been achieved by considering the $G$-endomorphisms of $I_P^G (E_B)$ as acting from the right.
Then we would get the opposite algebra $\End_G (I_P^G (E_B))^{op}$, and item (ii) of Corollary
\ref{cor:8.4} would involve left modules of $\End_G (I_P^G (E_B))^{op}$. The constructions
summarised in \eqref{eq:8.8} relate those to left modules of 
$\mh H (\tilde{\mc R}_u, W(M,\mc O)_{\sigma \otimes u}, k^u, \natural_u )^{op}$.

Fortunately, with the multiplication rules (i)--(iii) before \eqref{eq:8.10} it is easy to
identify the opposite algebra of a (twisted) graded Hecke algebra. Namely, there is an
algebra isomorphism
\begin{equation}\label{eq:8.11}
\begin{array}{ccc}
\mh H (\tilde{\mc R}_u, W(M,\mc O)_{\sigma \otimes u}, k^u, \natural_u )^{op} & \to &
\mh H (\tilde{\mc R}_u, W(M,\mc O)_{\sigma \otimes u}, k^u, \natural_u^{-1} ) \\
N_{rw} f & \mapsto & f N_{w^{-1} r^{-1}}
\end{array}.
\end{equation}
The only subtlety to check in \eqref{eq:8.11} is that the 2-cocycles match up -- for that one
needs \eqref{eq:7.18}.

Thus the categories in Corollary \ref{cor:8.4} are also equivalent with the category of finite
dimensional left $\mh H (\tilde{\mc R}_u, W(M,\mc O)_{\sigma \otimes u}, k^u, \natural_u^{-1} 
)$-modules, all whose $\C [a_M^* \otimes_\R \C]$-weights lie in $a_M^*$. In other words, for the 
algebras that we consider it does not make too much difference whether we use left or right modules.

For $\chi_c \in X_\nr (M,\sigma)$, the $M$-representations $\sigma \otimes u$ and 
$\sigma \otimes u \chi_c$ are equivalent, and sometimes it is hard to distinguish them.
Fortunately, the equivalences of categories from Corollary \ref{cor:8.4} are essentially the 
same for $\sigma \otimes u$ and $\sigma \otimes u \chi_c$. To make that precise, we assume that
in \eqref{eq:5.5} the choices are made so that
\begin{equation}\label{eq:8.13}
\rho_{r, \sigma_{\chi_c u}} = \phi_{\sigma,\chi_c} \rho_{r,\sigma \otimes u} \phi_{\sigma,\chi_c}^{-1}.
\end{equation}

\begin{lem}\label{lem:8.1}
Let $\chi_c \in X_\nr (M,\sigma)$.
\enuma{
\item The algebras $\mh H (\tilde{\mc R}_u, W(M,\mc O)_{\sigma \otimes u}, k^u, \natural_u )$
and $\mh H (\tilde{\mc R}_{\chi_c u}, W(M,\mc O)_{\sigma \otimes \chi_c u}, k^{\chi_c u}, 
\natural_{\chi_c u} )$ are equal.
\item Let $V \in \mh H (\tilde{\mc R}_u, W(M,\mc O)_{\sigma \otimes u}, k^u, \natural_u ) 
-\Modf{a_M^*}$. Then its image in \\ $\End_G (I_P^G (E_B))-\fMod$ via Corollary 
\ref{cor:8.4} coincides with the image of $V$ as $\mh H (\tilde{\mc R}_{\chi_c u}, 
W(M,\mc O)_{\sigma \otimes \chi_c u}, k^{\chi_c u}, \natural_{\chi_c u} )$-module in 
$\End_G (I_P^G (E_B))-\fMod$, obtained from Corollary \ref{cor:8.4} for $\chi_c u$. 
}
\end{lem}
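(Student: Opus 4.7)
My plan is to verify in part (a) that every ingredient defining the graded Hecke algebra matches on the nose for $u$ and $\chi_c u$, and then to deduce (b) by exhibiting the module isomorphism via conjugation by the intertwiner $\phi_{\chi_c}$ inside $\End_G(I_P^G(E_B))$.

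For (a), the isomorphism $\sigma \otimes u \cong \sigma \otimes \chi_c u$ in $\Irr(M)$ via $\phi_{\sigma,\chi_c}$ immediately gives $\Sigma_{\sigma \otimes u} = \Sigma_{\sigma \otimes \chi_c u}$ (since $\mu^{M_\alpha}$ is an invariant of the isomorphism class), and likewise $W(M,\mc O)_{\sigma \otimes u} = W(M,\mc O)_{\sigma \otimes \chi_c u}$ and $R(\sigma \otimes u) = R(\sigma \otimes \chi_c u)$. The degenerate root datum $\tilde{\mc R}_u$ depends only on $\Sigma_{\sigma \otimes u}$ and the ambient lattices of $M$, so $\tilde{\mc R}_u = \tilde{\mc R}_{\chi_c u}$. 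For the parameters: recall from Proposition \ref{prop:3.5} that $h^\vee_\alpha \in M^2_\sigma/M^1$ and from \eqref{eq:3.20} that $X_\alpha = b_{h^\vee_\alpha}$; since $\chi_c \in X_\nr(M,\sigma) = \Irr(M/M^2_\sigma)$ is trivial on $h^\vee_\alpha$, we have $X_\alpha(\chi_c u) = X_\alpha(u)$, hence $k^u_\alpha = k^{\chi_c u}_\alpha$ for every $\alpha \in \Sigma_{\sigma \otimes u}$.

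The crucial identity is $\natural_u = \natural_{\chi_c u}$ (on the nose, not merely up to cohomology). The compatibility \eqref{eq:8.13} is arranged precisely for this: applying $I_P^G$ to the identity $\rho_{r,\sigma \otimes \chi_c u} = \phi_{\sigma,\chi_c} \rho_{r,\sigma \otimes u} \phi_{\sigma,\chi_c}^{-1}$, combining with the intertwining \eqref{eq:3.14} of $J_{P'|P}$ with $\phi_{\chi_c}$, and unravelling the definition of $A_r^u = \rho^u_{P,r} \tau_r \lambda(\tilde r) J^u_{r^{-1}(P)|P}$, one obtains the operator identity $A_r^{\chi_c u} = \phi_{\chi_c} A_r^u \phi_{\chi_c}^{-1}$ in $\End_G(I_P^G(E_{K(B)}))$. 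Conjugating the defining relation $A^u_{r_1} A^u_{r_2} = \natural_u(r_1,r_2) A^u_{r_1 r_2}$ by $\phi_{\chi_c}$ then yields the same relation for the $A^{\chi_c u}_r$, forcing $\natural_u = \natural_{\chi_c u}$. The main obstacle is precisely the bookkeeping in this conjugation identity, which is designed to work out thanks to \eqref{eq:8.13}.

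For (b), by Lemma \ref{lem:6.6}(a) and Proposition \ref{prop:8.3} the equivalence in Corollary \ref{cor:8.4} factors as $V \mapsto V \otimes_{1_u \End_G(I_P^G(E_B))^\an_U 1_u,\; \Phi_u^{-1}} \End_G(I_P^G(E_B))^\an_U$, followed by Lemma \ref{lem:7.1}. From \eqref{eq:2.26} one computes $\phi_{\chi_c} \, 1_u \, \phi_{\chi_c}^{-1} = 1_{\chi_c u}$, so conjugation by $\phi_{\chi_c}$ implements an algebra isomorphism $1_u \End_G(I_P^G(E_B))^\an_U 1_u \to 1_{\chi_c u} \End_G(I_P^G(E_B))^\an_U 1_{\chi_c u}$. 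Combined with the conjugation identity $A_r^{\chi_c u} = \phi_{\chi_c} A_r^u \phi_{\chi_c}^{-1}$ and the compatible pullback of $C^\an(U_u)$ onto $C^\an(U_{\chi_c u})$ by translation by $\chi_c$, this conjugation satisfies $\Phi_{\chi_c u} = \Phi_u \circ \mr{Ad}(\phi_{\chi_c}^{-1})$. Setting $M_u := V \otimes_{1_u \End_G(I_P^G(E_B))^\an_U 1_u} \End_G(I_P^G(E_B))^\an_U$, right multiplication by $\phi_{\chi_c}^{-1}$ sends $V = M_u 1_u$ bijectively onto $M_u 1_{\chi_c u}$ and intertwines the $\mh H_u = \mh H_{\chi_c u}$-action transported through $\Phi_u$ with that through $\Phi_{\chi_c u}$. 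By Lemma \ref{lem:6.6}(a), this identifies $M_u$ with $M_{\chi_c u}$ as $\End_G(I_P^G(E_B))$-modules, proving (b).
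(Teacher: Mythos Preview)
Your proof is correct and follows essentially the same approach as the paper: both verify the ingredients of the graded Hecke algebra coincide (using \eqref{eq:8.13} and conjugation by $\phi_{\chi_c}$ to force $\natural_u=\natural_{\chi_c u}$), and both deduce part (b) by tracing the equivalence through the Morita bimodule and observing that right multiplication by $\phi_{\chi_c}^{-1}$ carries $M_u 1_u$ onto $M_u 1_{\chi_c u}$, compatibly with the commutative diagram $\Phi_{\chi_c u}=\Phi_u\circ\mr{Ad}(\phi_{\chi_c}^{-1})$. Your justification of $X_\alpha(\chi_c u)=X_\alpha(u)$ via $h_\alpha^\vee\in M_\sigma^2/M^1$ is a bit more explicit than the paper's appeal to the $X_\nr(M,\sigma)$-invariance of $\mu^{M_\alpha}$, but the content is the same.
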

\begin{proof}
(a) The $M$-representations $\sigma \otimes u$ and $\sigma \otimes u \chi_c$ are the same in
$\Irr (M)$ and $W(M,\mc O)$ acts on that set, so $W(M,\mc O)_{\sigma \otimes u} = 
W(M,\mc O)_{\sigma \otimes \chi_c u}$. By the $X_\nr (M,\sigma)$-invariance of $\mu^{M_\alpha}$,
$\tilde{\mc R}_u = \tilde{\mc R}_{\chi_c u}$ and $k^u = k^{\chi_c u}$.

Conjugation with $\phi_{\chi_c}$ provides an algebra isomorphism
\[
\mr{Ad}(\phi_{\chi_c}) : 1_u \End_G (I_P^G (E_B))_U^\an 1_u 
\to 1_{\chi_c u} \End_G (I_P^G (E_B))_U^\an 1_{\chi_c u} ,
\]
and similarly with meromorphic functions on $U$. By \eqref{eq:8.13} this isomorphism sends 
$\mc T_w^u \; (w \in W(M,\mc O)_{\sigma \otimes u})$ to
$\mc T_w^{\chi_c u}$, so the 2-cocycles $\natural_u$ and $\natural_{\chi_c u}$ of 
$W(M,\mc O)_{\sigma \otimes u}$ coincide. Furthermore Ad$(\phi_{\chi_c})$ sends $f \in 
C^\an (U_u)$ to $f (\chi_c^{-1} \cdot)$, which equals\\
$(\exp_u \circ \exp_{\chi_c u}^{-1})^* f$.
Thus Proposition \ref{prop:8.3} gives a commutative diagram
\begin{equation}\label{eq:8.12}
\begin{array}{ccc}
\mh H (\tilde{\mc R}_u, W(M,\mc O)_{\sigma \otimes u}, k^u, \natural_u ) & \to &
1_u \End_G (I_P^G (E_B))_U^\an 1_u \\ 
|| & & \downarrow \mr{Ad} (\phi_{\chi_c}) \\
\mh H (\tilde{\mc R}_{\chi_c u}, W(M,\mc O)_{\sigma \otimes \chi_c u}, k^{\chi_c u}, 
\natural_{\chi_c u} ) & \to & 1_{\chi_c u} \End_G (I_P^G (E_B))_U^\an 1_{\chi_c u}
\end{array} .
\end{equation}
(b) Let us retrace what happens in \eqref{eq:8.8}. First we translate $V$ to a module for
$1_u \End_G (I_P^G (E_B))_U^\an 1_u$ (on the same vector space). Then we apply the Morita
equivalence in Lemma \ref{lem:7.2}. That yields a module
\[
V' := \End_G (I_P^G (E_B))_U^\an \underset{1_u \End_G (I_P^G (E_B))_U^\an 1_u}{\otimes} V
= \bigoplus_{u' \in W(M,\sigma,X_\nr (M)) u} V_{u'} ,
\]
where $V_{u'} \in 1_{u'} \End_G (I_P^G (E_B))_U^\an 1_{u'} -\fMod$ has the same dimension as $V$.
This $V'$ is also the $\End_G (I_P^G (E_B))$-module that results from \eqref{eq:8.8}.
From the proof of Lemma \ref{lem:7.2} we see that $V_{u'} = V \mc T_{w'}$ for a $w' \in 
W(M,\sigma,X_\nr (M))$ which satisfies $w'u' = u$ and whose length is minimal for that
property. In particular $V_{u \chi_c} = V \phi_{\chi_c}^{-1}$. 

Hence the module of $1_{\chi_c u} \End_G (I_P^G (E_B))_U^\an 1_{\chi_c u}$ obtained from $V'$
via Lemma \ref{lem:7.2} is Ad$(\phi_{\chi_c}^{-1})^*$. In view of the commutative diagram
\eqref{eq:8.12}, this procedure recovers $V$ as module of $\mh H (\tilde{\mc R}_{\chi_c u}, 
W(M,\mc O)_{\sigma \otimes \chi_c u}, k^{\chi_c u}, \natural_{\chi_c u} )$. Then Corollary
\ref{cor:8.4} for $\chi_c u$ implies that $V$ in the latter sense has the same image in
$\End_G (I_P^G (E_B)) -\fMod$ as $V$ in the former sense. 
\end{proof}

A weaker version of Lemma \ref{lem:8.1} holds for all points in $W(M,\sigma,X_\nr (M)) u$.

\begin{lem}\label{lem:8.6}
Let $w$ be an element of $W(M,\sigma,X_\nr (M))$ which is of minimal length in the coset
$w W(M,\sigma,X_\nr (M))_u$.
\enuma{
\item Conjugation by $\mc T_w$ gives rise to an algebra isomorphism
\[
\mr{Ad}(\mc T_w) : \mh H (\tilde{\mc R}_u, W(M,\mc O)_{\sigma \otimes u}, k^u, \natural_u ) \to 
\mh H (\tilde{\mc R}_{w(u)}, W(M,\mc O)_{\sigma \otimes w(u)}, k^{w(u)}, \natural_{w(u)} ) 
\]
with $\mr{Ad}(\mc T_w)(\C N_v) = \C N_{w v w^{-1}}$ for $v \in W(M,\mc O)_{\sigma \otimes u}$
and $\mr{Ad}(\mc T_w)(N_v) = N_{w v w^{-1}}$ for $v \in W(\Sigma_{\sigma \otimes u})$. 
\item Let $V \in \mh H (\tilde{\mc R}_u, W(M,\mc O)_{\sigma \otimes u}, k^u, \natural_u ) -\fMod$
with Wt$(V) \subset a_M^*$. Then $V$ and 
\[
\mr{Ad}(\mc T_w^{-1})^* V \in \mh H (\tilde{\mc R}_{w(u)}, W(M,\mc O)_{\sigma \otimes w(u)},
k^{w(u)}, \natural_{w(u)} ) -\fMod
\]
have the same image in $\End_G (I_P^G (E_B)) -\fMod$ (via Corollary \ref{cor:8.4} for,
respectively, $\sigma \otimes u$ and $\sigma \otimes w(u)$).
}
\end{lem}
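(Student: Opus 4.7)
The strategy is to first show that conjugation by $\mc T_w$ transports the localized algebra at $u$ to the one at $wu$ inside $X := \End_G(I_P^G(E_B))_U^\an$, then transfer this through $\Phi_u$ and $\Phi_{wu}$ from Proposition \ref{prop:8.3} to obtain the claimed isomorphism on graded Hecke algebras; part (b) will then follow by tracing $V$ through the equivalence of categories, in parallel with Lemma \ref{lem:8.1}(b).

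\smallskip
First I would show that $\mc T_w$ is regular on $U_u$ and $\mc T_w^{-1}$ is regular on $U_{wu}=w(U_u)$. This is precisely the content of the argument carried out in the proof of Lemma \ref{lem:7.2}: minimality of $\ell_{\mc O}(w)$ in $wW(M,\sigma,X_\nr(M))_u$ ensures that no intermediate simple reflection creates a pole along a wall hit by $U_{u}$ or $U_{wu}$. Combining this with the intertwining relation \eqref{eq:4.30} yields $\mc T_w 1_u = 1_{wu}\mc T_w \in X$ and $\mc T_w^{-1}1_{wu}=1_u\mc T_w^{-1}\in X$, so $\mr{Ad}(\mc T_w)\colon a\mapsto \mc T_w a\mc T_w^{-1}$ defines an algebra isomorphism $1_u X 1_u \to 1_{wu}X 1_{wu}$.

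\smallskip
For part (a) I would analyse the effect of $\mr{Ad}(\mc T_w)$ on the three natural families of generators coming from Theorem \ref{thm:7.4} and Lemma \ref{lem:7.3}. On functions $f \in C^\an(U_u)$, \eqref{eq:4.30} gives $\mr{Ad}(\mc T_w)(f) = w\cdot f \in C^\an(U_{wu})$. On $\mc T^u_v$ with $v\in W(\Sigma_{\sigma\otimes u})$, the canonicity of these operators (Remark \ref{rem:6.4}), combined with the multiplication rules of Proposition \ref{prop:4.6} and Lemma \ref{lem:4.7} applied to $\mc T_w = \mc T_{\chi_c\mf{r}\mf{w_0}} = \phi_{\chi_c}A_r\mc T_{w_0}$, yields $\mr{Ad}(\mc T_w)(\mc T^u_v) = \mc T^{wu}_{wvw^{-1}}$ on the nose. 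On $A^u_r$ with $r\in R(\sigma\otimes u)$, the same analysis using Proposition \ref{prop:4.2} and Lemma \ref{lem:4.4} gives $\mr{Ad}(\mc T_w)(A^u_r) \in \C^\times A^{wu}_{wrw^{-1}}$, the ambiguity coming from the choice of $\rho_{\sigma\otimes u,r}$ in \eqref{eq:5.5}. Applying $\Phi_u$ and $\Phi_{wu}$ from Proposition \ref{prop:8.3} converts this into an isomorphism of localized graded Hecke algebras
\[
\mh H(\tilde{\mc R}_u, W(M,\mc O)_{\sigma\otimes u}, k^u, \natural_u)_{\log(U_u)}^\an \longrightarrow
\mh H(\tilde{\mc R}_{wu}, W(M,\mc O)_{\sigma\otimes wu}, k^{wu}, \natural_{wu})_{\log(U_{wu})}^\an ,
\]
and using \eqref{eq:8.9} to translate between $\tilde{\mc T}_{s_\alpha}$ and $N_{s_\alpha}$, one checks it sends $N_v \mapsto N_{wvw^{-1}}$ for $v\in W(\Sigma_{\sigma\otimes u})$ and $\C N_r \to \C N_{wrw^{-1}}$ for $r\in R(\sigma\otimes u)$; in particular it restricts to an isomorphism between the non-localized graded Hecke algebras, which is the content of (a).

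\smallskip
For part (b), I would follow the template of Lemma \ref{lem:8.1}(b). Let $V_u$ be the $1_uX1_u$-module obtained from $V$ through $\Phi_u$. Lemma \ref{lem:6.6}(a) gives the induced $X$-module $V' = V_u\otimes_{1_uX1_u} X$, whose image in $\End_G(I_P^G(E_B))-\fMod$ via Corollary \ref{cor:8.4} is the output attached to $V$. Decompose $V' = \bigoplus_{u'\in W(M,\sigma,X_\nr(M))u} V'1_{u'}$; from the proof of Lemma \ref{lem:7.2} one sees $V'1_{wu} = V_u \mc T_w^{-1}$ as $C^\an(U_{wu})$-modules. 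A direct calculation using $\mc T_w \in X$ and Step 1 shows that the action of $a\in 1_{wu}X1_{wu}$ on $v\mc T_w^{-1}\in V_u\mc T_w^{-1}$ equals $(v\cdot \mr{Ad}(\mc T_w^{-1})(a))\mc T_w^{-1}$; hence $V'1_{wu}$, as a $1_{wu}X1_{wu}$-module, is $\mr{Ad}(\mc T_w^{-1})^*V_u$. Applying Corollary \ref{cor:8.4} at the base point $wu$ to $\mr{Ad}(\mc T_w^{-1})^*V$ therefore produces the very same $V'$ (and thus the same image in $\End_G(I_P^G(E_B))-\fMod$), as required.

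\smallskip
\emph{Main obstacle.} The subtle step is the second bullet of Step 2: verifying that the canonical Coxeter generators $\mc T^u_v$ are transported to $\mc T^{wu}_{wvw^{-1}}$ \emph{without a scalar ambiguity}, whereas the R-group generators $A^u_r$ are only transported up to a scalar. This asymmetry is what dictates the different quantifiers in part (a)'s statement, and making it rigorous requires carefully expanding $\mc T_w$ via \eqref{eq:4.18} and repeatedly invoking Proposition \ref{prop:4.2}, Lemma \ref{lem:4.4} and Lemma \ref{lem:4.7} to move $\mc T_w$ past both $A^u_r$ and $\mc T^u_v$, keeping track of the resulting 2-cocycle and scalar factors and confirming that they collapse trivially in the Coxeter case by appeal to the canonicity in Remark \ref{rem:6.4}.
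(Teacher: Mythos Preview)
Your overall strategy matches the paper's proof: establish $\mc T_w 1_u,\ \mc T_w^{-1}1_{wu}\in \End_G(I_P^G(E_B))_U^\an$ via the minimality argument from Lemma \ref{lem:7.2}, analyse the conjugation on the three types of generators, and transport through $\Phi_u,\Phi_{wu}$. Part (b) is exactly what the paper does (reduce to the template of Lemma \ref{lem:8.1}.b).

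The one point where your plan diverges from the paper is the mechanism for the ``main obstacle'': showing $\mr{Ad}(\mc T_w)(\mc T^u_v)=\mc T^{wu}_{wvw^{-1}}$ exactly, with no scalar. You propose to expand $\mc T_w=\phi_{\chi_c}A_r\mc T_{w_0}$ and push it past $\mc T^u_v$ using Proposition \ref{prop:4.2}, Lemma \ref{lem:4.4} and Lemma \ref{lem:4.7}. The difficulty is that those results govern the base-point-$\sigma$ operators $\mc T_v, A_r$, whereas $\mc T^u_v$ and $A^u_r$ are built from the shifted base point $\sigma\otimes u$ (different $\rho_{\sigma\otimes u,w}$, different $g^u_\alpha$), so the cited relations do not directly move $\mc T_w$ past $\mc T^u_v$. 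The paper bypasses this computation entirely: it observes that $\mc T_w\mc T^u_v\mc T_w^{-1}$ and $\mc T^{wu}_{wvw^{-1}}$ satisfy the same multiplication rules and act the same way on $C^{\me}(U_{wu})$, hence differ by an element of $C^{\me}(U_{wu})^\times$; then it pins this factor to $1$ by noting that both specialize to the identity on $I_P^G(E)$ at $w(u)$, by Lemma \ref{lem:6.1}.a. For the $A^u_r$ the same reasoning gives only equality up to a scalar at $w(u)$, which is exactly the $\C N_v$ versus $N_v$ dichotomy in the statement. Your appeal to Remark \ref{rem:6.4} points in this direction but does not by itself fix the scalar; the specialization argument is the missing ingredient.
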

\begin{proof}
(a) From the proof of Lemma \ref{lem:7.2} we see that $\mc T_w 1_u \in \End_G (I_P^G (E_B))_U^\an$
and $\mc T_w 1_u \mc T_w^{-1} = 1_{w(u)}$. Therefore conjugation by $\mc T_w$ gives an
algebra isomorphism
\[
\mr{Ad}(\mc T_w) : 1_u \End_G (I_P^G (E_B))_U^\an 1_u 
\to 1_{w(u)} \End_G (I_P^G (E_B))_U^\an 1_{w(u)} ,
\]
which on $C^\an (U_u)$ is just $f \mapsto f \circ w^{-1}$. The elements
$\mc T_w \mc T_v^u \mc T_w^{-1}$ with $v \in W(\Sigma_{\sigma \otimes u})$ satisfy
the same multiplication relations as the elements $\mc T_{w v w^{-1}}^u$ and they
have the same specialization at $w(u)$ (namely the identity on $I_P^G (E)$, by
Lemma \ref{lem:6.1}.a). Therefore $\mr{Ad}(\mc T_w)(\mc T_v^u) = \mc T_{w v w^{-1}}^{w(u)}$.
The same applies to the $A_r^u$ with $r \in R(\sigma \otimes u)$, but for those
we can only say that $\mr{sp}_{w(u)} \mc T_w A_r^u \mc T_w^{-1}$ and
$\mr{sp}_{w(u)} A_{w r w^{-1}}$ are equal up to a scalar factor. Hence
$\mr{Ad}(\mc T_w)(\mc A_r^u)$ is a scalar multiple of $\mc A_{w r w^{-1}}^{w(u)}$.

Via Proposition \ref{prop:8.3} $\mr{Ad}(\mc T_w)$ becomes an algebra isomorphism
\[
\mh H (\tilde{\mc R}_u, W(M,\mc O)_{\sigma \otimes u}, k^u, \natural_u )_{\log (U_u)}^\an \to 
\mh H (\tilde{\mc R}_{w(u)}, W(M,\mc O)_{\sigma \otimes w(u)}, k^{w(u)}, 
\natural_{w(u)} )_{w(\log (U_u))}^\an .
\]
It restricts to $f \mapsto f \circ w$ on $C^\an (\log (U_u))$ and sends $N_v$  to $N_{w v w^{-1}}$
for $v \in W(\Sigma_{\sigma \otimes u})$, and to a scalar multiple of that for $v \in 
W(M,\mc O)_{\sigma \otimes u}$. Now it is clear that $\mr{Ad}(\mc T_w)$ restricts to the 
required isomorphism between (twisted) graded Hecke algebras.\\
(b) This can be shown in the same way as Lemma \ref{lem:8.1}.b.
\end{proof}

Corollary \ref{cor:8.4} tells us that there is a surjection from the union of the sets
\[
\{ \pi \in \Irr \big( \mh H (\tilde{\mc R}_u, W(M,\mc O)_{\sigma \otimes u}, k^u, \natural_u ) \big)
: \mr{Wt}(\pi) \subset a_M^* \}
\]
with $u \in X_\unr (M)$ to $\Irr \big( \End_G (I_P^G (E_B)) \big)$. For $u$ and $u'$ in
different $W(M,\sigma,X_\nr (M))$-orbits, the images in  $\Irr \big( \End_G (I_P^G (E_B)) \big)$
are disjoint. For $u$ and $u'$ in the same $W(M,\sigma,X_\nr (M))$-orbit, Lemma \ref{lem:8.6}.b
tells us precisely which modules of 
\[
\mh H (\tilde{\mc R}_u, W(M,\mc O)_{\sigma \otimes u}, k^u, \natural_u ) \quad \text{and} 
\quad \mh H (\tilde{\mc R}_{u'}, W(M,\mc O)_{\sigma \otimes u'}, k^{u'}, \natural_{u'} )
\]
have the same image -- the relation between them comes from an element $w \in W(M,\sigma,X_\nr (M))$
with $w(u) = u'$. If we agree that $W(M,\sigma, X_\nr (M))_u$ acts trivially on
$\Irr \big( \mh H (\tilde{\mc R}_u, W(M,\mc O)_{\sigma \otimes u}, k^u, \natural_u ) \big)$,
it does not matter which $w$ with $w(u) = u'$ we pick. Thus we obtain a bijection
\begin{align}\nonumber
\bigsqcup\nolimits_{u \in X_\unr (M)} & \{ \pi \in \Irr \big( \mh H (\tilde{\mc R}_u, 
W(M,\mc O)_{\sigma \otimes u}, k^u, \natural_u ) \big) : \mr{Wt}(\pi) \subset a_M^* \} 
/ W(M,\sigma,X_\nr (M)) \\
\label{eq:8.14} & \longrightarrow \Irr \big( \End_G (I_P^G (E_B)) \big) .
\end{align}
Here the group action of $W(M,\sigma,X_\nr (M))$ on the disjoint union comes from the relations
described in Lemma \ref{lem:8.6}.

\subsection{Comparison by setting the $q$-parameters to 1} \

It is interesting to investigate what happens when in Corollary \ref{cor:8.4} we replace the
parameter function $k^u$ by 0. It is known that the analogous operation for affine Hecke algebras 
gives rise to a bijection on the level of irreducible representations \cite{SolAHA,SolHecke}.
Replacing all the $k^u$ by 0
corresponds to manually setting all the parameters $q_\alpha$ and $q_{\alpha*}$ 
to 1. In view of Corollary \ref{cor:5.6}, that transforms $\End_G (I_P^G (E_B))$ into
$\C [X_\nr (M)] \rtimes \C [W(M,\sigma,X_\nr (M)),\natural]$. Therefore we start by analysing
the irreducible representations of that simpler crossed product algebra.

\begin{lem}\label{lem:8.7}
There is a canonical bijection 
\begin{multline*}
\bigsqcup\nolimits_{u \in X_\unr (M)} \{ \pi \in \Irr \big( \C [X_\nr (M)] \rtimes 
\C [W(M,\sigma,X_\nr (M))_u,\natural] \big) : \mr{Wt}(\pi) \subset u X_\nr^+ (M) \} \\
\big/ \: W(M,\sigma,X_\nr (M)) \\ \longrightarrow \Irr \big( \C [X_\nr (M)] \rtimes 
\C [W(M,\sigma,X_\nr (M)),\natural] \big) .
\end{multline*}
Here $w \in W(M,\sigma,X_\nr (M))$ acts on the disjoint union by pullback along the algebra
isomorphism $\mr{Ad}(N_w^{-1})$ :
\[
\C [X_\nr (M)] \rtimes \C [W(M,\sigma,X_\nr (M))_{w(u)} ,\natural]
\to \C [X_\nr (M)] \rtimes \C [W(M,\sigma,X_\nr (M))_u ,\natural] .
\]
\end{lem}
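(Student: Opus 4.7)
\textbf{Proof plan for Lemma \ref{lem:8.7}.} The plan is to apply Clifford--Mackey theory to the finite extension $1 \to \C[X_\nr(M)] \to A \to \C[W,\natural] \to 1$, where I abbreviate $A := \C[X_\nr(M)] \rtimes \C[W,\natural]$ and $W := W(M,\sigma,X_\nr(M))$. The key geometric fact is that the polar decomposition $X_\nr(M) = X_\unr(M) \times X_\nr^+(M)$ is $W$-equivariant (the action preserves it because $W$ acts by algebraic group automorphisms of the torus, which split compatibly with the decomposition into maximal compact and vector parts). Since $A$ is finitely generated as a module over $Z(A) = \C[X_\nr(M)]^W$, every irreducible $A$-module is finite dimensional, so admits a generalised weight-space decomposition under the commutative subalgebra $\C[X_\nr(M)]$.

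For the forward map, given $\pi \in \Irr(A)$, write $\pi = \bigoplus_{\chi} \pi_\chi$. The operators $N_w$ permute weight spaces according to $N_w \pi_\chi = \pi_{w(\chi)}$, so $\mr{Wt}(\pi)$ is a single $W$-orbit by irreducibility. Choose $\chi_0 \in \mr{Wt}(\pi)$ and write $\chi_0 = u \chi^+$ with $u \in X_\unr(M)$, $\chi^+ \in X_\nr^+(M)$. Since $W_u$ preserves $u X_\nr^+(M)$, the subspace
\[
\pi^u := \bigoplus_{\chi \in \mr{Wt}(\pi) \cap u X_\nr^+(M)} \pi_\chi
\]
is a module for $A_u := \C[X_\nr(M)] \rtimes \C[W_u,\natural]$ with all weights in $u X_\nr^+(M)$. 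I will show it is irreducible: any nonzero $A_u$-submodule $V' \subseteq \pi^u$ generates an $A$-submodule $\sum_{w \in W/W_u} V' \cdot N_w$, and this equals $\pi$ by irreducibility; the key point is the disjointness
\[
w(u X_\nr^+(M)) \cap u X_\nr^+(M) = \emptyset \quad \text{for } w \notin W_u,
\]
which implies that intersecting back with the weight range $u X_\nr^+(M)$ recovers precisely $V'$. A dimension count using this disjointness then gives the induction isomorphism $\pi \cong \pi^u \otimes_{A_u} A$.

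For the inverse map, start with irreducible $V^u \in A_u\text{-Mod}$ having weights in $u X_\nr^+(M)$, and form $V^u \otimes_{A_u} A = \bigoplus_{w \in W/W_u} V^u \otimes N_w$. The weights on the $w$-summand lie in $w^{-1}(u X_\nr^+(M))$, and by the same disjointness these summands are weight-disjoint; hence any $A$-submodule intersects $V^u \otimes 1$ in an $A_u$-submodule, which is $0$ or all of $V^u$, forcing irreducibility. This gives the inverse construction.

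Finally, for well-definedness modulo the $W$-action on the disjoint union: if $u' = w(u)$, then $\mr{Ad}(N_w)$ provides an algebra isomorphism $A_u \to A_{u'}$, and left multiplication by $N_w$ intertwines the induced modules $V^u \otimes_{A_u} A$ and $(\mr{Ad}(N_w^{-1}))^* V^u \otimes_{A_{u'}} A$, so pulling back $V^u$ along $\mr{Ad}(N_w^{-1})$ yields the same irreducible of $A$. Conversely, two $A_u$-modules (for the same $u$) that induce to isomorphic $A$-modules must already be isomorphic, by extracting the $u X_\nr^+(M)$-weight part from the isomorphism. The main (and only nontrivial) ingredient is the irreducibility assertion, and the only obstacle is ensuring the right identification of the stabiliser $W_u$ as the "little group" in Mackey's sense, which here is clear because $W_u$ is precisely the subgroup of $W$ preserving the subset $u X_\nr^+(M) \subset X_\nr(M)$ of potential weights.
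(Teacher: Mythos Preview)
Your argument is correct in substance and reaches the same conclusion as the paper, but it proceeds along a somewhat different route, so a brief comparison is worthwhile.

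The paper first passes to a central extension $\Gamma$ of $W(M,\sigma,X_\nr(M))$ on which $\natural$ trivialises, together with a central idempotent $p_\natural \in \C[\ker(\Gamma \to W)]$ that cuts out the twisted group algebra. This reduces everything to the untwisted crossed product $\C[X_\nr(M)] \rtimes \Gamma$, where one can quote Clifford theory in its textbook form: irreducibles are parametrised by pairs $(\chi,\rho)$ with $\rho \in \Irr(\Gamma_\chi)$, up to the $\Gamma$-action. The paper then applies this twice, first for $\Gamma$ and then for $\Gamma_u$, and compares. Your approach bypasses the untwisting step entirely and runs the Mackey machine directly inside the twisted algebra, using the weight-space decomposition and the disjointness $w(uX_\nr^+(M)) \cap uX_\nr^+(M) = \emptyset$ for $w \notin W_u$. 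This is more self-contained and makes the role of the polar decomposition more transparent; the paper's detour through $\Gamma$ has the advantage of literally invoking a standard reference but is otherwise not needed here.

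One point does need correction. You justify the $W$-equivariance of the polar decomposition by saying that $W$ acts by algebraic group automorphisms of the torus. That is false: recall from \eqref{eq:3.29} that the generators $\mf r$ and $\chi_c$ act by affine transformations $\chi \mapsto r(\chi)\chi_r$ and $\chi \mapsto \chi\chi_c$, not by group automorphisms. The correct reason the polar decomposition is preserved is that the translation parts $\chi_r$ and $\chi_c$ are unitary: $X_\nr(M,\sigma)$ is finite, hence consists of torsion (unitary) characters, and $\chi_r$ is unitary because $\sigma$ and $\tilde r \cdot \sigma$ are both unitary supercuspidals (Condition~\ref{cond:3.1}). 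With this fix your disjointness claim, and hence the entire induction/restriction argument, goes through unchanged.
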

\begin{proof}
Choose a central extension $\Gamma$ of $W(M,\sigma,X_\nr (M))$ such that $\natural$
becomes trivial in $H^2 (\Gamma,\C^\times)$. Then there exists a central idempotent\\
$p_\natural \in \C [ \ker (\Gamma \to W(M,\sigma,X_\nr (M))) ]$ such that
\begin{equation}\label{eq:8.16}
\C [W(M,\sigma,X_\nr (M)),\natural] \cong p_\natural \C [\Gamma] .
\end{equation}
The isomorphism sends $\C N_w$ to $\C p_\natural N_{\tilde w}$, for any lift
$\tilde w \in \Gamma$ of $w \in W(M,\sigma,X_\nr (M))$.

Lift the $W(M,\sigma,X_\nr (M))$-action on $X_\nr (M)$ to $\Gamma$ and note that
\eqref{eq:8.16} gives rise to a bijection
\begin{equation}\label{eq:8.15}
\Irr \big( \C [X_\nr (M)] \rtimes \C [W(M,\sigma,X_\nr (M)),\natural] \big) \longleftrightarrow
\{ V \in \big( \C [X_\nr (M)] \rtimes \C [\Gamma] \big) : p_\natural V \neq 0 \} . 
\end{equation}
By Clifford theory every irreducible representation $\pi$ of $\C [X_\nr (M)] \rtimes \C [\Gamma]$
is of the form
\begin{equation}\label{eq:8.20}
\mr{ind}_{\C [X_\nr (M)] \rtimes \C [\Gamma_\chi]}^{\C [X_\nr (M)] \rtimes \C [\Gamma]}
(\chi \otimes \rho) ,
\end{equation}
where $\chi \in X_\nr (M)$ and $\rho \in \Irr (\Gamma_\chi)$. Moreover the pair $(\chi,\rho)$ is
determined by $\pi$, uniquely up to the $\Gamma$-action 
\[
\gamma (\chi,\rho) = (\gamma (\chi), \mr{Ad} (N_\gamma^{-1})^* \rho) .
\] 
When $u$ is the unitary part of $\chi$, $\Gamma_u \supset \Gamma_\chi$. Again by Clifford theory, 
every irreducible representation of $\C [X_\nr (M)] \rtimes \Gamma_u$ is of the form 
\[
\mr{ind}_{\C [X_\nr (M)] \rtimes \C [\Gamma_{u,\chi}]}^{\C [X_\nr (M)] \rtimes \C [\Gamma_u]}
(\chi \otimes \rho) ,
\]
where $(\chi,\rho)$ is unique up to the action of $\Gamma_u$. Hence there is a canonical bijection
\[
\bigsqcup_{\chi \in u X_\nr^+ (M)} \Irr (\Gamma_\chi) \; \big/ \Gamma_u \longrightarrow
\{ \pi \in \Irr (\C [X_\nr (M)] \rtimes \Gamma_u) : \mr{Wt}(\pi) \subset u X_\nr^+ (M) \} .
\]
Comparing this with Clifford theory for $\C [X_\nr (M)] \rtimes \C [\Gamma]$, we deduce a
canonical bijection
\[
\bigsqcup_{u \in X_\unr (M)} \hspace{-3mm} \{ \pi \in \Irr \big( \C [X_\nr (M)] \rtimes 
\C [\Gamma_u] \big) : \mr{Wt}(\pi) \subset u X_\nr^+ (M) \} \big/ \Gamma
\to \Irr (\C [X_\nr (M)] \rtimes \Gamma ) . 
\]
Now we restrict on both sides to the subsets that are not annihilated by $p_\natural$ and we
use \eqref{eq:8.15}. 
\end{proof}

It is possible to vary on Lemma \ref{lem:8.7} by taking on the left hand side a variety in
which $u X_\nr^+ (M)$ embeds, for instance $a_M^* \otimes_\R \C$ with as embedding
$\exp_u^{-1} : u X_\nr^+ (M) \to a_M^*$. Then Lemma \ref{lem:8.7} provides a canonical bijection
between\\ $\Irr \big( \C [X_\nr (M)] \rtimes \C [W(M,\sigma,X_\nr (M)),\natural] \big)$ and
\begin{align}\nonumber
\bigsqcup\nolimits_{u \in X_\unr (M)} & \{ \pi \in \Irr \big( \C [a_M^* \otimes_\R \C] \rtimes 
\C [W(M,\sigma,X_\nr (M))_u,\natural] \big) : \mr{Wt}(\pi) \subset a_M^* \} \\
\label{eq:8.17} & \big/ \: W(M,\sigma,X_\nr (M)) .
\end{align}
With Lemma \ref{lem:8.5} we can identify \eqref{eq:8.17} with
\begin{align}\nonumber
\bigsqcup\nolimits_{u \in X_\unr (M)} & \{ \pi \in \Irr \big( \C [a_M^* \otimes_\R \C] \rtimes 
\C [W(M,\mc O)_{\sigma \otimes u},\natural_u] \big) : \mr{Wt}(\pi) \subset a_M^* \} \\
\label{eq:8.18} & \big/ \: W(M,\sigma,X_\nr (M)) .
\end{align}
Notice that
\[
\C [a_M^* \otimes_\R \C] \rtimes \C [W(M,\mc O)_{\sigma \otimes u},\natural_u] =
\mh H (\tilde{\mc R}_u , W(M,\mc O)_{\sigma \otimes u}, 0 ,\natural_u ) ,
\]
a (twisted) graded Hecke algebra with all parameters $k_\alpha$ equal to 0.
In \cite{SolHomGHA,SolAHA} we studied how graded Hecke algebras behave under deformations
of the parameters $k_\alpha$. To formulate that properly, we recall some results from the
representation theory of graded Hecke algebras.

For $P \subset \Delta_{\sigma \otimes u}$, we denote the Weyl group generated by the 
reflections $s_\alpha$ with $\alpha \in P$ by $W_P$. The set $\C[a_M^* \otimes_\R \C] \C[W_P]$
constitutes a parabolic subalgebra $\mh H (P,k)$ of $\mh H (\tilde{\mc R}_u, 
W(\Sigma_{\sigma \otimes u}),k)$. As algebra, it decomposes as a tensor product
\[
\C[ \mr{span}_\C (P)] \C [W_P] \otimes \C [ (a_M^* \otimes_\R \C)^{\perp P} ],
\]
where the subscript $\perp P$ denotes the subspace orthogonal to the set of coroots $P^\vee$.

The Langlands classification, proven for graded Hecke algebras in \cite{Eve}, expresses 
irreducible representations in terms of parabolic subalgebras, tempered representations and 
parabolic induction. See Definition \ref{def:temp} for temperedness. We need an extension that 
includes R-groups like $R(\sigma \otimes u)$. Such a version was proven
for affine Hecke algebras in \cite[\S 2.2]{SolAHA}. In view of Lusztig's second reduction
theorem \cite[\S 9]{Lus-Gr}, generalized in \cite[Corollary 2.1.5]{SolAHA}, that extended
Langlands classification also applies to graded Hecke algebras.

\begin{prop} \cite[Corollary 2.2.5]{SolAHA} \label{prop:8.9} \\
Let $\Gamma$ be a finite group acting linearly in $a_M^*$, stabilizing $\Sigma_{\sigma \otimes u}$
and $\Delta_{\sigma \otimes u}$. 
\enuma{
\item Suppose that the following data are given: $P \subset \Delta_{\sigma \otimes u}$,
$t \in (a_M^*)^{\perp P}$ which is strictly positive with respect to $\Delta_{\sigma \otimes u}
\setminus P$, a tempered $\tau \in \Irr (\mh H (P,k))$, an irreducible representation
$\rho$ of $\C [\Gamma_{P,\tau,t},\kappa]$ (where the 2-cocycle $\kappa$ is determined by
the action of $\Gamma_{P,\tau,t}$ on $\tau$). Then the 
$\mh H (\tilde{\mc R}_u , W(\Sigma_{\sigma \otimes u}) \Gamma, k)$-representation
\[
\mr{ind}_{\mh H (P,k) \rtimes \C [\Gamma_{P,\tau,t}]}^{\mh H (\tilde{\mc R}_u , 
W(\Sigma_{\sigma \otimes u}) \Gamma, k)} ((\tau \otimes t) \otimes \rho)
\]
has a unique irreducible quotient. It is called the Langlands quotient and we denote it by an L.
\item For every $\pi \in \Irr (\mh H (\tilde{\mc R}_u , W(\Sigma_{\sigma \otimes u}) \Gamma, k))$
there exist data as in part (a), unique up to the action of $\Gamma$, such that 
\[
\pi \cong L \Big( \mr{ind}_{\mh H (P,k) \rtimes \C [\Gamma_{P,\tau,t}]}^{\mh H (\tilde{\mc R}_u , 
W(\Sigma_{\sigma \otimes u}) \Gamma, k)} ((\sigma \otimes t) \otimes \rho) \Big) .
\]
}
\end{prop}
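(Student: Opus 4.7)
The plan is to deduce Proposition \ref{prop:8.9} from two inputs: (i) the standard Langlands classification for graded Hecke algebras, due to Evens \cite{Eve}, applied to $\mh H (\tilde{\mc R}_u, W(\Sigma_{\sigma \otimes u}), k)$; and (ii) Clifford theory for the extension
\[
\mh H (\tilde{\mc R}_u, W(\Sigma_{\sigma \otimes u}), k) \; \subset \;
\mh H (\tilde{\mc R}_u, W(\Sigma_{\sigma \otimes u}) \Gamma, k) .
\]
The second algebra is a crossed product of the first by $\Gamma$ (twisted by a suitable 2-cocycle arising from the $\Gamma$-action, but trivial on the normal subgroup generated by $W(\Sigma_{\sigma \otimes u})$), so standard Clifford theory applies.

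For part (a), I would proceed as follows. Given $(P,\tau,t)$, the Langlands classification of \cite{Eve} for $\mh H (\tilde{\mc R}_u, W(\Sigma_{\sigma \otimes u}), k)$ says that the induced representation $I(P,\tau,t) := \mr{ind}_{\mh H(P,k)}^{\mh H (\tilde{\mc R}_u, W(\Sigma_{\sigma \otimes u}), k)}(\tau \otimes t)$ has a unique irreducible quotient $L(P,\tau,t)$. The subgroup $\Gamma_{P,\tau,t} \subset \Gamma$ stabilizes the isomorphism class of $L(P,\tau,t)$, and the action of $\Gamma_{P,\tau,t}$ on $L(P,\tau,t)$ is projective, governed by the 2-cocycle $\kappa$ defined via the action on $\tau$ (lifted in a standard way using the intertwiners for elements of $\Gamma$ with $t$ fixed). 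Then $L(P,\tau,t) \otimes \rho$ is a well-defined module of $\mh H(P,k) \rtimes \C[\Gamma_{P,\tau,t}]$, and its induction to the full algebra has the form claimed; I expect the unique irreducible quotient to be obtained by combining the canonical quotient map $I(P,\tau,t) \twoheadrightarrow L(P,\tau,t)$ with the Clifford-theoretic decomposition of the induction from $\Gamma_{P,\tau,t}$ to $\Gamma$. Uniqueness follows because any quotient of the induced module must project onto the Langlands quotient of the underlying standard module, and then Clifford theory forces the $\rho$-isotypic component to be the whole quotient.

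For part (b), I would start with $\pi \in \Irr (\mh H (\tilde{\mc R}_u, W(\Sigma_{\sigma \otimes u}) \Gamma, k))$ and restrict it to $\mh H (\tilde{\mc R}_u, W(\Sigma_{\sigma \otimes u}), k)$. By Clifford theory, the restriction is $\Gamma$-semisimple with orbits of irreducible constituents, and picking one constituent $\pi_0$ yields, via the ordinary Langlands classification, data $(P,\tau,t)$ unique up to the $W(\Sigma_{\sigma \otimes u})$-action. The stabilizer in $\Gamma$ of the isomorphism class of $\pi_0$ equals $\Gamma_{P,\tau,t}$ (by uniqueness in the Langlands classification), and then $\pi$ is recovered as the induction from $\Gamma_{P,\tau,t}$ of $\pi_0$ twisted by an irreducible $\kappa$-representation $\rho$ of $\Gamma_{P,\tau,t}$. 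The different choices of $\pi_0$ inside the $\Gamma$-orbit account precisely for the $\Gamma$-ambiguity in $(P,\tau,t,\rho)$.

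The main obstacle I anticipate is a bookkeeping issue rather than a conceptual one: keeping track of the 2-cocycle $\kappa$ and verifying that the projective $\Gamma_{P,\tau,t}$-action on $L(P,\tau,t)$ indeed has cohomology class $\kappa$, and is compatible with the analogous action on the standard module $I(P,\tau,t)$ so that the unique irreducible quotient of $\mr{ind}(\,(\tau \otimes t) \otimes \rho)$ is indeed of the form $L(P,\tau,t) \otimes \rho$ (as a $\Gamma_{P,\tau,t}$-module). This is exactly the step carried out carefully in \cite[\S 2.2]{SolAHA} for affine Hecke algebras, and in view of \cite[Corollary 2.1.5]{SolAHA} (the generalization of Lusztig's second reduction theorem) the same argument transports verbatim to the graded setting; I would cite this rather than redo the combinatorics.
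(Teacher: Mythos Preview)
Your proposal is correct and matches the paper's own treatment: the paper does not give an independent proof but cites \cite[Corollary 2.2.5]{SolAHA}, explaining (in the paragraph preceding the statement) that the result follows by combining Evens' Langlands classification \cite{Eve} for graded Hecke algebras with the extended (R-group) version proven for affine Hecke algebras in \cite[\S 2.2]{SolAHA}, transferred to the graded setting via Lusztig's second reduction theorem as generalized in \cite[Corollary 2.1.5]{SolAHA}. Your sketch of the Clifford-theoretic argument is precisely the content of \cite[\S 2.2]{SolAHA}, so you are reproducing the cited proof rather than giving a different one.
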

In Proposition \ref{prop:8.9}.a we can combine $\tau$ and $\rho$ in
\begin{equation}\label{eq:8.19}
\tau' := \mr{ind}_{\mh H(P,k) \rtimes \Gamma_{P,\tau,t}}^{\mh H (P,k) \rtimes \Gamma_{P,t}}
(\tau \otimes \rho),
\end{equation}
an irreducible tempered representation such that
\[
\mr{ind}_{\mh H (P,k) \rtimes \C [\Gamma_{P,t}]}^{\mh H (\tilde{\mc R}_u 
W(\Sigma_{\sigma \otimes u}) \Gamma, k)} (\tau' \otimes t) \cong
\mr{ind}_{\mh H (P,k) \rtimes \C [\Gamma_{P,\tau,t}]}^{\mh H (\tilde{\mc R}_u 
W(\Sigma_{\sigma \otimes u}) \Gamma, k)} ((\tau \otimes t) \otimes \rho) .
\]
Then Proposition \ref{prop:8.9} holds also with the alternative data $P,\tau',t$.

\begin{thm}\label{thm:8.8}
There exists a bijection
\begin{multline*}
\zeta_u : \{ V \in \Irr \big( \mh H (\tilde{\mc R}_u , W(M,\mc O)_{\sigma \otimes u}, k^u ,\natural_u ) 
\big) : \mr{Wt}(V) \subset a_M^* \} \\
\longrightarrow \{ V \in \Irr \big( \C [a_M^* \otimes_\R \C] \rtimes 
\C [W(M,\mc O)_{\sigma \otimes u},\natural_u ] \big) : \mr{Wt}(V) \subset a_M^* \}
\end{multline*}
such that 
\begin{itemize}
\item $\pi$ is tempered if and only if $\zeta_u (\pi)$ is tempered,
\item $\zeta_u$ is compatible with the Langlands classification from Proposition \ref{prop:8.9}.
\end{itemize}
\end{thm}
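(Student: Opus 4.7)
The plan is to exploit the parameter-scaling technique for (twisted) graded Hecke algebras developed in \cite{SolHomGHA, SolAHA, SolHecke}, combined with the extended Langlands classification of Proposition \ref{prop:8.9}. The key observation is that the target algebra is obtained from the source by scaling the parameter function $k^u$ to $0$, so the theorem is really a statement about the rigidity of irreducible spectra under this deformation.

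First, I would introduce the continuous family
\[
\mh H_\epsilon := \mh H (\tilde{\mc R}_u, W(M,\mc O)_{\sigma \otimes u}, \epsilon k^u, \natural_u), \qquad \epsilon \in [0,1],
\]
all built on the same underlying vector space $\C[a_M^* \otimes_\R \C] \otimes \C[W(M,\mc O)_{\sigma \otimes u}, \natural_u]$, with multiplication relations varying smoothly in $\epsilon$. At $\epsilon=1$ this recovers the source algebra; at $\epsilon=0$ the defining relation $f N_{s_\alpha} - N_{s_\alpha}(s_\alpha \cdot f) = \epsilon k^u_\alpha (f - s_\alpha \cdot f)/h_\alpha^\vee$ degenerates to the crossed-product relation, yielding exactly the target algebra $\mh H_0 = \C[a_M^* \otimes_\R \C] \rtimes \C[W(M,\mc O)_{\sigma \otimes u}, \natural_u]$. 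The centre of each $\mh H_\epsilon$ is the constant subalgebra $\C[a_M^* \otimes_\R \C]^{W(M,\mc O)_{\sigma \otimes u}}$, and the $\C[a_M^* \otimes_\R \C]$-weight condition is $\epsilon$-independent.

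Second, I would establish the bijection on \emph{tempered} irreducible representations with weights in $a_M^*$. The main input from \cite{SolHomGHA,SolAHA} is that irreducible tempered modules of $\mh H_\epsilon$ with real central character deform into a locally constant family as $\epsilon$ varies in $[0,1]$: each $\tau_1 \in \Irr_{\mr{temp}}(\mh H_1)$ admits a unique continuous deformation $\tau_\epsilon \in \Irr_{\mr{temp}}(\mh H_\epsilon)$ with constant $W(M,\mc O)_{\sigma \otimes u}$-character, and isomorphism classes are discrete invariants along such paths. This produces the desired bijection between tempered irreducibles of $\mh H_1$ and $\mh H_0$. Third, I would pass to the general case via Proposition \ref{prop:8.9}: every $\pi \in \Irr(\mh H_1)$ with $\mr{Wt}(\pi) \subset a_M^*$ is the Langlands quotient $L(P, \tau, t, \rho)$ of a parabolically induced module, unique up to $W(M,\mc O)_{\sigma \otimes u}$. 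Each parabolic subalgebra $\mh H(P,k^u)$ sits in its own scaling family, so Step~2 supplies a tempered partner $\tau_0$ inside $\mh H(P,0)$; setting $\zeta_u(\pi) := L(P, \tau_0, t, \rho)$ gives an element of $\Irr(\mh H_0)$, and running the argument in reverse shows this is a bijection. Compatibility with Langlands data is built into the construction, and temperedness preservation is immediate because tempered representations correspond exactly to data with $P = \Delta_{\sigma \otimes u}$ and $t = 0$.

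The main obstacle is Step~2: showing that the tempered spectrum indeed deforms continuously and bijectively as $\epsilon \to 0$, even though $\mh H_0$ is a ``degenerate'' specialization where many Plancherel-measure arguments lose their bite. In the affine case this is handled in \cite{SolAHA} by constructing a smooth model for the Schwartz completion in $\epsilon$ and tracking tempered irreducibles via the action of the centre; the analogous result for twisted graded Hecke algebras follows by the standard localization correspondence from \cite{Lus-Gr} between graded and affine Hecke algebras (together with Clifford theory to absorb the 2-cocycle $\natural_u$, as in the proof of Lemma \ref{lem:8.7}). A secondary technical point is that one must verify the deformation is canonical once the cocycle $\natural_u$ is fixed, which amounts to observing that the paths $\epsilon \mapsto \tau_\epsilon$ are determined by their endpoints up to homotopy and that the space of parameters $[0,1]$ is contractible.
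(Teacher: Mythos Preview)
Your overall architecture matches the paper's: both reduce to the tempered case via the extended Langlands classification (Proposition~\ref{prop:8.9}), and both absorb the 2-cocycle $\natural_u$ by passing to a central extension $\Gamma$ of $R(\sigma\otimes u)$ and working with the idempotent $p_{\natural_u}$. Where you diverge is in the tempered step. You propose to track tempered irreducibles along the continuous scaling family $\mh H_\epsilon = \mh H(\tilde{\mc R}_u, W(M,\mc O)_{\sigma\otimes u}, \epsilon k^u, \natural_u)$ and invoke the deformation results of \cite{SolAHA,SolHecke}. The paper instead uses a static counting argument: by \cite[Theorem~6.5.c]{SolHomGHA}, the restrictions to $\C[W_P\Gamma_{P,t}]$ of tempered irreducibles of $\mh H(P,k)\rtimes\C[\Gamma_{P,t}]$ with real weights form a $\Q$-basis of the representation ring of $W_P\Gamma_{P,t}$; hence there is \emph{some} bijection $\zeta_{P,t}$ from these tempered irreducibles to $\Irr(W_P\Gamma_{P,t})$, with $\zeta_{P,t}(V)$ chosen to occur in $V|_{\C[W_P\Gamma_{P,t}]}$. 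Since the tempered irreducibles of $\mh H(P,0)\rtimes\C[\Gamma_{P,t}]$ with real weights are exactly $\Irr(W_P\Gamma_{P,t})$ (the only real tempered weight at $k=0$ is $0$), this already furnishes the tempered bijection, without any limiting argument at $\epsilon\to 0$.

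The trade-off is this: the paper's argument is shorter and avoids precisely the obstacle you flagged (degeneration of Plancherel-type arguments at $\epsilon=0$), but it only produces \emph{a} bijection, not a canonical one. The paper then remarks after the proof that canonicity follows from \cite[Theorem~6.2 and Proposition~6.10]{SolHecke}, which is essentially your scaling approach. So your route is correct and is in fact what is needed to upgrade the existence statement to a canonical map; the paper's route is more elementary for the theorem as stated. One small point: your claim that the deformation has ``constant $W(M,\mc O)_{\sigma\otimes u}$-character'' is the right heuristic but should be stated more carefully---what \cite{SolAHA} actually proves is that the restriction to the finite Weyl group algebra is preserved under the scaling maps, and it is this (not a homotopy argument) that pins down the limit uniquely.
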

\begin{proof}
First we get rid of the 2-cocycle $\natural_u$. Choose a central extension 
\[
1 \to Z(\sigma \otimes u) \to \Gamma \to R(\sigma \otimes u) \to 1
\]
such that $\natural_u$ becomes trivial in $H^2 (\Gamma,\C^\times)$. Let $p_{\natural_u} \in 
\C[Z(\sigma \otimes u)]$ be a central idempotent such that
\[
p_{\natural_u} \C [Z(\sigma \otimes u)] \cong \C [R(\sigma \otimes u),\natural_u].
\]
For both $k = k^u$ and $k = 0$ that gives a bijection
\begin{multline*}
\Irr \big( \mh H (\tilde{\mc R}_u , W(M,\mc O)_{\sigma \otimes u}, k ,\natural_u ) \big) \to 
\{ V \in \Irr \big( \mh H (\tilde{\mc R}_u , W(\Sigma_{\sigma \otimes u}) \Gamma, k ) \big) :
p_{\natural_u} V \neq 0 \}.
\end{multline*}
Hence it suffices to construct the required bijection with $\Gamma$ instead of $R(\sigma \otimes u)$,
provided that it does not change the $Z(\sigma \otimes u)$-characters of representations.

Consider $\pi \in \Irr \big( \mh H (\tilde{\mc R}_u , W(\Sigma_{\sigma \otimes u})\Gamma, k ) \big)$
with Wt$(\pi) \subset a_M^*$. By Proposition \ref{prop:8.9}, with the modified data from 
\eqref{eq:8.19}, we have
\begin{equation}\label{eq:8.22}
\pi \cong L \Big( \mr{ind}_{\mh H (P,k) \rtimes \C [\Gamma_{P,t}]}^{\mh H (\tilde{\mc R}_u ,
W(\Sigma_{\sigma \otimes u}) \Gamma, k)} (\tau' \otimes t) \Big) ,
\end{equation}
for data $(P,\tau',t)$ that are unique up to the $\Gamma$-action. Since both Wt$(\pi)$ and
$t$ lie in $a_M^*$ and Wt$(\tau') + t$ consists of weights of $\pi$ \cite{Eve}, we must 
have Wt$(\tau') \subset a_M^*$. By \cite[Theorem 6.5.c]{SolHomGHA} the restrictions to
$\C [W_P \Gamma_{P,t}]$ of the set
\begin{equation}\label{eq:8.21}
\{ V \in \Irr \big( \mh H (P,k)\rtimes \C[\Gamma_{P,t}] \big) \text{ is tempered and Wt}(V) 
\subset a_M^* \}
\end{equation}
form a $\Q$-basis of the representation ring of $W_P \Gamma_{P,t}$. As $Z(\sigma \otimes u)
\subset \Gamma_{P,t}$, we can find a bijection $\zeta_{P,t}$ from \eqref{eq:8.21} to
$\Irr (W_P \Gamma_{P,t})$, such that $\zeta_{P,t}(V)$ occurs in $V |_{\C [W_P \Gamma_{P,t}]}$.
We regard $\zeta_{P,t}(V)$ as a $\C[a_M^* \otimes_\R \C] \rtimes \C [W_P \Gamma_{P,t}]$-representation
on which $\C[a_M^* \otimes_\R \C]$ acts via evaluation at $0 \in a_M^*$.

Now we define
\begin{equation}\label{eq:8.23}
\zeta_u (\pi) := L \Big( \mr{ind}_{\mh H (P,0) \rtimes \C [\Gamma_{P,t}]}^{\mh H 
(\tilde{\mc R}_u , W(\Sigma_{\sigma \otimes u}) \Gamma, 0)} (\zeta_{P,t}(\tau') \otimes t) \Big) .
\end{equation}
By Proposition \ref{prop:8.9}, this is a well-defined irreducible representation of \\
$\mh H (\tilde{\mc R}_u, W(\Sigma_{\sigma \otimes u}) \Gamma, 0)$. The only weight of
$\zeta_{P,t}(\tau')$ is 0, so by \cite[Theorem 6.4]{BaMo2}
\[
\mr{Wt}(\zeta_u (\pi)) \subset W(\Sigma_{\sigma_\otimes u}) \Gamma t \subset a_M^* .
\] 
The analogy between \eqref{eq:8.22} and \eqref{eq:8.23} is our compatibility with the 
extended Langlands classification.
The construction of $\zeta_u$ also works in the other direction (with $\zeta_{P,t}^{-1}$),
so it is bijective. Since $\zeta_u$ is built from operations that do not change anything
in $Z(\sigma \otimes u)$, it preserves the $Z(\sigma \otimes u)$-characters of
representations.
\end{proof}

A canonical choice for the bijection $\zeta_{P,t}$ in the above proof is provided by 
\cite[Theorem 6.2]{SolHecke}. By that and \cite[Proposition 6.10]{SolHecke}, the map $\zeta_u$ is 
canonical once the 2-cocycle $\natural_u$ has been fixed. But, there is one caveat.
Namely, \cite{SolHecke} deals with all parameter functions $k : R \to \R_{\geq 0}$, except for root 
systems of type $F_4$, for those only certain positive parameter functions are analysed. 
In the sequel \cite{SolParam} we show that all the parameter functions $k_u$ occurring in this paper
are among those investigated in \cite{SolHecke}.

For any $w \in W(M,\sigma,X_\nr (M))$, conjugation with $\mc T_w$ in $\End_G (I_P^G (E_B)) \otimes_B K(B)$
defines an isomorphism
\[
\mr{Ad}(\mc T_w) : \C [W(M,\sigma,X_\nr (M))_u,\natural] \to \C[W(M,\sigma,X_\nr (M))_{w(u)},\natural] .
\]
Recall from Lemma \ref{lem:8.5} that $\C [W(M,\mc O)_{\sigma \otimes u},\natural_u]$ is embedded in\\
$\C [W(M,\sigma,X_\nr (M))_u,\natural]$ as the span of $W(M,\sigma,X_\nr (M))_u$. Thus
Ad$(\mc T_w)$ can be transferred to an algebra isomorphism
\[
\mr{Ad}(N_w) : \C [W(M,\mc O)_{\sigma \otimes u},\natural_u] \to 
\C [W(M,\mc O)_{\sigma \otimes w(u)},\natural_{w(u)}] ,
\]
which sends $\C N_{\Omega_u (v)}$ to $\C N_{\Omega_{w(u)}(w v w^{-1})}$. 
We denote the differential of $w : U_u \to U_{w(u)}$ also by $w$, but now from $a_M^* \otimes_\R \C$
to itself. For $f \in \C[a_M^* \otimes_\R \C]$ we define Ad$(N_w)f = f \circ w^{-1}$.
These instances of Ad$(N_w)$ combine to an algebra isomorphism
\[
\mr{Ad}(N_w) : \mh H (\tilde{\mc R}_u , W(M,\mc O)_{\sigma \otimes u}, k^u ,\natural_u ) \to
\mh H (\tilde{\mc R}_{w(u)} , W(M,\mc O)_{\sigma \otimes w(u)}, k^{w(u)} ,\natural_{w(u)} ) .
\]
For $w \in W(M,\sigma,X_\nr (M))_u$, this is just the inner automorphism Ad$(N_{\Omega_u (w)})$
of $\mh H (\tilde{\mc R}_u , W(M,\mc O)_{\sigma \otimes u}, k ,\natural_u )$. For other $w \in 
W(M,\sigma,X_\nr (M))$ the notation Ad$(N_w)$ is only suggestive, because we have not defined
an element $N_w$.

With all that set, we define a bijection
\begin{equation}\label{eq:8.24}
\mr{Ad}(N_w^{-1})^* : \Irr \big( \mh H (\tilde{\mc R}_u , W(M,\mc O)_{\sigma \otimes u}, k^u ,
\natural_u ) \big) \to 
\mh H (\tilde{\mc R}_{u'} , W(M,\mc O)_{\sigma \otimes u'}, k^{u'} ,\natural_{u'} ) ,
\end{equation}
for any $w \in W(M,\sigma,X_\nr (M))$ such that $w(u) = u'$. Since inner automorphisms act trivially
on the set of irreducible representation of an algebra, \eqref{eq:8.24} does not depend on the
choice of $w$ with $w(u) = u'$. 
Clearly, the construction of Ad$(N_w)$ also works with $k=0$ instead of $k^u$ and $k^{w(u)}$. 

However, because of the lack of canonicity of $\zeta_u$ it is not clear whether 
\[
\zeta_{w(u)} \circ \mr{Ad}(N_w^{-1})^* = \mr{Ad}(N_w^{-1})^* \circ \zeta_u .
\]
To achieve that desirable equality we can enforce it in the following way. For every 
$W(M,\sigma,X_\nr (M))$-orbit in $X_\unr (M)$ we fix one representative $u$. Then we define
\begin{multline}\label{eq:8.25}
\zeta_{w(u)} := \mr{Ad}(N_w^{-1})^* \circ \zeta_u \circ \mr{Ad}(N_w)^* : \\
\{ V \in \Irr \big( \mh H (\tilde{\mc R}_{w(u)} , W(M,\mc O)_{\sigma \otimes w(u)}, k^{w(u)} ,
\natural_{w(u)} ) \big) : \mr{Wt}(V) \subset a_M^* \} \\
\longrightarrow \{ V \in \Irr \big( \C [a_M^* \otimes_\R \C] \rtimes 
\C [W(M,\mc O)_{\sigma \otimes w(u)},\natural_{w(u)} ] \big) : \mr{Wt}(V) \subset a_M^* \} .
\end{multline}
When $w$ is of minimal length in $w W(M,\sigma,X_\nr (M))_u$, it sends $\Delta_{\sigma \otimes u}$
to $\Delta_{\sigma \otimes w(u)}$. Then $w (a_M^*)_u^- = (a_M^*)_{w(u)}^-$, so Ad$(N_w^{-1})^*$
preserves temperedness. That particular Ad$(N_w^{-1})^*$ also maps a Langlands datum $(P,\tau,t')$ 
(as in Proposition \ref{prop:8.9}) to another Langlands datum, so it respects the compatibility 
with the Langlands classification from \eqref{eq:8.22} and \eqref{eq:8.23}.

As $\zeta_{u'}$, as defined in \eqref{eq:8.25}, does not depend on the 
choice of $w$ with $w(u) = u'$, this means that $\zeta_{u'}$ always satisfies the requirements
of Theorem \ref{thm:8.8}.

\begin{cor}\label{cor:8.10}
There exists a bijection
\[
\zeta : \Irr (\End_G (I_P^G (E_B)) \to 
\Irr \big( \C[X_\nr (M)] \rtimes \C[W(M,\sigma,X_\nr (M))],\natural] \big)
\]
such that Wt$(\pi) \subset W(M,\sigma,X_\nr (M)) uX_\nr^+ (M)$ if and only if\\
Wt$(\zeta (\pi)) \subset W(M,\sigma,X_\nr (M)) uX_\nr^+ (M)$.
\end{cor}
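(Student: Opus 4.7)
The plan is to chain together the parametrizations already assembled in this section. First, I combine the bijection \eqref{eq:8.14} (which packages Corollary \ref{cor:8.4} with the equivariance supplied by Lemma \ref{lem:8.6}) to present
\[
\Irr \big( \End_G (I_P^G (E_B)) \big) \; \cong \; \bigsqcup\nolimits_{u \in X_\unr (M)}
\{ V \in \Irr (\mh H (\tilde{\mc R}_u, W(M,\mc O)_{\sigma \otimes u}, k^u, \natural_u)) :
\mr{Wt}(V) \subset a_M^* \} \big/ W ,
\]
where $W = W(M,\sigma, X_\nr (M))$ acts on the disjoint union as described after Corollary \ref{cor:8.4}. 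Symmetrically, I would apply Lemma \ref{lem:8.7}, together with the identification of $\C [W_u,\natural]$ with $\C [W(M,\mc O)_{\sigma \otimes u},\natural_u]$ from Lemma \ref{lem:8.5} and the analytic covering $\exp_u : a_M^* \otimes_\R \C \to X_\nr (M)$ with $\exp_u (a_M^*) = u X_\nr^+ (M)$, to rewrite
\[
\Irr \big( \C [X_\nr (M)] \rtimes \C [W,\natural] \big) \; \cong \; \bigsqcup\nolimits_{u \in X_\unr (M)}
\{ V \in \Irr (\C [a_M^* \otimes_\R \C] \rtimes \C [W(M,\mc O)_{\sigma \otimes u},\natural_u]) :
\mr{Wt}(V) \subset a_M^* \} \big/ W .
\]

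Second, I would apply the bijections $\zeta_u$ from Theorem \ref{thm:8.8} termwise, using the modification \eqref{eq:8.25} to ensure that for $w \in W$ with $w(u) = u'$ one has $\mr{Ad}(N_w^{-1})^* \circ \zeta_u = \zeta_{u'} \circ \mr{Ad}(N_w^{-1})^*$. This equivariance is exactly what is needed for the family $\{\zeta_u\}$ to descend to a well-defined bijection $\zeta$ between the two quotients above, giving the bijection claimed in the corollary.

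Third, for the compatibility with weights, I would use that under the parametrization \eqref{eq:8.14} an irreducible $\pi \in \Irr \big( \End_G (I_P^G (E_B)) \big)$ has $\mr{Wt}(\pi) \subset W u X_\nr^+ (M)$ precisely when its $W$-orbit in the disjoint union is indexed by points of $X_\unr (M)$ lying in $W u$. Indeed, by Lemma \ref{lem:7.1} and Lemma \ref{lem:6.6}(c), a module $V$ on the $u$-summand of the disjoint union corresponds to an $\End_G (I_P^G (E_B))$-module whose $\C [X_\nr (M)]$-weights form $W \exp_u (\mr{Wt}(V)) \subset W u X_\nr^+ (M)$. The analogous statement on the crossed-product side is built into the proof of Lemma \ref{lem:8.7}. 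Since $\zeta_u$ leaves the indexing by $u$ untouched and sends $a_M^*$-weighted modules to $a_M^*$-weighted modules, the two conditions on $\mr{Wt}$ translate into each other.

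The only delicate point is verifying step three: that the $W$-equivariance built into \eqref{eq:8.25} genuinely makes $\zeta_u$ and $\zeta_{w(u)}$ intertwine the two $W$-actions on the disjoint unions (one coming from the $\End_G (I_P^G (E_B))$ side via Lemma \ref{lem:8.6}, the other from the $\C [X_\nr (M)] \rtimes \C [W,\natural]$ side via Lemma \ref{lem:8.7}). This reduces to comparing how conjugation by $\mc T_w$ acts on the two $W_u$-algebras in question, which is precisely the content of Lemma \ref{lem:8.5} combined with Lemma \ref{lem:8.6}(a), and so the compatibility holds by construction once orbit representatives have been fixed.
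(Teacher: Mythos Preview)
Your proposal is correct and follows essentially the same approach as the paper's own proof: decompose both sides via \eqref{eq:8.14} and Lemma~\ref{lem:8.7} (the latter passed through \eqref{eq:8.17}--\eqref{eq:8.18} using Lemma~\ref{lem:8.5} and $\exp_u$), apply the $\zeta_u$ from Theorem~\ref{thm:8.8} termwise, and invoke the equivariance enforced by \eqref{eq:8.25} to descend to the quotient. Your treatment of the weight compatibility and of the matching of the two $W(M,\sigma,X_\nr(M))$-actions is more explicit than the paper's, which simply remarks that both actions are induced by $\mr{Ad}(\mc T_w)$, but the substance is the same.
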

\begin{proof}
With \eqref{eq:8.14} we decompose $\Irr \big( \End_G (I_P^G (E_B))\big)$ as a disjoint union over\\
$X_\unr (M)$, modulo an action of $W(M,\sigma,X_\nr (M))$. Notice that the 
$W(M,\sigma,X_\nr (M))$-actions in \eqref{eq:8.14} and \eqref{eq:8.24} agree, because both are
induced by Ad$(\mc T_w)$. By Theorem \ref{thm:8.8} the terms in the disjoint union in \eqref{eq:8.5}
are in bijection with
\[
\big\{ V \in \Irr \big( \C [a_M^* \otimes_\R \C] \rtimes 
\C [W(M,\mc O)_{\sigma \otimes u},\natural_u ] \big) : \mr{Wt}(V) \subset a_M^* \big\} .
\]
By \eqref{eq:8.25} the bijections from Theorem \ref{thm:8.8} are $W(M,\sigma,X_\nr (M))$-equivariant.
That brings us to the left hand side of Lemma \ref{lem:8.7}. Applying that lemma, we finally obtain
the required bijection. 
\end{proof}

\section{Temperedness}
\label{sec:temp}

Like in \cite{Hei3,SolComp}, we want to show that the equivalence of categories
\[
\mc E : \Rep (G)^{\mf s} \to \End_G (I_P^G (E_B))\text{-Mod}
\]
preserves temperedness. At the moment we have not even defined temperedness for representations
of $\End_G (I_P^G (E_B))$, so we address that first. We also consider (essentially) discrete series representations of Hecke algebras, which correspond, under some extra conditions, to (essentially) 
square-integrable representations in Rep$(G)^{\mf s}$.

Our definition will mimick that for affine Hecke algebras \cite[\S 2]{Opd-Sp}. It depends on the
choice of the parabolic subgroup $P$ with Levi factor $M$. Before we just picked one, in this
section we have to be more careful. 

Recall that $\mc A_0$ is a maximal $F$-split torus of $\mc G$, contained in $\mc M$. 
By the standard theory of reductive groups \cite{Spr} there are (non-reduced) root systems
$\Sigma (\mc M,\mc A_0)$ and $\Sigma (\mc G,\mc A_0)$ in $X^* (\mc A_0)$. Further
$\Sigma (\mc G,\mc M) \cup \{0\}$ is the image of $\Sigma (\mc G,\mc A_0) \cup \{0\}$ in
the quotient $X^* (\mc A_0) \otimes_\Z \R / \R \Sigma (\mc M,\mc A_0)$. 

The root system $\Sigma_{\mc O,\mu}$ is contained in $\Sigma_\red (A_M) \subset \Sigma (\mc G,\mc M)$.
We write
\[
\Sigma_{\mc O,\mu} \tilde{+} \Sigma (\mc M,\mc A_0) = 
\{ \alpha \in \Sigma (\mc G,\mc A_0) : \alpha + \R \Sigma (\mc M,\mc A_0) \in \R \Sigma_{\mc O,\mu} \} ,
\]
a parabolic root subsystem of $\Sigma (\mc G,\mc A_0)$.

\begin{lem}\label{lem:9.1}
There exists a basis $\Delta$ of $\Sigma (\mc G,\mc A_0)$ which contains a basis $\Delta_{\mc M}$
of $\Sigma (\mc M,\mc A_0)$ and a basis $\Delta_{\mc M}^{\mc O}$ of 
$\Sigma_{\mc O,\mu} \tilde{+} \Sigma (\mc M,\mc A_0)$.
\end{lem}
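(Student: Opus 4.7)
The strategy is to realize the three root systems involved as a chain of parabolic subsystems inside $\Sigma(\mc G,\mc A_0)$ and then apply the standard fact that a basis of a parabolic subsystem extends to a basis of the ambient root system.

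Let $p : X^*(\mc A_0) \otimes_\Z \R \to X^*(A_M) \otimes_\Z \R$ be the restriction map dual to $A_M \hookrightarrow A_0$. Since the centralizer of $A_M$ in $\mc G$ is $\mc M$, the roots of $(\mc G, \mc A_0)$ that vanish on $A_M$ are precisely the roots of $(\mc M, \mc A_0)$, i.e.\
\[
\Sigma(\mc M,\mc A_0) \;=\; \Sigma(\mc G,\mc A_0) \cap \ker(p) .
\]
By its very definition
\[
\Sigma_{\mc O,\mu} \tilde{+} \Sigma(\mc M,\mc A_0) \;=\; \Sigma(\mc G,\mc A_0) \cap p^{-1}(\R \Sigma_{\mc O,\mu}) .
\]
Both subsystems are therefore the intersections of $\Sigma(\mc G,\mc A_0)$ with linear subspaces of $X^*(\mc A_0)\otimes_\Z \R$, which is the intrinsic characterization of a parabolic subsystem. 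Since $\ker(p) \subset p^{-1}(\R \Sigma_{\mc O,\mu})$, I obtain a chain
\[
\Sigma(\mc M,\mc A_0) \;\subset\; \Sigma_{\mc O,\mu} \tilde{+} \Sigma(\mc M,\mc A_0) \;\subset\; \Sigma(\mc G,\mc A_0)
\]
of nested parabolic inclusions; in particular the first of these two inclusions is also a parabolic inclusion of root systems.

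I next invoke the extension principle: given a parabolic subsystem $\Sigma' = \Sigma \cap V'$ of a root system $\Sigma$ and a basis $\Delta'$ of $\Sigma'$, there exists a basis $\Delta$ of $\Sigma$ with $\Delta' \subset \Delta$. The usual argument chooses a linear functional $f$ on the ambient space that is strictly positive on the $\Delta'$-positive roots of $\Sigma'$, and perturbs it by a small multiple of a functional $g$ that vanishes on $V'$ and is regular on $\Sigma \setminus \Sigma'$; for $\epsilon > 0$ sufficiently small, $f + \epsilon g$ is regular on all of $\Sigma$ and gives a positive system for $\Sigma$ that extends the $\Delta'$-positive system, and hence a basis $\Delta$ of $\Sigma$ with $\Delta \cap \Sigma' = \Delta'$.

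Applying this extension principle twice finishes the proof. I first pick any basis $\Delta_{\mc M}$ of $\Sigma(\mc M,\mc A_0)$, extend it to a basis $\Delta_{\mc M}^{\mc O}$ of $\Sigma_{\mc O,\mu} \tilde{+} \Sigma(\mc M,\mc A_0)$, and then extend once more to a basis $\Delta$ of $\Sigma(\mc G,\mc A_0)$. As a useful byproduct, the image $p(\Delta_{\mc M}^{\mc O} \setminus \Delta_{\mc M})$ is automatically a basis of $\Sigma_{\mc O,\mu}$, because any $\alpha \in \Sigma_{\mc O,\mu} \tilde{+} \Sigma(\mc M,\mc A_0)$ is a nonnegative (or nonpositive) integral combination of $\Delta_{\mc M}^{\mc O}$ and $p$ kills $\Delta_{\mc M}$. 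There is no real obstacle here; the only point requiring care is the verification that $\Sigma_{\mc O,\mu} \tilde{+} \Sigma(\mc M,\mc A_0)$ is genuinely parabolic in $\Sigma(\mc G,\mc A_0)$, which is immediate from its definition via the quotient $p$. In later applications one will further want $\Delta$ to be adapted to the given parabolic $P$; this is arranged by initially choosing $\Delta_{\mc M}^{\mc O}$ so that $p(\Delta_{\mc M}^{\mc O} \setminus \Delta_{\mc M}) = \Delta_{\mc O,\mu}$.
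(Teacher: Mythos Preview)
Your argument is correct and is essentially the same as the paper's: both rest on the observation that $\Sigma(\mc M,\mc A_0)$ and $\Sigma_{\mc O,\mu}\tilde{+}\Sigma(\mc M,\mc A_0)$ are parabolic subsystems of $\Sigma(\mc G,\mc A_0)$, and both produce the desired basis via a regular linear functional adapted to this nesting. The only cosmetic difference is that you invoke the extension principle twice, whereas the paper builds a single functional $t$ with $0<|t(\alpha)|<|t(\beta)|<|t(\gamma)|$ across the three strata in one stroke.
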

\begin{proof}
Choose a linear function $t$ on $X^* (\mc A_0) \otimes_\Z \R$ such that, for all
$\alpha \in \Sigma (\mc M,\mc A_0), \beta \in \Sigma_{\mc O,\mu} \tilde{+} \Sigma (\mc M,\mc A_0) 
\setminus \Sigma (\mc M,\mc A_0)$ and $\gamma \in \Sigma (\mc G,\mc A_0) \setminus
\Sigma_{\mc O,\mu} \tilde{+} \Sigma (\mc M,\mc A_0)$:
\[
0 < |t (\alpha)| < |t (\beta)| < |t (\gamma)| .
\]
Now take the system of positive roots 
\[
\Sigma (\mc G,\mc A_0)^+ := \{ \alpha \in \Sigma (\mc G,\mc A_0) : t (\alpha) > 0 \} 
\]
and let $\Delta$ be the unique basis of $ \Sigma (\mc G,\mc A_0)$ contained therein. Then
$\Delta$ consists of the positive roots that cannot be written as sums of positive roots
with smaller $t$-values. Hence $\Delta$ consists of a basis of $\Sigma (\mc M,\mc A_0)$,
added to that some roots to create a basis of $\Sigma_{\mc O,\mu} \tilde{+} \Sigma (\mc M,\mc A_0)$
and completed with other roots (all with $t$-values as small as possible) to a basis
of $\Sigma (\mc G,\mc A_0)$.
\end{proof}

Let $\mc P_0$ be the "standard" minimal parabolic $F$-subgroup of $\mc G$ determined by $\mc A_0$ 
and $\Delta$ and put $\mc P = \mc P_0 \mc M$. Then $\Sigma (\mc G,\mc M)$ is spanned by 
$\Delta \setminus \Delta_{\mc M}$ and $\Delta_{\mc M}^{\mc O} \setminus \Delta_{\mc M}$ spans 
$\R \Sigma_{\mc O,\mu}$. We note that
\[
(a_M^*)^{\perp \Delta_{\mc O,\mu}} := 
\{ x \in a_M^* : \langle \alpha^\vee, x \rangle = 0 \; \forall \alpha \in \Delta_{\mc O,\mu} \}
= \{ x \in a_M^* : \langle \alpha^\vee, x \rangle = 0 \; \forall \alpha \in \Delta_{\mc M}^{\mc O} \}
\]
always contains $a_G^* = X^* (A_G) \otimes_\Z \R$, but can be larger (if $\Sigma_{\mc O,\mu}$
has smaller rank than $\Sigma (\mc G,\mc M)$). Consider the obtuse negative cones with
respect to $\Delta_{\mc O,\mu}$:
\begin{align*}
& a_M^{*-} = \big\{ \sum\nolimits_{\alpha \in \Delta_{\mc O,\mu}} x_\alpha \alpha : 
x_\alpha \in \R_{\leq 0} \big\} , \\
& a_M^{*--} = \big\{ \sum\nolimits_{\alpha \in \Delta_{\mc O,\mu}} x_\alpha \alpha : 
x_\alpha \in \R_{< 0} \big\} . 
\end{align*}

\begin{defn}\label{def:temp}
Let $\pi$ be a finite dimensional $\End_G (I_P^G (E_B))$-representation. Then $\pi$ tempered
if Wt$(\pi) \subset X_\unr (M) \exp (a_M^{*-})$.
We say that $\pi$ is discrete series if $(a_M^*)^{\perp \Delta_{\mc O,\mu}} = 0$ and
Wt$(\pi) \subset X_\unr (M) \exp (a_M^{*--})$.
We call $\pi$ essentially discrete series if Wt$(\pi) \subset 
\exp ((a_M^*)^{\perp \Delta_{\mc O,\mu}} \otimes_\R \C) X_\unr (M) \exp (a_M^{*--})$.

These definitions also apply to $1_u \End_G (I_P^G (E_B) )_U^\an 1_u$, provided that we replace
$\Delta_{\mc O,\mu}$ by $\Delta_{\sigma \otimes u}$. This means replacing $a_M^{*-}$ by
\[
(a_M^* )^-_u := \big\{ \sum\nolimits_{\alpha \in \Delta_{\sigma \otimes u}} x_\alpha \alpha : 
x_\alpha \in \R_{\leq 0} \big\}
\]
and similarly adjusting $a_M^{*--}$ to $(a_M^* )^{--}_u$ and 
$(a_M^*)^{\perp \Delta_{\sigma \otimes u}}$ to $(a_M^* )^{\perp \Delta_{\sigma \otimes u}}$.

Let $k : \Sigma_{\sigma \otimes u} \to \R$ be a $W(M,\mc O)_{\sigma \otimes u}$-invariant function.
We say that $V \in \mh H (\tilde{\mc R}_u , W(M,\mc O)_{\sigma \otimes u}, k, 
\natural_u ) -\fMod$ is tempered if Wt$(V) \subset i a_M^* + (a_M^*)^-_u$, essentially
discrete series if Wt$(V) \subset (a_M^* )^{\perp \Delta_{\sigma \otimes u}} +
i a_M^* + (a_M^* )^{--}_u$ and discrete series if it is essentially discrete series and
$(a_M^* )^{\perp \Delta_{\sigma \otimes u}} = 0$.
\end{defn}
We note that Definition \ref{def:temp} also makes sense for localized or completed versions
of Hecke algebras, because those still have a root system and a large commutative subalgebra with
respect to which one can consider weights.

\subsection{Preservation of temperedness and discrete series} \
\label{par:preserv}

We will investigate these aspects of the relation between $\Rep (G)^{\mf s}$ and\\
$\End_G (I_P^G (E_B))-\Mod$ via graded Hecke algebras. From Corollary \ref{cor:8.4} we recall
the equivalence of categories between
\begin{itemize}
\item $\End_G (I_P^G (E_B))-\Modf{W(M,\sigma,X_\nr (M)) u X_\nr^+ (M)}$,
\item $\mh H (\tilde{\mc R}_u , W(M,\mc O)_{\sigma \otimes u}, k^u, \natural_u )-\Modf{a_M^*}$.
\end{itemize}

\begin{prop}\label{prop:9.6}
\enuma{
\item The above equivalence of categories preserves temperedness.
\item The above equivalence of categories preserves discrete series.
\item Any $V \in \End_G (I_P^G (E_B))-\Modf{W(M,\sigma,X_\nr (M)) u X_\nr^+ (M)}$
is essentially discrete series if and only if the corresponding module for
$\mh H (\tilde{\mc R}_u ,W(M,\mc O)_{\sigma \otimes u}, k^u, \natural_u )$ is essentially 
discrete series and $\Sigma_{\sigma \otimes u}$ has full rank in $\Sigma_{\mc O,\mu}$.
}
\end{prop}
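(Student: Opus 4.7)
The plan is to track $\C[X_\nr(M)]$-weights through the equivalence of categories chain \eqref{eq:8.8} of Corollary \ref{cor:8.4}. Under the isomorphism $\Phi_u$ of Proposition \ref{prop:8.3}, composed with the identification $\log(U_u) \cong U_u$ via $\exp_u$, a weight $\lambda \in a_M^*$ of an $\mh H (\tilde{\mc R}_u, W(M,\mc O)_{\sigma\otimes u}, k^u, \natural_u)$-module corresponds to the weight $u\exp(\lambda)$ of the associated $1_u \End_G(I_P^G(E_B))_U^\an 1_u$-module $V_u$. Passing via the Morita equivalence (Lemma \ref{lem:7.2}) and invoking Lemma \ref{lem:6.6}(c), the $\End_G(I_P^G(E_B))$-module $V$ has weight set equal to the full $W(M,\sigma,X_\nr(M))$-orbit of $\mr{Wt}(V_u)$. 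Since this action respects the polar decomposition $X_\nr(M) = X_\unr(M) \times X_\nr^+(M)$ and acts on $X_\nr^+(M) \cong \exp(a_M^*)$ through its canonical image $\bar w \in W(M,\mc O)$, every weight of $V$ has the form $w(u)\exp(\bar w \cdot \lambda)$ for some $w \in W(M,\sigma,X_\nr(M))$ and some weight $\lambda$ of the $\mh H$-module; the unitary part $w(u)$ lies in $X_\unr(M)$ and is irrelevant for temperedness.

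\textbf{Part (a).} It follows that $V$ is tempered iff $\bar w \cdot \lambda \in a_M^{*-}$ for all $\bar w$ in a transversal for $W(M,\mc O)/W(M,\mc O)_{\sigma\otimes u}$ and all weights $\lambda$ of the $\mh H$-module, while the $\mh H$-module is tempered iff $\lambda \in (a_M^*)^-_u$ for each such $\lambda$. Thus (a) reduces to the combinatorial identity
\[
(a_M^*)^-_u \;=\; \bigcap_{\bar w \in W(M,\mc O)/W(M,\mc O)_{\sigma\otimes u}} \bar w^{-1}(a_M^{*-}) .
\]
The inclusion $\subseteq$ follows from two observations: first, by Lemma \ref{lem:9.1} the choice of $P$ ensures that $\Sigma_{\sigma\otimes u}(P) \subseteq \Sigma_{\mc O,\mu}(P)$, so every element of $\Delta_{\sigma\otimes u}$ is a non-negative combination of $\Delta_{\mc O,\mu}$, giving $(a_M^*)^-_u \subseteq a_M^{*-}$; second, $W(M,\mc O)_{\sigma\otimes u} = W(\Sigma_{\sigma\otimes u}) \rtimes R(\sigma\otimes u)$ stabilizes $(a_M^*)^-_u$ setwise because $R(\sigma\otimes u)$ preserves $\Delta_{\sigma\otimes u}$ and reflections in $W(\Sigma_{\sigma\otimes u})$ permute the $\Delta_{\sigma\otimes u}$-cones appropriately. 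The opposite inclusion $\supseteq$ is the delicate point: I plan to exhibit, for any simple reflection $s_\alpha$ with $\alpha \in \Sigma_{\mc O,\mu} \setminus \R \Sigma_{\sigma\otimes u}$, explicit constraints forcing any element of the intersection into $\R\Sigma_{\sigma\otimes u}$, and then use standard Weyl-chamber arguments within that subspace.

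\textbf{Parts (b) and (c), and the main obstacle.} The same strategy works with the strict obtuse negative cones, yielding the analogous identity
\[
(a_M^*)^{--}_u \;=\; \bigcap_{\bar w} \bar w^{-1}(a_M^{*--})
\]
after intersecting with the appropriate ambient subspaces. For essentially discrete series in the sense of part (c), $V$ has weights in $\exp((a_M^*)^{\perp \Delta_{\mc O,\mu}} \otimes_\R \C) X_\unr(M) \exp(a_M^{*--})$, while the $\mh H$-module has weights in $(a_M^*)^{\perp \Delta_{\sigma\otimes u}} + ia_M^* + (a_M^*)^{--}_u$; matching these via the weight correspondence forces the equality $(a_M^*)^{\perp \Delta_{\sigma\otimes u}} = (a_M^*)^{\perp \Delta_{\mc O,\mu}}$, which is precisely the condition that $\Sigma_{\sigma\otimes u}$ has full rank in $\Sigma_{\mc O,\mu}$. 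Part (b) is then the special case where both orthogonal complements vanish, which is built into the definition of discrete series on each side and already implies the rank condition. The main obstacle is a clean proof of the cone identities, complicated by the fact that $R(\mc O)$ need not normalize $\Sigma_{\sigma\otimes u}$ and that representatives $\bar w$ for cosets in $W(M,\mc O)/W(M,\mc O)_{\sigma\otimes u}$ mix contributions from $W(\Sigma_{\mc O,\mu})$ and $R(\mc O)$; I expect to handle this by first reducing via $R(\mc O)$-invariance of $\Delta_{\mc O,\mu}$ (equation \eqref{eq:3.19}) to an identity about the $W(\Sigma_{\mc O,\mu})$-orbit alone, where the classical theory of parabolic sub-root-systems and their obtuse cones applies.
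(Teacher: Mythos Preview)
Your overall strategy matches the paper's: track $\C[X_\nr(M)]$-weights through the chain \eqref{eq:8.8}, observe that only the Morita step \eqref{eq:9.11} is nontrivial, and reduce that step to a combinatorial cone identity. However, there is a genuine gap in your execution of that reduction.

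The claim in part (a) that $W(M,\mc O)_{\sigma\otimes u}$ stabilizes $(a_M^*)^-_u$ setwise is false. The subgroup $W(\Sigma_{\sigma\otimes u})$ does \emph{not} preserve the obtuse negative cone: for $\alpha\in\Delta_{\sigma\otimes u}$ one has $-\alpha\in (a_M^*)^-_u$ but $s_\alpha(-\alpha)=\alpha\notin (a_M^*)^-_u$. Consequently your displayed identity
\[
(a_M^*)^-_u \;=\; \bigcap_{\bar w \in W(M,\mc O)/W(M,\mc O)_{\sigma\otimes u}} \bar w^{-1}(a_M^{*-})
\]
is ill-posed: the right-hand side depends on the choice of transversal, and your argument for the inclusion $\subseteq$ breaks down. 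The same problem affects the strict cone identity you propose for parts (b) and (c).

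What the paper actually does is the refinement you sketch only in your final ``obstacle'' paragraph, and it must be the main argument rather than an afterthought. One first passes to $\mr{Wt}'$, the weights modulo $X_\nr(M,\sigma)$ (which is harmless for the positive part). Then one uses that $\mr{Wt}'(V)$ is $R(\mc O)$-stable (Remark \ref{rem:6.7}) and $\mr{Wt}'(V_u)$ is $R(\sigma\otimes u)$-stable (because the $A_r^u$ lie in the localized algebra). Since $R(\mc O)$ preserves $\Delta_{\mc O,\mu}$ and hence $a_M^{*-}$, the $R$-groups drop out entirely, and one is left with the \emph{specific} set of shortest-length representatives $W(\Sigma_{\mc O,\mu})^u$ for $W(\Sigma_{\mc O,\mu})/W(\Sigma_{\sigma\otimes u})$ acting on $\mr{Wt}'(V_u)$. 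At that point the paper invokes \cite[Theorem 2.5.c and Proposition 2.7]{AMS3} rather than proving the cone statement by hand. Note also that $\Sigma_{\sigma\otimes u}$ need not be a parabolic sub-root-system of $\Sigma_{\mc O,\mu}$, so your appeal to ``the classical theory of parabolic sub-root-systems'' may not suffice; the argument in \cite{AMS3} covers the more general situation needed here.
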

\begin{proof}
We have to consider all the steps in \eqref{eq:8.8}, for those give rise to the equivalence of
categories in Corollary \ref{cor:8.4}. 
Pullback along
\[
\End_G (I_P^G (E_B)) \to \End_G (I_P^G (E_B))_U^\an
\]
does not change the $\C [X_\nr (M)]$-weights, nor the root system, so that step certainly preserves
everything under consideration. Similarly pullback along
\[
\mh H (\tilde{\mc R}_u , W(M,\mc O)_{\sigma \otimes u}, k^u, \natural_u ) \longrightarrow
\mh H (\tilde{\mc R}_u , W(M,\mc O)_{\sigma \otimes u}, k^u, \natural_u )^\an_{\log (U_u)} 
\]
is innocent for our purposes. 

The algebra isomorphism
\[
1_u \End_G (I_P^G (E_B))_U^\an 1_u \longrightarrow 
\mh H (\tilde{\mc R}_u , W(M,\mc O)_{\sigma \otimes u}, k^u, \natural_u )^\an_{\log (U_u)} 
\]
from Proposition \ref{prop:8.3} has the effect $\exp_u$ on weights. Since the root systems on both
sides are the same,
\begin{align*}
& \exp_u (i a_M^* + (a_M^* )^-_u ) = X_\unr (M) \exp ( (a_M^* )^-_u ),\\
& \exp_u (i a_M^* + (a_M^* )^{--}_u ) = X_\unr (M) \exp ( (a_M^* )^{--}_u ),\\
& \exp_u \big( (a_M^* )^{\perp \Delta_{\sigma \otimes u}} + i a_M^* + (a_M^* )^{--}_u \big) = 
\exp \big( (a_M^* )^{\perp \Delta_{\sigma \otimes u}} \big) X_\unr (M) \exp ( (a_M^* )^{--}_u ).
\end{align*}
From Definition \ref{def:temp} we see that the equivalence of categories coming from this 
algebra isomorphism preserves temperedness and (essentially) discrete series.

It remains to investigate the Morita equivalent inclusion
\begin{equation}\label{eq:9.11}
1_u \End_G (I_P^G (E_B ))_U^\an 1_u \longrightarrow \End_G (I_P^G (E_B ))_U^\an 
\end{equation}
from Lemma \ref{lem:7.2}. Notice that here the root system changes from $\Sigma_{\sigma \otimes u}$
to $\Sigma_{\mc O,\mu}$. Let $V \in \End_G (I_P^G (E_B ))_U^\an -\fMod$ be a module corresponding
to $V_u \in \\ 1_u \End_G (I_P^G (E_B ))_U^\an 1_u -\fMod$. The relation between the 
$\C[X_\nr (M)]$-weights of $V_u$ and $V$ was described in Lemma \ref{lem:6.6}, in terms of a set
of representatives $W^u$ for $W(M,\sigma,X_\nr (M)) / W(M,\sigma,X_\nr (M))_u$.

In Definition \ref{def:temp} the unitary parts of $\C [X_\nr (M)]$-weights are irrelevant, the 
conditions depend only on the absolute values of $\C [X_\nr (M)]$-weights. Therefore it suffices to 
consider these weights as elements of $X_\nr (M) / X_\nr (M,\sigma) \cong \mc O$, or equivalently 
as characters of $\C [X_\nr (M) / X_\nr (M,\sigma)]$. We indicate this by Wt'$(V)$ and Wt'$(V_u)$. 
Then Lemma \ref{lem:6.6} becomes
\begin{align*}
& \mr{Wt'}(V_u) = \mr{Wt'} (V) \cap X_\nr (M,\sigma) U_u / X_\nr (M,\sigma) ,\\
& \mr{Wt'}(V) = \{ w (\chi) : w \in W^u , \chi \in \mr{Wt'}(V_u) \} .
\end{align*}
In view of Remark \ref{rem:6.7}, Wt'$(V)$ is stable under $R(\mc O)$. Therefore we may replace $W^u$
by $R (\mc O) W(\Sigma_{\mc O,\mu})^u$, where $W(\Sigma_{\mc O,\mu})^u$ is a set of shortest length
representatives for $W(\Sigma_{\mc O,\mu}) / W(\Sigma_{\mc O,\mu})_u$. Since Wt'$(V_u)$ is stable
under $R(\sigma \otimes u)$ and 
\[
W(\Sigma_{\mc O,\mu})_u \subset R(\sigma \otimes u) W(\Sigma_{\sigma \otimes u}),
\] 
we may also take for $W(\Sigma_{\mc O,\mu})^u$ a set of shortest length representatives of\\ 
$W(\Sigma_{\mc O,\mu}) / W(\Sigma_{\sigma \otimes u})$, then still
\[
\mr{Wt'}(V) = \{ r w (\chi) : r \in R (\mc O), w \in W(\Sigma_{\mc O,\mu})^u ,\chi \in \mr{Wt'}(V_u) \}.
\]
Now we are in a setting where, under the Morita equivalence from Lemma \ref{lem:6.6}, the root 
systems and $\C [X_\nr (M) / X_\nr (M,\sigma)]$-weights behave exactly as in \cite[Theorem 2.5.c]{AMS3}.
That enables us to apply the arguments for \cite[Proposition 2.7]{AMS3} (which can easily be rephrased
in terms of a root datum and an extended Weyl group acting on it). The conclusions from \cite[\S 2]{AMS3}
about the behaviour of temperedness and (essentially) discrete series with respect to \eqref{eq:9.11}
are exactly as stated in the proposition. 
\end{proof}

Now we will translate $\End_G (I_P^G (E_B))$ to the module category of an affine Hecke
algebra, as far as possible. Every finite dimensional $\End_G (I_P^G (E_B))$-module
decomposes canonically as a direct sum of submodules, each of which has weights in just one
set $W(M,\sigma,X_\nr (M)) u X_\nr^+ (M)$. Combining that with Corollary \ref{cor:8.4} and
Lemma \ref{lem:8.6}, we obtain equivalences of categories between $\Rep_{\mr f}(G)^{\mf s}$, 
$\End_G (I_P^G (E_B)) -\fMod$ and
\begin{equation}\label{eq:9.1}
\bigoplus\nolimits_{u \in X_\unr (M)} \mh H (\tilde{\mc R}_u , W(M,\mc O)_{\sigma \otimes u}, k^u, 
\natural_u ) -\Modf{a_M^*} \; \big/ W(M,\sigma,X_\nr (M)) .
\end{equation}
To make sense of this as category, the action of $w \in W(M,\sigma,X_\nr (M))$ on the summands 
indexed by $u$ with $w(u) = u$ is supposed to be trivial. Hence the quotient operation only takes 
place in the index set $X_\unr (M)$, and the result can be considered as a direct sum of module 
categories, indexed by $X_\unr (M) / W(M,\sigma,X_\nr (M))$. Unfortunately this is not canonical, it
depends on the choice of a set of representatives for the action of $X_\nr (M,\sigma)$ on $X_\unr (M)$.

With Lemma \ref{lem:8.1} we can rewrite \eqref{eq:9.1} as
\begin{equation}\label{eq:9.2}
\bigoplus\nolimits_{u \in X_\unr (M) / X_\nr (M,\sigma)} \mh H (\tilde{\mc R}_u , 
W(M,\mc O)_{\sigma \otimes u}, k^u, \natural_u ) -\Modf{a_M^*} \; \big/ W(M,\mc O) .
\end{equation}
This is very similar to the module category of an affine Hecke algebra with torus
$X_\unr (M) / X_\nr (M,\sigma) = \Irr (M_\sigma^2 / M^1)$. 
To make that precise, consider the algebra
\[
\End_G^\circ (I_P^G (E_B)) :=
\End_G (I_P^G (E_B)) \bigcap \bigoplus\nolimits_{w \in W(\Sigma_{\mc O,\mu})} \C (X_\nr (M)) \mc T_w .
\]
Corollary \ref{cor:5.6} and $\mc T_r \in \End_G (I_P^G (E_B))^\times$ for $r \in R(\mc O)$ entail that
\[
\End_G (I_P^G (E_B)) = \bigoplus\nolimits_{r \in R (\mc O)} \End_G^\circ (I_P^G (E_B)) \mc T_r .
\]
All the calculations in Sections \ref{sec:localization}--\ref{sec:class} also work with
$\End_G^\circ (I_P^G (E_B))$, provided we replace $W(M,\mc O)$ by $W(\Sigma_{\mc O,\mu})$ and
$W(M,\sigma,X_\nr (M))$ by $X_\nr (M,\sigma) \rtimes W(\Sigma_{\mc O,\mu})$ everywhere. (These 
restrictions only make the computations easier.) 

For any $u \in X_\unr (M)$, Lemmas \ref{lem:4.4} and \ref{lem:8.5} imply that the 2-cocycle $\natural_u$ 
of $W(M,\mc O)_{\sigma \otimes u}$ is trivial on $W(\Sigma_{\mc O,\mu})_{\sigma \otimes u}$. The proof
of Lemma \ref{lem:8.5} provides a canonical normalization for the involved $\mc T_w^u$, so that
$\natural_u$ disappears in this setting. In the end we find that, like \eqref{eq:9.2}, 
$\End_G^\circ (I_P^G (E_B))$ is equivalent with
\begin{equation}\label{eq:9.15}
\bigoplus\nolimits_{u \in X_\unr (M) / X_\nr (M,\sigma)} \mh H (\tilde{\mc R}_u , 
W(\Sigma_{\mc O,\mu})_{\sigma \otimes u}, k^u ) -\Modf{a_M^*} \; \big/ W(\Sigma_{\mc O,\mu}) .
\end{equation}
Recall the root datum
\[
(\Sigma_{\mc O}^\vee, M_\sigma^2 / M^1, \Sigma_{\mc O}, (M_\sigma^2 / M^1)^\vee)
\]
from Proposition \ref{prop:3.5}. Endow it with the basis determined by $P$, parameter $q_F$
and the labels 
\begin{equation}\label{eq:9.9}
\lambda (h_\alpha^\vee) = \log ( q_\alpha q_{\alpha*} ) / \log (q_F),\; 
\lambda^* (h_\alpha^\vee) = \log ( q_\alpha q_{\alpha*}^{-1} ) / \log (q_F) .
\end{equation}
To these data we associate the affine Hecke algebra
\begin{equation}\label{eq:9.4}
\mc H (\mc O,G) := \mc H \big( \Sigma_{\mc O}^\vee, M_\sigma^2 / M^1, \Sigma_{\mc O}, 
(M_\sigma^2 / M^1)^\vee, \lambda, \lambda^*, q_F \big).
\end{equation}
From Lusztig's reduction theorems \cite{Lus-Gr}, in the form \cite[Theorems 2.5 and 2.9]{AMS3},
we see that $\mc H (\mc O,G) - \fMod$ is also equivalent with \eqref{eq:9.15}.
The group $R(\mc O)$ acts on the root and on the algebra \eqref{eq:9.4}, preserving all 
the structure. With a 2-cocycle 
\begin{equation}\label{eq:9.3}
\tilde \natural : ( W(M,\mc O) / W(\Sigma_{\mc O,\mu}) )^2 \to \C^\times
\end{equation}
we build a twisted affine Hecke algebra
\[
\mc H (\mc O,G) \rtimes \C [R(\mc O),\tilde \natural] 
\] 
as in \cite[Proposition 2.2]{AMS3}. Let $\tilde{\natural}_u$ be the restriction of $\tilde \natural$
to $W (M,\mc O)_{\sigma \otimes u}$. From \cite[Theorems 2.5 and 2.9]{AMS3} we see that
$\mc H (\mc O,G) \rtimes \C [R(\mc O),\tilde \natural] -\fMod$ is equivalent with 
\begin{equation}\label{eq:9.12}
\bigoplus_{u \in X_\unr (M) / X_\nr (M,\sigma)} \mh H (\tilde{\mc R}_u , 
W(\Sigma_{\sigma \otimes u}), k^u) \rtimes \C[R(\sigma \otimes u), \tilde{\natural}_u ] 
-\Modf{a_M^*} \; \big/ W(M,\mc O) .
\end{equation}
Notice that here we do not see the entire 2-cocycle $\tilde{\natural}$, only its restrictions to the 
subgroups $W(M,\mc O)_{\sigma \otimes u}$. Let us summarise the above observations:

\begin{cor}\label{cor:9.9}
\enuma{
\item There exists an equivalence of categories
\[
\End_G^\circ (I_P^G (E_B)) -\fMod \longleftrightarrow \mc H (\mc O,G) -\fMod .
\]
Thus $\End_G (I_P^G (E_B))$ has a subalgebra, over which it is of finite rank, and that subalgebra
is almost Morita equivalent with an affine Hecke algebra.
\item Suppose that $\tilde{\natural}_u$ is cohomologous to $\natural_u$, for each $u \in X_\unr (M)$.
Then the categories $\mc H (\mc O,G)  \rtimes \C [R(\mc O),\tilde \natural] -\fMod$ and 
$\End_G (I_P^G (E_B)) -\fMod$ are equivalent.
}
\end{cor}

Although the equivalences of categories in Corollary \ref{cor:9.9} look like Morita equivalences, 
they are not quite, because we do not know whether they extend to modules of infinite length. 
Let us describe part (b) more explicitly. Start with $V \in \End_G (I_P^G (E_B))-\fMod$. Decompose it as
\[
V = \bigoplus\nolimits_{u \in X_\unr (M)} V_u \quad \text{where } \mr{Wt}(V_u) \subset u X_\nr^+ (M) .
\]
Pick a fundamental domain $\tilde X$ for the action of $X_\nr (M,\sigma)$ on $X_\unr (M)$.
Then put $\tilde V = \bigoplus_{u \in \tilde X} V_u$, this is the associated 
$\mc H (\mc O,G) \rtimes \C [R(\mc O),\tilde \natural]$-module. The \\
$\C [X_\nr (M)/X_\nr (M,\sigma)]$-action can be read of directly, to reconstruct how the rest of 
$\mc H (\mc O,G) \rtimes \C [R(\mc O),\tilde \natural]$ acts one needs Lemmas \ref{lem:8.1} and 
\ref{lem:8.6}. The effect of this equivalence on weights is simple. Whenever a module $\tilde V$ of 
$\mc H (\mc O,G) \rtimes \C [R(\mc O),\tilde \natural]$ has a weight $\chi X_\nr (M,\sigma) \in 
X_\nr (M)/X_\nr (M,\sigma)$, all elements of $\chi X_\nr (M,\sigma) \subset X_\nr (M)$ are weights 
of $V \in \End_G (I_P^G (E_B)) -\fMod$, and conversely. 

The problem with Corollary \ref{cor:9.9}.b lies in the existence of a 2-cocycle $\tilde \natural$
with the mentioned properties. We do not know whether such a 2-cocycle exists in general. 
Of course it is easy to fulfill the condition for one given $u \in X_\unr (M)$, but then it could 
fail for different $u' \in X_\unr (M)$.
Nevertheless, even if we cannot find such a $\tilde \natural$, we can still work in similar spirit.

Recall from the proof of Theorem \ref{thm:8.8} that there is a central extension $\Gamma_{\mc O}$ of
$R(\mc O)$ such that $\mc H (\mc O,G) \rtimes \C [R(\mc O),\tilde \natural] -\Mod$ is equivalent
with the subcategory of $\mc H (\mc O,G) \rtimes \Gamma_{\mc O}-\Mod$ determined by the appropriate 
character of $\ker (\Gamma_{\mc O} \to R(\mc O))$. That allows us to apply results about extended
affine Hecke algebras like $\mc H (\mc O,G) \rtimes \Gamma_{\mc O}$ to twisted affine Hecke algebras
like $\mc H (\mc O,G) \rtimes \C [R(\mc O),\tilde \natural]$.

It is known from \cite[\S 2.1--2.2]{AMS3} that the equivalence between $\mc H (\mc O,G) \rtimes 
\C [R(\mc O),\tilde \natural] -\fMod$ and \eqref{eq:9.12} is compatible with parabolic induction
and restriction, temperedness, discrete series and the Langlands classication, and its effect on
weights is also well-understood. All this is analogous to the equivalence between 
$\End_G (I_P^G (E_B))-\fMod$ and \eqref{eq:9.1}, as we worked out in Corollary \ref{cor:8.4} and
Proposition \ref{prop:9.6}. As a consequence, most results about finite dimensional modules of
extended affine Hecke algebras can be interpreted in terms of the category \eqref{eq:9.12}. 

That applies in particular to the results of \cite{SolComp} that do not involve infinite dimensional
modules or topological completions of Hecke algebras. In that paper it is assumed that Rep$(G)^{\mf s}$ 
is equivalent with the module category of an extended affine Hecke algebra, and properties of such an
equivalence are derived. In fact, all the proofs and results of \cite{SolComp} outside Paragraphs 1.2,
2.1 and 3.3 can be reformulated with \eqref{eq:9.12} instead of the module category of a twisted affine 
Hecke algebra, because they only use properties that are respected by such equivalences of categories.
If we do that, we do not need all of $\tilde \natural$ any more, it suffices to know its 
restrictions $\tilde{\natural}_u$. 

Once we realize that, we can generalize \cite{SolComp}. Namely, we can replace $\tilde \natural$
by a family of 2-cocyles $\tilde{\natural}_u$ of $W(M,\mc O)_{\sigma \otimes u}$, parametrized by 
$X_\nr (M) / X_\nr (M,\sigma)$ and equivariant (up to coboundaries) for $W(M,\mc O)$, but not 
necessarily constructed from a single 2-cocycle $\tilde \natural$ on $W(M,\mc O)$. From Lemmas
\ref{lem:8.1}.a and \ref{lem:8.6}.a we know that $\natural_u$ is such a family of 2-cocycles.
Thus, we want to apply \cite{SolComp} with \eqref{eq:9.2} instead of an extended affine Hecke algebra.

To do so, it remains to verify  the precise assumptions in \cite[\S 4.1]{SolComp} for the equivalence 
between \eqref{eq:9.2} and $\Rep_{\mr f}(G)^{\mf s}$. The results about 
parabolic induction and restriction in Corollary \ref{cor:8.4} take care of 
\cite[Condition 4.1.(i)--(ii)]{SolComp}. Next, \cite[Conditions 4.1.(iii) and 4.2.(ii)]{SolComp} are
about inclusions of parabolic subalgebras associated to Levi subgroups $L$ of $G$ containing $M$.
These are fulfilled by the naturality of the inclusion \eqref{eq:4.33} and because for graded Hecke
algebras we are using standard parabolic subalgebras anyway.

In \cite[Condition 4.2.iii]{SolComp} it is required that $\Sigma_{\mc O,\mu}^+$ lies in the
cone $\Q_{\geq 0} \Sigma (\mc G,\mc M)^+$ and that $\Q \Sigma_{\mc O,\mu}$ has a $\Q$-basis
consisting of simple roots of $\Sigma (\mc G,\mc M)$. Both are guaranteed by Lemma \ref{lem:9.1}.

Let $\Sigma_{\mc O,L}$ be the parabolic root subsystem of $\Sigma_{\mc O,\mu}$ consisting of
roots that come from the action of $A_M$ on the Lie algebra of $L$. Then
\[
N_L (M,\mc O) / M = W(\Sigma_{\mc O,L}) \rtimes R(\mc O,L) ,\text{ where }
R(\mc O,L) = R(\mc O) \cap N_L (M,\mc O) / M .
\]
In \cite[Condition 4.2.iv]{SolComp} it is required firstly that $R(\mc O,L)$ stabilizes 
$\Sigma_{\mc O,L}$ -- which is clear. Secondly, when $\Sigma_{\mc O,L}$ has full rank in 
$\Sigma (\mc L,\mc M)$, \cite[Condition 2.1]{SolComp} has to be fulfilled. That says 
\begin{itemize}
\item $R(\mc O,L) \subset R(\mc O,L')$ if $L \subset L'$;
\item the action of $R(\mc O,L)$ on $X_\nr (M)$ stabilizes the subsets $\exp (\C \Sigma_{\mc O,L})$ 
and $X_\nr (M)^L := \exp ((a_M^*)^{\perp L} \otimes_\R \C)$, where 
\[
(a_M^*)^{\perp L} = \{ x \in a_M^* : \langle \alpha^\vee , x\rangle = 0 \; \forall \alpha
\in \Sigma_{\mc O,L} \} ;
\]
\item $R(\mc O,L)$ acts on $X_\nr (M)^L$ by multiplication with elements of 
$X_\nr (M)^L \cap \exp (\C \Sigma_ {\mc O,L})$.
\end{itemize}
The first of these bullets is obvious. By the full rank assumption
\begin{equation}\label{eq:9.5}
(a_M^*)^{\perp L} = \{ x \in a_M^* : \langle \alpha^\vee , x\rangle = 0 \; \forall \alpha
\in \Sigma (\mc L,\mc M) \} = a_L^* = X^* (A_L) \otimes_\Z \R .
\end{equation}
Recall that the action of $r \in R(\mc O,L)$ on $X_\nr (M)$ consists of a part which is linear 
on the Lie algebra and a translation by $\chi_r$. By the $R(\mc O,L)$-stability of 
$\Sigma_{\mc O,L}$, the linear part stabilizes $\exp (\C \Sigma_{\mc O,L})$. Further the
linear part of the action of $r$ fixes $a_L^*$ pointwise, so by \eqref{eq:9.5} it fixes
$X_\nr (M)^L$ pointwise. The definition of $\chi_r$ in \eqref{eq:3.27} shows that it is
an unramified character of $L$ which is trivial on $Z(L)$. This means that 
$\chi_r \in X_\nr (M)^L \cap \exp (\C \Sigma_ {\mc O,L})$. Hence the second and third bullets hold.

We have verified everything needed to make the arguments in \cite[\S 4.2]{SolComp} about finite
length representations work with \eqref{eq:9.2}. Recall that a $G$-representation (of finite length) 
is called essentially square-integrable if its restriction to the derived group of $G$ is 
square-integrable.

\begin{prop}\label{prop:9.3}
Consider the equivalence between $\Rep_{\mr f}(G)^{\mf s}$ and 
\[
\bigoplus\nolimits_{u \in X_\unr (M) / X_\nr (M,\sigma)} \mh H (\tilde{\mc R}_u , 
W(M,\mc O)_{\sigma \otimes u}, k^u, \natural_u ) -\Modf{a_M^*} \; \big/ W(M,\mc O)
\] 
coming from Corollary \ref{cor:8.4} and Lemma \ref{lem:8.1}. 
\enuma{
\item This equivalence preserves temperedness.
\item Suppose that $\Sigma_{\sigma \otimes u}$ has smaller rank than $\Sigma (\mc G,\mc M)$. Then
$\Rep_{\mr f}(G)^{\mf s}$ contains no essentially square-integrable representations with cuspidal 
support in\\ $W(M,\mc O) \{ \sigma \otimes u \chi : \chi \in X_\nr^+ (M) \}$.
\item Suppose that $\Sigma_{\sigma \otimes u}$ has full rank in $\Sigma (\mc G,\mc M)$. 
The equivalence provides a bijection between the following sets:
\begin{itemize}
\item essentially square-integrable objects of $\Rep_{\mr f}(G)^{\mf s}$ 
with cuspidal support in $W(M,\mc O) \{ \sigma \otimes u \chi : \chi \in X_\nr^+ (M) \}$,
\item essentially discrete series objects of
$\mh H (\tilde{\mc R}_u , W(M,\mc O)_{\sigma \otimes u}, k ,\natural_u ) -\Modf{a_M^*}$.
\end{itemize}
This remains valid if we add "tempered" and/or "irreducible" on both sides.
\item When $Z(G)$ is compact, part (c) holds without "essentially".
}
\end{prop}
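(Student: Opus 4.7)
The strategy is to combine the equivalences and preservation results already established in this paper with the abstract transfer theorems of \cite[\S 4]{SolComp}. The discussion preceding the proposition has verified every hypothesis required by \cite[\S 4.1--4.2]{SolComp} for the equivalence between $\Rep_{\mr f}(G)^{\mf s}$ and the direct sum \eqref{eq:9.2}: compatibility with parabolic induction and Jacquet restriction (Corollary \ref{cor:8.4} and Proposition \ref{prop:4.5}); compatibility with parabolic subalgebras attached to intermediate Levi subgroups $L \supset M$ (naturality of the inclusion \eqref{eq:4.33} together with the standard construction of parabolic graded Hecke subalgebras); the positioning of $\Sigma_{\mc O,\mu}^+$ inside $\Q_{\geq 0} \Sigma(\mc G,\mc M)^+$ with a simple basis coming from $\Sigma(\mc G,\mc M)$, afforded by Lemma \ref{lem:9.1}; and the required R-group properties of $R(\mc O,L)$ derived from \eqref{eq:3.27}. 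Given this, the plan is to invoke the relevant transfer results of \cite[\S 4.2]{SolComp} directly, while using Proposition \ref{prop:9.6} to translate between the endomorphism algebra side and the graded Hecke algebra side in \eqref{eq:9.2}.

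For part (a), I would apply the temperedness transfer of \cite[\S 4.2]{SolComp}: Casselman's temperedness criterion for $\pi \in \Rep_{\mr f}(G)^{\mf s}$, phrased in terms of exponents of Jacquet modules along standard parabolics of $(\mc G,\mc M)$, translates under the equivalence with \eqref{eq:9.2} to the weight condition $\mr{Wt}(V) \subset i a_M^* + (a_M^*)^-_u$ of Definition \ref{def:temp}. The compatibility with parabolic restriction from Corollary \ref{cor:8.4}, together with Proposition \ref{prop:9.6}(a), makes the two conditions correspond. For parts (b)--(d), the analogous transfer for (essentially) square-integrable representations applies. Casselman's square-integrability criterion requires exponents, modulo $a_G^*$, to lie in the strictly negative cone spanned by $\Delta(\mc G,\mc M)$, whereas essential discrete series on the Hecke side is governed by the strictly negative cone spanned by $\Delta_{\sigma \otimes u}$, modulo $(a_M^*)^{\perp \Delta_{\sigma \otimes u}}$. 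These two conditions match precisely when $\Sigma_{\sigma \otimes u}$ has full rank in $\Sigma(\mc G,\mc M)$, in which case $(a_M^*)^{\perp \Delta_{\sigma \otimes u}} = a_G^*$; when the rank condition fails, a direction in $\Delta(\mc G,\mc M) \setminus \Q \Sigma_{\sigma \otimes u}$ obstructs essential square-integrability on the $G$-side, yielding part (b). Part (c) then follows from the matching of cones, and part (d) is the further specialization where $Z(G)$ compact forces $a_G^* = 0$, so that the adverb "essentially" disappears on both sides simultaneously.

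The main obstacle I anticipate is bookkeeping across the three root systems in play: the ambient $\Sigma(\mc G,\mc M)$ governing genuine square-integrability, the intermediate $\Sigma_{\mc O,\mu}$ governing $\End_G(I_P^G(E_B))$, and the smaller $\Sigma_{\sigma \otimes u}$ governing the graded Hecke algebra at $u$. The passage from $\Sigma_{\mc O,\mu}$ down to $\Sigma_{\sigma \otimes u}$ is already handled in Proposition \ref{prop:9.6} via the Morita step of Lemma \ref{lem:7.2} and the arguments of \cite[\S 2]{AMS3}; what remains is the passage from $\Sigma(\mc G,\mc M)$ down to $\Sigma_{\mc O,\mu}$, for which Lemma \ref{lem:9.1} supplies the required compatibility of simple systems, so that the cones used in Casselman's criterion and in Definition \ref{def:temp} can be directly compared. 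The noncanonical 2-cocycle $\natural_u$ does not enter any weight or cone condition, so the cohomological subtleties of Corollary \ref{cor:9.9} play no role here.
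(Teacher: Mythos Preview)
Your proposal is correct and follows essentially the same route as the paper's proof: both rely on the preceding verification of the hypotheses of \cite[\S 4.1]{SolComp} and then invoke the transfer results of \cite[\S 4.2]{SolComp} (the paper names \cite[Theorem 4.9 and Proposition 4.10]{SolComp} specifically). The only organizational difference is that the paper explicitly cites \cite[Proposition 2.7 and Theorem 2.11.d]{AMS3} to translate the affine Hecke algebra notions of temperedness and (essentially) discrete series into the graded Hecke setting of \eqref{eq:9.2}, whereas you route that translation through Proposition \ref{prop:9.6}, whose proof already absorbs the \cite{AMS3} arguments; your more explicit discussion of the cone comparison across $\Sigma(\mc G,\mc M)$, $\Sigma_{\mc O,\mu}$, and $\Sigma_{\sigma\otimes u}$ is a helpful unpacking of what those citations encode.
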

\begin{proof}
With \cite[Proposition 2.7 and Theorem 2.11.d]{AMS3} we translate the notions of temperedness and
(essentially) discrete series for extended affine Hecke algebras to notions for \eqref{eq:9.3}.
Then we apply \cite[Theorem 4.9 and Proposition 4.10]{SolComp}, as discussed above.
\end{proof}

With Proposition \ref{prop:9.6} we can translate Proposition \ref{prop:9.3} into a statement about
$\End_G (I_P^G (E_B))$.

\begin{thm}\label{thm:9.2}
\enuma{
\item The equivalence $\mc E : \Rep (G)^{\mf s} \to \End_G (I_P^G (E_B))\mr{-Mod}$ restricts to 
an equivalence between the subcategories of finite length tempered representations on both sides.
\item If $\Sigma_{\mc O,\mu}$ has smaller rank than $\Sigma (\mc G,\mc M)$, then 
$\Rep_{\mr f} (G)^{\mf s}$ contains no essentially square-integrable representations.
\item Suppose that $\Sigma_{\mc O,\mu}$ has full rank in $\Sigma (\mc G,\mc M)$. Then
$\mc E$ provides a bijection between the following sets:
\begin{itemize}
\item essentially square-integrable representations in $\Rep_{\mr f} (G)^{\mf s}$,
\item essentially discrete series representations in $\End_G (I_P^G (E_B)) -\fMod$.
\end{itemize}
This remains valid if we add "tempered" and/or "irreducible" on both sides.
\item When $Z(G)$ is compact, part (c) also holds without "essentially".
}
\end{thm}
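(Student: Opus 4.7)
The plan is to combine Propositions 9.3 and 9.6, bridging them through graded Hecke algebras. The equivalence of Proposition 9.3, between $\Rep_{\mr f}(G)^{\mf s}$ and
\[
\bigoplus\nolimits_{u \in X_\unr(M)/X_\nr(M,\sigma)} \mh H(\tilde{\mc R}_u, W(M,\mc O)_{\sigma \otimes u}, k^u, \natural_u)-\Modf{a_M^*} \big/ W(M,\mc O) ,
\]
is by construction the composition of $\mc E$ with the equivalence obtained from Corollary \ref{cor:8.4} and Lemma \ref{lem:8.1}. The first step is to invoke this factorization so that statements comparing $V \in \Rep_{\mr f}(G)^{\mf s}$ and $\mc E(V)$ can be routed through the graded Hecke algebra side, where Proposition \ref{prop:9.6} controls the relation to $\End_G(I_P^G(E_B))-\fMod$.

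For (a), I would decompose $V \in \Rep_{\mr f}(G)^{\mf s}$ along cuspidal supports, each summand having cuspidal support in a fixed $W(M,\mc O)\{\sigma \otimes u\chi : \chi \in X_\nr^+(M)\}$, with $\mc E(V)$ decomposing correspondingly by $C^\an(U_u)$-weight. Proposition \ref{prop:9.3}(a) gives preservation of temperedness along the diagonal, and Proposition \ref{prop:9.6}(a) along the vertical arrow of the factorization; since both are equivalences, $\mc E$ preserves temperedness in each piece and hence globally. Part (b) is then immediate: under the hypothesis $\Sigma_{\sigma \otimes u} \subset \Sigma_{\mc O,\mu}$ has rank strictly less than that of $\Sigma(\mc G, \mc M)$ for every unitary $u$, so Proposition \ref{prop:9.3}(b) rules out essentially square-integrable representations in every piece, hence in $\Rep_{\mr f}(G)^{\mf s}$.

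For (c), I would fix unitary $u$ under the full-rank assumption and split into two cases. If $\Sigma_{\sigma \otimes u}$ also has full rank in $\Sigma(\mc G, \mc M)$, it has full rank in $\Sigma_{\mc O,\mu}$ as well, so Proposition \ref{prop:9.3}(c) and Proposition \ref{prop:9.6}(c) both produce bijections with essentially discrete series modules over the graded Hecke algebra, and transitivity yields the claim. If $\Sigma_{\sigma \otimes u}$ has smaller rank in $\Sigma(\mc G, \mc M)$, then Proposition \ref{prop:9.3}(b) gives no essentially square-integrable representations in that piece, and Proposition \ref{prop:9.6}(c) (with the observation that $\Sigma_{\sigma \otimes u}$ then also has smaller rank in $\Sigma_{\mc O,\mu}$) gives no essentially discrete series modules of $\End_G(I_P^G(E_B))$ in that piece, so the bijection is vacuous. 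Summing over $u$ gives (c); the tempered refinement follows from part (a) and irreducibility is automatic. Part (d) is immediate from (c): compactness of $Z(G)$ forces $a_G^* = 0$, and under the full-rank assumption $(a_M^*)^{\perp \Delta_{\mc O,\mu}} = a_G^* = 0$, so "essentially discrete series" collapses to "discrete series" on the Hecke side while "essentially square-integrable" collapses to "square-integrable" on the group side.

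The main technical point is not any single hard estimate but rather keeping straight the interplay between three different root systems, $\Sigma(\mc G, \mc M)$, $\Sigma_{\mc O,\mu}$, and $\Sigma_{\sigma \otimes u}$, and the notions of temperedness and (essentially) discrete series these induce. Since this bookkeeping has already been done in Propositions \ref{prop:9.3} and \ref{prop:9.6}, the argument for Theorem \ref{thm:9.2} reduces to careful case analysis rather than new computation.
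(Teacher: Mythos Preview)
Your proposal is correct and follows exactly the approach the paper takes: the paper's entire proof of Theorem~\ref{thm:9.2} is the single sentence ``With Proposition~\ref{prop:9.6} we can translate Proposition~\ref{prop:9.3} into a statement about $\End_G (I_P^G (E_B))$,'' and your argument is a faithful unpacking of that translation, including the case split on the rank of $\Sigma_{\sigma \otimes u}$ needed to make part~(c) go through.
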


\subsection{The structure of $\Irr (G)^{\mf s}$} \

It is interesting to combine the previous results on temperedness with Corollaries \ref{cor:8.4}
and \ref{cor:8.10}. 

\begin{thm}\label{thm:9.4}
There exist bijections
\[
\Irr (G)^{\mf s} \xrightarrow{\mc E} \Irr \big( \End_G (I_P^G (E_B)) \big)
\xrightarrow{\zeta} \Irr \big( \C [X_\nr (M)] \rtimes \C [W(M,\sigma,X_\nr(M)),\natural] \big)
\]
such that, for $\pi \in \Irr (G)^{\mf s}$ and $u \in X_\unr (M)$:
\begin{itemize}
\item the cuspidal support Sc$(\pi)$ lies in $W(M,\mc O) u X_\nr^+ (M) \Longleftrightarrow$\\
Wt$(\mc E (\pi)) \subset W(M,\sigma,X_\nr(M)) u X_\nr^+ (M) \Longleftrightarrow$\\
Wt$(\zeta \circ \mc E (\pi)) \subset W(M,\sigma,X_\nr(M)) u X_\nr^+ (M)$
\item $\pi$ is tempered $\Longleftrightarrow \mc E (\pi)$ is tempered
$\Longleftrightarrow \mr{Wt}(\zeta \circ \mc E (\pi)) \subset X_\unr (M)$
\end{itemize}
\end{thm}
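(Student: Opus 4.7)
The bijection $\mc E$ is Bernstein's equivalence \eqref{eq:4.equiv} restricted to isomorphism classes of irreducible objects, so bijectivity is immediate. The bijection $\zeta$ was constructed in Corollary \ref{cor:8.10}, which also establishes the equivalence of the last two conditions in the first bullet. What remains is to verify (i) the equivalence of the first two conditions of the first bullet and (ii) the second bullet.

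For (i), the plan is to unpack the $B = \C[X_\nr(M)]$-action on $\mc E(\pi) = \Hom_G(I_P^G(E_B), \pi)$. By the construction in \eqref{eq:2.5}, a weight $\chi \in X_\nr(M)$ of $\mc E(\pi)$ records an appearance of $\sigma \otimes \chi$ as an $M$-constituent of the Jacquet restriction of $\pi$. Two weights $\chi, \chi'$ yield the same element of $\mc O$ precisely when $\chi \chi'^{-1} \in X_\nr(M,\sigma)$, and since $X_\nr(M,\sigma) \subset W(M,\sigma,X_\nr(M))$, a full $W(M,\mc O)$-orbit in $\mc O$ lifts to a $W(M,\sigma,X_\nr(M))$-orbit in $X_\nr(M)$. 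This matches the two conditions.

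For (ii), the first equivalence, that $\pi$ is tempered iff $\mc E(\pi)$ is tempered, is Theorem \ref{thm:9.2}(a). For the second, one first decomposes $\mc E(\pi)$ according to the unitary part of its weights, so that one may assume $\mr{Wt}(\mc E(\pi)) \subset W(M,\sigma,X_\nr(M)) u X_\nr^+(M)$ for a single $u \in X_\unr(M)$. By Proposition \ref{prop:9.6}(a) together with Corollary \ref{cor:8.4}, $\mc E(\pi)$ is tempered if and only if the corresponding module $V$ over $\mh H(\tilde{\mc R}_u, W(M,\mc O)_{\sigma\otimes u}, k^u, \natural_u)$ is tempered. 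Theorem \ref{thm:8.8} then ensures $\zeta_u(V)$ is a tempered module over the $k = 0$ crossed product $\C[a_M^* \otimes_\R \C] \rtimes \C[W(M,\mc O)_{\sigma\otimes u}, \natural_u]$, and the final step of Corollary \ref{cor:8.10}, via $\exp_u$ and Lemma \ref{lem:8.7}, identifies this with the component of $\zeta \circ \mc E(\pi)$ whose weights lie in $W(M,\sigma,X_\nr(M)) u X_\nr^+(M)$.

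The hard part will be the very last identification: for an irreducible tempered module of $\C[a_M^* \otimes_\R \C] \rtimes \C[W(M,\mc O)_{\sigma\otimes u}, \natural_u]$, Clifford theory gives weights forming a single $W(M,\mc O)_{\sigma\otimes u}$-orbit $W\chi$, and the tempered condition $W\chi \subset ia_M^* + (a_M^*)^-_u$ forces $\mr{Re}(\chi)$ into the intersection $\bigcap_w w\bigl((a_M^*)^-_u\bigr) = (a_M^*)^{\perp\Delta_{\sigma\otimes u}}$. To conclude that $\exp_u(\chi) \in X_\unr(M)$, one must further argue that the $(a_M^*)^{\perp\Delta_{\sigma\otimes u}}$-component of $\chi$ is purely imaginary. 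This should follow by tracking Langlands data through $\zeta_u$: a genuinely tempered module has its Langlands parameter $t$ in $ia_M^*$, and Theorem \ref{thm:8.8}'s compatibility with the Langlands classification preserves this, so that all weights of $\zeta_u(V)$ lie in $ia_M^*$ and exponentiate to unitary characters. The converse direction (unitary weights imply tempered) follows by reversing the same chain, using that $\zeta_u^{-1}$ preserves temperedness and that the subsequent Morita and categorical equivalences do too.
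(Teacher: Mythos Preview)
Your overall strategy is the same as the paper's: pass through the graded Hecke algebra at $u$ via Corollary \ref{cor:8.4}, use Theorem \ref{thm:8.8} to toggle between $k^u$ and $0$, and invoke Theorem \ref{thm:9.2} (or Proposition \ref{prop:9.3}/\ref{prop:9.6}) for the temperedness equivalence on the $G$-side. The first bullet is indeed already contained in Corollaries \ref{cor:8.4} and \ref{cor:8.10}; your extra paragraph (i) is a correct gloss on what Corollary \ref{cor:8.4} already proves.

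The gap is in what you call the ``hard part''. Your formula
\[
\bigcap_{w} w\bigl((a_M^*)^-_u\bigr) = (a_M^*)^{\perp \Delta_{\sigma\otimes u}}
\]
is wrong for the cone used in this paper: by Definition \ref{def:temp}, $(a_M^*)^-_u = \{\sum_{\alpha \in \Delta_{\sigma\otimes u}} x_\alpha \alpha : x_\alpha \le 0\}$ lies entirely inside $\R\Sigma_{\sigma\otimes u}$, so the intersection of its Weyl translates is $\{0\}$, not the orthogonal complement. (Intersecting with the $w_0$-translate already gives $\{0\}$.) Consequently there is no residual $(a_M^*)^{\perp \Delta_{\sigma\otimes u}}$-component to worry about, and your Langlands-data detour is unnecessary.

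The clean fix, which is what the paper does, is to use that by construction $\mathrm{Wt}(V_2) \subset a_M^*$ (this is the restriction in Corollary \ref{cor:8.4}, coming from $\exp_u^{-1}(uX_\nr^+(M)) = a_M^*$). So the weights are real, they form full $W(\Sigma_{\sigma\otimes u})$-orbits, and temperedness says each orbit lies in $(a_M^*)^-_u$. But any nonzero $W(\Sigma_{\sigma\otimes u})$-orbit in $a_M^*$ meets the complement of $(a_M^*)^-_u$: if $\chi \notin \R\Sigma_{\sigma\otimes u}$ then $\chi$ itself is not in the cone, and if $\chi \in \R\Sigma_{\sigma\otimes u} \setminus \{0\}$ then $w_0\chi$ lies in the opposite cone. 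Hence temperedness of $V_2$ is equivalent to $\mathrm{Wt}(V_2) = \{0\}$, which under $\exp_u$ and Lemma \ref{lem:8.7} becomes exactly $\mathrm{Wt}(\zeta\circ\mc E(\pi)) \subset X_\unr(M)$.
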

\begin{proof}
The bijections and the first bullet come from Corollaries \ref{cor:8.4} and \ref{cor:8.10}. Let 
\[
V_0 \in \Irr \big( \C [X_\nr (M)] \rtimes \C [W(M,\sigma,X_\nr(M)),\natural] \big)
\] 
with Wt$(V) \subset W(M,\sigma,X_\nr(M)) u X_\nr^+ (M)$. Then Wt$(V_0) \subset X_\unr (M)$ if and
only if the irreducible representation $V_1$ of $\C [X_\nr (M)] \rtimes 
\C [W(M,\sigma,X_\nr(M))_u,\natural]$ associated to it by Lemma \ref{lem:8.7} has $u$ as its
only weight. This is the case if and only if the irreducible representation $V_2$ of
$\C [a_M^* \otimes_\R \C] \rtimes \C [W(M,\mc O)_{\sigma \otimes u},\natural_u]$ obtained 
from $V_1$ as in \eqref{eq:8.18} has $0 \in a_M^*$ as its only weight. 

Now $V_2$, a representation of a twisted graded Hecke algebra \\
$\mh H (\tilde{\mc R}_u, W(M,\mc O)_{\sigma \otimes u}, 0, \natural_u )$ with 
Wt$(V_2) \subset a_M^*$, is tempered if and only if Wt$(V_2) = \{0\}$.
To see that, notice that the weights of $V_2$ form full $W(\Sigma_{\sigma \otimes u})$-orbits.
Every $W(\Sigma_{\sigma \otimes u})$-orbit in $a_M^*$, except $\{0\}$, contains elements outside
the cone $(a_M^*)_u^-$.

By Theorem \ref{thm:8.8} $V_2$ is tempered if and only if
\[
\zeta_u^{-1} (V_2) \in \Irr \big( \mh H (\tilde{\mc R}_u, W(M,\mc O)_{\sigma \otimes u}, k^u, 
\natural_u ) \big)
\] 
is tempered. Next Proposition \ref{prop:9.3} says that $\zeta_u^{-1}(V_2)$ is tempered if and 
only if its image $V_3$ in $\Irr (\End_G (I_P^G (E_B)))$ is so. Comparing the above with the 
proof of Corollary \ref{cor:8.10}, we see that $V_3$ equals $\zeta^{-1}(V_0)$. 
Finally, in Theorem \ref{thm:9.2}.a we showed that $\zeta^{-1}(V_0)$ is tempered if and only
if $\mc E^{-1} (\zeta^{-1} (V_0))$ is tempered.
\end{proof}

The space $\Irr \big( \C [X_\nr (M)] \rtimes \C [W(M,\sigma,X_\nr(M)),\natural] \big)$
admits an alternative description, which clarifies the geometric structure in Theorem \ref{thm:9.4}.

\begin{lem}\label{lem:9.5}
There is a canonical bijection
\begin{multline*}
\bigsqcup\nolimits_{\chi \in X_\nr (M) / X_\nr (M,\sigma)}
\Irr (\C [W(M,\mc O)_{\sigma \otimes \chi},\natural_\chi]) \; \big/ W(M,\mc O) \longrightarrow \\
\Irr \big( \C [X_\nr (M)] \rtimes \C [W(M,\sigma,X_\nr(M)),\natural] \big) .
\end{multline*}
Here $\natural_\chi$ is defined as the restriction of $\natural_u$ to 
$(W(M,\mc O)_{\sigma \otimes \chi} )^2$, where $u$ is the unitary part of $\chi$.
\end{lem}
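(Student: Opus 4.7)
The plan is to apply Clifford theory to the crossed product $\C[X_\nr(M)] \rtimes \C[W(M,\sigma,X_\nr(M)),\natural]$ with respect to its commutative subalgebra $\C[X_\nr(M)]$, in the spirit of the proof of Lemma \ref{lem:8.7}. I would first pass to a finite central extension $\Gamma$ of $W(M,\sigma,X_\nr(M))$ that kills $\natural$ in $H^2(\Gamma,\C^\times)$, together with a central idempotent $p_\natural$ as in \eqref{eq:8.16}. Clifford theory applied to $\C[X_\nr(M)] \rtimes \C[\Gamma]$ in the style of \eqref{eq:8.20}, followed by restriction to the $p_\natural$-component, then yields a canonical bijection between $\Irr\big(\C[X_\nr(M)] \rtimes \C[W(M,\sigma,X_\nr(M)),\natural]\big)$ and
\[
\bigsqcup_{\chi \in X_\nr(M)} \Irr\big(\C[W(M,\sigma,X_\nr(M))_\chi,\natural]\big) \big/ W(M,\sigma,X_\nr(M)),
\]
where $w$ acts by sending the $\chi$-summand to the $w(\chi)$-summand via pullback along $\mr{Ad}(\mc T_w^{-1})$.

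Next I would identify the stabilizer $W(M,\sigma,X_\nr(M))_\chi$ for $\chi \in X_\nr(M)$. Writing an element of $W(M,\sigma,X_\nr(M))$ uniquely as $\chi_c\mf{r}\mf{v}$ as in \eqref{eq:3.30} and expanding the action via \eqref{eq:3.29}, the equation $(\chi_c \mf{r}\mf{v})(\chi) = \chi$ forces $rv \in W(M,\mc O)_{\sigma \otimes \chi}$ and determines $\chi_c = \chi/(r v(\chi)\chi_r)$ uniquely. This produces a group isomorphism
\[
\Omega_\chi : W(M,\mc O)_{\sigma\otimes\chi} \xrightarrow{\;\sim\;} W(M,\sigma,X_\nr(M))_\chi
\]
entirely analogous to \eqref{eq:7.11}. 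Decomposing $\chi = ut$ with $u \in X_\unr(M)$ unitary and $t \in X_\nr^+(M)$ positive, and using that $X_\nr(M,\sigma)$ is a finite hence unitary group of characters, the polar decomposition forces $W(M,\mc O)_{\sigma\otimes\chi} \subset W(M,\mc O)_{\sigma\otimes u}$ and shows that $\Omega_\chi$ is the restriction of $\Omega_u$. Lemma \ref{lem:8.5} then tells us that $\natural \circ \Omega_u$ is cohomologous to $\natural_u$ on $W(M,\mc O)_{\sigma\otimes u}^2$; restricting to $W(M,\mc O)_{\sigma\otimes\chi}^2$ identifies the restriction of $\natural$ to $W(M,\sigma,X_\nr(M))_\chi^2$ (via $\Omega_\chi$) with $\natural_\chi$ up to coboundary, so the twisted group algebras $\C[W(M,\sigma,X_\nr(M))_\chi,\natural]$ and $\C[W(M,\mc O)_{\sigma\otimes\chi},\natural_\chi]$ have canonically identified $\Irr$ sets.

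Finally I would split the $W(M,\sigma,X_\nr(M))$-quotient into two stages. The normal subgroup $X_\nr(M,\sigma) \subset W(M,\sigma,X_\nr(M))$ acts freely on $X_\nr(M)$ by multiplication, with quotient $X_\nr(M)/X_\nr(M,\sigma)$. A direct computation in $W(M,\sigma,X_\nr(M))$ shows that conjugation by $\chi_c \in X_\nr(M,\sigma)$ carries $\Omega_\chi(rv)$ to $\Omega_{\chi\chi_c}(rv)$: indeed in the commutation $\mf{r}\mf{v} \circ \chi_c^{-1} = r(v(\chi_c))^{-1}\mf{r}\mf{v}$ only the linear part $r \circ v$ appears, and the $\chi_r$-twist in the identity $(\mf{r}\mf{v})(\chi\chi_c) = (\mf{r}\mf{v})(\chi)\cdot r(v(\chi_c))$ is exactly what is needed to absorb the difference. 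Consequently, after passing to the identifications of the previous paragraph, the action of $X_\nr(M,\sigma)$ on the Clifford data reduces to its action on the index set $X_\nr(M)$. Modding out by $X_\nr(M,\sigma)$ therefore produces a disjoint union over $X_\nr(M)/X_\nr(M,\sigma)$, and the residual action of $W(M,\sigma,X_\nr(M))/X_\nr(M,\sigma) = W(M,\mc O)$ yields the stated bijection. The main obstacle is the cocycle identification in the second step, but Lemma \ref{lem:8.5} already handles that; the remainder is bookkeeping.
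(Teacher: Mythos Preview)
Your approach is essentially the paper's: apply Clifford theory to the crossed product (via a central extension trivializing $\natural$) to get a bijection with
$\bigsqcup_{\chi \in X_\nr(M)} \Irr\!\big(\C[W(M,\sigma,X_\nr(M))_\chi,\natural]\big) \big/ W(M,\sigma,X_\nr(M))$,
then use Lemma~\ref{lem:8.5} (via the isomorphism $\Omega_\chi$, which you correctly identify as the restriction of $\Omega_u$) to rewrite each summand as $\Irr\!\big(\C[W(M,\mc O)_{\sigma\otimes\chi},\natural_\chi]\big)$, and finally collapse along $X_\nr(M,\sigma)$-orbits before taking the residual $W(M,\mc O)$-quotient.

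There is, however, a genuine gap in your final step. Your computation that conjugation by $\chi_c$ in the \emph{group} $W(M,\sigma,X_\nr(M))$ sends $\Omega_\chi(rv)$ to $\Omega_{\chi\chi_c}(rv)$ is correct, but it does not yet show that the action of $X_\nr(M,\sigma)$ on the Clifford data ``reduces to its action on the index set''. The Clifford action is by $\mr{Ad}(\phi_{\chi_c})$ on the twisted group \emph{algebra}, which sends $\mc T_{\Omega_\chi(w)}$ only to a scalar multiple of $\mc T_{\Omega_{\chi\chi_c}(w)}$; moreover the identifications you invoke from Lemma~\ref{lem:8.5} are $\widetilde{\Omega_u}$ for $\chi$ and $\widetilde{\Omega_{u\chi_c}}$ for $\chi\chi_c$, and these involve the (different) scalars $f^u_w(u)$ and $f^{u\chi_c}_w(u\chi_c)$. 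So a priori the composite automorphism of $\C[W(M,\mc O)_{\sigma\otimes\chi},\natural_\chi]$ is $N_w \mapsto \mu_w N_w$ for some character $\mu$ of $W(M,\mc O)_{\sigma\otimes\chi}$, which could twist $\rho'$ nontrivially and spoil the canonical passage to $X_\nr(M)/X_\nr(M,\sigma)$. What is needed is precisely Lemma~\ref{lem:8.1} (the commutative diagram \eqref{eq:8.12}, built on the normalization \eqref{eq:8.13}): it shows that under the canonical isomorphisms $\widetilde{\Omega_u}$ and $\widetilde{\Omega_{u\chi_c}}$, conjugation by $\phi_{\chi_c}$ becomes the \emph{identity} map, so the $X_\nr(M,\sigma)$-action really is trivial on the representation part. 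The paper invokes exactly this; once you add it, your argument is complete.
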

\begin{proof}
Let the central extension $\Gamma$ of $W(M,\sigma,X_\nr (M))$ and the central idempotent
$p_\natural$ be as in the proof of Lemma \ref{lem:8.7}, so that
\[
p_\natural \C [\Gamma] \cong  \C[W(M,\sigma,X_\nr (M)),\natural] .
\]
By \eqref{eq:8.15} and \eqref{eq:8.20}, every irreducible representation $\pi$ of \\
$\C [X_\nr (M)] \rtimes \C [W(M,\sigma,X_\nr(M)),\natural]$ is of the form
\[
\mr{ind}_{\C [X_\nr (M)] \rtimes \C [W(M,\sigma,X_\nr(M))_\chi,\natural]}^{\C [X_\nr (M)] \rtimes 
\C [W(M,\sigma,X_\nr(M)),\natural]} (\chi \otimes \rho), 
\]
where $\chi \in X_\nr (M)$ and $\rho \in \Irr (\C [W(M,\sigma,X_\nr(M))_\chi,\natural])$.
The pair $(\chi,\rho)$ is determined by $\pi$, uniquely up to the action of $\Gamma$ given by
\[
\gamma (\chi,\rho) = (\gamma (\chi), \mr{Ad}(\gamma^{-1})^* \rho) .
\]
Since $\Gamma$ is a central extension of $W(M,\sigma,X_\nr (M))$, this action descends to an
action of $W(M,\sigma,X_\nr (M))$ on the collection of such pairs. This yields a bijection
\begin{multline}\label{eq:9.6}
\bigsqcup\nolimits_{\chi \in X_\nr (M)} \Irr (\C [W(M,\sigma,X_\nr(M))_\chi,\natural]) \; \big/ 
W(M,\sigma,X_\nr (M)) \longrightarrow \\
\Irr \big( \C [X_\nr (M)] \rtimes \C [W(M,\sigma,X_\nr(M)),\natural] \big) .
\end{multline}
Recall from Lemma \ref{lem:8.5} that $\C [W(M,\sigma,X_\nr(M))_\chi,\natural]$ is canonically
isomorphic with $\C [W(M,\mc O)_{\sigma \otimes \chi}, \natural_\chi]$.
For $\chi_c \in X_\nr (M,\sigma)$ there are group isomorphisms
\begin{multline}\label{eq:9.7}
W(M,\mc O)_{\sigma \otimes \chi} \xrightarrow{\Omega_\chi} W(M,\sigma,X_\nr (M))_\chi
\xrightarrow{\mr{Ad}(\chi_c)} \\
W(M,\sigma,X_\nr (M))_{\chi \chi_c} \xleftarrow{\Omega_{\chi \chi_c}} 
W(M,\mc O)_{\sigma \otimes \chi \chi_c} .
\end{multline}
It follows from \eqref{eq:8.12} that conjugation with $\phi_{\chi_c}$ induces an algebra
isomorphism
\[
\C [W(M,\sigma,X_\nr(M))_\chi,\natural] \to \C [W(M,\sigma,X_\nr(M))_{\chi_c \chi},\natural]
\]
which via Lemma \ref{lem:8.5} translates to the identity map
\[
\C [W(M,\mc O)_{\sigma \otimes \chi}, \natural_\chi] \to
\C [W(M,\mc O)_{\sigma \otimes \chi_c \chi}, \natural_{\chi_c \chi}] .
\]
Hence, in \eqref{eq:9.6} we can canonically identify all the terms associated to $\chi$'s in
one $X_\nr (M,\sigma)$-orbit. If we do that, the action of $W(M,\sigma,X_\nr (M))$ descends
to an action of
\[
W(M,\mc O) = W(M,\sigma,X_\nr (M)) / X_\nr (M,\sigma) 
\]
and the left hand side of \eqref{eq:9.6} becomes
\[
\bigsqcup\nolimits_{\chi \in X_\nr (M) / X_\nr (M,\sigma)} 
\Irr (\C [W(M,\mc O)_{\sigma \otimes \chi},\natural_\chi]) \; \big/ W(M,\mc O) . \qedhere 
\]
\end{proof}

We note that in Lemma \ref{lem:9.5} $\natural_\chi$ is not necessarily equal to
$\natural \circ \Omega_u |_{W(M,\mc O)_{\sigma \otimes \chi}}$. These 2-cocycles are 
merely cohomologous (by Lemma \ref{lem:8.5} with $u$ the unitary part of $\chi$). 
An advantage of $\natural_\chi$ is that it factors via
\[
\big( W(M,\mc O)_{\sigma \otimes u} / W(\Sigma_{\sigma \otimes u}) \big)^2
\cong R(\sigma \otimes u)^2 . 
\]
The action of $w \in W(M,\mc O)$ on the left hand side of Lemma \ref{lem:9.5} comes from isomorphisms
\begin{multline}\label{eq:9.10}
\C [W(M,\mc O)_{\sigma \otimes \chi},\natural_\chi] \cong \C [W(M,\sigma,X_\nr (M))_\chi, \natural]
\xrightarrow{\mr{Ad}(N_w)} \\ \C [W(M,\sigma,X_\nr (M))_{w(\chi)}, \natural] \cong
\C [W(M,\mc O)_{\sigma \otimes w(\chi)},\natural_{w(\chi)}] . 
\end{multline}
Here the outer automorphisms are described in Lemma \ref{lem:8.5}, while the middle
isomorphism is computed in $\C [W(M,\sigma,X_\nr (M)),\natural]$. In particular we still make use
of the entire 2-cocycle $\natural$, not just of the $\natural_\chi$. 

Define the root system $\Sigma_{\sigma \otimes \chi}$ like $\Sigma_{\sigma \otimes u}$.
By Lemma \ref{lem:8.6}, the composed isomorphism \eqref{eq:9.10} sends $N_v$ to $N_{w v w^{-1}}$ 
for $v \in W(\Sigma_{\sigma \otimes \chi})$, and to a scalar multiple of that for
$v \in W(M,\mc O)_{\sigma \otimes \chi}$. Since
\[
X_\nr (M) / X_\nr (M,\sigma) \to \mc O : \chi \mapsto \sigma \otimes \chi
\]
is bijective, we can rewrite the left hand side of Lemma \ref{lem:9.5} as
\begin{equation}\label{eq:9.8}
\bigsqcup\nolimits_{\sigma' \in \mc O} \Irr ( \C[W(M,\mc O)_{\sigma'}, \natural_{\sigma'}])
\; \big/ W(M,\mc O) .
\end{equation}
In the terminology of \cite[\S 2.1]{ABPS}, \eqref{eq:9.8} is the twisted extended quotient\\
$\big( \mc O /\!/ W(M,\mc O) \big)_\natural$. 

\begin{thm}\label{thm:9.8}
\enuma{
\item There exists a bijection
\[
\widetilde{\zeta \circ \mc E} : \Irr (G)^{\mf s} \to \big( \mc O /\!/ W(M,\mc O) \big)_\natural ,
\]
such that, for $\pi \in \Irr (G)^{\mf s}$ and $u \in X_\unr (M)$:
\begin{itemize}
\item the cuspidal support Sc$(\pi)$ lies in $W(M,\mc O)\{ \sigma \otimes u \chi :
\chi \in X_\nr^+ (M) \}$ if and only if
$\widetilde{\zeta \circ \mc E}(\pi)$ has $\mc O$-coordinate in 
$W(M,\mc O)\{ \sigma \otimes u \chi : \chi \in X_\nr^+ (M) \}$;
\item $\pi$ is tempered if and only if $\widetilde{\zeta \circ \mc E}(\pi)$ has a unitary
(or equivalently tempered) $\mc O$-coordinate.
\end{itemize}
\item This gives rise to bijection
\[
\Irr (G) \longrightarrow \bigsqcup\nolimits_M  \big( \Irr_{\mr{cusp}}(M) /\!/ W(G,M) \big)_\natural ,
\]
where $M$ runs over a set of representatives for the conjugacy classes of Levi subgroups of $G$.}
\end{thm}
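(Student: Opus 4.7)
The plan for part (a) is to compose the bijection $\zeta \circ \mc E$ from Theorem \ref{thm:9.4} with the bijection of Lemma \ref{lem:9.5}, rewritten in the form \eqref{eq:9.8} to exhibit the target as the twisted extended quotient $(\mc O /\!/ W(M,\mc O))_\natural$. Concretely, for $\pi \in \Irr(G)^{\mf s}$, the weights of $\zeta \circ \mc E(\pi)$ form a single $W(M,\sigma,X_\nr(M))$-orbit in $X_\nr(M)$; under the quotient $X_\nr(M)/X_\nr(M,\sigma) \cong \mc O$, this descends to a single $W(M,\mc O)$-orbit in $\mc O$, and the associated isotypic representation of $\C[W(M,\mc O)_{\sigma'},\natural_{\sigma'}]$ (for $\sigma'$ in that orbit) is the second coordinate of $\widetilde{\zeta \circ \mc E}(\pi)$.

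The compatibility with cuspidal support is then immediate from the first bullet of Theorem \ref{thm:9.4}: the condition $\mr{Wt}(\zeta \circ \mc E(\pi)) \subset W(M,\sigma,X_\nr(M)) u X_\nr^+(M)$ translates, via the covering $X_\nr(M) \to \mc O$, into the $\mc O$-coordinate lying in $W(M,\mc O)\{\sigma \otimes u\chi : \chi \in X_\nr^+(M)\}$. For temperedness, the second bullet of Theorem \ref{thm:9.4} says $\pi$ is tempered iff all $\C[X_\nr(M)]$-weights are unitary; these weights form a $W(M,\sigma,X_\nr(M))$-orbit that is unitary iff its image in $\mc O$ consists of unitary (equivalently tempered) supercuspidals, which is precisely the condition that the $\mc O$-coordinate of $\widetilde{\zeta \circ \mc E}(\pi)$ be unitary.

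For part (b), I would invoke the Bernstein decomposition $\Irr(G) = \bigsqcup_{\mf s} \Irr(G)^{\mf s}$, apply part (a) blockwise, and then regroup the inertial classes $\mf s = [M,\sigma]_G$ according to the $G$-conjugacy class of $M$. For a fixed Levi $M$, the inertial classes $\mf s$ with Levi representative $M$ are in bijection with the $W(G,M)$-orbits on the set of $X_\nr(M)$-orbits in $\Irr_{\cusp}(M)$. The key identity to verify is that, for each fixed $M$,
\[
\bigsqcup\nolimits_{\mf s \leftrightarrow M} \big( \mc O /\!/ W(M,\mc O) \big)_\natural \;\cong\;
\big( \Irr_{\cusp}(M) /\!/ W(G,M) \big)_\natural ,
\]
where the isomorphism on each $W(G,M)$-orbit of $\mc O$'s follows from the observation that for any $\sigma' \in \mc O$ one has $W(G,M)_{\sigma'} = W(M,\mc O)_{\sigma'}$ (if $w \in W(G,M)$ fixes $\sigma'$ then $w(\mc O)$ meets $\mc O$, hence equals $\mc O$), and that the 2-cocycle $\natural_{\sigma'}$ on this stabilizer coincides with the one defined from $\mc O$.

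The principal subtlety I expect is bookkeeping of the 2-cocycles under these identifications: one must confirm that $\natural_{\sigma'}$ as defined in Lemma \ref{lem:9.5} (via the unitary part of a lift of $\sigma'$ to $X_\nr(M)$) is intrinsic to $\sigma' \in \mc O$, equivariant for $W(M,\mc O)$ up to coboundary, and compatible with the enlargement from $W(M,\mc O)$ to $W(G,M)$ in part (b). All these points are already addressed in Lemmas \ref{lem:8.1}, \ref{lem:8.5} and \ref{lem:8.6} and the discussion preceding \eqref{eq:9.8}, so the argument should amount to a careful assembly of those lemmas rather than any new computation.
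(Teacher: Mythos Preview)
Your proposal is correct and follows essentially the same route as the paper. Part (a) is exactly the paper's argument: compose Theorem \ref{thm:9.4} with Lemma \ref{lem:9.5} in its rewritten form \eqref{eq:9.8}, and read off the two bullets from the corresponding bullets of Theorem \ref{thm:9.4}.

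For part (b) your outline is also the paper's, but one point deserves sharpening. You invoke Lemmas \ref{lem:8.1}, \ref{lem:8.5}, \ref{lem:8.6} to handle the 2-cocycle bookkeeping; those results, however, concern only elements of $W(M,\sigma,X_\nr(M))$ and hence of $W(M,\mc O)$, not of the larger group $W(G,M)$. The paper does not claim that $\natural_{\sigma'}$ is intrinsically defined on $W(G,M)$-translates of $\mc O$; instead it notes that $\widetilde{\zeta\circ\mc E}$ is not entirely canonical, fixes for each inertial class a representative $\sigma$ and representatives $w$ for $N_G(M)/N_G(M,\mc O_\sigma)$, and then \emph{defines} $\natural_{w\cdot\sigma'}$ as the push-forward of $\natural_{\sigma'}$ along $\mr{Ad}(N_w)$. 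With these (non-canonical) choices the $W(M,\mc O)$-action in \eqref{eq:9.8} extends to a $W(G,M)$-action, and the identity you wrote follows (the paper cites \cite[(29)]{ABPS} for this formal step). So your plan is right, but the ``compatibility with the enlargement to $W(G,M)$'' is achieved by a choice rather than deduced from the lemmas you cite.
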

\begin{proof}
(a) This follows from Theorem \ref{thm:9.4}, Lemma \ref{lem:9.5} and \eqref{eq:9.8}.\\
(b) We write $\mc O = \mc O_\sigma$, so that the space of supercuspidal representations of $M$ becomes 
\[
\Irr_{\mr{cusp}}(M) = \bigsqcup\nolimits_{\mf s_M = [M,\sigma]_M} \Irr (M)^{\mf s_M} =
\bigsqcup\nolimits_{\mf s_M = [M,\sigma]_M} \mc O_\sigma .
\]
Let $\Irr (G,M)$ be the subset of $\Irr (G)$ with supercuspidal support in $\Irr (M)$. 
Part (a) gives rise to a bijection
\begin{equation}\label{eq:9.13}
\Irr (G,M) = \bigsqcup\nolimits_{\mf s = [M,\sigma]_G} \Irr (G)^{\mf s} \longrightarrow
\bigsqcup\nolimits_{\mf s = [M,\sigma]_G} \big( \mc O_\sigma /\!/ W(M,\mc O_\sigma) \big)_\natural .
\end{equation}
Now we must be careful because $\widetilde{\zeta \circ \mc E}$ is not entirely canonical. We choose
a set of $\sigma \in \Irr_\cusp (M)$ representing all possible inertial equivalence classes (for $G$) 
in \eqref{eq:9.13}, and (for each such $\sigma$) a set of representatives $w \in N_G (M) / M$ for 
$N_G (M) / N_G (M,\mc O_\sigma)$. For $\sigma' \in \mc O_\sigma$ and such a $w$, we define 
$\natural_{w \cdot \sigma'}$ as the push-forward of $\natural_{\sigma'}$ along Ad$(N_w)$
Then the set in \eqref{eq:9.8} can be enlarged to
\begin{equation}\label{eq:9.14}
\bigsqcup\nolimits_{\sigma,w} \bigsqcup\nolimits_{\sigma' \in \mc O} 
\Irr ( \C[W(M,\mc O_\sigma )_{w \cdot \sigma'}, \natural_{w \cdot \sigma'}])
\end{equation}
and the action of $W(M,\mc O)$ in \eqref{eq:9.8} extends to an action of $W(G,M) = N_G (M) / M$ 
on \eqref{eq:9.14}. As in \cite[(29)]{ABPS}, that puts \eqref{eq:9.13} in bijection with
\[
\Big( \bigsqcup\nolimits_{\mf s_M = [M,\sigma]_M} \mc O_\sigma /\!/ W(G,M) 
\Big)_\natural = \big( \Irr_{\mr{cusp}}(M) /\!/ W(G,M) \big)_\natural .
\]
When we let $M$ run over a set of representatives for the conjugacy classes of Levi subgroups of $G$,
$\Irr (G,M)$ exhausts $\Irr (G)$ and we find the claimed bijection.
\end{proof}

Theorem \ref{thm:9.8}.a reveals some geometry hidden in $\Irr (G)^{\mf s}$ and proves a version of 
the ABPS conjecture, namely \cite[Conjecture 2]{ABPS}.

\section{A smaller progenerator of $\Rep (G)^{\mf s}$} 
\label{sec:smaller}

In our main results we do not obtain an affine Hecke algebra, rather a category which is
almost equivalent to the category of finite length modules of an affine Hecke algebra. 
On the other hand, it is known that in many cases $\Rep (G)^{\mf s}$ is really equivalent to the 
module category of an affine Hecke algebra. To obtain more results in that direction, we 
study progenerators of $\Rep (G)^{\mf s}$ that are strictly contained in $I_P^G (E_B)$. 

\subsection{The cuspidal case} \

From \eqref{eq:2.35} we see that $\chi \mapsto \phi_\chi$ yields a group homomorphism
\begin{equation}\label{eq:2.31}
\Irr (M / M_\sigma^3) \to \mr{Aut}_M \big( \mr{ind}_{M^1}^M (E) \big) \cong 
\mr{Aut}_M \big( E \otimes_\C \C [X_\nr (M)] \big) .
\end{equation}
In other words, the group $\Irr (M / M_\sigma^3)$ acts on $\mr{ind}_{M^1}^M (E)$ and on
$E \otimes_\C \C [X_\nr (M)]$. With the isomorphism \eqref{eq:2.3} one can easily express 
the space of invariants under $\Irr (M / M_\sigma^3)$:
\begin{equation}\label{eq:2.32}
\big( \mr{ind}_{M^1}^M (E) \big)^{\Irr (M / M_\sigma^3)} = \mr{ind}_{M^1}^M (E_1) .
\end{equation}
Recall that every irreducible $M^1$-subrepresentation of $E$ is 
isomorphic to $(m^{-1} \cdot \sigma_1, \sigma (m) E_1)$ for some $m \in M$. More precisely,
it follows from \eqref{eq:2.6} and \eqref{eq:2.9} that
\[
\Res_{M^1}^M (\sigma,E) \cong \bigoplus\nolimits_{m \in M / M_\sigma^4} (m\cdot \sigma_1)^{\mu_{\sigma,1}} . 
\]
Since $\sigma_1$ and $m \cdot \sigma_1$ have isomorphic induction to $M$:
\begin{equation}\label{eq:2.15}
\ind_{M^1}^M (\sigma,E) \cong \bigoplus\nolimits_{m \in M / M_\sigma^4} \ind_{M^1}^M (m\cdot \sigma_1
)^{\mu_{\sigma,1}} \cong \ind_{M^1}^M (\sigma_1,E_1)^{[M : M_\sigma^4] \mu_{\sigma,1}} . 
\end{equation}
Notice that 
\[
[M : M_\sigma^4] \mu_{\sigma,1} = [M:M_\sigma^3]
\]
is the length of $\Res_{M^1}^M (E)$. From \eqref{eq:2.15} we see that $\ind_{M^1}^M (\sigma_1,E_1)$ is,
like $\ind_{M^1}^M (\sigma,E)$, a progenerator of $\Rep (M)^{\mc O}$ -- this was already shown by
Bernstein \cite{BeRu}. Further \eqref{eq:2.15} implies
\begin{equation}\label{eq:2.16}
\End_M ( \ind_{M^1}^M (\sigma,E) ) \cong 
\End_M (\ind_{M^1}^M (\sigma_1,E_1)) \otimes_\C M_{[M:M_\sigma^3]} (\C) ,
\end{equation}
where $M_d (\C)$ denotes the algebra of $d \times d$ complex matrices. For comparison with \cite{Hei2}
we analyse the Morita equivalent subalgebra $\End_M (\ind_{M^1}^M (\sigma_1,E_1))$ as well. 
Now \eqref{eq:2.5} cannot be used in general,
because $\sigma (m^{-1})$ need not preserve $E_1$. So we cannot easily embed 
$\C [X_\nr (M)] \cong \C [M / M^1]$ in $\End_M (\ind_{M^1}^M (\sigma_1,E_1))$. The formula \eqref{eq:2.5} 
does still apply when $m \in M_\sigma^3 / M^1$, which means that \eqref{eq:2.5} provides a homomorphism
\[
M_\sigma^3 / M^1 \to \mr{Aut}_M (\ind_{M^1}^M (\sigma_1,E_1)) . 
\]
That extends $\C$-linearly to an embedding
\begin{equation}\label{eq:2.17}
\begin{array}{ccccc}
\ind_{M^1}^{M_\sigma^3} (\C) & \cong & \C [M_\sigma^3 / M^1] & \to & \End_M (\ind_{M^1}^M (\sigma_1,E_1)) \\
\delta_m & \mapsto & m^{-1} M^1 & \mapsto & \sigma_3 (m) \lambda_m 
\end{array} ,
\end{equation}
where $\sigma_3 (m) \lambda_m (v) (m') = \sigma_3 (m) v(m^{-1} m')$ for $v \in \ind_{M^1}^M (\sigma_1,E_1)$.
We note that $\C [M_\sigma^3 / M^1]$ can be regarded as the ring of regular functions on the complex torus 
\[
\mc O_3 := \Irr (M_\sigma^3 / M^1), 
\]
a degree $\mu_{\sigma,1}$ cover of $X_\nr (M) / X_\nr (M,\sigma)$. Another way to construct the embedding
\eqref{eq:2.17} uses that $(\sigma_1,E_1)$ extends to the $M_\sigma^3$-representation $\sigma_3$.
The same reasoning as in \eqref{eq:2.3} gives an isomorphism of $M_\sigma^3$-representations
\[
\ind_{M^1}^{M_\sigma^3} (E_1) \to E_1 \otimes_\C \C [\mc O_3] . 
\]
Since $\C [\mc O_3]$ is commutative, the $M_\sigma^3$-action on $E_1 \otimes_\C \C [\mc O_3]$ is
$\C [\mc O_3]$-linear. We find 
\begin{equation}\label{eq:2.20}
\ind_{M^1}^M (E_1) \cong \ind_{M_\sigma^3}^M (E_1 \otimes_\C \C [\mc O_3]) 
\end{equation}
and $\C [\mc O_3]$ acts on it by $M$-intertwiners, induced from the action on $E_1 \otimes_\C \C [\mc O_3]$.

As worked out in \cite[Proposition 1.6.3.2]{Roc1}, the subalgebra
\begin{equation}\label{eq:2.27}
\C [M_\sigma^2 / M^1] \cong \C [ X_\nr (M) / X_\nr (M,\sigma)]  
\end{equation}
is the centre of $\End_M (\ind_{M^1}^M (\sigma_1,E_1))$. In view of \eqref{eq:2.16}, 
$\C [M_\sigma^2 / M^1]$ is also the centre of $\End_M (\ind_{M^1}^M (\sigma,E))$ -- 
which can be derived directly from Proposition \ref{prop:2.2}. Furthermore $\End_M (\ind_{M^1}^M 
(\sigma_1,E_1))$ is free of rank $\mu_{\sigma,1}^2$ as a module over its centre. 

The commutative subalgebra 
\[
\C[M_\sigma^3 / M^1] \cong \C [\mc O_3],
\]
embedded in $\End_M (\ind_{M^1}^M (\sigma_1,E_1))$ as in \eqref{eq:2.17} or \eqref{eq:2.20}, 
is free of rank $\mu_{\sigma,1}$ as a module over $\C [M_\sigma^2 / M^1]$.
To find generators for $\End_M (\ind_{M^1}^M (\sigma_1,E_1))$ as a module over $\C[M_\sigma^3 / M^1]$,
we consider any $m \in M_\sigma^4$. By definition there exists an $M_1$-isomorphism
\[
\phi_{m,\sigma_1} : (\sigma_1,E_1) \to (m^{-1} \cdot \sigma_1, E_1)  .
\]
Regarding the subspace of $\ind_{M^1}^M (E_1)$ supported on $m M^1$ as the $M^1$-representation
$m^{-1} \cdot \sigma_1$, $\phi_{m,\sigma_1}$ becomes an element of 
$\Hom_{M_1} (\sigma_1 , \ind_{M^1}^M (\sigma_1))$. Applying Frobenius reciprocity, we obtain 
\begin{equation}\label{eq:2.18}
\phi_m \in \End_M (\ind_{M^1}^M (\sigma_1,E_1)), \qquad \phi_m (v) = \phi_{m,\sigma_1} \lambda_m (v) . 
\end{equation}
For $m \in M_\sigma^3$ we can take $\phi_{m,\sigma_1} = \sigma_3 (m)$, and then \eqref{eq:2.18}
recovers \eqref{eq:2.17}.

\begin{lem}\label{lem:2.3}
For every $m \in M_\sigma^4 / M^1$ and for every $m \in M_\sigma^4 / M_\sigma^3$ 
we pick a representative $\tilde m \in M_\sigma^4$. 
\enuma{
\item The set $\{ \phi_{\tilde m} : m \in M_\sigma^4 / M^1 \}$ is a $\C$-basis of 
$\End_M (\ind_{M^1}^M (\sigma_1,E_1))$.
\item With respect to the embedding \eqref{eq:2.17}:
}
\[
\End_M \big( \ind_{M^1}^M (\sigma_1,E_1) \big) =
\bigoplus_{m \in M_\sigma^4 / M_\sigma^3} \phi_{\tilde m} \C [ M_\sigma^3 / M^1] =
\bigoplus_{m \in M_\sigma^4 / M_\sigma^3} \phi_{\tilde m} \C [ \mc O_3] .
\]
\end{lem}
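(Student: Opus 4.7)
The plan is to mirror the strategy of Proposition \ref{prop:2.2}, but with the smaller space $E_1$ in place of $E$ and using the more refined tower $M^1 \subset M_\sigma^3 \subset M_\sigma^4$. The two main ingredients are Frobenius reciprocity for compact smooth induction (available since $M^1$ is open in $M$) and a Mackey-style restriction formula, which is especially simple because $M^1$ is normal in $M$.

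For part (a), I would start with the natural bijection
\[
\End_M (\ind_{M^1}^M (\sigma_1,E_1)) \;\cong\; \Hom_{M^1} (E_1, \ind_{M^1}^M (\sigma_1,E_1)),
\]
coming from Frobenius reciprocity. Since $M^1$ is normal in $M$, the restriction of $\ind_{M^1}^M (\sigma_1,E_1)$ to $M^1$ decomposes as $\bigoplus_{m \in M/M^1} (m^{-1} \cdot \sigma_1)$, the summand indexed by $m M^1$ being the subspace of functions supported on $m M^1$. By the definition of $M_\sigma^4$, an irreducible summand $m^{-1} \cdot \sigma_1$ is $M^1$-isomorphic to $\sigma_1$ precisely when $m \in M_\sigma^4$, and in that case $\Hom_{M^1} (\sigma_1, m^{-1} \cdot \sigma_1)$ is one-dimensional by Schur's lemma. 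The element $\phi_{\tilde m}$ built in \eqref{eq:2.18} is (by construction) the image under Frobenius reciprocity of the map sending $E_1$ into the component $m^{-1} \cdot \sigma_1$ via $\phi_{\tilde m,\sigma_1}$. This shows that the collection $\{\phi_{\tilde m} : m \in M_\sigma^4 / M^1\}$ is a basis.

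For part (b), I would use the tower $M^1 \subset M_\sigma^3 \subset M_\sigma^4$ to factor the indexing set $M_\sigma^4 / M^1$ as a union of $M_\sigma^3 / M^1$-cosets parametrized by $M_\sigma^4 / M_\sigma^3$. Given any $m \in M_\sigma^4$ and $n \in M_\sigma^3$, one can arrange that $\phi_{mn,\sigma_1} = \phi_{m,\sigma_1} \circ \sigma_3 (n)$ (Schur's lemma determines the intertwiner up to a scalar, which can be normalized on each coset representative). Unwinding the formula \eqref{eq:2.18} then gives
\[
\phi_{\widetilde{mn}} \;=\; \phi_{\tilde m} \cdot \sigma_3 (\tilde n) \lambda_{\tilde n},
\]
and the right-hand factor lies in the image of the embedding \eqref{eq:2.17} of $\C [M_\sigma^3/M^1]$. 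Combined with part (a), this yields both direct sum decompositions in (b).

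The only real obstacle is bookkeeping: one must choose the isomorphisms $\phi_{\tilde m,\sigma_1}$ coherently across representatives of $M_\sigma^4 / M_\sigma^3$ so that the $\C [M_\sigma^3/M^1]$-module structure on the right of (b) matches the embedding \eqref{eq:2.17}. This is the same normalization issue handled in Proposition \ref{prop:2.2}, and Schur's lemma guarantees it can be resolved up to nonzero scalars (which is enough for the basis statement). Once normalizations are fixed, linear independence is inherited from part (a), and the count $[M_\sigma^4 : M_\sigma^3] \cdot [M_\sigma^3 : M^1] = [M_\sigma^4 : M^1]$ shows that no elements are missing.
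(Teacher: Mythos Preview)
Your proposal is correct and follows essentially the same route as the paper: Frobenius reciprocity, the Mackey-type decomposition $\Res_{M^1}^M \ind_{M^1}^M(\sigma_1) \cong \bigoplus_{m \in M/M^1} m^{-1}\cdot\sigma_1$ (using normality of $M^1$), the definition of $M_\sigma^4$ to isolate the nonzero summands, and Schur's lemma for one-dimensionality. Your treatment of part~(b) is in fact more explicit than the paper's, which simply observes that (b) follows directly from part~(a) together with the embedding \eqref{eq:2.17}; no separate normalization argument is needed once (a) is known, since the images of $\C[M_\sigma^3/M^1]$ under \eqref{eq:2.17} already coincide (up to scalars) with the $\phi_{\tilde m}$ for $m \in M_\sigma^3/M^1$.
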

\begin{proof}
(a) By Frobenius reciprocity
\begin{multline}\label{eq:2.19}
\End_M (\ind_{M^1}^M (\sigma_1,E_1)) \cong \Hom_{M_1} (\sigma_1 , \ind_{M^1}^M (\sigma_1)) \\
\cong \Hom_{M_1} \big( \sigma_1 , \bigoplus\nolimits_{m \in M / M^1} (m^{-1} \cdot \sigma_1) \big) .
\end{multline}
By the definition of $M_\sigma^4$, this reduces to 
\[
\bigoplus\nolimits_{m \in M_\sigma^4 / M^1}\Hom_{M_1} (\sigma_1 , m^{-1} \cdot \sigma_1) ,
\]
where each summand is one-dimensional. For every $m \in M_\sigma^4$, the element 
$\phi_m \in \End_M (\ind_{M^1}^M (\sigma_1,E_1))$ comes by construction from the nonzero element
$\phi_{m,\sigma_1} \in \Hom_{M^1}(\sigma_1, m^{-1} \cdot \sigma_1)$. It follows that, for any 
$m_1 \in M^1$, $\phi_m$ and $\phi_{m m_1}$ differ only by a scalar, and that the 
$\phi_{\tilde m}$ with $m \in M_\sigma^4 / M^1$ form a basis of \eqref{eq:2.19}.\\
(b) This follows directly from part (a) and \eqref{eq:2.17}.
\end{proof}

It is known from \cite[(1.6.1.1)]{Roc1} that the operators $\phi_m$ with $m \in M_\sigma^4$ 
commute up to scalars. However, by \cite[Proposition 1.6.1.2]{Roc1} the algebra 
$\End_M (\ind_{M^1}^M (\sigma_1,E_1))$ is commutative if and only if $\mu_{\sigma,1} = 1$. 

From \eqref{eq:2.21}, \eqref{eq:2.20} and Lemma \ref{lem:2.3}.b we obtain 
\begin{equation}\label{eq:2.23}
\Hom_M \big( \ind_{M^1}^M (E_1), \ind_{M^1}^M (E_1) \otimes_{\C [\mc O_3]} \C (\mc O_3) \big) \cong
\bigoplus\nolimits_{m \in M_\sigma^4 / M_\sigma^3} \phi_{\tilde m} \C (\mc O_3) .
\end{equation}
The action of $\Irr (M / M_\sigma^3)$ on $\ind_{M^1}^M (E)$ from \eqref{eq:2.31} extends naturally 
to an action on $\ind_{M^1}^M (E) \otimes_{\C [X_\nr (M)]} \C (X_\nr (M))$. 
From \cite[Proposition 4.3]{Hei2}, with the same proof, we obtain
\begin{equation}\label{eq:2.33}
\big( \ind_{M^1}^M (E) \otimes_{\C [X_\nr (M)]} \C (X_\nr (M) \big)^{\Irr (M / M_\sigma^3)}
= \ind_{M^1}^M (E_1) \otimes_{\C [\mc O_3]} \C (\mc O_3) .
\end{equation}

Unfortunately, we did not succeed in making the isomorphism \eqref{eq:2.16} explicit in 
terms of the endomorphisms of $\ind_{M^1}^M (E)$ and $\ind_{M^1}^M (E_1)$ exhibited above. Clearly 
\[
\C [\mc O_3] \subset \End_M (\ind_{M^1}^M (E_1))
\]
corresponds naturally to a subalgebra of
\[
\C [X_\nr (M)] \subset \End_M (\ind_{M^1}^M (E)) .
\]
From Lemma \ref{lem:2.3} and \eqref{eq:2.26} we see that the $\phi_{\tilde m}$ 
with $m \in M_\sigma^4 / M_\sigma^3$ should correspond to linear combinations of the $\phi_\chi$ 
with $\chi \in X_\nr (M,\sigma)$ and $\chi |_{M_\sigma^3} = \chi_{3,m}$, but we did not find a
canonical choice. Thus, although the progenerators $\ind_{M^1}^M (E)$ and $\ind_{M^1}^M (E_1)$ of 
the cuspidal Bernstein component $\Rep (M)^{\mc O}$ are equally good, they look somewhat differently. 
It seems technically difficult to analyse\\ 
$\End_G \big( I_P^G  (\ind_{M^1}^M (E_1)) \big)$ in general.

Many complications with the 2-cocycles $\natural$ stem from the multiplicity $\mu_{\sigma,1}$ of
the $M^1$-subrepresentation $E_1$ in $\Res^M_{M^1} E$. In Section \ref{sec:cuspidal} we saw that
$\natural \big|_{X_\nr (M,\sigma)}$ is trivial if and only if $\mu_{\sigma,1} = 1$. Recall that
$\mu_{\sigma,1}$ depends on $\sigma$ but not on the choice of $E_1$.
It is known from \cite[Remark 1.6.1.3]{Roc1} that $\mu_{\sigma,1} = 1$ in many cases:
\begin{itemize}
\item when the maximal $F$-split central torus of $M$ has dimension $\leq 1$,
\item when $M$ is quasi-split and $(\sigma,E)$ is generic,
\item for finite products $(\prod_i M_i, \boxtimes_i \sigma_i)$ with each $(M_i,\sigma_i)$ as
in the above cases.
\end{itemize}
We note that the first bullet includes semisimple groups, general linear groups, unitary groups
and inner forms of such groups, while the second bullet includes all tori.
On the other hand, in Example \ref{ex:2.E} we saw that $\mu_{\sigma,1} = 2$ also occurs.

\begin{work}\label{conj:10.1}
We are given $(\sigma,E) \in \Irr_\cusp (M)$ such that the restriction
of $\sigma$ to $M^1$ is multiplicity free, that is, $\mu_{\sigma,1} = 1$.
\end{work}

We stress that this hypothesis does not imply that the restriction of $(\sigma,E)$ to the
derived subgroup $M_\der$ is multiplicity-free. Indeed, a counterexample to that can be found in
\cite[\S 7]{AdPr}.

As Working hypothesis \ref{conj:10.1} constitutes a major difference between the assumptions 
from \cite{Hei2} and from this paper, it should not come as a surprise that it makes better results
possible. We will investigate how far we can get, following \cite{Hei2} rather closely. So, in the 
remainder of this section we assume Working hypothesis \ref{conj:10.1} (then it also holds for 
$\sigma \otimes \chi$ with $\chi \in X_\nr (M)$ because $M^1 \subset \ker \chi$).

From \eqref{eq:2.9} we know that $M_\sigma^2, M_\sigma^3$ and $M_\sigma^4$ now coincide to a single
group, which we call $M_\sigma$. Lemma \ref{lem:2.3} reduces to
\[
\End_M \big( \ind_{M^1}^M (\sigma_1,E_1) \big) = \C [ M_\sigma / M^1] = \C [ \mc O_3] .
\]
Further $X_\nr (M,\sigma) = \Irr (M / M_\sigma)$ and 
\eqref{eq:2.28}--\eqref{eq:2.12} provide a group homomorphism
\[
X_\nr (M,\sigma) \to \mr{Aut}_M (E_B) : \chi_c \mapsto \phi_{\chi_c} .
\]
In particular $\natural \big|_{X_\nr (M,\sigma)} = 1$ and \eqref{eq:2.36} simplifies to
\[
\End_M (E_B) = \C [X_\nr (M)] \rtimes X_\nr (M,\sigma) .
\]

\subsection{The non-cuspidal case} \

As before, we also write $\phi_{\chi_c}$ for $I_P^G (\phi_{\chi_c}) \in \mr{Aut}_G (I_P^G (E_B))$.
Then \eqref{eq:2.32} implies 
\begin{equation}\label{eq:10.1}
(I_P^G (E_B) )^{X_\nr (M,\sigma)} = I_P^G \big( (E_B)^{X_\nr (M,\sigma)} \big) = 
I_P^G (\ind_{M^1}^M (E_1)) . 
\end{equation}
By \eqref{eq:2.15} the progenerator $I_P^G (E_B)$ of $\Rep (G)^{\mf s}$ is isomorphic to
$I_P^G (\ind_{M^1}^M (E_1))^{[M : M_\sigma]}$. In particular $I_P^G (\ind_{M^1}^M (E_1))$ is also
a progenerator of $\Rep (G)^{\mf s}$ and 
\begin{equation}\label{eq:10.2}
\begin{aligned}
& \End_G (I_P^G (E_B)) \cong \End_G \big( I_P^G (\ind_{M^1}^M (E_1)) \big) \otimes_\C 
M_{[M : M_\sigma]}(\C) , \\
& \End_G (I_P^G (E_B))^{X_\nr (M,\sigma) \times X_\nr (M,\sigma)} = 
\End_G \big( I_P^G (\ind_{M^1}^M (E_1)) \big) .
\end{aligned}
\end{equation}
Here we use that $X_\nr (M,\sigma)$ embeds in $\mr{Aut}_G (I_P^G (E_B))$, which acts from the left
and from the right on $\End_G (I_P^G (E_B))$.

Under Working hypothesis \ref{conj:10.1}, \cite[\S 4.1--4.4]{Hei2} holds for $G,M,\sigma$.

\begin{lem}\label{lem:10.2}
Assume Working hypothesis \ref{conj:10.1} and let $w \in W(M,\mc O)$. 
\enuma{
\item There exists a $m_w \in M$, unique up to $M_\sigma$, such that:
\begin{itemize}
\item $\sigma (m_w) \rho_{\sigma,w} E_1 = E_1$,
\item $b_{m_w} A_w$ stabilizes $I_P^G (E_{K(B)})^{X_\nr (M,\sigma)}$.
\end{itemize}
\item For $s_\alpha$ with $\alpha \in \Sigma_{\mc O,\mu}$, we can take $m_{s_\alpha} \in M \cap 
M^1_\alpha$, and then there is a canonical choice up to $M^1$.
}
\end{lem}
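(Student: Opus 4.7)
The plan is to combine the multiplicity-freeness of $\Res_{M^1}^M(\sigma,E)$ given by Conjecture \ref{conj:10.1} with the intertwining relations from Sections \ref{sec:cuspidal}--\ref{sec:intertwining}. Since $\chi_w$ is trivial on $M^1$, the $M$-intertwiner $\rho_{\sigma,w} : \tilde w \cdot \sigma \to \sigma \otimes \chi_w$ is an $M^1$-isomorphism between representations that are $M^1$-isomorphic to $(\sigma,E)$; in particular $\rho_{\sigma,w}(E_1) \subset E$ is an irreducible $M^1$-subrepresentation. By Conjecture \ref{conj:10.1} and \eqref{eq:2.11} the decomposition $\Res_{M^1}^M E = \bigoplus_{m \in M/M_\sigma} \sigma(m) E_1$ is multiplicity-free, so there is a unique coset $m_w M_\sigma \in M/M_\sigma$ with $\rho_{\sigma,w}(E_1) = \sigma(m_w^{-1}) E_1$. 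This settles the first bullet of (a).

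For the second bullet, I would verify that $b_{m_w} A_w$ has trivial weight for the $X_\nr(M,\sigma)$-action on $\End_G(I_P^G(E_{K(B)}))$ given by conjugation with the $\phi_\chi$. By \eqref{eq:2.26} one gets $\phi_\chi b_{m_w} = \chi(m_w)^{-1} b_{m_w} \phi_\chi$, and by \eqref{eq:4.7} one has $\phi_\chi A_w = z(w, w^{-1}\chi)^{-1} A_w \phi_{w^{-1}\chi}$. Applied to a vector $v \in I_P^G(E_B)^{X_\nr(M,\sigma)}$, these combine to show $\phi_\chi(b_{m_w} A_w v) = \chi(m_w)^{-1} z(w,w^{-1}\chi)^{-1} (b_{m_w} A_w v)$. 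The scalar $z(w,\chi)$ is exactly the one introduced in \eqref{eq:3.16}; evaluating that identity on $e_1 \in E_1$, using $\phi_{\sigma,\chi}|_{E_1} = \mr{id}$ (by \eqref{eq:2.28}) together with $\rho_{\sigma,w}(e_1) \in \sigma(m_w^{-1}) E_1$, expresses $z(w,\chi)$ as an explicit character in $\chi$ depending on $m_w$ through $w(\chi)(m_w^{\pm 1})$. The cancellation with the factor $\chi(m_w)^{-1}$ from $b_{m_w}$ then forces the product scalar to equal $1$ for every $\chi \in X_\nr(M,\sigma)$, confirming the stabilization. The uniqueness up to $M_\sigma$ is inherited from the first bullet: modifying $m_w$ by $m_0 \in M_\sigma$ changes $b_{m_w}$ by $b_{m_0} \in \C[X_\nr(M)/X_\nr(M,\sigma)] = \C[\mc O]$, which is $X_\nr(M,\sigma)$-invariant and thus preserves the stabilization property.

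For part (b), $w = s_\alpha$ with $\alpha \in \Sigma_{\mc O,\mu}$ satisfies $\chi_{s_\alpha} = 1$ by Condition \ref{cond:3.1}, so by Lemma \ref{lem:3.3} the intertwiner $\rho_{\sigma,s_\alpha}$ is built entirely inside the Levi $M_\alpha$ from the Harish-Chandra operator $J_{s_\alpha(P \cap M_\alpha)|P \cap M_\alpha}$. Because the construction lives inside $M_\alpha$, the $M^1$-subrepresentation $\rho_{\sigma,s_\alpha}(E_1)$ is moved from $E_1$ by an element of $M \cap M_\alpha^1$: any unramified character of $M_\alpha$ acts trivially on the $M_\alpha$-built intertwiner, which rules out translation outside of $M_\alpha^1$. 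Thus a representative $m_{s_\alpha} \in M \cap M_\alpha^1$ exists within the $M_\sigma$-coset from (a). The residual ambiguity is $(M_\sigma \cap M_\alpha^1)/M^1 = \Z h_\alpha^\vee$, and the explicit form of $\rho_{\sigma,s_\alpha}$ obtained by multiplying $J_{s_\alpha(P\cap M_\alpha)|P\cap M_\alpha}$ by $(X_\alpha - 1)$ and specialising at $\chi = 1$ (as in the construction before \eqref{eq:3.7}) singles out a distinguished $M^1$-coset independent of any power of $h_\alpha^\vee$.

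The main obstacle I anticipate is the precise sign and conjugation bookkeeping in the weight-cancellation of part (a)(ii): many normalization choices enter through the definitions of $\phi_{\sigma,\chi}$, $\rho_{\sigma,w}$, $\chi_w$ and the $W(M,\mc O)$-action on $X_\nr(M)$, and one must verify they all interact coherently with the sign in $\rho_{\sigma,w}(E_1) = \sigma(m_w^{-1}) E_1$. For part (b), the sharper canonicity modulo $M^1$ (as opposed to modulo $\Z h_\alpha^\vee$) will need a direct analysis of $J_{s_\alpha(P\cap M_\alpha)|P\cap M_\alpha}$ showing that no coroot translation enters; this is analogous in spirit to the explicit residue computations in Lemma \ref{lem:6.1}.
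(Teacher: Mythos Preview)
Your argument for the first bullet of (a) matches the paper's: Conjecture~\ref{conj:10.1} together with the decomposition $E = \bigoplus_{m \in M/M_\sigma} \sigma(m) E_1$ immediately produces $m_w$, unique up to $M_\sigma$. For the second bullet the paper simply invokes the proof of \cite[Lemme~4.5]{Hei2}, whereas you attempt the explicit scalar computation via \eqref{eq:3.16} and \eqref{eq:4.7}. That is a legitimate route, but the cancellation you assert does not go through as written. Evaluating \eqref{eq:3.16} on $e_1 \in E_1$ with $\rho_{\sigma,w}(e_1) \in \sigma(m_w^{-1}) E_1$ gives $z(w,\chi) = (w\chi)(m_w)$, hence $z(w, w^{-1}\chi) = \chi(m_w)$, and then
\[
\phi_\chi(b_{m_w} A_w v) \;=\; \chi(m_w)^{-1}\, z(w,w^{-1}\chi)^{-1}\, b_{m_w} A_w v
\;=\; \chi(m_w)^{-2}\, b_{m_w} A_w v
\]
rather than $b_{m_w} A_w v$. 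So the ``sign and conjugation bookkeeping'' you flag as the main obstacle is a genuine one: either the relation between the two bullets carries an inversion of $m_w$ that is hidden in the conventions (this is exactly the kind of convention-sensitive point where the paper defers to \cite{Hei2}), or one must argue differently, e.g.\ by working directly with the realization \eqref{eq:2.32} and tracking how $\rho_{\sigma,w}$ and the shift $b_{m_w}$ interact with $\ind_{M^1}^M(E_1) \subset \ind_{M^1}^M(E)$ rather than via the scalar $z(w,\chi)$.

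For part (b), your existence sketch (the intertwiner for $s_\alpha$ is built within $M_\alpha$) is in line with \cite{Hei2}. But your canonicity argument has a gap: the construction of $\rho_{\sigma,s_\alpha}$ only determines $m_{s_\alpha}$ modulo $M_\sigma$, and intersecting with $M \cap M_\alpha^1$ still leaves the $\Z$-ambiguity $(M_\sigma \cap M_\alpha^1)/M^1 = \Z h_\alpha^\vee$ that you yourself identify. Nothing in the specialization of $(X_\alpha - 1) J_{s_\alpha(P)|P}$ at $\chi = 1$ singles out a preferred coset in this $\Z$. The paper's resolution is far more direct and of a different nature: since $(M \cap M_\alpha^1)/M^1 \cong \Z$, one simply \emph{declares} the canonical $m_{s_\alpha}$ to be the unique representative with $\nu_F(\alpha(m_{s_\alpha}))$ positive and minimal. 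That is a normalization, not something extracted from the intertwining operator.
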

\begin{proof}
(a) Working hypothesis \ref{conj:10.1} entails that there is a $m_w \in M$, unique up to $M_\sigma$,
which fulfills the first bullet. The proof of \cite[Lemme 4.5]{Hei2} then shows the second bullet.\\
(b) The element $m_{s_\alpha} \in M \cap M^1_\alpha$ comes from the proof of \cite[Lemme 4.5]{Hei2}.
As $M \cap M^1_\alpha / M^1 \cong \Z$, there is a unique choice $m_{s_\alpha} M^1$ such that
$\nu_F (\alpha (m_{s_\alpha}))$ is positive and minimal.
\end{proof}

As in \cite[\S 4.6]{Hei2} we define $J_{s_\alpha} = b_{m_{s_\alpha}} A_{s_\alpha}$ for 
$\alpha \in \Delta_{\mc O,\mu}$, but not for other $w \in W(\Sigma_{\mc O,\mu})$. 
By \eqref{eq:4.2} and Proposition \ref{prop:4.1}.c:
\begin{equation}\label{eq:10.3}
J^2_{s_\alpha} = b_{m_{s_\alpha}} (s_\alpha \cdot b_{m_{s_\alpha}}) A^2_{s_\alpha} =
A_{s_\alpha}^2 = \frac{4 c'_{s_\alpha}}{(1 - q_\alpha^{-1} )^2 
(1 + q_{\alpha*}^{-1} )^2 \mu^{M_\alpha}(\sigma \otimes \cdot)} .
\end{equation}
Let $w \in W(\Sigma_{\mc O,\mu})$, with a reduced expression $w = s_1 s_2 \cdots s_{\ell (w)}$.
Then \cite[Lemme 4.7.i]{Hei2} shows: the operator
\begin{equation}\label{eq:10.4}
J_w := J_{s_1} J_{s_2} \cdots J_{s_{\ell (w)}}
\end{equation}
depends only on $w$, not on the chosen reduced expression. For $r \in R(\mc O)$ we define
\[
J_{rw} := b_{m_r} A_r J_w = J_r J_w .
\]
By Lemma \ref{lem:10.2} and \eqref{eq:4.6}, $J_r$ belongs to $\End_G (I_P^G (E_B)^{X_\nr (M,\sigma)})$.
On the other hand, the $J_w$ with $w \in W(\Sigma_{\mc O,\mu}) \setminus \{1\}$ have poles, just like
the $A_w$. The multiplication rules for these elements are similar to Proposition \ref{prop:4.2}:

\begin{lem}\label{lem:10.3}
Assume Working hypothesis \ref{conj:10.1}, 
let $r,r_1,r_2 \in R(\mc O)$ and $w \in W(\Sigma_{\mc O,\mu})$.
\enuma{
\item Write $\chi (r_1,r_2) = \chi_{r_1} r_1 (\chi_{r_2}) \chi_{r_1 r_2}^{-1} \in X_\nr (M,\sigma)$.
There exists a $\natural_J (r_1,r_2) \in \C [X_\nr (M) / X_\nr (M,\sigma)]^\times \cong
\C^\times \times M_\sigma / M^1$ such that 
\[
J_{r_1} \circ J_{r_2} = \natural_J (r_1,r_2) \phi_{\chi (r_1,r,2)} \circ J_{r_1 r_2} .
\]
\item There exists a $\natural_J (w,r) \in \C [X_\nr (M) / X_\nr (M,\sigma)]^\times$ such that
\[
J_w \circ J_r = \natural (w,r) \phi_{w(\chi_r^{-1}) \chi_r} \circ J_{wr} .
\]
}
\end{lem}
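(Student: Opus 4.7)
\textbf{Proof plan for Lemma \ref{lem:10.3}.}

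The strategy is to substitute the definitions $J_r = b_{m_r} A_r$ and $J_w = J_{s_1}\cdots J_{s_{\ell(w)}}$ into the claimed identities, move all $b$-factors past the $A$-operators using the commutation rule \eqref{eq:4.2}, apply the multiplication formulas of Proposition \ref{prop:4.2}, and then reorganize to isolate the coefficient $\natural_J$. The nontrivial point is not the existence of some scalar in $K(B)$ making the identity hold (Theorem \ref{thm:4.3} guarantees that), but rather that this scalar is in fact an element of the unit group $\C[X_\nr(M)/X_\nr(M,\sigma)]^\times \cong \C^\times \times M_\sigma/M^1$. Here Lemma \ref{lem:10.2} is the essential input provided by Conjecture \ref{conj:10.1}.

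For part (a): writing $J_{r_1} J_{r_2} = b_{m_{r_1}}(r_1 \cdot b_{m_{r_2}})_{\chi_{r_1}^{-1}} A_{r_1} A_{r_2}$ via \eqref{eq:4.2}, then invoking Proposition \ref{prop:4.2}(a) to get $A_{r_1} A_{r_2} = \natural(r_1, r_2) \phi_{\chi(r_1,r_2)} A_{r_1 r_2}$, and finally substituting $A_{r_1 r_2} = b_{m_{r_1 r_2}}^{-1} J_{r_1 r_2}$ and commuting $\phi_{\chi(r_1,r_2)}$ past the $b$-factors using \eqref{eq:2.26}, yields a formula $J_{r_1} J_{r_2} = \natural_J(r_1, r_2) \phi_{\chi(r_1, r_2)} J_{r_1 r_2}$ in which the coefficient $\natural_J(r_1, r_2)$ is manifestly a monomial in $B^\times$, that is, a nonzero scalar times $b_{m'}$ for an explicit $m' \in M$.

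To upgrade "$m' \in M$" to "$m' \in M_\sigma$" (equivalently, $b_{m'} \in \C[X_\nr(M)/X_\nr(M,\sigma)]^\times$) I would argue as follows. By Lemma \ref{lem:10.2}(a) each of the operators $J_{r_1}, J_{r_2}, J_{r_1 r_2}$ stabilizes the subspace $I_P^G(\ind_{M^1}^M E_1) = I_P^G(E_B)^{X_\nr(M,\sigma)}$ of \eqref{eq:10.1}, and so does $\phi_{\chi(r_1, r_2)}$ since $\chi(r_1, r_2) \in X_\nr(M,\sigma)$ acts trivially on the invariants. Hence $\natural_J(r_1, r_2)$, viewed as a $B$-multiplication operator on $I_P^G(E_B)$, must preserve the $X_\nr(M,\sigma)$-fixed subspace. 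A monomial $b_{m'} \in B$ has this property if and only if $m' \in M_\sigma$, which gives the claim. Part (b) is handled in the same spirit: by the multiplicativity \eqref{eq:10.4} it suffices to treat $w = s_\alpha$ with $\alpha \in \Delta_{\mc O,\mu}$, and then the same manipulation with Proposition \ref{prop:4.2}(c) in place of (a) produces the stated formula; the invariance argument is identical, using both parts of Lemma \ref{lem:10.2}.

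The main obstacle is the invariance verification in the last step: without Conjecture \ref{conj:10.1} the elements $m_w$ are only canonical up to $M_\sigma^4$ (a larger group than $M_\sigma^2$), the $\phi_\chi$ no longer form a genuine group representation of $X_\nr(M,\sigma)$, and the monomials arising from \eqref{eq:4.2} would generally fail to descend to $\C[X_\nr(M)/X_\nr(M,\sigma)]$. Hence the whole argument hinges on the multiplicity-one hypothesis collapsing $M_\sigma^2 = M_\sigma^3 = M_\sigma^4 = M_\sigma$ and trivializing $\natural|_{X_\nr(M,\sigma)^2}$, which is exactly what allows $\natural_J$ to take values in units of $\C[\mc O]$ rather than in the larger algebra $K(B)$.
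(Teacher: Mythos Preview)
Your proposal is correct and follows essentially the same approach as the paper's proof. The paper carries out exactly the computation you outline for part~(a): expand $J_{r_1}J_{r_2}$ via \eqref{eq:4.2}, apply Proposition~\ref{prop:4.2}(a), and rewrite in terms of $\phi_{\chi(r_1,r_2)} J_{r_1 r_2}$; for part~(b) it simply says ``analogous, using Proposition~\ref{prop:4.2}(c)''. The only cosmetic difference is in the final step: where you argue directly that a monomial $b_{m'}$ preserves the $X_\nr(M,\sigma)$-invariants iff $m' \in M_\sigma$, the paper phrases the same fact as ``by the uniqueness of $m_w$ (up to $M_\sigma$)'' from Lemma~\ref{lem:10.2}(a). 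Your explanation of why Conjecture~\ref{conj:10.1} is essential is also accurate.
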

\begin{proof}
(a) By \eqref{eq:4.2} and Proposition \ref{prop:4.2}.a 
\begin{equation}\label{eq:10.5}
\begin{aligned}
J_{r_1} J_{r_2} & = b_{m_{r_1}} A_{r_1} b_{m_{r_2}} A_{r_2} \; = \; b_{m_{r_1}} (r_1 \cdot b_{m_{r_2}}
)_{\chi_{r_1}^{-1}} A_{r_1} A_{r_2} \\
& = b_{m_{r_1}} (r_1 \cdot b_{m_{r_2}}) (r_1 \cdot b_{m_{r_2}})(\chi_{r_1}^{-1}) A_{r_1} A_{r_2} \\
& = b_{m_{r_1}} b_{\tilde r_1 m_{r_2} \tilde r_1^{-1}}  b_{m_{r_2}}(r_1^{-1} \chi_{r_1}^{-1})
\natural (r_1,r_2) \phi_{\chi (r_1,r_2)} A_{r_1 r_2} .
\end{aligned}
\end{equation}
On the other hand, by \eqref{eq:2.26} 
\begin{multline}\label{eq:10.6}
\phi_{\chi (r_1,r_2)} J_{r_1 r_2} = \phi_{\chi (r_1,r_2)} b_{m_{r_1 r_2}} A_{r_1 r_2} \\
= (b_{m_{r_1 r_2}} )_{\chi (r_1,r_2)^{-1}} \phi_{\chi (r_1,r_2)} A_{r_1 r_2} =
b_{m_{r_1 r_2}} \chi (r_1,r_2)^{-1} (m_{r_1 r_2}) \phi_{\chi (r_1,r_2)} A_{r_1 r_2} .
\end{multline}
Inserting \eqref{eq:10.6} into \eqref{eq:10.5}, we obtain
\begin{equation}\label{eq:10.7}
b_{m_{r_1}} b_{\tilde r_1 m_{r_2} \tilde r_1^{-1}} b_{m_{r_1 r_2}}^{-1} \chi (r_1,r_2) (m_{r_1 r_2}) 
(r_1^{-1} \chi_{r_1}^{-1})(m_{r_2}) \natural (r_1,r_2) \phi_{\chi (r_1,r_2)} J_{r_1 r_2} .
\end{equation}
This element stabilizes $I_P^G (E_{K(B)})^{X_\nr (M,\sigma)}$, so by the uniqueness of $m_w$
(up to $M_\sigma$):
\[
m := m_{r_1} \tilde r_1 m_{r_2} \tilde r_1^{-1} m_{r_1 r_2}^{-1} \text{ lies in } M_\sigma .
\]
The three middle terms in \eqref{eq:10.7} are nonzero scalars, so \eqref{eq:10.7} is of the form\\
$b_m z \phi_{\chi (r_1,r_2)} J_{r_1 r_2}$ for some $z \in \C^\times$. \\
(b) This can be derived from Proposition \ref{prop:4.2}.c, analogous to part (a).
\end{proof}

It is clear from Theorem \ref{thm:4.3} that $\{ J_w \phi_{\chi_c} : w \in W(M,\mc O), \chi_c \in 
X_\nr (M,\sigma) \}$ is a $K(B)$-basis of $\Hom_G \big( I_P^G (E_B), I_P^G (E_{K(B)}) \big)$.
Consider the idempotent
\[
p_{X_\nr (M,\sigma)} := |X_\nr (M,\sigma) |^{-1} \sum\nolimits_{\chi_c \in X_\nr (M,\sigma)} 
\phi_{\chi_c} \; \in \End_G (I_P^G (E_B)) .
\]
It satisfies $p_{X_\nr (M,\sigma)} I_P^G (E_B) = I_P^G (E_B)^{X_\nr (M,\sigma)}$ and
\begin{multline}\label{eq:10.8}
p_{X_\nr (M,\sigma)} \Hom_G \big( I_P^G (E_B), I_P^G (E_{K(B)}) \big) p_{X_\nr (M,\sigma)} = \\
\Hom_G \big( I_P^G (E_B)^{X_\nr (M,\sigma)}, I_P^G (E_{K(B)})^{X_\nr (M,\sigma)} \big) .
\end{multline}

\begin{prop}\label{prop:10.4}
Assume Working hypothesis \ref{conj:10.1}. As vector spaces over\\ 
$K(B)^{X_\nr (M,\sigma)} = \C (X_\nr (M) / X_\nr (M,\sigma))$:
\[
\Hom_G \big( I_P^G (E_B)^{X_\nr (M,\sigma)}, I_P^G (E_{K(B)})^{X_\nr (M,\sigma)} \big) =
\bigoplus_{w \in W(M,\mc O)} K(B)^{X_\nr (M,\sigma)} J_w p_{X_\nr (M,\sigma)} .
\]
\end{prop}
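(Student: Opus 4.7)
The strategy is to extract the result from Theorem \ref{thm:4.3} by applying the idempotent $p := p_{X_\nr(M,\sigma)}$ on both sides. By \eqref{eq:10.8}, conjugation by $p$ identifies the target of the proposition with $p \Hom_G \big( I_P^G(E_B), I_P^G(E_{K(B)}) \big) p$. Since $J_w = b_{m_w} A_w$ differs from $A_w$ by an element of $B \subset K(B)$, Theorem \ref{thm:4.3} says that $\{ J_w \phi_{\chi_c} : w \in W(M,\mc O), \chi_c \in X_\nr(M,\sigma)\}$ is a $K(B)$-basis of the full Hom space. I will then show that projecting each basis element through $p \cdot (-) \cdot p$ collapses the $\chi_c$ index and produces the sought-after basis.

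\textbf{Key commutations.} Three ingredients enter. First, $\phi_{\chi_c} p = p = p \phi_{\chi_c}$, immediate from the definition of $p$. Second, by the second bullet of Lemma \ref{lem:10.2}.a, $J_w$ maps $X_\nr(M,\sigma)$-invariants to $X_\nr(M,\sigma)$-invariants; equivalently, $\phi_\eta J_w p = J_w p$ for every $\eta \in X_\nr(M,\sigma)$, and hence $p J_w p = J_w p$. This is precisely what the choice of $m_w$ (available thanks to Conjecture \ref{conj:10.1}) is designed to achieve. Third, for $f \in K(B)$, \eqref{eq:2.26} gives $\phi_\eta f = f_{\eta^{-1}} \phi_\eta$, so
\[
p f p \;=\; |X_\nr(M,\sigma)|^{-1} \sum_{\eta \in X_\nr(M,\sigma)} f_{\eta^{-1}} \phi_\eta p \;=\; \bar f \, p, \qquad \bar f := |X_\nr(M,\sigma)|^{-1} \sum_\eta f_{\eta^{-1}} \in K(B)^{X_\nr(M,\sigma)}.
\]

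\textbf{Spanning.} Write an arbitrary element of the full Hom space as $T = \sum_{w,\chi_c} f_{w,\chi_c} J_w \phi_{\chi_c}$ with $f_{w,\chi_c} \in K(B)$. Using $\phi_{\chi_c} p = p$ one gets $T p = \sum_w g_w J_w p$ with $g_w := \sum_{\chi_c} f_{w,\chi_c}$. Applying $p$ on the left and using both $\phi_\eta J_w p = J_w p$ and the averaging formula above,
\[
p T p \;=\; \sum_w p\, g_w (J_w p) \;=\; \sum_w |X_\nr(M,\sigma)|^{-1} \sum_\eta (g_w)_{\eta^{-1}} \phi_\eta J_w p \;=\; \sum_w \bar{g_w}\, J_w p,
\]
and $\bar{g_w} \in K(B)^{X_\nr(M,\sigma)}$. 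Conversely, any such combination is already $X_\nr(M,\sigma)$-biinvariant, so it lies in $p \Hom p$.

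\textbf{Linear independence and wrap-up.} Since $J_w p = |X_\nr(M,\sigma)|^{-1} \sum_{\chi_c} J_w \phi_{\chi_c}$, a relation $\sum_w h_w J_w p = 0$ with $h_w \in K(B)^{X_\nr(M,\sigma)}$ unfolds to $\sum_{w,\chi_c} h_w J_w \phi_{\chi_c} = 0$; Theorem \ref{thm:4.3} then forces each $h_w = 0$. This yields the asserted direct sum decomposition. The main (only) subtle point is the invariance statement $\phi_\eta J_w p = J_w p$: it is the whole reason for introducing the twist $b_{m_w}$ in passing from $A_w$ to $J_w$ and is the place where Conjecture \ref{conj:10.1} is essential (via Lemma \ref{lem:10.2}); everything else is bookkeeping around this symmetrization.
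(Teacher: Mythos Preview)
Your proof is correct and follows essentially the same approach as the paper: both arguments reduce to Theorem~\ref{thm:4.3} via the idempotent $p_{X_\nr(M,\sigma)}$, using Lemma~\ref{lem:10.2}.a to ensure $J_w$ preserves invariants and \eqref{eq:2.26} to compute $pK(B)p = K(B)^{X_\nr(M,\sigma)}p$. One cosmetic remark: for general $w\in W(M,\mc O)$ the operator $J_w$ is not literally $b_{m_w}A_w$ (it is defined as a product via \eqref{eq:10.4} and $J_{rw}=J_rJ_w$), but it still lies in $K(B)^\times A_w$ and still stabilizes the invariants because each factor does, so your use of it is fine.
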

\begin{proof}
By Lemma \ref{lem:10.2}.a, each $J_w$ defines an element $J_w p_{X_\nr (M,\sigma)}$ of\\
$\Hom_G \big( I_P^G (E_B)^{X_\nr (M,\sigma)}, I_P^G (E_{K(B)})^{X_\nr (M,\sigma)} \big)$.
By Theorem \ref{thm:4.3} these elements are linearly independent over $K(B)$ and over
$K(B)^{X_\nr (M,\sigma)}$. This proves the inclusion $\supset$ of the proposition.

Further, Lemma \ref{lem:10.2}.a also shows that, for every $\chi_c \in X_\nr (M,\sigma)$,
\[
J_w p_{X_\nr (M,\sigma)} = p_{X_\nr (M,\sigma)} J_w p_{X_\nr (M,\sigma)} = 
\phi_{\chi_c} J_w p_{X_\nr (M,\sigma)}.
\] 
Since $J_w$ is invertible in $\End_G (I_P^G (E_{K(B)}))$, this also equals 
\begin{equation}\label{eq:10.9}
p_{X_\nr (M,\sigma)} J_w = p_{X_\nr (M,\sigma)} J_w \phi_{\chi_c} .
\end{equation}
With Theorem \ref{thm:4.3} we deduce
\begin{equation}\label{eq:10.10} 
\begin{aligned}
& \Hom_G \big( I_P^G (E_B)^{X_\nr (M,\sigma)}, I_P^G (E_{K(B)})^{X_\nr (M,\sigma)} \big) = \\
& \Big( \bigoplus\nolimits_{w \in W(M,\mc O)} \bigoplus\nolimits_{\chi_c \in X_\nr (M,\sigma)} 
K(B) \phi_{\chi_c} J_w \Big)^{X_\nr (M,\sigma) \times X_\nr (M,\sigma)} = \\
& \bigoplus\nolimits_{w \in W(M,\mc O)} \Big( p_{X_\nr (M,\sigma)} \bigoplus\nolimits_{\chi_c 
\in X_\nr (M,\sigma)} K(B) \phi_{\chi_c} J_w p_{X_\nr (M,\sigma)} \Big) = \\
& \bigoplus\nolimits_{w \in W(M,\mc O)} p_{X_\nr (M,\sigma)} K(B)  p_{X_\nr (M,\sigma)} J_w . 
\end{aligned}
\end{equation}
From \eqref{eq:2.26} we see that 
\[
p_{X_\nr (M,\sigma)} K(B) p_{X_\nr (M,\sigma)} = K(B)^{X_\nr (M,\sigma)} p_{X_\nr (M,\sigma)}.
\]
Combine that with \eqref{eq:10.10} and \eqref{eq:10.9}.
\end{proof}

As $\Hom_G ( I_P^G (E_B), I_P^G (E_{K(B)})$ is an algebra, so the vector space in 
Proposition \ref{prop:10.4}. The multiplication relations between the elements $J_w$ with 
$w \in W(\Sigma_{\mc O,\mu})$ are similar to those in Proposition \ref{prop:4.1}.a, but with some 
extra factors $b_m$ inserted. The relations between the other $J_w$ with $w \in W(M,\mc O)$ are
given in Lemma \ref{lem:10.3} (although in part (a) we do not need $\phi_{\chi (r_1,r_2)}$ because
it is the identity on $I_P^G (E_{K(B)})$). Like in Paragraph \ref{par:Tw}, we can simplify these
multiplication relations by replacing the $J_w$ with slightly different operators.

For $\alpha \in \Delta_{\mc O,\mu}$ we define 
\begin{equation}\label{eq:10.11}
\mc T'_{s_\alpha} = g_\alpha J_{s_\alpha} .
\end{equation}
Then Proposition \ref{prop:4.6} holds for these $\mc T'_{s_\alpha}$, with the same proof:
\eqref{eq:10.11} extends to a group homomorphism
\begin{equation}\label{eq:10.12}
W(\Sigma_{\mc O,\mu}) \to \mr{Aut}_G \big( I_P^G (E_{K(B)})^{X_\nr (M,\sigma)} \big) :
w \mapsto \mc T'_w .
\end{equation}
Now we can replay the proof of Lemma \ref{lem:4.7}.b for the $\mc T'_w J_r$, using Lemma 
\ref{lem:10.3}.b as a substitute for Proposition \ref{prop:4.2}.c. That leads to:
\begin{equation}\label{eq:10.13} 
\mc T'_w J_r = \natural_J (w,r) J_r \mc T'_{r^{-1} w r} \qquad 
w \in W (\Sigma_{\mc O,\mu}), r \in R(\mc O) ,
\end{equation}
where $\natural_J (w,r) \in \C [X_\nr (M) / X_\nr (M,\sigma)]^\times$ is as in Lemma \ref{lem:10.3}.
Recall that 
\[
X_\nr (M) / X_\nr (M,\sigma) \to \mc O : \chi \mapsto \sigma \otimes \chi
\]
is a bijection and that $W(M,\mc O)$ acts naturally on $\mc O$ (which induces an action on
$\C [\mc O]$).

\begin{thm}\label{thm:10.5}
Assume Working hypothesis \ref{conj:10.1}.
\enuma{
\item The set $\{ J_r \mc T'_w : r \in R(\mc O), w \in W(\Sigma_{\mc O,\mu}) \}$ is a
$\C (\mc O)$-basis of \\
$\Hom_G \big( I_P^G (E_B)^{X_\nr (M,\sigma)}, I_P^G (E_{K(B)})^{X_\nr (M,\sigma)} \big)$.
For any $b \in \C (\mc O)$:
\[
J_r \mc T'_w \circ b = (rw \cdot b) \circ J_r \mc T'_w .
\]
\item There exists a 2-cocycle $\natural_J : W(M,\mc O)^2 \to \C[\mc O]^\times$ (with respect
to the action of $W(M,\mc O)$ on $\C[\mc O]$) such that, for all 
$r_1, r_2 \in R (\mc O), w_1, w_2 \in W(\Sigma_{\mc O,\mu})$:
\[
J_{r_1} \mc T'_{w_1} J_{r_2} \mc T'_{w_2} =
\natural_J (r_1 w_1, r_2 w_2) J_{r_1 r_2} \mc T'_{r_2^{-1} w_1 r_2 w_2} .
\] 
When $r_2 = 1$, this simplifies to $\natural_J (r_1 w_1,w_2) = 1$.
}
\end{thm}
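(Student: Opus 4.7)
The plan is to derive both parts from Proposition \ref{prop:10.4}, Lemma \ref{lem:10.3}, the group homomorphism \eqref{eq:10.12}, and the relation \eqref{eq:10.13}, exploiting throughout that, under Conjecture \ref{conj:10.1}, $\chi_c \mapsto \phi_{\chi_c}$ is a genuine group homomorphism on $X_\nr(M,\sigma)$. This makes $p_{X_\nr(M,\sigma)}$ a true idempotent projector onto the invariant subspace, so that any $\phi_{\chi_c}$ with $\chi_c \in X_\nr(M,\sigma)$ acts as the identity on $I_P^G(E_B)^{X_\nr(M,\sigma)}$.

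For part (a), I would start from the $\C(\mc O)$-basis $\{J_w p_{X_\nr(M,\sigma)} : w \in W(M,\mc O)\}$ of $\Hom_G(I_P^G(E_B)^{X_\nr(M,\sigma)}, I_P^G(E_{K(B)})^{X_\nr(M,\sigma)})$ given by Proposition \ref{prop:10.4} (note $K(B)^{X_\nr(M,\sigma)} = \C(\mc O)$). Writing $w = rw'$ via \eqref{eq:3.8} with $r \in R(\mc O)$ and $w' \in W(\Sigma_{\mc O,\mu})$, we have $J_w = J_r J_{w'}$ by \eqref{eq:10.4}, while $\mc T'_{w'}$ differs from $J_{w'}$ by right-multiplication with a product of scalars $g_\alpha \in \C(\mc O)^\times$. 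Hence $J_r \mc T'_{w'}$ is a $\C(\mc O)^\times$-multiple of $J_w$ on the invariants, so $\{J_r \mc T'_w\}$ is a $\C(\mc O)$-basis as well. The commutation rule decomposes into $J_r b = (r \cdot b) J_r$ for $b \in \C(\mc O)$ (using \eqref{eq:4.2} together with the identification $(r \cdot b)_{\chi_r^{-1}} = r \cdot b$ for the $W(M,\mc O)$-action on $\mc O \cong X_\nr(M)/X_\nr(M,\sigma)$), combined with the analogue of \eqref{eq:4.30} for $\mc T'_w$ (proved exactly like Proposition \ref{prop:4.6}) together with $\chi_{w'} = 1$ from Condition \ref{cond:3.1} — this ensures that the $W(M,\mc O)$-action and the ordinary action agree on $W(\Sigma_{\mc O,\mu})$.

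For part (b), I would multiply step by step. First, \eqref{eq:10.13} gives $\mc T'_{w_1} J_{r_2} = \natural_J(w_1, r_2) J_{r_2} \mc T'_{r_2^{-1} w_1 r_2}$ with $\natural_J(w_1, r_2) \in \C[\mc O]^\times$; the $\phi$-correction present in the underlying Lemma \ref{lem:4.7}.b variant is trivial on invariants since its exponent lies in $X_\nr(M,\sigma)$ by Lemma \ref{lem:3.7}. Next, I would move $\natural_J(w_1, r_2)$ past $J_{r_1}$ to pick up $r_1 \cdot \natural_J(w_1, r_2)$, apply Lemma \ref{lem:10.3}.a (whose $\phi_{\chi(r_1, r_2)}$ factor again acts trivially on invariants) to get $J_{r_1} J_{r_2} = \natural_J(r_1, r_2) J_{r_1 r_2}$, and finally use \eqref{eq:10.12} to combine $\mc T'_{r_2^{-1} w_1 r_2} \mc T'_{w_2} = \mc T'_{r_2^{-1} w_1 r_2 w_2}$. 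This yields the stated product with
\[
\natural_J(r_1 w_1, r_2 w_2) = \bigl(r_1 \cdot \natural_J(w_1, r_2)\bigr) \cdot \natural_J(r_1, r_2) \in \C[\mc O]^\times.
\]
When $r_2 = 1$, both $\natural_J(w_1, 1)$ and $\natural_J(r_1, 1)$ equal $1$ (as $J_1 = 1$), so only \eqref{eq:10.12} contributes and the coefficient collapses to $1$. The 2-cocycle identity for $\natural_J$ relative to the $W(M,\mc O)$-action on $\C[\mc O]$ then drops out of the associativity of composition in the Hom space.

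The main obstacle is keeping each correction in the polynomial ring $\C[\mc O]^\times$ rather than merely in $\C(\mc O)^\times$. This is what Lemma \ref{lem:10.2} is designed for: it ensures that $J_r$ preserves the integral (not just rational) form of the invariant subspace, and Lemma \ref{lem:10.3} provides $b_m$-type factors lying in $\C[M_\sigma^2/M^1] = \C[\mc O]$. Without this polynomial control the theorem would degenerate into a weaker rational statement; it is precisely the $\C[\mc O]$-valued 2-cocycle produced here that allows the subsequent identification (Theorem \ref{thm:F}) of $\End_G(I_P^G(\ind_{M^1}^M(E_1)))$ with the extended affine Hecke algebra $\mc H(\mc O,\Sigma_{\mc O,\mu},\lambda,\lambda^*,q_F^{1/2}) \rtimes \C[R(\mc O),\natural_J]$ rather than with some localization thereof.
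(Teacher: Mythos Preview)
Your proposal is correct and follows essentially the same route as the paper's proof: for (a) you pass from the $J_w$-basis of Proposition~\ref{prop:10.4} to the $J_r\mc T'_w$-basis via an invertible $\C(\mc O)$-factor and read off the commutation from \eqref{eq:4.2}/\eqref{eq:4.30}; for (b) you carry out exactly the same step-by-step computation using \eqref{eq:10.13}, Lemma~\ref{lem:10.3}.a and \eqref{eq:10.12}, arriving at the identical formula $\natural_J(r_1w_1,r_2w_2)=(r_1\cdot\natural_J(w_1,r_2))\,\natural_J(r_1,r_2)$, and then invoke associativity for the cocycle condition. One tiny slip: the factor relating $\mc T'_w$ to $J_w$ is a \emph{left} (not right) multiplication by an element of $\C(\mc O)^\times$, but this is immaterial to the argument.
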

\begin{proof}
(a) The expression for $J_r \mc T'_w b$ follows from \eqref{eq:4.16} and \eqref{eq:4.30}. This
implies that
\[
J_r \mc T'_w (J_r J_w )^{-1} \in \C [\mc O]^\times \qquad
\forall r \in R(\mc O), w \in W(\Sigma_{\mc O,\mu}) .
\]
Then Proposition \ref{prop:10.4} says that the $J_r \mc T'_w$, just like the $J_r J_w$, form
a $\C (\mc O)$-basis.\\
(b) With \eqref{eq:10.12}, \eqref{eq:10.13} and Lemma \ref{lem:10.3}.a we compute
\begin{multline*}
J_{r_1} \mc T'_{w_1} J_{r_2} \mc T'_{w_2} = J_{r_1} \natural (w_1,r_2) J_{r_2} 
\mc T'_{r_2^{-1} w_1 r_2} \mc T'_{w_2} = (r_1 \cdot \natural_J (w_1,r_2) ) J_{r_1} J_{r_2}
\mc T'_{r_2^{-1} w_1 r_2 w_2} \\
= (r_1 \cdot \natural_J (w_1,r_2) ) \natural_J (r_1,r_2) J_{r_1 r_2} \mc T'_{r_2^{-1} w_1 r_2 w_2} .
\end{multline*}
This means that we must define 
\[
\natural_J (r_1 w_1,r_2 w_2) = (r_1 \cdot \natural_J (w_1,r_2) ) \natural_J (r_1,r_2) .
\]
The above computation with $r_2 = 1$ shows that $\natural_J (r_1 w_1,w_2) = 1$.
In view of the associativity of $\End_G \big( I_P^G (E_{K(B)})^{X_\nr (M,\sigma)} \big)$,
we can work out the product
\[
J_{r_1} \mc T'_{w_1} \circ J_{r_2} \mc T'_{w_2} \circ J_{r_3} \mc T'_{w_3}
\]
in two equivalent ways. Comparing the resulting expressions in
\[
\C [\mc O]^\times J_{r_1 r_2 r_3} \mc T'_{r_3^{-1} r_2^{-1} w_1 r_3^{-1} w_2 r_3 w_3} ,
\]
we deduce that $\natural_J$ is a 2-cocycle.
\end{proof}

Next we aim for a version of Theorem \ref{thm:10.5} with regular (instead of merely rational)
functions on $\mc O$. As $\mc T'_{s_\alpha} = b_{m_{s_\alpha}} \mc T_{s_\alpha}$ and
$b_{m_{s_\alpha}} \in \C [\mc O]^\times$, the poles of $\mc T'_{s_\alpha}$ are the same as the
poles of $\mc T_{s_\alpha}$. We know those from \eqref{eq:4.26}: they are simple and occur at 
\begin{equation}\label{eq:10.15}
\{ X_\alpha = q_\alpha \} \quad \text{ and \; (if } b_{s_\alpha} > 0 
\text{) at } \{ X_\alpha = -q_{\alpha*} \}.
\end{equation}
Since $X_\alpha = b_{h_\alpha^\vee}$ and $h_\alpha^\vee$ is indivisible in $M_\sigma / M^1$,
any set of the form 
\[
\{ \sigma' \in \mc O : X_\alpha (\sigma') = \text{constant} \}
\]
is connected. By Lemma \ref{lem:3.6} and the same indivisibility of $h_\alpha^\vee$, $s_\alpha$
pointwise fixes $\{ \sigma' \in \mc O : X_\alpha (\sigma') = 1 \}$ and (if $b_{s_\alpha} > 0$)
$\{ \sigma' \in \mc O : X_\alpha (\sigma') = -1 \}$.

For $\sigma' \in \mc O$ and $v \in I_P^G (E_B )^{X_\nr (M,\sigma)} = I_P^G (\ind_{M^1}^M (E_1))$
we define
\[
\mr{sp}_{\sigma'} (v) = |X_\nr (M,\sigma)|^{-1} \sum\nolimits_{\chi_c \in X_\nr (M,\sigma)}
\phi_{\chi_c} (\mr{sp}_\chi (v)) ,
\]
for any $\chi \in X_\nr (M,\sigma)$ with $\sigma' \cong \sigma \otimes \chi$.
By the invariance property of $v$, this does not depend on the choice of $\chi$. In view of
\eqref{eq:2.28}, it is a well-defined map
\[
\mr{sp}_{\sigma'} : I_P^G (E_B )^{X_\nr (M,\sigma)} \to E_1 .
\] 

\begin{lem}\label{lem:10.6}
Assume Working hypothesis \ref{conj:10.1}. Let $\alpha \in \Delta_{\mc O,\mu}$ and 
$\sigma_+, \sigma_- \in \mc O$ with $X_\alpha (\sigma_{\pm}) = \pm 1$.
\enuma{
\item $\mr{sp}_{\sigma_+} (1 - X_\alpha) J_{s_\alpha} = \mr{sp}_{\sigma_+} \mc T'_{s_\alpha} =
\mr{sp}_{\sigma_+}$.
\item Suppose that $b_{s_\alpha} > 0$. There exists $\epsilon_\alpha \in \{0,1\}$ such that
$\mr{sp}_{\sigma_-} \mc T'_{s_\alpha} = (-1)^{\epsilon_\alpha} \mr{sp}_{\sigma_-}$.
}
\end{lem}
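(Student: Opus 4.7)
The plan is to reduce part (a) to Lemma \ref{lem:6.1}(a) after a careful choice of lift of $\sigma_+$, while part (b), where the analogous lift is unavailable, is handled indirectly using $(\mc T'_{s_\alpha})^2 = 1$ together with Schur's lemma on the $M_\alpha$-level. Conjecture \ref{conj:10.1} is the crucial ingredient throughout.

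For part (a), I would first show that $\sigma_+$ admits a lift $\chi_+ \in X_\nr(M_\alpha) \subset X_\nr(M)$. The condition $X_\alpha(\chi_+) = 1$ means exactly that $\chi_+$ is trivial on the subgroup $\langle h^\vee_\alpha \rangle = (M_\sigma \cap M^1_\alpha)/M^1$ --- this is where Conjecture \ref{conj:10.1} enters, via indivisibility of $h^\vee_\alpha$ in $M_\sigma / M^1$. Pontryagin duality in the finite abelian group $M/M_\sigma$ then allows one to modify $\chi_+$ by an element of $X_\nr(M,\sigma) = (M/M_\sigma)^\wedge$ to force triviality on the larger subgroup $(M \cap M^1_\alpha) M_\sigma / M_\sigma$, at which point $\chi_+ \in X_\nr(M_\alpha)$. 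Such a lift automatically satisfies $s_\alpha(\chi_+) = \chi_+$ and $\chi_+(m_{s_\alpha}) = 1$ (recall $m_{s_\alpha} \in M \cap M^1_\alpha$ by Lemma \ref{lem:10.2}(b)). Using \eqref{eq:3.31} and invariance, $\mr{sp}_{\sigma_+}(w) = \mr{sp}_{\chi_+}(w)$ for any $X_\nr(M,\sigma)$-invariant $w$; since $J_{s_\alpha}$ preserves the invariant subspace and $(1-X_\alpha) \in \C[\mc O]$, both identities in (a) reduce to evaluations at $\chi_+$. The first equality is then Lemma \ref{lem:6.1}(a) applied to $\chi_+ \in X_\nr(M_\alpha)$ (where the sign is explicitly $+1$), combined with $\chi_+(m_{s_\alpha}) = 1$. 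The second follows by writing $g_\alpha = (1-X_\alpha) h_\alpha$ with $h_\alpha$ regular at $X_\alpha = 1$ and $h_\alpha(\sigma_+) = 1$ (by direct computation), so that $\mc T'_{s_\alpha} = h_\alpha (1-X_\alpha) J_{s_\alpha}$.

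For part (b) the lift trick is unavailable, since $X_\alpha \equiv 1$ on $X_\nr(M_\alpha)$ forces any $\chi_-$ lying there to have $X_\alpha(\chi_-) \neq -1$. I would instead first establish $(\mc T'_{s_\alpha})^2 = 1$: starting from $\mc T'_{s_\alpha} = b_{m_{s_\alpha}} \mc T_{s_\alpha}$ and Proposition \ref{prop:4.6}'s quadratic relation $\mc T_{s_\alpha}^2 = 1$, together with the identity $s_\alpha \cdot b_{m_{s_\alpha}} = b_{m_{s_\alpha}}^{-1}$ --- which follows from $H_M(m_{s_\alpha}) \in \R h^\vee_\alpha$ being negated by the reflection $s_\alpha$, combined with injectivity of $H_M$ on $M/M^1$ --- one obtains $(\mc T'_{s_\alpha})^2 = b_{m_{s_\alpha}} (s_\alpha \cdot b_{m_{s_\alpha}}) \mc T_{s_\alpha}^2 = 1$. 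Since $\mu^{M_\alpha}(\sigma_-) = 0$, the $M_\alpha$-representation $I_{P \cap M_\alpha}^{M_\alpha}(\sigma_-)$ is irreducible. The operator $\mc T'_{s_\alpha}$ is assembled from ingredients that arise by parabolic induction $I_{PM_\alpha}^G$ from $M_\alpha$-level data (Harish-Chandra's $J_{s_\alpha(P)|P}$ decomposes as in the proof of Proposition \ref{prop:3.2}, and the $B$-multiplication factors specialize at $\sigma_-$ to scalars). Hence the specialization $T := \mr{sp}_{\sigma_-}(\mc T'_{s_\alpha}) \in \End_G(I_P^G(\sigma_-))$ arises by $I_{PM_\alpha}^G$ from an $M_\alpha$-endomorphism $T_{M_\alpha}$; Schur's lemma forces $T_{M_\alpha} \in \C$, and $T^2 = 1$ then yields $T = (-1)^{\epsilon_\alpha}$ for some $\epsilon_\alpha \in \{0,1\}$.

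The hardest part will be the bookkeeping in (b): one must check carefully that after specialization at $\sigma_-$ the operator $\mc T'_{s_\alpha}$ really descends to an endomorphism parabolically induced from $M_\alpha$ --- in particular, that the $B$-multiplication factors (which are not $M_\alpha$-equivariant before specialization) contribute only overall scalars --- so that Schur on the $M_\alpha$-level forces a single scalar rather than allowing independent signs on possibly inequivalent $G$-constituents of the potentially reducible $I_P^G(\sigma_-)$.
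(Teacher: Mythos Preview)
Your part (a) is essentially the paper's argument: find a lift $\chi_+ \in X_\nr(M_\alpha)$, invoke Lemma~\ref{lem:6.1}(a), and use $\chi_+(m_{s_\alpha})=1$. The paper obtains the lift by noting that $\{\sigma'\in\mc O:X_\alpha(\sigma')=1\}$ is connected (since $h_\alpha^\vee$ is indivisible in $M_\sigma/M^1$), so the connected component $X_\nr(M_\alpha)$ of $\{\chi:X_\alpha(\chi)=1\}$ already surjects onto it; your Pontryagin-duality manoeuvre is an equivalent way to see this.

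For part (b) your route is correct but more laborious than the paper's. Rather than re-running an $M_\alpha$-level Schur argument, the paper simply invokes Lemma~\ref{lem:6.1}(b): for any lift $\chi_-$ of $\sigma_-$ one has $\mr{sp}_{\chi_-}\mc T'_{s_\alpha}=z\,\mr{sp}_{\chi_-}\phi_{\chi_-\,s_\alpha(\chi_-^{-1})}$ for some $z\in\C^\times$. Since $\phi_{\chi_c}$ acts as the identity on $I_P^G(E_B)^{X_\nr(M,\sigma)}$ for every $\chi_c\in X_\nr(M,\sigma)$, this becomes $\mr{sp}_{\sigma_-}\mc T'_{s_\alpha}=z\,\mr{sp}_{\sigma_-}$, and then $(\mc T'_{s_\alpha})^2=1$ forces $z=\pm1$. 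This bypasses exactly the bookkeeping you flag as the hardest part: the descent to $M_\alpha$ is already packaged inside Lemma~\ref{lem:6.1}, and the possibly troublesome $\phi$ factor is harmless precisely because we work on the $X_\nr(M,\sigma)$-invariants.

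There is also a genuine gap in your part (b): you produce a sign for a fixed $\sigma_-$, but the statement asserts a single $\epsilon_\alpha$ valid for all $\sigma_-$ with $X_\alpha(\sigma_-)=-1$ (and this uniformity is used immediately afterwards to define $T'_{s_\alpha}$). The paper closes this by observing that $\{\sigma'\in\mc O:X_\alpha(\sigma')=-1\}$ is connected and the sign depends continuously on $\sigma'$, hence is constant. You should add this step.
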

\begin{proof}
(a) From $\mc T'_{s_\alpha} = g_\alpha J_{s_\alpha}$ and the definition of $g_\alpha$ we see that 
\[
\mr{sp}_{\sigma_+} \mc T'_{s_\alpha} = \mr{sp}_{\sigma_+} (1 - X_\alpha) J_{s_\alpha} .
\]
By Lemma \ref{lem:6.1}, $\mr{sp}_{\chi_+} (1 - X_\alpha) A_{s_\alpha} = \mr{sp}_{\chi_+}$ for all 
$\chi_+ \in X_\nr (M_\alpha)$. From Lemma \ref{lem:10.2}.b we see that
also $\mr{sp}_{\chi_+} (1 - X_\alpha) J_{s_\alpha} = \mr{sp}_{\chi_+}$. As
\[
\{ \chi \in X_\nr (M) : X_\alpha (\chi) \} \to \{ \sigma' \in \mc O : X_\alpha (\sigma') = 1 \}
\]
is a covering, there exists a $\chi_+ \in X_\nr (M_\alpha)$ with $\sigma \otimes \chi_+ \cong 
\sigma_+$. Consequently
\[
\mr{sp}_{\sigma_+} (1 - X_\alpha) J_{s_\alpha} = |X_\nr (M,\sigma) |^{-1} \sum\nolimits_{
\chi_c \in X_\nr (M,\sigma)} \phi_{\chi_c} \circ \mr{sp}_{\chi_+} = \mr{sp}_{\chi_+} .
\]
(b) Pick $\chi_- \in X_\nr (M)$ with $\sigma \otimes \chi_- \cong \sigma_-$. From Lemma \ref{lem:6.1}
we know that
\begin{equation}\label{eq:10.14}
\mr{sp}_{\chi_-} \mc T'_{s_\alpha} = z \mr{sp}_{\chi_-} \phi_{\chi_- s_\alpha (\chi_-^{-1})}
\end{equation}
for some $z \in \C^\times$. On $I_P^G (E_B)^{X_\nr (M,\sigma)}$ the operator 
$\phi_{\chi_- s_\alpha (\chi_-^{-1})}$ acts as the identity, so there \eqref{eq:10.14} reduces to
$\mr{sp}_{\chi_-} \mc T'_{s_\alpha} = z \mr{sp}_{\chi_-}$. Using $(\mc T'_{s_\alpha})^2 = 1$,
we deduce $z = \pm 1$. By definition, that means
\[
\mr{sp}_{\sigma_-} \mc T'_{s_\alpha} = \pm \mr{sp}_{\sigma_-} .
\]
Since $\{ \sigma' \in \mc O : X_\alpha (\sigma') = -1 \}$ is connected and this sign $\pm$
depends continuously on $\sigma'$, the sign is a constant $(-1)^{\epsilon_\alpha}$.
\end{proof}

Recall the elements $f_\alpha \in \C (\mc O)$ for $\alpha \in \Sigma_{\mc O,\mu}$ from \eqref{eq:4.23}.
One readily checks that
\[
(1 + f_{-\alpha}) \mc T'_{s_\alpha} = (q_\alpha - X_\alpha)(q_{\alpha*} + X_\alpha) J_{s_\alpha} / 2.
\]
For $\alpha \in \Delta_{\mc O,\mu}$ we define
\[
T'_{s_\alpha} = (1 + f_{-\alpha}) X_\alpha^{\epsilon_\alpha} \mc T'_{s_\alpha} + f_\alpha =
(X_\alpha^{\epsilon_\alpha} \mc T'_{s_\alpha} + 1)(1 + f_\alpha) - 1 .
\]

\begin{lem}\label{lem:10.7}
Assume Working hypothesis \ref{conj:10.1}. For all $\alpha \in \Delta_{\mc O,\mu}$, 
$T'_{s_\alpha}$ lies in $\End_G \big( I_P^G (E_B)^{X_\nr (M,\sigma)} \big)$.
\end{lem}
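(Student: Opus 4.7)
The plan is to show (i) that $T'_{s_\alpha}$ has no poles as an operator on $I_P^G(E_B)^{X_\nr(M,\sigma)}$, and (ii) that it stabilizes this subspace. Point (ii) is easy once (i) is established: $J_{s_\alpha}$ stabilizes $I_P^G(E_B)^{X_\nr(M,\sigma)}$ by Lemma \ref{lem:10.2}, while $f_\alpha$ and $X_\alpha$ belong to $\C[\mc O]=\C[X_\nr(M)]^{X_\nr(M,\sigma)}$, so multiplication by them preserves the $X_\nr(M,\sigma)$-invariants. Hence I focus on (i).

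First I would simplify $T'_{s_\alpha}$. A direct calculation shows
\[
(1+f_{-\alpha})=\frac{(q_\alpha-X_\alpha)(q_{\alpha*}+X_\alpha)}{(1-X_\alpha)(1+X_\alpha)},
\]
and multiplying with $g_\alpha$ the factors $(q_\alpha - X_\alpha)(q_{\alpha*}+X_\alpha)$ cancel with the denominator of $g_\alpha$, so
\[
(1+f_{-\alpha})\mc T'_{s_\alpha}=(1+f_{-\alpha})g_\alpha J_{s_\alpha}=\tfrac{(q_\alpha-1)(q_{\alpha*}+1)}{2}J_{s_\alpha}.
\]
Consequently $T'_{s_\alpha}=c\,X_\alpha^{\epsilon_\alpha}J_{s_\alpha}+f_\alpha$ with $c=(q_\alpha-1)(q_{\alpha*}+1)/2$. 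By Proposition \ref{prop:3.2} the only poles of $J_{s_\alpha}$ on $\mc O$ are simple poles at $\{X_\alpha=1\}$ and, if $b_{s_\alpha}>0$, at $\{X_\alpha=-1\}$; and the only poles of $f_\alpha$ are at the same two divisors. So I need only check pole cancellation there.

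At $\{X_\alpha=1\}$: a short computation gives $f_\alpha=-c/(1-X_\alpha)+\text{regular}$ near $X_\alpha=1$, while Lemma \ref{lem:10.6}(a) (combined with $\mc T'_{s_\alpha}=g_\alpha J_{s_\alpha}$ and the explicit form of $g_\alpha$) tells us that $(1-X_\alpha)J_{s_\alpha}$ equals the identity at every fiber $\mr{sp}_{\sigma_+}$ on the invariant subspace. Since $X_\alpha(\sigma_+)^{\epsilon_\alpha}=1$, the residue of $cX_\alpha^{\epsilon_\alpha}J_{s_\alpha}$ at $\{X_\alpha=1\}$ is $c\cdot\mr{id}/(1-X_\alpha)+\text{regular}$, which cancels that of $f_\alpha$. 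At $\{X_\alpha=-1\}$ (assuming $b_{s_\alpha}>0$): near $X_\alpha=-1$ I compute $f_\alpha=-(q_\alpha+1)(q_{\alpha*}-1)/\bigl(2(1+X_\alpha)\bigr)+\text{regular}$, and similarly $(1+f_{-\alpha})$ has residue $(q_\alpha+1)(q_{\alpha*}-1)/2$ at the same divisor. Combining with Lemma \ref{lem:10.6}(b), which gives $\mr{sp}_{\sigma_-}\mc T'_{s_\alpha}=(-1)^{\epsilon_\alpha}\mr{sp}_{\sigma_-}$, the residue of $X_\alpha^{\epsilon_\alpha}(1+f_{-\alpha})\mc T'_{s_\alpha}$ at $\{X_\alpha=-1\}$ picks up the sign $X_\alpha(\sigma_-)^{\epsilon_\alpha}(-1)^{\epsilon_\alpha}=(-1)^{2\epsilon_\alpha}=1$ and equals $+(q_\alpha+1)(q_{\alpha*}-1)/\bigl(2(1+X_\alpha)\bigr)\cdot\mr{id}$, cancelling the residue of $f_\alpha$.

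The main subtlety — and the whole reason for inserting the factor $X_\alpha^{\epsilon_\alpha}$ in the definition — is precisely the sign cancellation at $\{X_\alpha=-1\}$: without $X_\alpha^{\epsilon_\alpha}$, the sign $(-1)^{\epsilon_\alpha}$ arising from Lemma \ref{lem:10.6}(b) when $\epsilon_\alpha=1$ would make the residues add rather than cancel. The sign $\epsilon_\alpha$ is exactly what is needed to match the fiber action of $\mc T'_{s_\alpha}$ at $\{X_\alpha=-1\}$ with the identity sign on the fiber at $\{X_\alpha=1\}$, after which the residue computations with $f_\alpha$ become uniform. Once both residue cancellations are verified, $T'_{s_\alpha}$ has no poles on $\mc O$, which together with (ii) gives the claim.
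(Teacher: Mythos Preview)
Your argument is correct and follows essentially the same route as the paper: identify the only possible (simple) poles of $T'_{s_\alpha}$ as the divisors $\{X_\alpha=\pm 1\}$, and then use Lemma~\ref{lem:10.6} to verify that the singular parts of $(1+f_{-\alpha})X_\alpha^{\epsilon_\alpha}\mc T'_{s_\alpha}$ and $f_\alpha$ cancel there. The paper phrases this as ``multiply by $(1\mp X_\alpha)$ and show the specialization at $\sigma_\pm$ vanishes'', while you phrase it as a residue computation; these are the same verification.

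One small remark: your simplification $(1+f_{-\alpha})\mc T'_{s_\alpha}=\tfrac{(q_\alpha-1)(q_{\alpha*}+1)}{2}\,J_{s_\alpha}$ is indeed correct (the denominator $(1-q_\alpha^{-1}X_\alpha)(1+q_{\alpha*}^{-1}X_\alpha)$ of $g_\alpha$ cancels exactly with $(q_\alpha-X_\alpha)(q_{\alpha*}+X_\alpha)$ up to the constant $q_\alpha q_{\alpha*}$). The paper's displayed identity just before the definition of $T'_{s_\alpha}$, which writes $(q_\alpha-X_\alpha)(q_{\alpha*}+X_\alpha)/2$ instead of the constant, appears to be a typo; your version is the right one, and in any case the discrepancy does not affect the pole analysis. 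Also, having made that simplification you briefly revert to $(1+f_{-\alpha})\mc T'_{s_\alpha}$ for the $X_\alpha=-1$ case; this is fine but unnecessary --- you could equally well extract the residue of $J_{s_\alpha}$ at $\{X_\alpha=-1\}$ from Lemma~\ref{lem:10.6}(b) via the relation $\mc T'_{s_\alpha}=g_\alpha J_{s_\alpha}$.
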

\begin{proof}
By \eqref{eq:10.15} the operators $(q_\alpha - X_\alpha)(q_{\alpha*} + X_\alpha) J_{s_\alpha}/2, 
f_\alpha$ and $T'_{s_\alpha}$ can only have poles at $\{ \sigma' \in \mc O : X_\alpha (\sigma') = 
\pm 1 \}$. Select $\sigma_\pm \in \mc O$ with $X_\alpha (\sigma_\pm ) = \pm 1$. 
By Lemma \ref{lem:10.6}, as operators on $I_P^G (E_B)^{X_\nr (M,\sigma)}$:
\[
\begin{array}{ccccc}
\! \mr{sp}_{\sigma_+} (1 - X_\alpha) (1 + f_{-\alpha}) X_\alpha^{\epsilon_\alpha} \mc T'_{s_\alpha} & 
\! = \! & - \mr{sp}_{\sigma_+} (1 - X_\alpha) f_\alpha & \! = \! & 
\mr{sp}_{\sigma_+} (q_\alpha - 1) (q_{\alpha *} + 1) /2 \\
\! \mr{sp}_{\sigma_-} (1 + X_\alpha) (1 + f_{-\alpha}) X_\alpha^{\epsilon_\alpha} \mc T'_{s_\alpha} & 
\! = \! & - \mr{sp}_{\sigma_-} (1 + X_\alpha) f_\alpha & \! = \! & 
\mr{sp}_{\sigma_-} (q_\alpha + 1) (q_{\alpha *} - 1) /2 
\end{array}
\]
Hence $\mr{sp}_{\sigma_+} (1 - X_\alpha) T'_{s_\alpha} = 0$ and 
$\mr{sp}_{\sigma_-} (1 + X_\alpha) T'_{s_\alpha} = 0$, which means that $T'_{s_\alpha}$ does not have
any poles and sends $I_P^G (E_B)^{X_\nr (M,\sigma)}$ to itself.
\end{proof}

In view of Theorem \ref{thm:10.5}.a, the proof of Lemma \ref{lem:6.3} remains valid in the current
setting, and shows that
\[
(T'_{s_\alpha} + 1)(T'_{s_\alpha} - q_\alpha q_{\alpha*}) = 0 .
\]
Similarly \eqref{eq:6.9} holds for $T'_{s_\alpha}$ and $b \in \C (\mc O)$.
Next, the proof of Lemma \ref{lem:6.2}.a also holds in our setting, if we take $u=1$ and replace 
$X_\nr (M)$ by $\mc O$. That provides a consistent definition of $T'_w$ for $w \in W(\Sigma_{\mc O,\mu})$.
The multiplication relations between these $T'_w$ and the $J_r$ with $r \in R(\mc O)$ are not as nice
as in Lemma \ref{lem:6.2}.b, because Lemma \ref{lem:10.3} is weaker than Proposition \ref{prop:4.2}.

Fortunately \cite[\S 5.7--5.12]{Hei2} still works for the elements $J_r T'_w$. 

\begin{thm}\label{thm:10.8}
Assume Working hypothesis \ref{conj:10.1}. The set $\{ J_r T'_w : r \in R(\mc O), w \in 
W(\Sigma_{\mc O,\mu}) \}$ is a $\C [\mc O]$-basis of $\End_G \big( I_P^G (E_B)^{X_\nr (M,\sigma)} \big)$.
The subalgebra 
\[
\bigoplus\nolimits_{w \in W(\Sigma_{\mc O,\mu})} \C [\mc O] T'_w
\]
is canonically isomorphic to the affine Hecke algebra $\mc H (\mc O,G)$ from \eqref{eq:9.4}.
\end{thm}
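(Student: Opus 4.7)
The strategy has two parts: first establish the $\C[\mc O]$-basis claim, then match the subalgebra $\bigoplus_w \C[\mc O] T'_w$ with $\mc H(\mc O,G)$ via its Bernstein--Lusztig presentation.

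For the basis claim, Lemma \ref{lem:10.7} places each $T'_{s_\alpha}$ in $\End_G(I_P^G(E_B)^{X_\nr(M,\sigma)})$. Applying the argument of Lemma \ref{lem:6.2}(a) with $u=1$ and $X_\nr(M)$ replaced by $\mc O$, the product $T'_w := T'_{s_1} \cdots T'_{s_{\ell_{\mc O}(w)}}$ along any reduced expression is independent of that expression, and the products $J_r T'_w$ land in $\End_G(I_P^G(E_B)^{X_\nr(M,\sigma)})$ thanks to Lemma \ref{lem:10.2}. The relation $T'_{s_\alpha} = (1+f_{-\alpha})X_\alpha^{\epsilon_\alpha}\mc T'_{s_\alpha} + f_\alpha$ shows that the change of basis from $\{J_r \mc T'_w\}$ to $\{J_r T'_w\}$ is upper triangular over $\C(\mc O)$ with invertible diagonal, so by Theorem \ref{thm:10.5}(a) the set $\{J_r T'_w\}$ is already a $\C(\mc O)$-basis of the Hom-space there. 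To promote to a $\C[\mc O]$-basis of $\End_G(I_P^G(E_B)^{X_\nr(M,\sigma)})$, I would write a general $A$ in that algebra as $\sum b_{r,w} J_r T'_w$ with $b_{r,w} \in \C(\mc O)$ and check that every $b_{r,w}$ is regular: since $T'_w$ has no poles (Lemmas \ref{lem:10.6}--\ref{lem:10.7}) and neither does $J_r$, the specialization $\mr{sp}_{\sigma'}(A) = \sum b_{r,w}(\sigma')\mr{sp}_{\sigma'}(J_r T'_w)$ is meaningful at every $\sigma' \in \mc O$. An adaptation of \cite[Prop.~5.9]{Hei2}, simplified because Conjecture \ref{conj:10.1} makes $\Res^M_{M^1}\sigma$ multiplicity-free, yields pointwise $\C$-linear independence of the $\mr{sp}_{\sigma'}(J_r T'_w)$, forcing each $b_{r,w}$ to be finite at every $\sigma'$.

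For the identification with $\mc H(\mc O,G)$, Proposition \ref{prop:3.5} supplies the root datum $(\Sigma_{\mc O}^\vee, M_\sigma^2/M^1, \Sigma_{\mc O}, (M_\sigma^2/M^1)^\vee)$ and the labels from (9.9) make $q_F^{\lambda(h_\alpha^\vee)} = q_\alpha q_{\alpha*}$ and $q_F^{\lambda^*(h_\alpha^\vee)} = q_\alpha/q_{\alpha*}$. In the Bernstein--Lusztig presentation $\mc H(\mc O,G)$ is generated by $\C[\mc O]$ together with $\{T_{s_\alpha}: \alpha\in\Delta_{\mc O,\mu}\}$, subject to the braid relations of $W(\Sigma_{\mc O,\mu})$, the quadratic relations $(T_{s_\alpha}+1)(T_{s_\alpha}-q_\alpha q_{\alpha*}) = 0$, and the commutation $bT_{s_\alpha} - T_{s_\alpha}(s_\alpha\cdot b) = f_\alpha(b - s_\alpha\cdot b)$ for $b \in \C[\mc O]$. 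The remarks following Lemma \ref{lem:10.7} already verify all three for $T'_{s_\alpha}$: the quadratic relation via the proof of Lemma \ref{lem:6.3}, the commutation via the analogue of (6.9) on $\C[\mc O]$, and the braid relations via the consistency of $T'_w$. Therefore there is a unique algebra homomorphism $\mc H(\mc O,G) \to \bigoplus_w \C[\mc O] T'_w$ sending $T_{s_\alpha} \mapsto T'_{s_\alpha}$ and restricting to the identity on $\C[\mc O]$; surjectivity is built in, and injectivity follows from the basis property of the first step. The isomorphism is canonical because the only choice in defining $T'_{s_\alpha}$ is $m_{s_\alpha} \in M \cap M_\alpha^1$, pinned down modulo $M^1$ by Lemma \ref{lem:10.2}(b).

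The main technical obstacle sits inside the basis argument: the operators $T'_w$ are tailored so as to be regular at the reducibility loci $\{X_\alpha = \pm 1\}$ (Lemma \ref{lem:10.6} and the construction with the sign $\epsilon_\alpha$), but extracting $\C$-linear independence of $\mr{sp}_{\sigma'}(J_r T'_w)$ at such boundary points requires analyzing $\Hom_G(I_P^G(E_1\otimes\sigma'), I_P^G(E_1\otimes\sigma'))$ carefully, taking into account the reducibility of the induced representation. Conjecture \ref{conj:10.1} is indispensable here -- without it the 2-cocycle $\natural$ could be nontrivial on $X_\nr(M,\sigma)$ and even the invariants $I_P^G(E_B)^{X_\nr(M,\sigma)}$ would not behave like a progenerator of the required size, obstructing a clean presentation of the endomorphism algebra.
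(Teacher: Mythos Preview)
Your proposal is correct and follows essentially the same route as the paper: the paper's own proof is a two-line deferral, invoking \cite[\S 5.7--5.12 and Th\'eor\`eme 5.10]{Hei2} for the $\C[\mc O]$-basis claim (exactly your specialization-plus-pointwise-independence argument) and the proof of Lemma \ref{lem:6.2}.a for the identification with $\mc H(\mc O,G)$ via the Bernstein--Lusztig relations. Your write-up simply unpacks these references; the only cosmetic difference is that you cite \cite[Proposition 5.9]{Hei2} for the pointwise linear independence, whereas the paper cites the theorem that packages it, \cite[Th\'eor\`eme 5.10]{Hei2}.
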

\begin{proof}
The first claim is \cite[Th\'eor\`eme 5.10]{Hei2} in our generality. The second claim follows
from the proof of Lemma \ref{lem:6.2}.a. It generalizes \cite[Proposition 7.4]{Hei2}.
\end{proof}

From Theorem \ref{thm:10.5} we see that conjugation by $J_r$ stabilizes the subalgebra 
$\mc H (G,\mc O)$. However, it seems that $J_r^{-1} T'_w J_r$ need not be contained in
$\C T'_{r^{-1} w r}$, or even in $\C [\mc O] T'_{r^{-1} w r}$. Therefore the main conclusions
(\S 7.6--7.8) of \cite{Hei2} do not necessarily hold in our setting.

Recall from \eqref{eq:4.equiv} and \eqref{eq:10.2} that 
\begin{equation}\label{eq:10.16}
\End_G \big( I_P^G (E_B)^{X_\nr (M,\sigma)} \big) - \Mod \; = \;
\End_G \big( I_P^G (\ind_{M^1}^M (E_1)) \big) - \Mod
\end{equation}
is naturally equivalent with $\Rep (G)^{\mf s}$ and with $\End_G (I_P^G (E_B)) -\Mod$. 
However, with only the above description available, deriving the representation theory of 
$\End_G \big( I_P^G (E_B)^{X_\nr (M,\sigma)} \big)$ from that of $\mc H (\mc O,G)$ is quite involved.
One problem is that Clifford theory (as in the proof of Lemma \ref{lem:8.7}) is not available if
the values of $\natural_J$ are not central in $\mc H (\mc O,G)$. A way around that is
by localization on subsets of $\mc O$, as in Section \ref{sec:localization}. In fact any $U_u 
\subset X_\nr (M)$ satisfying Condition \ref{cond:7.U} is diffeomorphic to its image in $\mc O$,
and the analytic localization of $\End_G \big( I_P^G (E_B)^{X_\nr (M,\sigma)} \big)$ on $U_u$ is
canonically isomorphic to $1_u \End_G (I_P^G (E_B))_U^\an 1_u$. In this way the results from
Sections \ref{sec:localization}--\ref{sec:class} provide an analysis of \eqref{eq:10.16} in terms
of a family of (twisted) graded Hecke algebras.

For aspects of \eqref{eq:10.16} that cannot be translated to graded Hecke algebras, like those
involving modules of infinite length, one must understand the 2-cocycle
\[
\natural_J : W(M,\mc O )^2 \to \C [\mc O]^\times
\] 
well. In practice that means one needs either irreducibility of $\Res_{M^1}^M (E)$ (for 
then we can take $m_w = 1$ for all $w$) or an explicit easy description of $R (\mc O)$, like for 
classical groups in \cite[Proposition 1.15 and \S 2.5]{Hei2}. Here we must warn the reader that 
exactly this aspect of \cite{Hei2} is incomplete, it was corrected in \cite[Appendix A]{Hei4}.

\appendix
\section{Correction (from 2023)}

\renewcommand{\theequation}{\Alph{section}.\arabic{equation}}

In the recent preprint \cite{Oha}, the algebras $\End_G (I_P^G (E_B))$ are compared with 
similar endomorphism algebras from \cite{Mor1}. The main results of \cite{Oha} made the author
realize that there could be an issue with the preservation of temperedness proven in
Theorem \ref{thm:9.2}. Further investigations revealed that there is indeed a problem, and that
it stems from \cite{Hei3}. In this appendix we explain the problem and we show how it can be
fixed.

By Definition \ref{def:temp}, a finite dimensional $\End_G (I_P^G (E_B))$-module $V$ is tempered
if all its $\C [X_\nr (M)]$-weights $t \in X_\nr (M)$ satisfy the following condition:
\begin{equation}\label{eq:A.1}
\log |t| \in \Hom (M_\sigma^2 / M^1,\R) \text{ lies in the negative cone } 
\big\{ \sum_{\alpha^\sharp \in \Delta_{\mc O}\!\!\!} \! c_\alpha \alpha^\sharp : 
c_\alpha \in \R_{\leq 0} \big\} .
\end{equation}
Equivalently, $|t(m)| \leq 1$ whenever $m \in M_\sigma^2 / M^1$ lies in the positive Weyl chamber
with respect to the simple roots $h_\alpha^\vee \in \Delta_{\mc O}^\vee$. Here the bases 
$\Delta_{\mc O}$ of $\Sigma_{\mc O}$ and $\Delta_{\mc O}^\vee$ of $\Sigma_{\mc O}^\vee$ are
determined by $P$. This is the same notion of temperedness as commonly used for affine Hecke 
algebras, e.g. in \cite{Opd-Sp,SolAHA}.

According to Theorem \ref{thm:9.2}, the equivalence of categories 
\[
\Rep_f (G)^{\mf s} \cong \End_G (I_P^G (E_B)) -\Mod_f
\]
preserves temperedness. The proof proceeds by reduction to results of \cite{SolComp}. One of
the conditions needed to apply \cite{SolComp} is that the definition of positivity for roots
in $\Sigma_{\mc O}^\vee$ corresponds (via $X_\nr (M)$ which is present on both sides) to the 
definition of positivity for roots in $\Sigma_\red (A_M)$. This is checked in Lemma \ref{lem:9.1}
and shortly before Proposition \ref{prop:9.3}. 

Unfortunately, the comparison between these two root systems is made in the wrong way. Namely,
for a positive root $\alpha \in \Sigma_\red (A_M)$, $\nu_F \circ \alpha \in \mf a_M^*$ is 
regarded as positive. That results in a definition of $h_\alpha^\vee$ as element of
$\R_{>0} \alpha^\vee (\varpi_F) \cap M_\sigma^2 / M^1$. This use of $\nu_F \circ \alpha$ as
positive element stems from \cite{Hei2,Hei3}, on which Section \ref{sec:temp} is partly based.
As a consequence Theorem \ref{thm:9.2} and \cite[Th\'eor\`eme 5]{Hei3} suffer from the same
problem: they do not send tempered $G$-representations to tempered $\End_G (I_P^G (E_B))
$-representations, but to anti-tempered $\End_G (I_P^G (E_B))$-representations. Here
anti-tempered means that the above condition \eqref{eq:A.1} for temperedness is replaced by
\[
\log |t| \in \Hom (M_\sigma^2 / M^1 ,\R) \quad \text{lies in} \quad \big\{ \sum\nolimits_{
\alpha^\sharp \in \Delta_{\mc O}} d_\alpha \alpha^\sharp : d_\alpha \in \R_{\geq 0} \big\} .
\]
Let us work out explicitly why this is the case, in contrast to the statements of
Theorem \ref{thm:9.2} and \cite[Th\'eor\`eme 5]{Hei3}. Consider a root $\alpha \in \Sigma_\red
(A_M)$ which is positive with respect to $P$. Then $|\alpha|_F \in X_\nr (M)$ is an unramified
character in positive position with respect to $P$, as used in Casselman's criteria for
temperedness of $G$-representations \cite[Proposition III.2.2]{Wal}. Since $|\alpha (\alpha^\vee
(\varpi_F))|_F < 1$, it follows that $|\alpha |_F$ becomes a negative multiple of $\alpha^\sharp$
in \cite{Hei2} and in Section \ref{sec:root}. That is off by a minus sign from what is needed
in \cite{SolComp}, and therefore the ``preservation of temperedness" in Theorem \ref{thm:9.2} 
and \cite[Th\'eor\`eme 5]{Hei3} relates to the preservation of temperedness in 
\cite[\S 4.2]{SolComp} by inserting an extra minus sign in the criterion for temperedness.

For example, consider the Steinberg representation St of $G = GL_2 (F)$, with $P$ the standard
Borel subgroup and $\alpha$ the unique positive root of the diagonal torus $T$. Then
$\End_G (I_P^G (E_B))$ is an affine Hecke algebra of type $GL_2$ \cite[\S 2.5]{SolHecke} and 
\[
J^G_{\overline P} (\mr{St}) = \delta_{\overline P}^{1/2} = |\alpha |_F^{-1/2} .
\]
Let $V$ be the $\End_G (I_P^G (E_B))$-module which corresponds to St under the equivalence
of categories \eqref{eq:4.equiv}. By Proposition \ref{prop:4.5}.b, $J^G_{\overline P} (\mr{St})$
corresponds to $\mr{Res}^{\End_G (I_P^G (E_B))}_{\C [X_\nr (T)]} V$ under the equivalence
of categories \eqref{eq:4.equiv} for $T$. In other words, $\mr{Res}^{\End_G (I_P^G (E_B))
}_{\C [X_\nr (T)]} V$ is the map $\mc O (X_\nr (T)) \to \C$ coming from evaluation at
$|\alpha |_F^{-1/2} \in X_\nr (T)$. The element $h_\alpha^\vee = \alpha^\vee (\varpi_F)$, from
Proposition \ref{prop:3.5} and \cite{Hei2}, satisfies
\[
|\alpha |_F^{-1/2} (h_\alpha^\vee) = | \alpha (\alpha^\vee (\varpi_F)) |_F^{-1/2} = 
|\varpi_F |_F^{-1} = q_F .
\]
Thus $\log |t| = \log (q_F) \alpha^\sharp / 2$ for the unique $\C [X_\nr (T)]$-weight 
$t = |\alpha |_F^{-1/2}$ of $V$.
As $\log (q_F) > 0$, we find that $V$ is an anti-tempered $\End_G (I_P^G (E_B))$-module.\\

Now that we have seen all this, it is clear how the problem can be fixed: we replace every
$h_\alpha^\vee \in M_\sigma^2 / M^1$ by $-h_\alpha^\vee$. Equivalently, the requirement
\begin{equation}\label{eq:A.2}
\nu_F (\alpha (h_\alpha^\vee)) > 0 \quad \text{is substituted by} \quad 
|\alpha (h_\alpha^\vee) |_F > 1 .
\end{equation}
It is best to replace $H_M$ by $-H_M$ at the same time, so that $H_M (h_\alpha^\vee)$ does not 
change. That leads to a definition of $H_M$ which is in any case more common: 
\begin{equation}\label{eq:A.3}
q_F^{\langle H_M (m), \gamma \rangle} = |\gamma (m)|_F \qquad m \in M, \gamma \in X^* (M).
\end{equation}
We point out that the specific formulas for $H_M$ and for $h_\alpha^\vee$ as element
of $G$ are never actually used in this paper. They only play an implicit role in the part of
Paragraph \ref{par:preserv} after Corollary \ref{cor:9.9}, because only that part makes use of
\cite{Hei3,SolComp}. Hence our entire paper remains valid with \eqref{eq:A.2} and \eqref{eq:A.3}
instead of the conventions just before Proposition \ref{prop:3.5}. With this improvement
Proposition \ref{prop:9.3} and Theorem \ref{thm:9.2} really become valid as stated. The same
goes for the results that use those two, namely Theorems \ref{thm:C}, \ref{thm:D}, \ref{thm:E}, 
\ref{thm:9.4} and \ref{thm:9.8}.

The same improvement could be used in \cite{Hei2,Hei3}, that would leave everything in \cite{Hei2}
valid and would repair the issue for \cite{Hei3}.


\begin{thebibliography}{99}

\bibitem[AdPr]{AdPr} J.D. Adler, D. Prasad,
``Multiplicity upon restriction to the derived subgroup",
Pacific J. Math. {\bf 301.1}, 2019, 1--14

\bibitem[ABPS1]{ABPS3} A.-M. Aubert, P.F. Baum, R.J. Plymen, M. Solleveld,
``Geometric structure for the principal series of a split reductive $p$-adic 
group with connected centre",
J. Noncommut. Geom. {\bf 10} (2016), 663--680

\bibitem[ABPS2]{ABPS4} A.-M. Aubert, P.F. Baum, R.J. Plymen, M. Solleveld,
``The principal series of $p$-adic groups with disconnected centre",
Proc. London Math. Soc. {\bf 114.5} (2017), 798--854

\bibitem[ABPS3]{ABPS1} A.-M. Aubert, P.F. Baum, R.J. Plymen, M. Solleveld,
``Hecke algebras for inner forms of $p$-adic special linear groups",
J. Inst. Math. Jussieu {\bf 16.2} (2017), 351--419

\bibitem[ABPS4]{ABPS} A.-M. Aubert, P.F. Baum, R.J. Plymen, M. Solleveld,
``Conjectures about $p$-adic groups and their noncommutative geometry",
Contemp. Math. {\bf 691} (2017), 15--51

\bibitem[AMS1]{AMS1} A.-M. Aubert, A. Moussaoui, M. Solleveld,
``Generalizations of the Springer correspondence and cuspidal Langlands parameters",
Manus. Math. {\bf 157} (2018), 121--192

\bibitem[AMS2]{AMS2} A.-M. Aubert, A. Moussaoui, M. Solleveld,
``Graded Hecke algebras for disconnected reductive groups",
in: \emph{Geometric Aspects of the Trace Formula
(editors W. M\"uller, S.W. Shin, N. Templier)}, Simons Symposia, Springer, 2018

\bibitem[AMS3]{AMS3} A.-M. Aubert, A. Moussaoui, M. Solleveld,
``Affine Hecke algebras for Langlands parameters",
arXiv:1701.03593v3, 2019

\bibitem[Bad]{Bad} M.P. Badea,
``Hecke algebras for covers of principal series Bernstein
components in quasisplit unitary groups over local Fields",
PhD. thesis, Radboud Universiteit Nijmegen, 2020

\bibitem[BaMo1]{BaMo1} D. Barbasch, A. Moy,
``Reduction to real infinitesimal character in affine Hecke algebras'',
J. Amer. Math. Soc. \textbf{6.3} (1993), 611--635

\bibitem[BaMo2]{BaMo2} D. Barbasch, A. Moy,
``Unitary spherical spectrum for $p$-adic classical groups'',
Acta Appl. Math. \textbf{44} (1996), 3--37

\bibitem[BeDe]{BeDe} J. Bernstein, P. Deligne,
``Le "centre" de Bernstein",
pp. 1--32 in: \emph{Repr\'esentations des groupes r\'eductifs sur un corps local}, 
Travaux en cours, Hermann, Paris, 1984

\bibitem[BeRu]{BeRu} J. Bernstein, K.E. Rumelhart,
``Representations of $p$-adic groups",
draft, 1993

\bibitem[BrPl]{BrPl} J. Brodzki, R.J. Plymen,
``Complex structure on the smooth dual of GL(n)",
Doc. Math. {\bf 7} (2002), 91--112

\bibitem[BuKu1]{BuKu1} C.J. Bushnell, P.C. Kutzko,
\emph{The admissible dual of GL(N) via compact open subgroups},
Annals of Mathematics Studies {\bf 129}, Princeton University Press, 1993

\bibitem[BuKu2]{BuKu2} C.J. Bushnell, P.C. Kutzko,
``Smooth representations of reductive $p$-adic groups: structure theory via types",
Proc. London Math. Soc. {\bf 77.3} (1998), 582--634

\bibitem[Ciu]{Ciu} D. Ciubotaru,	
``Types and unitary representations of reductive $p$-adic groups",
Invent. Math. {\bf 213} (2018), 237--269

\bibitem[COT]{CiOpTr} D. Ciubotaru, E.M. Opdam, P.E. Trapa,
``Algebraic and analytic Dirac induction for graded affine Hecke algebras",
J. Inst. Math. Jussieu {\bf 13.3} (2014), 447--486

\bibitem[Eve]{Eve} S. Evens,
``The Langlands classification for graded Hecke algebras'',
Proc. Amer. Math. Soc. \textbf{124.4} (1996), 1285--1290.

\bibitem[GeKn]{GeKn} S.S. Gelbart, A.W. Knapp,
``$L$-indistinguishability and $R$ groups for the special linear group",
Adv. in Math. {\bf 43} (1982), 101--121

\bibitem[GoHe]{GoHe} D. Goldberg, R. Herb,
``Some results on the admissible representations of non-connected reductive $p$-adic groups",
Ann. scient. \'Ec. Norm. Sup. {\bf 30} (1997), 97--146

\bibitem[Hei1]{Hei1} V. Heiermann,
``Param\`etres de Langlands et alg\`ebres d'entrelacement",
Int. Math. Res. Not. {\bf 2010}, 1607--1623

\bibitem[Hei2]{Hei2} V. Heiermann,
``Op\'erateurs d'entrelacement et alg\`ebres de Hecke avec param\`etres d'un 
groupe r\'eductif $p$-adique - le cas des groupes classiques",
Selecta Math. {\bf 17.3} (2011), 713--756

\bibitem[Hei3]{Hei3} V. Heiermann,
``Alg\`ebres de Hecke avec param\`etres et repr\'esentations d'un
groupe p-adique classique: pr\'eservation du spectre temp\'er\'e",
J. Algebra {\bf 371} (2012), 596--608

\bibitem[Hei4]{Hei4} V. Heiermann,
``Local Langlands correspondence for classical groups and affine Hecke algebras",
Math. Zeitschrift {\bf 287} (2017), 1029--1052

\bibitem[HeVi]{HeVi} G. Henniart, M.-F. Vign\'eras,
``A Satake isomorphism for representations modulo $p$ of reductive groups over local fields",
J. reine angew. Math. {\bf 701} (2015), 33--75

\bibitem[HiSa]{HiSa} K. Hiraga, H. Saito,
``On L-packets for inner forms of $SL_n$",
Mem. Amer. Math. Soc. {\bf 1013}, Vol. {\bf 215} (2012)

\bibitem[Lus1]{Lus-Gr} G. Lusztig,
``Affine Hecke algebras and their graded version",
J. Amer. Math. Soc {\bf 2.3} (1989), 599--635

\bibitem[Lus2]{Lus-Uni} G. Lusztig,
``Classification of unipotent representations of simple $p$-adic groups",
Int. Math. Res. Notices {\bf 11} (1995), 517--589

\bibitem[Lus3]{Lus2} G. Lusztig,
``Open problems on Iwahori--Hecke algebras",
arXiv:2006.08535, 2020 

\bibitem[MiSt]{MiSt} M. Miyauchi, S. Stevens,
``Semisimple types for $p$-adic classical groups",
Math. Ann. {\bf 358} (2014), 257--288

\bibitem[Mor1]{Mor1} L. Morris,
``Tamely ramified intertwining algebras",
Invent. Math. {\bf 114.1} (1993), 1--54
           
\bibitem[Mor2]{Mor2} L. Morris,           
``Level zero $G$-types",
Compositio Math. {\bf 118.2} (1999), 135--157

\bibitem[Oha]{Oha} K. Ohara,
``A comparison of endomorphism algebras",
arXiv:2301.09182, 2023

\bibitem[Opd]{Opd-Sp} E.M. Opdam,
``On the spectral decomposition of affine Hecke algebras'',
J. Inst. Math. Jussieu \textbf{3.4} (2004), 531--648

\bibitem[Ren]{Ren} D. Renard,
``Repr\'esentations des groupes r\'eductifs $p$-adiques",
Cours sp\'ecialis\'es {\bf 17}, Soci\'et\'e Math\'ematique de France, 2010

\bibitem[Roc1]{Roc} A. Roche,
``Types and Hecke algebras for principal series representations of split reductive $p$-adic groups",
Ann. Sci. \'Ecole Norm. Sup. {\bf 31.3} (1998), 361--413

\bibitem[Roc2]{Roc1} A. Roche,
``The Bernstein decomposition and the Bernstein centre",
pp. 3--52 in: \emph{Ottawa lectures on admissible representations of reductive $p$-adic groups},
Fields Inst. Monogr. {\bf 26}, Amer. Math. Soc., Providence RI, 2009

\bibitem[SeSt]{SeSt} V. S\'echerre, S. Stevens, 
``Smooth representations of $GL(m,D)$ VI: semisimple types",
Int. Math. Res. Notices {\bf 13} (2012), 2994--3039

\bibitem[Sil1]{Sil1} A.J. Silberger,
``The Knapp-Stein dimension theorem for $p$-adic groups",
Proc. Amer. Math. Soc. {\bf 68.2} (1978), 243--246, and Correction
Proc. Amer. Math. Soc. {\bf 76.1} (1979), 169--170

\bibitem[Sil2]{Sil2} A.J. Silberger,
\emph{Introduction to harmonic analysis on reductive $p$-adic groups},
Mathematical Notes {\bf 23}, Princeton University Press, 1979

\bibitem[Sil3]{Sil3} A.J. Silberger,
``Special representations of reductive $p$-adic groups are not integrable",
Ann. Math {\bf 111} (1980), 571--587

\bibitem[Sol1]{SolHomGHA} M. Solleveld,
``Homology of graded Hecke algebras", 
J. Algebra {\bf 323} (2010), 1622--1648

\bibitem[Sol2]{SolGHA} M. Solleveld,
``Parabolically induced representations of graded Hecke algebras",
Algebras Repres. Th. {\bf 15.2} (2012), 233--271

\bibitem[Sol3]{SolAHA} M. Solleveld,
``On the classification of irreducible representations of affine Hecke algebras 
with unequal parameters", Representation Theory {\bf 16} (2012), 1--87

\bibitem[Sol4]{SolHomAHA} M. Solleveld,	
``Hochschild homology of affine Hecke algebras",
J. Algebra {\bf 384} (2013), 1--35

\bibitem[Sol5]{SolComp} M. Solleveld,
``On completions of Hecke algebras",
pp. 207--262 in: \emph{Representations of Reductive p-adic Groups, 
A.-M. Aubert, M. Mishra, A. Roche, S. Spallone (eds.)},
Progress in Mathematics {\bf 328}, Birkh\"auser, 2019

\bibitem[Sol6]{SolRamif} M. Solleveld,
``On unipotent representations of ramified $p$-adic groups",
arXiv:1912.08451v2, 2021

\bibitem[Sol7]{SolHecke} M. Solleveld,
``Affine Hecke algebras and their representations",
Indagationes Mathematica {\bf 32.5} (2021), 1005--1082

\bibitem[Sol8]{SolParam} M. Solleveld,
``Parameters of Hecke algebras for Bernstein components of $p$-adic groups",
arXiv:2103.13113, 2021

\bibitem[Spr]{Spr} T.A. Springer,
\emph{Linear algebraic groups 2nd ed.},
Progress in Mathematics {\bf 9}, Birkh\"auser, 1998

\bibitem[Ste]{Ste} S. Stevens,
``The supercuspidal representations of $p$-adic classical groups",
Invent. Math. {\bf 172.2} (2008), 289--352

\bibitem[Wal]{Wal} J.-L. Waldspurger,
``La formule de Plancherel pour les groupes $p$-adiques (d'apr\`es Harish-Chandra)",
J. Inst. Math. Jussieu {\bf 2.2} (2003), 235--333

\end{thebibliography}
\end{document}